\numberwithin{equation}{section}
\newcommand{\matdev}{\partial^{\bullet}}
\newcommand{\gradg}{\nabla_{\Gamma}}
\newcommand{\gradgh}{\nabla_{\Gamma_h}}
\newcommand{\lapg}{\Delta_{\Gamma}}
\newcommand{\invh}{\mathcal{G}_h}
\newcommand{\invsh}{\mathcal{G}_{S_h}}
\newcommand{\intinvsh}{\bar{\mathcal{G}}_{S_h}}
\newcommand{\normh}[2]{\left \| #2 \right\|_{h,#1}}
\newcommand{\normsh}[1]{\left\| #1 \right\|_{S_h}}
\newcommand{\inormh}[1]{\left\| #1 \right\|_{-h}}
\newcommand{\matdevtau}{\partial^{\bullet}_\tau}
\newcommand{\Xint}[1]{\mathchoice
	{\XXint\displaystyle\textstyle{#1}}%
	{\XXint\textstyle\scriptstyle{#1}}%
	{\XXint\scriptstyle\scriptscriptstyle{#1}}%
	{\XXint\scriptscriptstyle\scriptscriptstyle{#1}}%
	\!\int}
\newcommand{\XXint}[3]{{\setbox0=\hbox{$#1{#2#3}{\int}$ }
		\vcenter{\hbox{$#2#3$ }}\kern-.6\wd0}}
\newcommand{\dashint}{\Xint-}
\newcommand{\mval}[2]{\dashint_{#2} #1}
\newcommand{\esssup}[1]{\underset{#1}{\operatorname{ess \ sup \ }}}
\newcommand{\Udh}{U_h^{\delta}}
\newcommand{\Wdh}{W_h^{\delta}}
\newcommand{\Udhn}[1]{U_h^{#1,\delta}}
\newcommand{\Wdhn}[1]{W_h^{#1,\delta}}
\newcommand{\fd}{f^{\delta}}
\newcommand{\Ech}{E^{\text{CH}}}
\newcommand{\Echdh}{E^{\text{CH},\delta}_h}
\newcommand{\intm}{\bar{m}_h}
\theoremstyle{plain}
\newtheorem{theorem}{Theorem}[section]
\newtheorem{corollary}[theorem]{Corollary}
\newtheorem{proposition}[theorem]{Proposition}
\newtheorem{lemma}[theorem]{Lemma}
\newtheorem{definition}[theorem]{Definition}
\newtheorem{notation}[theorem]{Notation}
\newtheorem{assumption}[theorem]{Assumption}
\newtheorem{remark}[theorem]{Remark}
\begin{document}

\begin{abstract}
In this paper we study semi-discrete and fully discrete evolving surface finite element schemes for the Cahn-Hilliard equation with a logarithmic potential. Specifically we consider linear finite elements discretising space and backward Euler time discretisation. Our analysis relies on a specific geometric assumption on the evolution of the surface. Our main results are $L^2_{H^1}$ error bounds for both the semi-discrete and fully discrete schemes, and we provide some numerical results.
\end{abstract}

\author{Charles M. Elliott \and Thomas Sales}
\title[ESFEM for the Cahn-Hilliard equation with a logarithmic potential]{An evolving surface finite element method for the Cahn-Hilliard equation with a logarithmic potential}
\address{Mathematics Institute, Zeeman Building, University of Warwick, Coventry CV4 7AL, UK}
\email{\href{mailto:c.m.elliott@warwick.ac.uk}{c.m.elliott@warwick.ac.uk}, \href{mailto:tom.sales@warwick.ac.uk}{tom.sales@warwick.ac.uk}}
\subjclass[2020]{65M60, 65M16, 35K67}
\keywords{evolving surface finite element method, Cahn-Hilliard equation, error bounds}
\date{}

\maketitle

\section{Introduction}
\label{section:logch intro}
We are interested in evolving surface finite element (ESFEM) schemes for the Cahn-Hilliard equation posed on a sufficiently smooth, closed, orientable evolving surface, $\Gamma(t) \subset \mathbb{R}^3$.
The evolving surface Cahn-Hilliard equation, as formulated in \cite{caetano2021cahn,caetano2023regularization}, is given by
\begin{align}
	\begin{gathered}
	\matdev u + (\gradg \cdot V)u = \lapg w,\\
	w = -\varepsilon \lapg u + \frac{1}{\varepsilon}F'(u),
	\end{gathered} \label{cahnhilliard}
\end{align}
subject to the initial condition $u(0) = u_0$ for suitable initial data.
Our focus is on the (singular) logarithmic potential function
\[F(r) := \frac{\theta}{2 \theta_c} F_{\log}(r) + \frac{1-r^2}{2},\]
where $F_{\log}(r) := (1-r) \log(1-r) + (1+r) \log(1+r)$, and $0 < \theta < \theta_c$ corresponds to the (assumed constant) temperature of the system, with $\theta_c$ being some critical temperature.
The condition $0 < \theta < \theta_c$ ensures that the potential takes a double-well form, which is seen to have two minima of equal size but differing sign.
For ease of presentation we set $\theta_c = 1$ and hence $\theta \in (0,1)$.
We will also use the notation $f(r) := F_{\log}'(r) = \log \left(\frac{1+r}{1-r} \right)$ throughout.
We expand on the appropriate differential operators in the next section.

The system \eqref{cahnhilliard} is often studied by use of the Ginzburg-Landau functional,
\begin{align}
	\label{glfunctional}
	\Ech[u;t] := \int_{\Gamma(t)} \frac{\varepsilon |\gradg u|^2}{2} + \frac{1}{\varepsilon} F(u),
\end{align}
where the constant $\varepsilon > 0$ is often a small coefficient associated with the width of a transition layer connecting  two  phase domains in which the solution takes values close to the minima of $F(\cdot)$.
On a stationary domain this is natural, as \eqref{cahnhilliard} is the $H^{-1}$ gradient flow of the Ginzburg-Landau functional (see \cite{bansch2023interfaces} for example).
It is noted in \cite{elliott2015evolving} that this is not the case for an arbitrary evolving surface, and on an evolving domain this functional is known to be bounded, but not necessarily monotonic.
This has been observed numerically in \cite{beschle2022stability,elliott2015evolving,elliott2024fully}, where the Ginzburg-Landau functional appears to converge to a periodic function on domains with periodic evolution --- to the authors knowledge there are no analytic results on this phenomena.
The Ginzburg-Landau functional remains useful in the analysis nonetheless.

The Cahn-Hilliard equation originates from the work of Cahn and Hilliard, \cite{cahn1958free}, in modelling phase separation in a binary alloy.
The Cahn-Hilliard equation originally was applied to metallurgy, for example in the studying phenomenon of spinodal decomposition \cite{cahn1961spinodal,elliott1989cahn}, but also has found application outside of this field, for example in modelling the dynamics of lipid biomembranes \cite{zhiliakov2021experimental}.

This has been extensively studied on a Euclidean domain, see for example \cite{ miranville2019cahn}, and the logarithmic potential has been studied in \cite{barrett1995error,barrett1997finite,copetti1992numerical,cherfils2011cahn,elliott1991generalized} for example.
Recently there has been interest in the equation when posed on a (possibly evolving) surface as motivated by applications such as those in \cite{eilks2008numerical,zhiliakov2021experimental}.
We refer to \cite{abels2024diffuse,abels2024diffuse2,caetano2021cahn, caetano2023regularization, elliott2024navier} for recent results on the analysis.
Likewise, from a numerical perspective we refer the reader to \cite{beschle2022stability, du2011finite, eilks2008numerical,elliott2015evolving,elliott2024fully, li2018direct}.

The breakdown of this paper is as follows.
In Section \ref{section: logch prelim} we introduce some preliminary material which will be necessary for our ESFEM schemes.
Then in Section \ref{section: logch variational form} we introduce the weak formulation of the Cahn-Hilliard equation and prove an error bound for a related regularisation.
In Section \ref{section: logch semi discrete} and Section \ref{section: logch fully discrete} we introduce, and analyse, a semi-discrete ESFEM scheme, and a fully discrete ESFEM scheme.
We provide some numerical results in Section \ref{section: logch numerics}.
We have focussed only on linear finite elements in discretising space, and backward Euler time discretisation as it is known that the logarithmic potential limits the regularity properties of the solution.
For higher order (in space and/or time) ESFEM schemes for more regular problems we refer the reader to \cite{elliott2024fully,kovacs2016error,lubich2013backward}.
Finally we include some material on inverse Laplacian operators in Appendix \ref{invlaps}.

\section{Preliminaries}
\label{section: logch prelim}
\subsection{Some geometric analysis}
Throughout we will consider closed, connected, oriented $C^2$ surfaces.
Given such a surface, $\Gamma$, we denote its normal vector by $\boldsymbol{\nu}(x,t)$.
We recall from \cite{deckelnick2005computation} that $\Gamma$ partitions $\mathbb{R}^{3}$ into two regions, an interior region which we denote $\Omega$, and an exterior region $\mathbb{R}^{3} \backslash \bar{\Omega}$.

It is known (see \cite{foote1984regularity, gilbarg1977elliptic}) that for a $C^2$ surface, $\Gamma$, the oriented distance function $d: \mathcal{N}(\Gamma) \rightarrow \mathbb{R}$ is a $C^2$ function, where $\mathcal{N}(\Gamma)$ is some open, tubular neighbourhood of $\Gamma$, $\mathcal{N}(\Gamma):= \{ x \in \mathbb{R}^3 \mid |d(x)| < \epsilon_0 \}$ for some $\epsilon_0$.
Given a point $x \in \mathcal{N}(\Gamma)$, we may express $x$ in Fermi coordinates as
\begin{align}
	\label{normalprojection}
	x = p(x) + d(x) \boldsymbol{\nu} (p(x)),
\end{align}
for some unique $p(x) \in \Gamma$.
We call $p : \mathcal{N}(\Gamma) \rightarrow \Gamma$ the normal projection operator.
This will be used later in the triangulation of the surface as a way of relating functions on a triangulated surface to the exact surface.

\begin{definition}
	Let $f : \Gamma \rightarrow \mathbb{R}$ be such that we have a differentiable extension of $f$, say $\tilde{f}$, defined on an open neighbourhood of $\Gamma$.
	We define the tangential gradient of $f$ at $x \in \Gamma$ to be
	$$ \gradg f(x) = \nabla \tilde{f}(x) - \left( \nabla \tilde{f}(x) \cdot \boldsymbol{\nu}(x) \right) \boldsymbol{\nu}(x) .$$
	We may express this componentwise as
	$$ \gradg f = \left( \underline{D}_1 f,\underline{D}_2 f,\underline{D}_3 f \right).$$
	It can be shown that this expression is independent of the choice of extension, $\tilde{f}$.
	
	We then define the Laplace-Beltrami operator of $f$to be
	$$ \Delta_{\Gamma} f = \gradg \cdot \gradg f = \sum_{i=1}^{3} \underline{D}_i \underline{D}_i f.$$
\end{definition}

Given these definitions, we define the mean curvature, $H$, of $\Gamma$ to be the tangential divergence of $\boldsymbol{\nu}$, that is for $x \in \Gamma$, $H(x) := \gradg \cdot \boldsymbol{\nu}(x) = \sum_{i=1}^{3} \underline{D}_i \nu_i (x)$.

\subsubsection{Sobolev spaces}

\begin{definition}
	For $p \in [1, \infty]$, the Sobolev space $H^{1,p}(\Gamma)$ is then defined by
	$$ H^{1,p}(\Gamma) := \{ f \in L^p(\Gamma) \mid \underline{D}_i f \in L^p(\Gamma), \ i=1,...,n+1 \},$$
    where $\underline{D}_i$ is understood in the weak sense.
	Higher order spaces ($k \in \mathbb{N}$) are defined recursively by
	$$ H^{k,p}(\Gamma) := \{ f \in H^{k-1,p}(\Gamma) \mid \underline{D}_i f \in H^{k-1,p}(\Gamma), \ i = 1,...,n+1  \}, $$
	where $H^{0,p}(\Gamma) := L^p(\Gamma)$.
	These Sobolev spaces are known to be Banach spaces when equipped with norm,
	$$
	\| f \|_{H^{k,p}(\Gamma)} := \begin{cases}
		\left( \sum_{|\alpha| = 0}^{k}  \| \underline{D}^{\alpha} f \|_{L^p(\Gamma)}^p  \right)^{\frac{1}{p}}, & p \in [1, \infty),\\
		\max_{|\alpha|=1,...,k} \|\underline{D}^\alpha f \|_{L^\infty(\Gamma)}, & p = \infty,
	\end{cases}$$
	where we consider all weak derivatives of order $|\alpha|$.
	We use shorthand notation, $H^{k}(\Gamma) := H^{k,2}(\Gamma)$, for the case $p = 2$.
\end{definition}

Next we introduce some notation which will be used throughout.
\begin{notation}
	For a $\mathcal{H}^2 -$measurable set, $X \subset \mathbb{R}^{3}$, we denote the $\mathcal{H}^2$ measure of $X$ by
	$$ |X| := \mathcal{H}^2(X).$$
	
	For a function $f \in L^1(X)$ we denote the mean value of $f$ on $X$ by
	$$ \mval{f}{X} := \frac{1}{|X|} \int_X f .$$
\end{notation}

We refer the reader to \cite{aubin2012nonlinear,hebey2000nonlinear} for further details on Sobolev spaces defined on Riemannian manifolds.

\subsection{Evolving surfaces}
\begin{definition}[$C^{2}$ evolving surface]
	Let $\Gamma_0 \subset \mathbb{R}^3$ be a closed, connected, orientable $C^{2}$ surface and let $\Phi :\Gamma_0 \times [0,T] \rightarrow \mathbb{R}^3$ be a smooth map such that
	\begin{enumerate}
		\item For all $t \in [0,T]$,
		$$\Phi(\cdot,t): \Gamma_0 \rightarrow \Phi(\Gamma_0,t) =: \Gamma(t)$$
		is a $C^{2}$ diffeomorphism.
		\item $\Phi(\cdot,0) = \mathrm{id}_{\Gamma_0}$.
	\end{enumerate}
	Then we call the family $(\Gamma(t))_{t \in [0,T]}$ a $C^{2}$ evolving surface.
\end{definition}
It follows that $\Gamma(t)$ is closed, connected and orientable for all $t$.
We define the spacetime surface to be the set
$$\mathcal{S}_T = \bigcup_{t \in [0,T]} \Gamma(t) \times \{ t \},$$
and given $(x,t) \in \mathcal{S}_T$ we denote the unit normal to $\Gamma(t)$  by $\boldsymbol{\nu}(x,t)$.
We will assume throughout that there is a velocity field $V \in C^1([0,T];C^{2}(\mathbb{R}^3;\mathbb{R}^3))$ such that for $t \in [0,T]$ and $x \in \Gamma_0$,
\begin{align*}
	\frac{d}{dt} \Phi(x,t) &= V(\Phi(x,t),t),\\
	\Phi(x,0) &= x.
\end{align*}
By compactness of $\mathcal{S}_T$, and assumed smoothness of $V$, there is a constant $C_V$ independent of $t$ such that
\begin{align*}
	\|V(t)\|_{C^{2}(\Gamma(t))} \leq C_V ,
\end{align*}
for all $t \in [0,T]$.

\subsubsection{Time-dependent Lebesgue/Bochner spaces}

Next we introduce a way of relating functions on the evolving surface back to the initial surface, which will be necessary for defining the evolving function spaces.
Let $t \in [0,T]$, $\eta \in H^{m,p}(\Gamma_0)$ and $\zeta \in H^{m,p}(\Gamma(t))$ for some $m = 0,1,2$, and $p \in [1,\infty]$.
We define the pushforward of $\eta$ by
$$ \Phi_t \eta = \eta(\Phi(\cdot,t)) \in H^{m,p}(\Gamma(t)),$$
and the pullback of $\zeta$ by
$$ \Phi_{-t} \zeta = \zeta(\Phi(\cdot,t)^{-1}) \in H^{m,p}(\Gamma_0).$$
Under our assumptions on the smoothness of $\Gamma(t)$ it can be shown that the pairs $(H^{m,p}(\Gamma(t)), \Phi_t)$ are compatible in the sense of \cite{alphonse2023function,alphonse2015abstract} for $m = 0,1,2$ and $p \in [1,\infty]$.
Compatibility of these spaces allows to one obtain Sobolev inequalities on $\Gamma(t)$ independent of $t$.

With these definitions, we can define time-dependent Bochner spaces.

\begin{definition}
	In the following we let $X(t)$ denote a Banach space dependent on $t$, for instance $H^{m,p}(\Gamma(t))$.
	The space $L^2_X$ consists of (equivalence classes of) functions
	\begin{gather*}
		u:[0,T] \rightarrow \bigcup_{t \in [0,T]} X(t) \times \{ t \},\\
		t \mapsto (\bar{u}(t), t),
	\end{gather*}
	such that $ \Phi_{-(\cdot)} \bar{u}(\cdot) \in L^2(0,T;X(0))$.
	We identify $u$ with $\bar{u}$.
    This space is equipped with a norm
    \[ \|u\|_{L^2_X} = \left( \int_0^T \|u(t)\|^2_{X(t)} \right)^{\frac{1}{2}}. \]
	If the family $X(t)$ are in fact Hilbert spaces then this norm is induced by the inner product
	$$ (u,v)_{L^2_X} = \int_0^T (u(t),v(t))_{X(t)},$$
	for $u,v \in L^2_X$.
	In this case, as justified in \cite{alphonse2015abstract}, we make the identification $(L^2_{X})^* \cong L^2_{X^*}$, and for $X = H^1$ we write $L^2_{H^{-1}} := (L^2_{H^1})^*$.
	
	One can similarly define $L^p_X$ for $p \in [1, \infty]$, which is equipped with a norm
	\[ \|u\|_{L^p_X} := \begin{cases}
		\left( \int_0^T \|u(t)\|^p_{X(t)} \right)^{\frac{1}{p}}, & p \in [1,\infty),\\
		\esssup{t \in [0,T]} \|u(t)\|_{X(t)}, & p = \infty.
	\end{cases}
	\]
	We refer the reader to \cite{alphonse2023function} for further details.
\end{definition}

\begin{definition}[Strong material derivative]
	Let $u : \mathcal{S}_T \rightarrow \mathbb{R}$ be sufficiently smooth, then we define the strong material time derivative as
	\[\matdev u = \Phi_t \left( \frac{d}{dt} \Phi_{-t} u \right).\]
\end{definition}
As in the stationary setting, this can be generalised to define a weak material derivative.

\begin{definition}[Weak material derivative]
	Let $u \in L^2_{H^1}$.
	A function $v \in L^2_{H^{-1}}$ is said to be the weak material time derivative of $u$ if for all $\eta \in \mathcal{D}_{H^1}(0,T)$ we have
	$$ \int_0^T \langle v(t), \eta(t) \rangle_{H^{-1}(\Gamma(t)) \times H^1(\Gamma(t))} = - \int_0^T \left(u(t), \matdev \eta(t) \right)_{L^2(\Gamma(t))} - \int_0^T \int_{\Gamma(t)} u(t) \eta(t) \gradg \cdot V(t),$$
	where
	$$ \mathcal{D}_{H^1}(0,T) := \left\{ u \in L^2_{H^1} \mid \Phi_{-t} u(t) \in C^{\infty}_c(0,T;H^1(\Gamma_0)) \right\}.$$
	We abuse notation and write $v = \matdev u$.
\end{definition}

We introduce shorthand notation for a function space of weakly differentiable functions to be
\[ H^1_{H^{-1}} := \{ \eta \in L^2_{H^{-1}} \mid \matdev \eta \in L^2_{H^{-1}} \}, \]
and more generally we may consider the space
\[ H^1_{H^{k}} := \{ \eta \in L^2_{H^{k}} \mid \matdev \eta \in L^2_{H^{k}} \}, \]
for $k \geq 0$.
Clearly if $u \in L^2_{H^1}$ has a strong material time derivative it has a weak material time derivative, and the two coincide.

We now state a transport theorem for quantities defined on an evolving surface.
Firstly, we define the following notation for bilinear forms to be used throughout:
\begin{align*}
	m_*(t;\hat{\eta}, \zeta) &:= \langle \hat{\eta}, \zeta \rangle_{H^{-1}(\Gamma(t)) \times H^1(\Gamma(t))},\\
	m(t;\eta,\zeta) &:=  \int_{\Gamma(t)} \eta \zeta, \\
	g(t;\eta,\zeta)  &:=  \int_{\Gamma(t)} \eta \zeta \gradg \cdot V(t),\\
	a(t;\eta,\zeta) &:= \int_{\Gamma(t)} \gradg \eta \cdot \gradg \zeta,
\end{align*}
where $\eta, \zeta \in H^1(\Gamma(t))$, $\hat{\eta} \in H^{-1}(\Gamma(t))$.
The argument in $t$ will often be omitted, as above.
For weakly differentiable functions we have the following result.
\begin{proposition}[{\cite[Lemma 5.2]{dziuk2013finite}}]
	\label{transport2}
	Let $\eta, \zeta \in L^2_{H^1}\cap H^1_{H^{-1}}$.
	Then $t \mapsto m(\eta(t), \zeta(t))$ is absolutely continuous and such that
	$$ \frac{d}{dt} m(\eta, \zeta) = m_*(\matdev \eta, \zeta) + m_*(\matdev \zeta,\eta) + g(\eta, \zeta).$$
	Moreover, if $\eta, \zeta \in H^1_{H^1}$ then $t \mapsto a_S(\eta(t), \zeta(t))$ is absolutely continuous and such that
	$$ \frac{d}{dt} a(\eta, \zeta) = a(\matdev \eta, \zeta) + a(\eta, \matdev \zeta) + b(\eta, \zeta).$$
	where
	\[b(\eta, \zeta) := \int_{\Gamma(t)} \mathbf{B}(V) \gradg \eta \cdot \gradg \zeta ,\]
	and $\mathbf{B}(V)$ is a tensor given by
	\[\mathbf{B}(V) = \left( (\gradg \cdot V)\mathrm{id} - (\gradg V + (\gradg V)^T) \right).\]
\end{proposition}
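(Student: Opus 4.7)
The plan is to first establish both identities for smooth $\eta, \zeta$ where $\matdev$ is the classical material derivative, and then extend to the weak setting by a density argument. The starting point is the classical Leibniz/transport formula on a moving surface,
\[ \frac{d}{dt} \int_{\Gamma(t)} f = \int_{\Gamma(t)} \matdev f + f \gradg \cdot V(t), \]
which is standard for sufficiently smooth $f:\mathcal{S}_T \to \mathbb{R}$ and can be derived by pulling back to $\Gamma_0$ via $\Phi_{-t}$ and differentiating the resulting surface integral over a fixed manifold. For the first identity I would take $f = \eta \zeta$ with smooth $\eta, \zeta$, apply the product rule for $\matdev$, and identify the two boundary terms with $m_*(\matdev \eta, \zeta)$ and $m_*(\matdev \zeta, \eta)$, which in the smooth case reduce to the ordinary $L^2$ inner products.

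For the second identity I would take $f = \gradg \eta \cdot \gradg \zeta$ and use the commutator between $\matdev$ and $\gradg$. A direct computation in Fermi coordinates shows that $\matdev \gradg \eta - \gradg \matdev \eta$ is a linear expression in $\gradg V$ acting on $\gradg \eta$; pairing with $\gradg \zeta$ and symmetrising in $\eta, \zeta$, together with the $(\gradg \cdot V) \gradg \eta \cdot \gradg \zeta$ term coming from the Leibniz rule, collects precisely into the tensor $\mathbf{B}(V) = (\gradg \cdot V)\mathrm{id} - (\gradg V + (\gradg V)^T)$ acting on $\gradg \eta$ tested against $\gradg \zeta$. This gives the formula in the smooth case.

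To pass to the weak setting, I would approximate $\eta, \zeta$ by smooth functions on $\mathcal{S}_T$ via pullback to $\Gamma_0$: since $(H^{m,p}(\Gamma(t)),\Phi_t)$ are compatible, density of $C^\infty_c(0,T;X(\Gamma_0))$ in the corresponding Bochner spaces yields approximants $\eta_n, \zeta_n$ converging to $\eta, \zeta$ in $L^2_{H^1} \cap H^1_{H^{-1}}$ (respectively $H^1_{H^1}$). Integrating the smooth identity over $[s,t]$, each bilinear form on the right-hand side is continuous in the relevant topology: $m$, $g$, $a$ are continuous on $L^2 \times L^2$ or $H^1 \times H^1$, and $m_*$ extends by continuity as the $H^{-1}\times H^1$ duality pairing. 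Passing to the limit yields an integrated version of the claimed identity, from which absolute continuity of $t \mapsto m(\eta,\zeta)$ (resp. $t \mapsto a(\eta,\zeta)$) and the pointwise a.e. derivative formula follow by the Lebesgue differentiation theorem.

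The main obstacle is the first identity in the setting where $\matdev \eta, \matdev \zeta$ lie only in $L^2_{H^{-1}}$: one must justify that $m_*(\matdev \eta_n, \zeta_n) \to m_*(\matdev \eta, \zeta)$ using that $\matdev \eta_n \to \matdev \eta$ in $L^2_{H^{-1}}$ and $\zeta_n \to \zeta$ in $L^2_{H^1}$, which requires the joint continuity of the duality pairing on these spaces and is the core content of \cite{alphonse2015abstract}. For the second identity, the delicate point is the algebraic derivation of $\mathbf{B}(V)$ from the $\matdev$–$\gradg$ commutator, where one must carefully distinguish tangential from ambient derivatives of $V$ and verify that the normal components cancel so that $\mathbf{B}(V)$ acts only on tangential fields.
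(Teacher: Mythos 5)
The paper does not prove this proposition; it is quoted verbatim from the cited reference (Lemma 5.2 of Dziuk--Elliott), and your argument is essentially the standard proof given there: the Leibniz transport formula for smooth functions, the $\matdev$--$\gradg$ commutator computation producing $\mathbf{B}(V)$ after symmetrisation, and a density/approximation argument in the compatible-spaces framework of Alphonse--Elliott--Stinner to reach $L^2_{H^1}\cap H^1_{H^{-1}}$ (resp. $H^1_{H^1}$), using the embedding of these spaces into $C^0_{L^2}$ (resp. $C^0_{H^1}$) to pass to the limit in the integrated identity. Two small points to tighten: the approximating class should be smooth up to $t=0,T$ (e.g. $C^\infty([0,T];X(\Gamma_0))$ pulled forward), not compactly supported in time, since you need convergence of the endpoint values $m(\eta_n(s),\zeta_n(s))$; and the commutator of $\matdev$ with $\gradg$ for a single function contains normal/Weingarten contributions when $V$ has a normal component, which cancel only after pairing with $\gradg\zeta$ and symmetrising --- you flag this correctly, and it is exactly where the tensor $\mathbf{B}(V)$ arises.
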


\subsection{Triangulated surfaces}
In this subsection we briefly discuss discretisation of (evolving) surfaces.
Much of this presentation is the same as in \cite{elliott2024fully}.
\subsubsection{Construction and lifts}
Let $\Gamma \subset \mathbb{R}^3$ be a closed, oriented $C^2$ surface.
We introduce a discretised version of this surface, denoted $\Gamma_h$, which we call an triangulated (or interpolated) surface.

\begin{definition}
	We let $(x_i)_{i=1,...,N_h} \subset \Gamma$ be a collection of nodes used to define a set of triangles $\mathcal{T}_h$.
	The triangulated surface, $\Gamma_h$, is defined by an admissible subdivision of triangles, $\mathcal{T}_h$, such that
	$$\bigcup_{K \in \mathcal{T}_h} K = \Gamma_h.$$
	If $K_1, K_2 \in \mathcal{T}_h$ are distinct, then we have $K_1^{\circ} \cap K_2^{\circ} = \emptyset$, and if $ \bar{K}_1 \cap \bar{K}_2 \neq \emptyset$ then this intersection is either a node of the triangulation, or a line segment connecting two adjacent nodes.
	
	For $K \in \mathcal{T}_h$ we define following quantities
	$$h_K := \mathrm{diam}(K), \quad \rho_K := \sup\{ \mathrm{diam}(B) \mid B \text{ is a } 2-\text{dimensional ball contained in } K \}.$$
	We assume that the subdivision $\mathcal{T}_h$ is quasi-uniform, meaning there exists $\rho > 0$ such that for all $h \in (0,h_0)$
	$$ \min \{ \rho_K \mid K \in \mathcal{T}_h \} \geq \rho \max_{K \in \mathcal{T}_h} h_{K} .$$
\end{definition}

Throughout this paper we work with linear Lagrange finite elements --- that is our degrees of freedom are given by the point evaluations at the nodes $(x_i)_{i=1,...,N_h}$.
We will denote the set of shape functions as
$$ S_h := \left\{ \phi_h \in C(\Gamma_h) \mid \phi_h |_{K} \text{ is affine linear for } K \in \mathcal{T}_h \right\}.$$
The normal $\boldsymbol{\nu}_h$ is defined piecewise on each element of $\Gamma_h$ which gives rise to a discrete tangential gradient, $\gradgh$, defined element-wise on $\Gamma_h$.

Next we relate functions on $\Gamma_h$ and $\Gamma$ by defining lifts.
We will assume that our triangulated surface $\Gamma_h$ is such that $\Gamma_h \subset \mathcal{N}(\Gamma)$ for $\mathcal{N}(\Gamma)$ a tubular neighbourhood as described above.
This is possible in practice by considering a sufficiently fine triangulation.
This allows us to define lifts of functions.
\begin{definition}
	For a function $\eta_h : \Gamma_h \rightarrow \mathbb{R}$ we implicitly define the lift operation on $\eta_h$ by
	$$\eta_h^{\ell}(p(x)) := \eta_h(x),$$
	where $p$ is the normal projection operator \eqref{normalprojection}.
	
	Similarly, for $\eta : \Gamma \rightarrow \mathbb{R}$ we define the inverse lift by
	$$ \eta^{-\ell}(x) = \eta(p(x)) . $$
	
\end{definition}

In \cite{elliott2021unified} the following result concerning lifts of functions is proven.
\begin{lemma}
	There exists constants $C_1, C_2$, independent of $h$ such that for $\eta_h \in H^1(\Gamma_h)$
	\begin{gather}
		C_1 \| \eta_h^{\ell} \|_{L^2(\Gamma)} \leq \| \eta_h \|_{L^2(\Gamma_h)} \leq C_2 \| \eta_h^{\ell} \|_{L^2(\Gamma)}, \label{lift1}\\
		C_1 \| \gradg \eta_h^{\ell} \|_{L^2(\Gamma)} \leq \| \gradgh \eta_h \|_{L^2(\Gamma_h)} \leq C_2 \| \gradg \eta_h^{\ell} \|_{L^2(\Gamma)}. \label{lift2}
	\end{gather}
\end{lemma}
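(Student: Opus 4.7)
The plan is to prove both equivalences by a change-of-variables argument on a single element, then sum over the triangulation. Fix an element $K_h \in \mathcal{T}_h$ and denote its lift $K := p(K_h) \subset \Gamma$. The normal projection $p : K_h \to K$ is a $C^1$ bijection for $h$ sufficiently small (so that $\Gamma_h \subset \mathcal{N}(\Gamma)$ and $p$ restricted to a single flat triangle is injective), so there exists a positive area-element ratio $\mu_h : K_h \to \mathbb{R}$ such that
\[
\int_{K} \phi \, = \, \int_{K_h} (\phi \circ p) \, \mu_h \qquad \forall \phi \in L^1(K).
\]
Applying this with $\phi = (\eta_h^{\ell})^2$, using $\eta_h^{\ell} \circ p = \eta_h$, and summing over $K_h \in \mathcal{T}_h$ reduces \eqref{lift1} to showing that $\mu_h$ is bounded above and below by positive constants independent of $h$.

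For that bound I would use the standard explicit formula
\[
\mu_h(x) \, = \, \bigl(1 - d(x)\kappa_1(p(x))\bigr)\bigl(1 - d(x)\kappa_2(p(x))\bigr) \, \boldsymbol{\nu}(p(x)) \cdot \boldsymbol{\nu}_h(x),
\]
where $\kappa_1, \kappa_2$ are the principal curvatures of $\Gamma$ at $p(x)$. The two key geometric inputs are: (i) since the nodes $x_i$ lie on $\Gamma$, the signed distance $d$ vanishes at vertices of each $K_h$ and hence $\|d\|_{L^\infty(\Gamma_h)} = O(h^2)$ by standard linear interpolation estimates on the smooth function $d$; and (ii) the curvatures are uniformly bounded since $\Gamma \in C^2$ and is compact, and the difference $\boldsymbol{\nu} - \boldsymbol{\nu}_h$ is $O(h)$ by an analogous argument applied to the $C^1$ Gauss map. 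Hence $\mu_h = 1 + O(h)$ uniformly, which gives \eqref{lift1} with constants depending only on $\Gamma$ and the shape-regularity constant $\rho$.

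For \eqref{lift2} I would differentiate the identity $\eta_h = \eta_h^{\ell} \circ p$ using $\nabla p = (I - d \, \mathcal{H})^{-1}(I - \boldsymbol{\nu} \otimes \boldsymbol{\nu})$, where $\mathcal{H} = D^2 d$ is the Weingarten map extended off $\Gamma$. Projecting onto the piecewise tangent space of $\Gamma_h$ yields
\[
\gradgh \eta_h(x) \, = \, P_h(x) \, (I - d(x)\mathcal{H}(p(x)))^{-1} \, \gradg \eta_h^{\ell}(p(x)),
\]
where $P_h := I - \boldsymbol{\nu}_h \otimes \boldsymbol{\nu}_h$, and a symmetric expression inverting the roles. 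The operator $(I - d\mathcal{H})^{-1}$ has norm $1 + O(h^2)$, and combined with $\|P_h - (I-\boldsymbol{\nu}\otimes\boldsymbol{\nu})\| = O(h)$, both matrix factors are uniformly invertible on $\Gamma_h$ with operator norms bounded independently of $h$. Squaring, integrating with the area-element formula above, and summing over $\mathcal{T}_h$ then gives \eqref{lift2}.

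The only real obstacle is the geometric bookkeeping for $\mu_h$ and the chain-rule formula — once these are in place and one has the $O(h^2)$ bound $\|d\|_{L^\infty(\Gamma_h)} \leq Ch^2$ together with $C^2$-regularity of $d$ on $\mathcal{N}(\Gamma)$, both inequalities reduce to uniform eigenvalue bounds on explicit $3 \times 3$ matrices and are essentially mechanical. For this reason the result is quoted from \cite{elliott2021unified} rather than reproved.
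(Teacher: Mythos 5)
The paper offers no proof of this lemma at all --- it is quoted directly from \cite{elliott2021unified} --- and your sketch reproduces the standard argument from that source (and the earlier Dziuk--Elliott literature): comparison of area elements under the normal projection together with the chain rule for tangential gradients, using $\|d\|_{L^\infty(\Gamma_h)} = O(h^2)$ (vertices lie on $\Gamma$) and $|\boldsymbol{\nu} - \boldsymbol{\nu}_h| = O(h)$, so the proposal is correct and essentially the same approach as the cited proof. The only quibble is the formula for $\nabla p$: since $p(x) = x - d(x)\nabla d(x)$ one has $Dp = \bigl(I - d\, D^2 d\bigr)\bigl(I - \nabla d \otimes \nabla d\bigr)$ with no inverse, the inverse appearing only when one expresses $\gradg \eta_h^{\ell}$ in terms of $\gradgh \eta_h$; as both factors are $I + O(h)$ perturbations of tangential projections this does not affect the uniform two-sided bounds, so the argument stands.
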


This shows that there exist constants $C_1, C_2$ independent of $h$ such that
$$ C_1 |\Gamma| \leq |\Gamma_h| \leq C_2 |\Gamma| .$$ 
Another useful consequence of the stability of the lift is that it allows one to obtain a Poincar\'e inequality independent of $h$.

\begin{definition}
	We say that our triangulation is exact if the lifted triangles $K^{\ell} := \{ x^{\ell} \mid x \in \Gamma_h \}$ form a conforming subdivision of $\Gamma$.
\end{definition}

\subsubsection{Evolving triangulated surfaces}
Given an evolving surface, $(\Gamma(t))_{t \in [0,T]}$ we construct an evolving triangulated surface as follows.
Firstly, we construct an admissible triangulation,  $\mathcal{T}_h(0)$, of $\Gamma_0$, with nodes $(x_{i,0})_{i=1,...,N_h}$ as above.
We denote this triangulated surface as $\Gamma_h(0)$.
The nodes of $\Gamma_h(0)$ then evolve in time according to the ODE,
$$ \frac{d}{dt} x_i(t) = V(x_i(t),t), \qquad x_i(0) = x_{i,0},$$
where $V$ is the velocity field associated with the evolution of $\Gamma(t)$.
This induces a triangulation $\mathcal{T}_h(t)$, where $K(0) \in \mathcal{T}_h(0)$ gives rise to a triangle $ K(t) \in \mathcal{T}_h(t)$ by evolving the nodes as above.
The triangulated surface $\Gamma_h(t)$ is then defined as
$$ \Gamma_h(t) := \bigcup_{K(t) \in \mathcal{T}_h(t)} K(t),$$
which will be admissible by construction of $\Gamma_h(0)$.
Here the $h$ parameter is defined to be
$$ h:= \sup_{t \in [0,T]} \max_{K(t) \in \mathcal{T}_h(t)} h_{K(t)}. $$
We denote the discrete spacetime surface as
$$ \mathcal{S}_{h,T} := \bigcup_{t \in [0,T]} \Gamma_h(t) \times \{t\}.$$

We note that as the domain is evolving, the set of basis functions also evolves in time.
As such, we denote the set of basis functions at time $t$ to be
$$ S_h(t) = \left\{ \phi_h \in C(\Gamma_h(t)) \mid \phi_h |_{K(t)} \text{ is affine linear },  K(t) \in \mathcal{T}_h(t) \right\}.$$
This definition allows one to characterise the velocity of the surface $\Gamma_h(t)$, as an arbitrary point $x(t) \in \Gamma_h(t)$ will evolve according to the discrete velocity, $V_h$, given by
$$ \frac{d}{dt} x(t) = V_h(x(t),t) := \sum_{i=1}^{N_h} \dot{x}_i(t) \phi_i(x(t),t) = \sum_{i=1}^{N_h} V(x_i(t),t) \phi_i(x(t),t),$$
where $\phi_i(t)$ is the `$i$'th nodal basis function of $\Gamma_h(t)$.
From this we observe that $V_h$ is the Lagrange interpolant of $V$.

The evolution of $\Gamma(t)$ induces a discrete flow map $\Phi^h: \Gamma_{h}(0) \rightarrow \Gamma_h(t)$ (see \cite{elliott2021unified}) which one can use to define discrete pushforwards/pullbacks $\Phi_t^h \eta_h, \Phi_{-t}^h \eta_h$, for which one defines a strong material time derivative by
\[ \matdev_h \eta_h = \Phi_t^h \left( \frac{d}{dt} \Phi_{-t}^h \eta_h \right). \]
One can similarly define a weak discrete material derivative in the standard way.

An important consequence of this is the transport property of basis functions --- if $\phi_i : \mathcal{S}_{h,T} \rightarrow \mathbb{R}$ is one of the nodal basis functions of $\Gamma_h(t)$, then
\[\matdev_h \phi_i = 0.\]
This is an important property, which is exploited in implementing evolving surface finite element schemes as it eliminates any velocity terms in the fully discrete formulation.
\begin{definition}
	The evolving triangulated surface, $\Gamma_h(t)$, is said to be uniformly quasi-uniform if there exists $\rho > 0$ such that for all $t \in [0,T]$, and $h \in (0, h_0)$ we have
	$$ \min\{ \rho_{K(t)} \mid K(t) \in \mathcal{T}_h(t) \} \geq \rho h .$$
\end{definition}

A useful property of uniformly quasi-uniform meshes is the following discrete Sobolev inequality
\begin{align}
	\|\phi_h\|_{L^\infty(\Gamma(t))} \leq C \log\left(\frac{1}{h}\right)^\frac{1}{2} \|\phi_h \|_{H^1(\Gamma(t))}, \label{discrete sobolev}
\end{align}
where $\phi_h \in S_h(t)$, and $C$ is independent of $h,t$.
We do not prove this for an evolving surface, but we do refer to \cite{thomee2007galerkin} Lemma 6.4 for a proof in the stationary, Euclidean case.

\begin{definition}
	We say that the triangulation, $\mathcal{T}_h$, of $\Gamma_h$ is acute if for all $K \in \mathcal{T}_h$ the angles of $K$ are less than or equal to $\frac{\pi}{2}$.
	We say that the triangulation for an evolving surface is evolving acute if $\mathcal{T}_h(t)$ is acute for all $t \in [0,T]$.
	
\end{definition}
We say the evolving triangulation is exact if for all $t \in [0,T]$
$$ \bigcup_{K(t) \in \mathcal{T}_h(t)} K^{\ell}(t) = \Gamma(t).$$
In our analysis we make the following assumptions:
\begin{assumption}[Evolving triangulation]
~~

\begin{enumerate}
\item
For all $t \in [0,T]$ the  evolving triangulated surface, $\Gamma_h(t)$  is uniformly quasi-uniform.
\item
For all $t \in [0,T]$ the  evolving triangulated surface, $\Gamma_h(t)$  is exact.
\item
For all $t \in [0,T]$ the  evolving triangulated surface, $\Gamma_h(t)$ is acute.
\item
For each $t \in [0,T]$ one has $\Gamma_h(t) \subset \mathcal{N}(\Gamma(t))$ so that we may define the lift at all times $t \in [0,T]$.
\end{enumerate}
\end{assumption}
\begin{remark}
The condition that the mesh remains acute for all time is somewhat problematic as it is known that an initially acute mesh may lose the acute property after evolution in time.
However, if the initial mesh is strictly acute then for some small time interval (dependent on the velocity field $V$) the mesh will remain strictly acute.
More generally one may want to use a remeshing procedure, such as the algorithm in \cite{elliott2016algorithms} --- where an initially acute mesh will remain acute under evolution as harmonic map flow yields conformal maps.
It is likely that one can extend the analysis in this paper to meshes which satisfy weaker conditions on the mesh, for example the Xu-Zikatanov condition \cite{xu1999monotone}, but we shall not consider this here.
\end{remark}
We now state a discrete analogue of the transport theorem, Proposition \ref{transport2}.
Here we denote the (time-dependent) bilinear forms by
\begin{align*}
	m_h(t;\eta_h, \zeta_h) &:= \int_{\Gamma_h(t)} \eta_h \zeta_h,\\
	a_h(t;\eta_h, \zeta_h) &:= \int_{\Gamma_h(t)} \gradgh \eta_h \cdot \gradgh \zeta_h,\\
	g_h(t;\eta_h, \zeta_h) &:= \int_{\Gamma_h(t)} \eta_h \zeta_h \gradgh \cdot V_h,
\end{align*}
where we typically omit the $t$ argument.
We then have a discrete transport theorem for these bilinear forms.
\begin{proposition}
	\label{transport3}
	Let $\eta_h, \zeta_h \in S_h(t)$ be such that $\matdev_h\eta_h, \matdev_h\zeta_h \in S_h(t)$ exist.
	Then we have
	\begin{gather*}
		\frac{d}{dt} m_h(\eta_h, \zeta_h) = m_h(\matdev_h \eta_h, \zeta_h) + m_h(\eta_h, \matdev_h \zeta_h) + g_h(\eta_h, \zeta_h),\\
		\frac{d}{dt} a_h(\eta_h, \zeta_h) = a_h(\matdev_h \eta_h, \zeta_h) + a_h(\eta_h, \matdev_h \zeta_h) + b_h(\eta_h, \zeta_h).
	\end{gather*}
	Here
	$$ b_h(\eta_h, \zeta_h) = \int_{\Gamma_h(t)} \mathbf{B}_h(V_h) \gradgh \eta_h \cdot \gradgh \zeta_h , $$
	where
	$$ \mathbf{B}_h(V_h) = \left( (\gradgh \cdot V_h)\mathrm{id} - (\gradgh V_h + (\gradgh V_h)^T) \right).$$
\end{proposition}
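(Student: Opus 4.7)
The plan is to mimic the proof of Proposition \ref{transport2} element-wise. Since $\Gamma_h(t)$ is the essentially disjoint union of flat triangles $K(t) \in \mathcal{T}_h(t)$, and $\eta_h, \zeta_h$ are affine on each $K(t)$, the bilinear forms $m_h$, $a_h$, $g_h$, $b_h$ all decompose into sums of integrals over individual $K(t)$. On each element, the discrete flow map $\Phi_t^h$ restricts to an affine bijection $K(0) \to K(t)$, so there is no issue differentiating under the integral, and the only matter is bookkeeping.

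First I would fix $K(0) \in \mathcal{T}_h(0)$ and pull the integral over $K(t)$ back to the fixed reference triangle $K(0)$ using $\Phi_t^h$. Writing $d\sigma_{K(t)} = J_h(t)\, d\sigma_{K(0)}$ for the surface Jacobian, I would establish the element-wise identity
\[ \matdev_h J_h = J_h \, \gradgh \cdot V_h, \]
which is the exact discrete analogue of $\matdev J = J \gradg \cdot V$ and holds here trivially: on the flat triangle both $J_h(t)$ and $\gradgh \cdot V_h$ are constants (the latter because $V_h$ is affine as the Lagrange interpolant of $V$), and the identity reduces to a Euclidean divergence computation on an affine map between two flat triangles. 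Combined with the transport property $\matdev_h \phi_i = 0$ of the nodal basis functions, the product rule under the integral gives
\[ \frac{d}{dt}\int_{K(t)} \eta_h \zeta_h = \int_{K(t)} \bigl( (\matdev_h \eta_h)\zeta_h + \eta_h(\matdev_h \zeta_h) + \eta_h \zeta_h\, \gradgh \cdot V_h \bigr), \]
and summing over $K(t) \in \mathcal{T}_h(t)$ yields the first formula.

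For the second identity I would proceed identically but additionally track how $\gradgh$ transforms under the pullback. On each element, $\gradgh \eta_h$ transforms via $(D\Phi_t^h)^{-T}$ (in local tangential coordinates on the flat triangle), so the integrand in $a_h$ involves the combined factor $J_h \, (D\Phi_t^h)^{-1} (D\Phi_t^h)^{-T}$. A direct computation of its strong material derivative, using again that $V_h$ is affine on $K(t)$, produces after pushing forward to $K(t)$ exactly the tensor
\[ \mathbf{B}_h(V_h) = (\gradgh \cdot V_h)\,\mathrm{id} - \bigl(\gradgh V_h + (\gradgh V_h)^T\bigr). \]
Applying the product rule to $\gradgh \eta_h \cdot \gradgh \zeta_h$ and summing over elements then delivers the second formula.

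The main (mild) obstacle is the verification of the two Jacobian-derivative formulas, particularly the tensor identity producing $\mathbf{B}_h(V_h)$, since $V_h$ is only globally continuous and the surface $\Gamma_h(t)$ has no regularity across element edges. However, because the bilinear forms never require differentiation across edges (they are element-sum integrals of element-wise quantities), the computation reduces on each $K(t)$ to the classical smooth transport argument on an affine moving triangle, where it is routine. The nodal-basis transport property $\matdev_h \phi_i = 0$ then ensures that when $\eta_h, \zeta_h$ are expanded in the evolving basis, the only $t$-dependence that survives differentiation sits in the coefficients and in the geometry, giving the stated identities.
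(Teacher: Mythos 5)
The paper states Proposition \ref{transport3} without proof, relying on the standard discrete transport theorem from the ESFEM literature (the discrete analogue of \cite{dziuk2013finite}), and your element-wise argument is exactly that standard proof: pull back each flat, affinely evolving triangle to its reference element, use that $V_h$ is affine on each $K(t)$ so that $\matdev_h J_h = J_h\,\gradgh\cdot V_h$ with $J_h$ and $\gradgh\cdot V_h$ constant per element, and differentiate the pulled-back factor $J_h (D\Phi_t^h)^{-1}(D\Phi_t^h)^{-T}$ to produce $\mathbf{B}_h(V_h)$, then sum over elements. Your proposal is correct and coincides with the argument the paper implicitly invokes.
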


\subsubsection{Geometric perturbation estimates and the Ritz projection}
Here we state several results which are crucial to the numerical analysis of surface PDEs.
Firstly, as noted in \cite{elliott2021unified}, one can obtain another material derivative by (inverse) lifting a function onto $\Gamma_h(t)$, differentiating, and lifting back onto $\Gamma(t)$.
\begin{definition}
	For a sufficiently smooth function, $\eta : \mathcal{S}_T \rightarrow \mathbf{R}$, we define the (strong) lifted material derivative as
	\[\matdev_\ell \eta = \left( \matdev_h \eta^{-\ell} \right)^\ell.\]
\end{definition}
This can also be expressed as
\[\matdev_\ell \eta = \Phi_t^\ell \left( \frac{d}{dt} \Phi_{-t}^\ell \eta \right),\]
where $\Phi_{t}^\ell, \Phi_{-t}^\ell$ are the pushforward/pullback respectively associated to the map $\Phi^\ell : \Gamma_0 \rightarrow \Gamma(t)$ defined by
\[\Phi^\ell(x,t) = \Phi^h(x^{-\ell},t)^\ell.\]
This is discussed in detail in \cite{elliott2021unified}.
From this we obtain an alternate version of Proposition \ref{transport2}.
\begin{proposition}
	\label{transport4}
	Let $\eta, \zeta \in C^1_{L^2}$, then
	\begin{align*}
		\frac{d}{dt} m(t; \eta, \zeta) = m(t; \matdev_\ell \eta, \zeta) + m(t; \eta, \matdev_\ell \zeta) + g_\ell (t; \eta, \zeta),
	\end{align*}
	where
	$$g_\ell (t; \eta, \zeta) = \int_{\Gamma(t)} \eta \zeta (\gradg \cdot V_h^\ell).$$
	Similarly, if $\eta, \zeta \in C^1_{H^1}$, then
	\begin{align*}
		\frac{d}{dt} a(t; \eta, \zeta) = a(t; \matdev_\ell, \zeta) + a(t; \eta, \matdev_\ell \zeta) + b_\ell (t; \eta, \zeta),
	\end{align*}
	where
	$$ b_\ell(\eta, \zeta) = \int_{\Gamma(t)} \mathbf{B}(V_h^\ell) \gradg \eta \cdot \gradg \zeta , $$
	and $\mathbf{B}(V_h^\ell)$ is as in Proposition \ref{transport2}.
\end{proposition}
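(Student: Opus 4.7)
The strategy is to recognise Proposition \ref{transport4} as the transport theorem (Proposition \ref{transport2}) applied to the alternative flow $\Phi^\ell : \Gamma_0 \times [0,T] \to \mathbb{R}^3$ in place of $\Phi$. Under the exact triangulation assumption together with $\Gamma_h(t) \subset \mathcal{N}(\Gamma(t))$, the map $\Phi^\ell(\cdot,t)$ is a $C^2$ diffeomorphism from $\Gamma_0$ onto $\Gamma(t)$, and so furnishes a genuine parametrisation of the evolving surface. The lifted material derivative $\matdev_\ell$ is by construction the material derivative attached to this flow, so it plays here the same role that $\matdev$ plays in Proposition \ref{transport2}.

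The first step would be to verify that $\partial_t \Phi^\ell(x,t) = V_h^\ell(\Phi^\ell(x,t), t)$. Differentiating the identity $\Phi^\ell(x,t) = \Phi^h(x^{-\ell},t)^\ell$ via the chain rule, using that $V_h$ is the Lagrange interpolant of $V$ and that the lift at time $t$ is the normal projection onto $\Gamma(t)$, one identifies the instantaneous velocity of the flow $\Phi^\ell$ as $V_h^\ell$. This is the key ingredient that allows the subsequent computations to proceed in direct analogy with the continuous case.

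With this in hand, for the mass form I would pull back to $\Gamma_0$,
\[ m(t;\eta,\zeta) = \int_{\Gamma_0} (\Phi_{-t}^\ell \eta)(\Phi_{-t}^\ell \zeta) J^\ell(t), \]
where $J^\ell(t)$ denotes the Jacobian of $\Phi^\ell(\cdot,t)$. Differentiating under the integral sign, the time derivatives of the pulled-back functions yield the $m(\matdev_\ell \eta, \zeta)$ and $m(\eta, \matdev_\ell \zeta)$ contributions, while the standard geometric identity $\frac{d}{dt} J^\ell = J^\ell \cdot \Phi_{-t}^\ell(\gradg \cdot V_h^\ell)$ (a consequence of $V_h^\ell$ being the velocity of $\Phi^\ell$) produces the $g_\ell$ term after pushing forward. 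The bilinear form $a$ is handled identically: the additional $b_\ell$ term arises from differentiating the pulled-back metric tensor, giving $\mathbf{B}(V_h^\ell)$ in place of $\mathbf{B}(V)$, exactly as in the proof of Proposition \ref{transport2}.

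The principal obstacle is the identification of $V_h^\ell$ as the velocity of $\Phi^\ell$, since the normal projection defining the lift at time $t$ itself depends on $t$ through the evolving surface $\Gamma(t)$, so the chain rule computation must keep track of two moving pieces. Once this is dealt with, the remainder of the argument is a verbatim repetition of the proof of Proposition \ref{transport2} with $V$ and $\Phi$ replaced by $V_h^\ell$ and $\Phi^\ell$, and I would not expect any further complications.
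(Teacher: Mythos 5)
The paper does not present its own proof of Proposition \ref{transport4}; it is stated as a consequence of the framework in the cited reference, so the right question is whether your route would actually establish it.

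Your high-level strategy (interpret $\matdev_\ell$ as the material derivative along the auxiliary flow $\Phi^\ell$ and rerun the transport-theorem argument) is sensible, and you correctly locate the load-bearing step: identifying the velocity of $\Phi^\ell$. But that step, as you have stated it, is not correct, and you leave it unresolved (``once this is dealt with\ldots''). Differentiating $\Phi^\ell(x,t)=p\bigl(\Phi^h(x^{-\ell},t),t\bigr)$ by the chain rule gives
\[
\partial_t\Phi^\ell = (D_xp)\cdot V_h + \partial_t p,
\]
where $p(\cdot,t)$ is the normal projection onto $\Gamma(t)$. This is not the componentwise lift $V_h^\ell$ of $V_h$: the spatial Jacobian $D_xp$ projects tangentially (and rescales by curvature factors), and $\partial_t p$ is nonzero because the target surface $\Gamma(t)$ itself moves. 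One can check on a concrete example (an inscribed $n$-gon tracking a circle of radius $r(t)$ under radial expansion) that the velocity of $\Phi^\ell$ at the lift of a mid-edge point is $\dot r\,\nu$, whereas $V_h^\ell$ there has strictly smaller magnitude; the two agree only at the nodes and differ by $O(h^2)$ in between. So the identity $\partial_t\Phi^\ell = V_h^\ell\circ\Phi^\ell$ does not hold literally, and a chain-rule verification of it cannot succeed as written.

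The clean way to prove the proposition, and the one implicit in \cite{elliott2021unified}, sidesteps the velocity of $\Phi^\ell$ entirely. Pull the mass form back to $\Gamma_h(t)$ via the inverse lift: $m(t;\eta,\zeta) = \int_{\Gamma_h(t)}\eta^{-\ell}\zeta^{-\ell}\,\delta_h$, where $\delta_h$ is the ratio of the surface measures under the projection $p$. Differentiate this using the \emph{discrete} transport theorem (Proposition \ref{transport3}), which involves only $\matdev_h$ and $\gradgh\cdot V_h$ — quantities you already control — then lift back, using $\matdev_\ell\eta = (\matdev_h\eta^{-\ell})^\ell$. The contributions from $\matdev_h\delta_h$ and $\gradgh\cdot V_h$ recombine into the $g_\ell$ term of the proposition; the stiffness form $a$ is handled in the same way via the pulled-back metric. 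This route never asks what the actual velocity field of $\Phi^\ell$ is, which is precisely where your sketch stalls. I would reorganise the argument along these lines rather than try to force the chain-rule identification through.
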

This allows one to define a weak lifted material derivative in the usual way.
We can then related $\matdev_\ell$ and $\matdev$ through the following result.
\begin{lemma}[{\cite[Lemma 9.25]{elliott2021unified}}]
	Let $\eta \in H^1_{H^1}$.
	Then we have
	\begin{align}
		\|\matdev \eta - \matdev_\ell \eta\|_{L^2(\Gamma(t))} \leq C h^{2} \|\eta\|_{H^1(\Gamma(t))},\label{derivativedifference1}
	\end{align}
	and if we have further that $\eta \in H^1_{H^2}$, then
	\begin{align}
		\|\gradg(\matdev \eta - \matdev_\ell \eta)\|_{L^2(\Gamma(t))} \leq C h \|\eta\|_{H^2(\Gamma(t))}.\label{derivativedifference2}
	\end{align}
\end{lemma}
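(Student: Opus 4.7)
The plan is to reduce both estimates to a pointwise identity relating the two material derivatives to the difference between the continuous and discrete velocity fields, and then to invoke standard Lagrange interpolation estimates for $V$.

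The key observation is that both flows $\Phi(\cdot,t)$ and $\Phi^\ell(\cdot,t)$ map $\Gamma_0$ onto the same surface $\Gamma(t)$. Consequently, although $V$ and $V_h^\ell$ differ in their tangential components, their normal components on $\Gamma(t)$ must agree, i.e. $V \cdot \boldsymbol{\nu} = V_h^\ell \cdot \boldsymbol{\nu}$, since both vector fields must reproduce the (geometric) normal velocity of $\Gamma(t)$. I would use this to establish the pointwise identity
\[\matdev \eta - \matdev_\ell \eta = (V - V_h^\ell) \cdot \gradg \eta.\]
For smooth $\eta$ with an ambient extension $\tilde\eta$, the chain rule gives $\matdev \eta = \partial_t \tilde\eta + V \cdot \nabla \tilde\eta$ and $\matdev_\ell \eta = \partial_t \tilde\eta + V_h^\ell \cdot \nabla \tilde\eta$; decomposing $\nabla \tilde\eta = \gradg \eta + (\partial_{\boldsymbol{\nu}} \tilde\eta)\boldsymbol{\nu}$ and using the cancellation of the normal components of $V - V_h^\ell$ yields the identity. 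One then extends to $\eta \in H^1_{H^1}$ by density, which is legitimate because the right-hand side involves only intrinsic surface data.

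Next I would collect interpolation estimates for the velocity. Since $V_h$ is the piecewise linear Lagrange interpolant of $V \in C^1([0,T];C^2(\mathbb{R}^3))$ on the exact, uniformly quasi-uniform triangulation $\Gamma_h(t)$, and the lift is uniformly $H^1$-stable by \eqref{lift1}--\eqref{lift2}, standard finite element interpolation theory combined with lift estimates yields
\[\|V - V_h^\ell\|_{L^\infty(\Gamma(t))} \leq C h^2, \qquad \|\gradg(V - V_h^\ell)\|_{L^\infty(\Gamma(t))} \leq C h,\]
uniformly in $t \in [0,T]$. With these in hand, the first bound follows at once from Cauchy--Schwarz:
\[\|\matdev \eta - \matdev_\ell \eta\|_{L^2(\Gamma(t))} \leq \|V - V_h^\ell\|_{L^\infty(\Gamma(t))} \|\gradg \eta\|_{L^2(\Gamma(t))} \leq C h^2 \|\eta\|_{H^1(\Gamma(t))}.\]
For the second bound, I would differentiate the identity tangentially and apply the product rule,
\[\gradg(\matdev \eta - \matdev_\ell \eta) = \gradg(V - V_h^\ell) \cdot \gradg \eta + (V - V_h^\ell) \cdot \text{Hess}_\Gamma \eta,\]
(with a lower-order curvature contribution that is harmless), and then estimate
\[\|\gradg(\matdev \eta - \matdev_\ell \eta)\|_{L^2(\Gamma(t))} \leq C h \|\eta\|_{H^1(\Gamma(t))} + C h^2 \|\eta\|_{H^2(\Gamma(t))} \leq C h \|\eta\|_{H^2(\Gamma(t))}.\]

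The main obstacle is making the chain-rule identity rigorous at the regularity $\eta \in H^1_{H^1}$, since a priori $\matdev \eta$ and $\matdev_\ell \eta$ are only weak derivatives defined by distinct duality pairings. I would handle this by proving the identity first for $\eta \in C^1_{H^2}$ via explicit computation with the flow maps $\Phi$ and $\Phi^\ell$, and then extending to $H^1_{H^1}$ by a density argument using the continuity of the bilinear forms in Propositions \ref{transport2} and \ref{transport4} together with the bound $\|V - V_h^\ell\|_{L^\infty}\leq Ch^2$ controlling the right-hand side in the appropriate norm.
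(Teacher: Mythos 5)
The paper itself offers no proof of this lemma: it is imported verbatim from \cite[Lemma 9.25]{elliott2021unified}, and the proof there follows exactly the skeleton you propose, namely the identity $\matdev \eta - \matdev_\ell \eta = (V - v_h)\cdot \gradg \eta$, where $v_h$ is the velocity of the lifted flow $\Phi^\ell$, combined with the velocity-difference estimates $\|V - v_h\|_{L^\infty(\Gamma(t))} \leq Ch^2$ and $\|\gradg (V - v_h)\|_{L^\infty(\Gamma(t))} \leq Ch$; your Cauchy--Schwarz and product-rule steps, and the density argument to reach $H^1_{H^1}$ and $H^1_{H^2}$, are then fine.

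The genuine gap is in how you justify the velocity estimates. You identify the velocity entering $\matdev_\ell$ with $V_h^\ell$, the lift of the Lagrange interpolant $V_h = I_h V^{-\ell}$, and then invoke only interpolation theory plus the stability of the lift. But these are two different vector fields. The lifted flow is $\Phi^\ell(\cdot,t) = p(\Phi^h(\cdot,t),t)$, so a lifted point moves with velocity $\frac{d}{dt}\, p(X(t),t) = \partial_t p + (\nabla p)V_h$, and since $p = \mathrm{id} - d\,\nabla d$ this differs from the naive lift $V_h^\ell$ by terms involving the signed distance $d$, its time derivative and second derivatives (the Weingarten map). Your claim that $V\cdot\boldsymbol{\nu} = V_h^\ell\cdot\boldsymbol{\nu}$ exactly is only true for the lifted-flow velocity (because $\Phi^\ell$ stays on $\Gamma(t)$); for the lift of $V_h$ the normal components agree only up to $O(h^2)$, since $V_h$ reproduces the normal motion of $\Gamma_h(t)$ with respect to $\boldsymbol{\nu}_h$, not that of $\Gamma(t)$ with respect to $\boldsymbol{\nu}$. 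So either your pointwise identity is exact but the velocity in it is not an interpolant, in which case pure interpolation estimates do not apply to it, or the velocity is the interpolant, in which case the identity picks up non-tangential remainder terms. Closing this loop --- i.e.\ showing $|v_h - V_h^\ell| \leq Ch^2$ and $|\gradg(v_h - V_h^\ell)| \leq Ch$ --- requires the geometric estimates $|d| \leq Ch^2$, $|\boldsymbol{\nu} - \boldsymbol{\nu}_h| \leq Ch$, and uniform bounds on $\partial_t d$ and $\nabla^2 d$, combined with the interpolation error for $V - V_h$; this geometric bookkeeping is precisely the substantive content of the cited proof (see \cite{elliott2021unified,dziuk2013finite}) and is what your argument omits. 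The final orders $h^2$ and $h$ are unaffected, but as written the proof conflates the two velocities at the very point where the work lies.
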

We now state some results which allow us to compare bilinear forms on $\Gamma(t)$ and $\Gamma_h(t)$.
The following results are proven in \cite{elliott2021unified}.
\begin{lemma}
	Let $\eta_h, \zeta_h \in H^1(\Gamma_h(t))$, and $h$ be sufficiently small.
	Then there exists a constant $C > 0$, independent of $t,h$, such that
	\begin{align}
		\left| m \left(t; \eta_h^{\ell}, \zeta_h^{\ell} \right) - m_h \left(t; \eta_h, \zeta_h \right) \right| &\leq C h^{2} \| \eta_h \|_{L^2(\Gamma_h(t))} \| \zeta_h \|_{L^2(\Gamma_h(t))},\label{perturb1}\\
		\left| a \left(t; \eta_h^{\ell}, \zeta_h^{\ell} \right) - a_h \left(t; \eta_h, \zeta_h \right) \right| &\leq C h^{2} \| \gradgh \eta_h \|_{L^2(\Gamma_h(t))} \| \gradgh \zeta_h \|_{L^2(\Gamma_h(t))},\label{perturb2}\\
		\left| g_\ell \left(t; \eta_h^{\ell}, \zeta_h^{\ell} \right) - g_h \left(t; \eta_h, \zeta_h \right) \right| &\leq C h^{2} \| \eta_h \|_{L^2(\Gamma_h(t))} \| \zeta_h \|_{L^2(\Gamma_h(t))},\label{perturb3}\\
		\left| b_\ell \left(t; \eta_h^{\ell}, \zeta_h^{\ell} \right) - b_h \left(t; \eta_h, \zeta_h \right) \right| &\leq C h^{2} \| \gradgh \eta_h \|_{L^2(\Gamma_h(t))} \| \gradgh \zeta_h \|_{L^2(\Gamma_h(t))},\label{perturb4},\\
		\left| g \left(t; \eta_h^{\ell}, \zeta_h^{\ell} \right) - g_\ell \left(t; \eta_h^{\ell}, \zeta_h^{\ell} \right) \right| &\leq C h \| \eta_h \|_{H^1(\Gamma_h(t))} \| \zeta_h \|_{H^1(\Gamma_h(t))},\label{perturb5}\\
		\left| b \left(t; \eta_h^{\ell}, \zeta_h^{\ell} \right) - b_\ell \left(t; \eta_h^{\ell}, \zeta_h^{\ell} \right) \right| &\leq C h \| \eta_h \|_{H^1(\Gamma_h(t))} \| \zeta_h \|_{H^1(\Gamma_h(t))},\label{perturb6}.
	\end{align}
\end{lemma}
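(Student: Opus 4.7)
The plan is to reduce all six estimates to standard geometric perturbation results from \cite{elliott2021unified} (and related, e.g. Demlow, Dziuk) by pulling back integrals over $\Gamma(t)$ to $\Gamma_h(t)$ via the normal lift $p$ and exploiting quantitative control on (a) the area element and (b) a matrix relating lifted and discrete tangential gradients, (c) the interpolation error for the velocity field.

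For the first four inequalities I would proceed as follows. Recall that, writing $\mu_h$ for the area element of the lift, one has $|1-\mu_h|_{L^\infty(\Gamma_h(t))}\leq C h^2$, uniformly in $t$. For \eqref{perturb1}, a change of variables gives
\[ m(\eta_h^\ell,\zeta_h^\ell)-m_h(\eta_h,\zeta_h)=\int_{\Gamma_h(t)} \eta_h\zeta_h(\mu_h-1), \]
and Cauchy--Schwarz with the bound on $\mu_h-1$ delivers the claim. For \eqref{perturb2}, one uses the identity $\gradg \eta_h^{\ell}\circ p = R_h \gradgh \eta_h$ for some matrix $R_h=R_h(x,t)$ satisfying $|R_h^T R_h \mu_h - \mathrm{id}|\leq C h^2$ pointwise (in operator norm), which follows from expressing $p$ in Fermi coordinates \eqref{normalprojection} via $d(x)$ and $H$; writing
\[ a(\eta_h^\ell,\zeta_h^\ell)-a_h(\eta_h,\zeta_h)=\int_{\Gamma_h(t)} \bigl(R_h^T R_h\mu_h-\mathrm{id}\bigr)\gradgh \eta_h\cdot \gradgh \zeta_h \]
then gives the estimate. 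Estimates \eqref{perturb3} and \eqref{perturb4} are handled identically to \eqref{perturb1} and \eqref{perturb2}, with the observation that $g_\ell$ and $b_\ell$ already use the \emph{lifted} discrete velocity $V_h^\ell$, so $\gradg\cdot V_h^\ell\circ p$ and $\gradg V_h^\ell\circ p$ transform to $\gradgh \cdot V_h$ and $\gradgh V_h$ up to the same matrix factors controlled at order $h^2$; the $L^\infty$ bound on $V_h$, which is uniform in $h,t$ thanks to Lagrange interpolation of the smooth $V$, absorbs into the constant.

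For \eqref{perturb5} and \eqref{perturb6}, both sides are integrals over $\Gamma(t)$, so there is no geometric change of variable; the only difference lies in replacing $V$ by $V_h^\ell$. The key input is the interpolation estimate
\[ \|V-V_h^\ell\|_{L^\infty(\Gamma(t))}\leq C h^2,\qquad \|\gradg(V-V_h^\ell)\|_{L^\infty(\Gamma(t))}\leq C h, \]
which is a consequence of $V\in C^1([0,T];C^2(\mathbb{R}^3))$ and standard Lagrange interpolation on the quasi-uniform mesh. Writing
\[ g(\eta_h^\ell,\zeta_h^\ell)-g_\ell(\eta_h^\ell,\zeta_h^\ell)=\int_{\Gamma(t)} \eta_h^\ell\zeta_h^\ell\,\gradg\cdot(V-V_h^\ell), \]
and bounding the last factor in $L^\infty$ by $Ch$, then using \eqref{lift1} to pass back to norms on $\Gamma_h(t)$ (and trivially estimating the $L^2$ norm by the $H^1$ norm) yields \eqref{perturb5}. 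The proof of \eqref{perturb6} is analogous, the integrand involving $\gradg(V-V_h^\ell)$ as a factor multiplying $\gradg\eta_h^\ell\cdot\gradg\zeta_h^\ell$; the $O(h)$ bound on $\gradg(V-V_h^\ell)$ combined with \eqref{lift2} closes the estimate.

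The main obstacle is really bookkeeping rather than analysis: correctly identifying the matrix $R_h$ and the scalar $\mu_h$, and showing that the combined factor $R_h^T R_h \mu_h - \mathrm{id}$ is $O(h^2)$ despite $R_h - P$ itself being only $O(h)$ (the cancellation between the area and metric distortions is what recovers the quadratic rate and is the reason \eqref{perturb1}--\eqref{perturb4} are $O(h^2)$ while \eqref{perturb5}--\eqref{perturb6} are only $O(h)$). Since this is carried out in detail in \cite{elliott2021unified}, the argument reduces to invoking those pointwise geometric estimates and performing the change-of-variable computations above.
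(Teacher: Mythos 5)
Your proposal is correct and follows essentially the same route as the paper, which gives no proof of its own but cites \cite{elliott2021unified}, where exactly this argument — pulling back to $\Gamma_h(t)$, bounding the area-element distortion $|1-\mu_h|\leq Ch^2$ and the combined gradient-transformation matrix at order $h^2$ (the cancellation you highlight), and using the $O(h)$/$O(h^2)$ Lagrange interpolation bounds for $V-V_h^\ell$ for the last two estimates — is carried out in detail. No gaps beyond the bookkeeping you already defer to that reference; indeed your argument for \eqref{perturb5} even yields the slightly stronger $L^2$-norm bound.
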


Next we introduce a projection onto the shape functions which is useful in the error analysis for surface finite elements.
\begin{definition}
	For $z \in H^1(\Gamma(t))$ we define the Ritz projection\footnote{Some authors define the Ritz projection differently, see \cite[Remark 3.4]{elliott2015evolving}.}, $\Pi_h z \in S_h(t)$, to be the unique solution of
	\begin{align}
            \begin{aligned}
                a_h(\Pi_h z , \phi_h) &= a(z , \phi_h^{\ell}),\\
        \int_{\Gamma_h(t)} \Pi_h z &= \int_{\Gamma(t)} z,
            \end{aligned}
        \label{ritz}
	\end{align}
	for all $\phi_h \in S_h(t)$.
	We denote the lift of the Ritz projection by
	$\pi_h z = (\Pi_h z)^{\ell}.$
\end{definition}
One has the following bounds for the Ritz projection, for which we refer the reader to \cite{elliott2015evolving,elliott2021unified}

\begin{lemma}
	For $z \in H^1(\Gamma(t))$ we have the following,
	\begin{gather}
		\label{ritz1}
		\| \pi_h z \|_{H^1(\Gamma(t))} \leq C \| z \|_{H^1(\Gamma(t))},\\
		\label{ritz2}
		\| \pi_h z - z \|_{L^2(\Gamma(t))} \leq C h \|z\|_{H^1(\Gamma(t))}.
	\end{gather}
	Moreover, if $z \in H^{2}(\Gamma(t))$ then
	\begin{gather}
		\|\Pi_h z\|_{L^\infty(\Gamma_h(t))} = \|\pi_h z\|_{L^\infty(\Gamma(t))} \leq C \|z\|_{H^2(\Gamma(t))}, \label{ritz4}\\
		\label{ritz3}
		\| \pi_h z - z \|_{L^2(\Gamma(t))} + h \| \gradg (\pi_h z - z) \|_{L^2(\Gamma(t))} \leq C h^{2} \|z \|_{H^{2}(\Gamma(t))}.
	\end{gather}
\end{lemma}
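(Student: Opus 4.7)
The plan is to treat the four bounds in sequence. In all four, the key tools are the lift equivalences \eqref{lift1}--\eqref{lift2}, the geometric perturbation estimates \eqref{perturb1}--\eqref{perturb2}, a Poincar\'e inequality on $\Gamma(t)$ with mean-value correction, and standard finite element interpolation theory on triangulated surfaces.

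For the $H^1$ stability \eqref{ritz1}, I would test \eqref{ritz} with $\phi_h = \Pi_h z$. Then $\|\gradgh \Pi_h z\|_{L^2(\Gamma_h(t))}^2 = a(z, \pi_h z) \leq \|\gradg z\|_{L^2(\Gamma(t))} \|\gradg \pi_h z\|_{L^2(\Gamma(t))}$, and \eqref{lift2} turns this into $\|\gradg \pi_h z\|_{L^2(\Gamma(t))} \leq C\|\gradg z\|_{L^2(\Gamma(t))}$. For the $L^2$ part of the $H^1$ norm, the mean-value constraint in \eqref{ritz}, combined with \eqref{perturb1} applied to the pairing of $\Pi_h z$ with the constant function, controls $\left|\int_{\Gamma(t)} \pi_h z\right|$ by $\|z\|_{L^1(\Gamma(t))}$ up to an $O(h^2)$ perturbation; a Poincar\'e inequality with mean-value lower-order term then closes the estimate.

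For \eqref{ritz3}, a C\'ea-type argument together with Lagrange interpolation delivers the $H^1$ bound: the Ritz problem \eqref{ritz} implies an \emph{approximate} Galerkin orthogonality for $\Pi_h z - z^{-\ell}$, whose consistency error is of order $h^2$ by \eqref{perturb2}, and best approximation by $I_h z$ (the nodal interpolant on $\Gamma_h(t)$) supplies the factor of $h$. The $L^2$ bound comes from an Aubin--Nitsche duality argument: solve $-\lapg \psi = \pi_h z - z$ on $\Gamma(t)$ with zero mean, invoke $H^2$ regularity to bound $\|\psi\|_{H^2(\Gamma(t))}$ by $\|\pi_h z - z\|_{L^2(\Gamma(t))}$, and then test this dual problem with $\pi_h z - z$ while inserting $\pi_h \psi$ to exploit the (approximate) Galerkin orthogonality of the Ritz projection. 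The same duality argument, paired with a Cl\'ement-type quasi-interpolation of order one for $H^1$ data, gives \eqref{ritz2}.

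Finally for \eqref{ritz4} I would split $\pi_h z = (I_h z)^\ell + (\pi_h z - (I_h z)^\ell)$. The first term is controlled by $\|z\|_{L^\infty(\Gamma(t))} \leq C\|z\|_{H^2(\Gamma(t))}$ via the Sobolev embedding $H^2 \hookrightarrow L^\infty$ on a two-dimensional manifold, since Lagrange interpolation does not increase the $L^\infty$ norm. The second term lies in the lift of the finite element space, so an inverse estimate together with \eqref{ritz3} and the interpolation bound $\|z - (I_h z)^\ell\|_{L^2(\Gamma(t))} \leq Ch^2 \|z\|_{H^2(\Gamma(t))}$ yields $\|\pi_h z - (I_h z)^\ell\|_{L^\infty(\Gamma(t))} \leq Ch \|z\|_{H^2(\Gamma(t))}$, completing the estimate without a logarithmic factor. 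The main technical obstacle throughout is the careful tracking of geometric perturbations: neither the Galerkin orthogonality nor the preservation of the mean value is exact, and each must be accounted for at order $h^2$ in a way that does not degrade the final convergence rates, in particular the interplay between the mean-value constraint imposed on $\Gamma_h(t)$ and the Poincar\'e estimate used on $\Gamma(t)$.
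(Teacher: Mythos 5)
The paper does not prove this lemma at all --- it is quoted with a citation to \cite{elliott2015evolving,elliott2021unified} --- so the only comparison available is with the standard arguments in those references, and your sketch reproduces exactly that route: testing \eqref{ritz} with $\Pi_h z$ plus the lift equivalences for \eqref{ritz1}, approximate Galerkin orthogonality (consistency error $O(h^2)$ via \eqref{perturb2}) combined with interpolation for the $H^1$ part of \eqref{ritz3}, Aubin--Nitsche duality for the $L^2$ bounds, and the split $\pi_h z = (I_h z)^\ell + (\pi_h z - (I_h z)^\ell)$ with the two-dimensional inverse estimate $\|\cdot\|_{L^\infty} \leq C h^{-1}\|\cdot\|_{L^2}$ for \eqref{ritz4}, which indeed gives the bound with no logarithm. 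The one point to tighten is the dual problem: $-\lapg \psi = \pi_h z - z$ is not solvable on a closed surface unless the datum has zero mean, and $\int_{\Gamma(t)}(\pi_h z - z)$ is only $O(h^2)$ small (from the mean-value constraint on $\Gamma_h(t)$ versus integration on $\Gamma(t)$), so one must pose the dual problem for $\pi_h z - z - \mval{(\pi_h z - z)}{\Gamma(t)}$ and treat the mean separately; you flag this interplay at the end, and it only contributes a harmless $O(h^2)$ term, so the argument goes through.
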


We also have the following lemma regarding the time derivative of the Ritz projection.

\begin{lemma}
	For $z : \mathcal{S}_T \rightarrow \mathbb{R}$ with $z, \matdev{z} \in H^2(\Gamma(t))$ then $\matdev_h \Pi_h z \in S_h(t)$ exists and
	\begin{equation}
		\label{ritzddtnorm}
		\| \matdev_h \Pi_h z\|_{H^1(\Gamma_h(t))} \leq C\left( \|z\|_{H^2(\Gamma(t))} + \|\matdev z\|_{H^2(\Gamma(t))} \right),
	\end{equation}
	and
	\begin{equation}
		\label{ritzddt}
		\left \|  \matdev_\ell(\pi_h z - z) \right\|_{L^2(\Gamma(t))} + h \left \| \gradg  \matdev_\ell(\pi_h z - z)\right\|_{L^2(\Gamma(t))} \leq C h^{2} \left( \|z\|_{H^{2}(\Gamma(t))} + \left\| \matdev z \right\|_{H^{2}(\Gamma(t))} \right),
	\end{equation}
	where $C$ is a constant independent of $h,t$.
\end{lemma}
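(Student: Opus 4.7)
The plan is to establish existence of $\matdev_h \Pi_h z$ by differentiating \eqref{ritz} in time, obtain the $H^1$ bound \eqref{ritzddtnorm} via a direct energy estimate on the resulting identity, and then prove \eqref{ritzddt} by decomposing the error into the Ritz projection error of $\matdev z$ plus a discrete remainder whose smallness in $L^2$ requires an Aubin--Nitsche duality argument.

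For existence, writing $\Pi_h z = \sum_i \alpha_i(t) \phi_i$ with the nodal basis functions $\phi_i$ satisfying $\matdev_h \phi_i = 0$, the relations \eqref{ritz} form a linear system for the $\alpha_i$ whose coefficients depend smoothly on $t$, so that $\matdev_h \Pi_h z = \sum_i \dot\alpha_i(t) \phi_i \in S_h(t)$. Differentiating \eqref{ritz} via Propositions \ref{transport3} and \ref{transport4}, and using that $\matdev_\ell \phi_h^\ell = (\matdev_h \phi_h)^\ell = 0$ for test functions with time-constant coefficients, gives
\begin{gather*}
a_h(\matdev_h \Pi_h z, \phi_h) = a(\matdev_\ell z, \phi_h^\ell) + b_\ell(z, \phi_h^\ell) - b_h(\Pi_h z, \phi_h), \\
\int_{\Gamma_h(t)} \matdev_h \Pi_h z = \int_{\Gamma(t)} \matdev_\ell z + g_\ell(z,1) - g_h(\Pi_h z, 1).
\end{gather*}
Testing with $\phi_h = \matdev_h \Pi_h z - \mval{\matdev_h \Pi_h z}{\Gamma_h(t)}$, bounding each term by Cauchy--Schwarz using \eqref{ritz1}, \eqref{derivativedifference2}, and boundedness of $V$, $V_h$, and applying the Poincar\'e inequality together with the mean-value identity, yields \eqref{ritzddtnorm}.

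For \eqref{ritzddt} I decompose
\[\matdev_\ell(\pi_h z - z) = \sigma_h^\ell + (\pi_h \matdev z - \matdev z) + (\matdev z - \matdev_\ell z),\]
where $\sigma_h := \matdev_h \Pi_h z - \Pi_h \matdev z \in S_h(t)$. The second and third summands are $O(h^2)$ in $L^2$ and $O(h)$ in $H^1$ by \eqref{ritz3}, \eqref{derivativedifference1} and \eqref{derivativedifference2}, and contribute the $\|\matdev z\|_{H^2}$ and $\|z\|_{H^2}$ terms in the target bound. Subtracting the defining equations of $\Pi_h \matdev z$ from those of $\matdev_h \Pi_h z$ gives
\[a_h(\sigma_h, \phi_h) = a(\matdev_\ell z - \matdev z, \phi_h^\ell) + b_\ell(z, \phi_h^\ell) - b_h(\Pi_h z, \phi_h),\]
together with an analogous mean-value identity. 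Splitting $b_\ell(z, \phi_h^\ell) - b_h(\Pi_h z, \phi_h) = b_\ell(z - \pi_h z, \phi_h^\ell) + (b_\ell(\pi_h z, \phi_h^\ell) - b_h(\Pi_h z, \phi_h))$ and invoking \eqref{derivativedifference2}, \eqref{ritz3} and \eqref{perturb4} delivers $\|\gradgh \sigma_h\|_{L^2(\Gamma_h(t))} \leq Ch\|z\|_{H^2(\Gamma(t))}$ via a Poincar\'e-based energy estimate, which supplies the gradient contribution in \eqref{ritzddt}.

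The main obstacle is sharpening the $L^2$ bound on $\sigma_h$ from the direct $O(h)$ to $O(h^2)$, which I achieve by an Aubin--Nitsche duality argument. Let $\psi \in H^2(\Gamma(t))$ solve $-\lapg \psi = \sigma_h^\ell - \mval{\sigma_h^\ell}{\Gamma(t)}$ with $\mval{\psi}{\Gamma(t)} = 0$, so that $\|\psi\|_{H^2(\Gamma(t))} \leq C\|\sigma_h^\ell\|_{L^2(\Gamma(t))}$ by elliptic regularity. Then $\|\sigma_h^\ell - \mval{\sigma_h^\ell}{\Gamma(t)}\|_{L^2}^2 = a(\sigma_h^\ell, \psi) = a_h(\sigma_h, \Pi_h \psi) + O(h^2 \|z\|_{H^2}\|\psi\|_{H^2})$ after replacing $\psi$ by $\pi_h \psi$ via \eqref{ritz3} and using the geometric perturbation \eqref{perturb2}. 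Each of the three terms in $a_h(\sigma_h, \Pi_h \psi)$ is then bounded by $Ch^2\|z\|_{H^2}\|\psi\|_{H^2}$: the first by integrating by parts, $a(\matdev_\ell z - \matdev z, \psi) = -\int_{\Gamma(t)} (\matdev_\ell z - \matdev z)\lapg \psi$, combined with \eqref{derivativedifference1}; the second and third by the splitting above, a further integration by parts on $b_\ell(z - \pi_h z, \psi)$ (legitimate after writing $V_h^\ell = V + (V_h^\ell - V)$ to exploit the $C^2$ regularity of $V$), \eqref{ritz3} and \eqref{perturb4}. A parallel computation using \eqref{perturb3} and \eqref{derivativedifference1} bounds $|\mval{\sigma_h}{\Gamma_h(t)}|$ by $Ch^2\|z\|_{H^2}$ directly from the mean-value identity. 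Absorbing $\|\sigma_h^\ell\|_{L^2}$ into the left-hand side completes the proof of \eqref{ritzddt}.
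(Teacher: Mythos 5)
The paper states this lemma without proof, deferring to \cite{elliott2015evolving,elliott2021unified}, so there is no internal proof to compare against. Your argument is correct and follows the standard line of reasoning for this family of results: differentiate the defining relations of the Ritz projection in time via the transport formulae (using that $\matdev_h\phi_h = 0 \Rightarrow \matdev_\ell\phi_h^\ell = 0$), extract the $H^1$ bound from the resulting energy identity plus the differentiated mean-value constraint and Poincar\'e, and upgrade the $L^2$ bound on the genuinely discrete remainder $\sigma_h := \matdev_h\Pi_h z - \Pi_h\matdev z$ by Aubin--Nitsche duality. The decomposition $\matdev_\ell(\pi_h z - z) = \sigma_h^\ell + (\pi_h\matdev z - \matdev z) + (\matdev z - \matdev_\ell z)$ is exactly the right split, with the middle term covered by \eqref{ritz3} and the last by \eqref{derivativedifference1}--\eqref{derivativedifference2}.

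Two small places where you compress but the argument is sound: (i) in the duality step you write $b_\ell(z - \pi_h z, \psi)$, which implicitly requires first replacing $\pi_h\psi$ by $\psi$ in $b_\ell(z - \pi_h z, \pi_h\psi)$ and absorbing the discrepancy $b_\ell(z-\pi_h z,\pi_h\psi - \psi) = O(h\cdot h)$ via \eqref{ritz3} -- this should be stated; (ii) when you integrate $\int_{\Gamma(t)}\mathbf B(V)\gradg(z-\pi_h z)\cdot\gradg\psi$ by parts, the surface divergence theorem contributes a curvature term $\int u\,H\,(\mathbf F\cdot\boldsymbol\nu)$, so the correct second-order bound is $\|z-\pi_h z\|_{L^2}\bigl(\|\gradg\cdot(\mathbf B(V)\gradg\psi)\|_{L^2} + \|H\|_{L^\infty}\|\mathbf B(V)\gradg\psi\|_{L^2}\bigr)$; both factors are controlled by $\|V\|_{C^2}\|\psi\|_{H^2}$ since $\Gamma(t)$ is $C^2$, so the conclusion $O(h^2\|z\|_{H^2}\|\psi\|_{H^2})$ still holds, but the curvature term should not be silently dropped. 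With these clarifications the proof is complete.
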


\subsubsection{Interpolation and numerical integration}
\begin{definition}
	We denote the linear nodal basis functions, on $\Gamma_{h}(t)$, by $\phi_i$.
	Then the (Lagrange) interpolation operator $I_h : C(\Gamma_h(t)) \rightarrow S_h(t)$ is defined by
	$$ I_h z_h (t;x) = \sum_{i=1}^{N_h} z_h(t;x_i) \phi_i(t;x) ,$$
	where $z_h \in C(\Gamma_h(t))$.
	Similarly, we define the lifted interpolation operator, $I_h^{\ell} : C(\Gamma(t)) \rightarrow S_h^{\ell}(t)$, by
	$$ I_h^{\ell} z (t;x) = \sum_{i=1}^{N_h} z(t;x_i) \phi_i^{\ell}(t;x), $$
	where $z \in C(\Gamma(t))$.
	We will omit the $t$ argument as usual.
\end{definition}

In \cite{elliott2021unified} the following result is shown for the lifted interpolation operator, where we note that as $H^2(\Gamma(t)) \hookrightarrow C^0(\Gamma(t))$ the interpolant of a $H^2$ function is well-defined.
\begin{lemma}
	The lifted interpolation operator defined above satisfies, for $z \in H^2(\Gamma(t))$
	\begin{align}
		\label{interpolation}
		\left\| z - I_h^{\ell}z \right\|_{L^2(\Gamma(t))} + h \left\| \gradg(z - I_h^{\ell}z) \right\|_{L^2(\Gamma(t))} \leq C h^2 \| z \|_{H^2(\Gamma(t))},
	\end{align}
	and more generally for $z \in H^{k,p}(\Gamma(t))\cap C(\Gamma(t))$
	\begin{align}
		\left\| z - I_h^{\ell}z \right\|_{L^q(\Gamma(t))} + h\left\|\gradg (z - I_h^{\ell}z) \right\|_{L^q(\Gamma(t))} \leq C h^{k + \frac{2}{q} - \frac{2}{p}} \|z\|_{H^{k,p}(\Gamma(t))}, \label{interpolation2}
	\end{align}
	where $C$ is a constant independent of $t, h$, and $k = 1,2$, $p,q \in [1,\infty]$.
\end{lemma}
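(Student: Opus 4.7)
The result is a classical piecewise-polynomial interpolation estimate adapted to the moving-surface setting, and the plan is to reduce to the standard reference-element analysis on $\Gamma_h(t)$ and then transfer to $\Gamma(t)$ via the lift.

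First I would pass to $\Gamma_h(t)$. Since the nodes $x_i$ lie on $\Gamma(t)$ as well as on $\Gamma_h(t)$, we have $(I_h^\ell z)^{-\ell} = I_h (z^{-\ell})$, where $I_h$ is the discrete Lagrange interpolant into $S_h(t)$. Combining this identity with the lift norm equivalences \eqref{lift1}--\eqref{lift2}, and observing that because the normal projection $p$ is a $C^2$ diffeomorphism $\Gamma_h(t) \to \Gamma(t)$ with Jacobian $1+O(h^2)$ one has an analogous equivalence $\|z\|_{H^{k,p}(\Gamma(t))} \sim \|z^{-\ell}\|_{H^{k,p}(\Gamma_h(t))}$, it suffices to show
\[
\|w_h - I_h w_h\|_{L^q(\Gamma_h(t))} + h\,\|\gradgh(w_h - I_h w_h)\|_{L^q(\Gamma_h(t))} \leq C h^{k+\frac{2}{q}-\frac{2}{p}}\,\|w_h\|_{H^{k,p}(\Gamma_h(t))}
\]
for $w_h = z^{-\ell}$.

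Next I would argue element by element. On each flat triangle $K(t) \in \mathcal{T}_h(t)$, the restriction $(I_h w_h)|_K$ is just the affine Lagrange interpolant of $w_h|_K$. Pulling back via an affine map $F_K : \hat{K} \to K$ from a fixed reference triangle and applying the Bramble--Hilbert lemma yields the interpolation error estimate on $\hat K$ in terms of $|\hat w|_{H^{k,p}(\hat K)}$. The standard scaling arguments then give that $L^q$ norms pick up a factor $h_K^{2/q}$, the gradient an additional $h_K^{-1}$, and the $H^{k,p}$ seminorm on $\hat K$ corresponds to $h_K^{k-2/p}|w_h|_{H^{k,p}(K)}$. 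Combining these produces a local bound with prefactor $h_K^{k+2/q-2/p}$. Summing over $K \in \mathcal{T}_h(t)$ and using quasi-uniformity ($h_K \sim h$) together with an $\ell^q$--$\ell^p$ inequality (which is trivial for $p \geq q$ and follows from H\"older and $|\Gamma_h(t)|$ bounded otherwise) yields the global estimate.

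The main obstacle is ensuring that all of the constants are uniform in $t$, rather than merely in $h$ for a fixed surface. This uniformity is not entirely automatic, but it follows because the evolving triangulation is assumed uniformly quasi-uniform, so the shape-regularity of every $K(t)$ is bounded below by a constant independent of $t$; the Bramble--Hilbert and scaling constants on the reference element are therefore $t$-independent. The only other $t$-dependence enters through the Jacobian of $p$ and the equivalence between intrinsic norms on $\Gamma(t)$ and pulled-back norms on $\Gamma_h(t)$, both of which are controlled uniformly on $[0,T]$ by the $C^2$ smoothness of the flow $\Phi$ together with $\Gamma_h(t) \subset \mathcal{N}(\Gamma(t))$. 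Once this uniformity is secured, the estimate follows by the combination above.
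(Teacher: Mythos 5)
The paper does not actually prove this lemma: it is quoted from \cite{elliott2021unified}. Your outline --- passing to $\Gamma_h(t)$ via the identity $(I_h^{\ell}z)^{-\ell} = I_h(z^{-\ell})$ (valid because the nodes lie on $\Gamma(t)$ and are fixed by $p$), elementwise affine scaling plus Bramble--Hilbert, uniform quasi-uniformity to make the reference-element constants independent of $t$, and the lift/norm equivalences to return to $\Gamma(t)$ --- is exactly the standard route such a proof takes, and it is sound in the regime $q \geq p$, which covers every use of \eqref{interpolation} and \eqref{interpolation2} in this paper.

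One step is stated backwards, and it matters for the full generality claimed. After the local estimate $\|z - I_K z\|_{L^q(K)} \leq C h_K^{k+2/q-2/p} |z|_{H^{k,p}(K)}$, the global bound needs $\bigl(\sum_K |z|_{H^{k,p}(K)}^q\bigr)^{1/q} \leq \bigl(\sum_K |z|_{H^{k,p}(K)}^p\bigr)^{1/p}$, i.e. the embedding $\ell^p \hookrightarrow \ell^q$, which is the ``trivial'' case precisely when $q \geq p$, not $p \geq q$ as you wrote. In the opposite regime $q < p$, your proposed fix (H\"older together with $|\Gamma_h(t)| \leq C$) does not give the stated exponent: H\"older over the $N \sim h^{-2}$ elements costs a factor $N^{1/q-1/p} = h^{-(2/q-2/p)}$, which exactly cancels the extra $h^{2/q-2/p}$ gained locally, leaving only $O(h^{k})$ (and $O(h^{k-1})$ for the gradient). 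This is not a repairable defect of your argument: the global rate $h^{k+2/q-2/p}$ with $q<p$ is false in general (interpolate a fixed smooth function with $p=\infty$, $k=2$; the error is of size $h^{2}$ in every $L^q$ norm, not $h^{2+2/q}$), so the sharp mixed-exponent bound is either local or global only for $q \geq p$. A similar caveat applies to $k=1$, $p\leq 2$, where nodal interpolation is not controlled by the $H^{1,p}$ norm alone, so the scaling argument requires $k - 2/p > 0$ (or $k=2$, $p=1$). Since the paper only invokes the estimate in the admissible range, your proof suffices for its purposes, but as a proof of the lemma as literally stated the summation step is a genuine gap --- one inherited from the overly broad statement rather than from your strategy.
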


The interpolation operator also has the following property which will be used throughout.
\begin{lemma}
	For a monotonically increasing function $\lambda \in C^1(\mathbb{R})$, and a function $\phi_h \in S_h(t)$ we have that
	\begin{align}
		\|I_h \lambda(\phi_h) - \lambda(\phi_h)\|_{L^2(\Gamma_h(t))} \leq Ch\|\gradgh I_h \lambda(\phi_h)\|_{L^2(\Gamma_h(t))}, \label{enthalpybound}
	\end{align}
	where $C$ is independent of $t,h$.
\end{lemma}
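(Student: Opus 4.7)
The plan is to work element by element, exploiting the fact that on a single triangle both $\lambda(\phi_h)$ and $I_h\lambda(\phi_h)$ take values in the same small interval determined by the nodal values, and then to convert the length of that interval into an $L^\infty$ bound on the gradient of the (affine) interpolant times $h_K$.

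Fix $K \in \mathcal{T}_h(t)$ with vertices $x_1,x_2,x_3$ and set $v_i := \phi_h(x_i)$. Since $\phi_h$ is affine linear on $K$, its range on $K$ is the interval $[\min_i v_i, \max_i v_i]$, and because $\lambda$ is monotone, the range of $\lambda(\phi_h)$ on $K$ is $[\min_i \lambda(v_i), \max_i \lambda(v_i)]$. The interpolant $I_h\lambda(\phi_h)$ is the affine linear function with nodal values $\lambda(v_i)$, so its range on $K$ is the \emph{same} interval. Consequently, for every $x\in K$,
\[
\bigl|I_h\lambda(\phi_h)(x) - \lambda(\phi_h(x))\bigr| \;\leq\; \max_i \lambda(v_i) - \min_i \lambda(v_i).
\]

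Next I convert the right-hand side to a gradient term. Since $I_h\lambda(\phi_h)$ is affine linear on $K$ with constant gradient $\gradgh I_h\lambda(\phi_h)|_K$, its oscillation on $K$ satisfies
\[
\max_i \lambda(v_i) - \min_i \lambda(v_i) \;\leq\; h_K \bigl|\gradgh I_h\lambda(\phi_h)|_K\bigr|.
\]
Squaring and integrating over $K$ gives
\[
\int_K \bigl|I_h\lambda(\phi_h) - \lambda(\phi_h)\bigr|^2 \;\leq\; h_K^2\,|K|\,\bigl|\gradgh I_h\lambda(\phi_h)|_K\bigr|^2 \;=\; h_K^2 \int_K |\gradgh I_h\lambda(\phi_h)|^2.
\]
Summing over $K\in\mathcal{T}_h(t)$ and using $h_K\leq h$ yields the claim with $C=1$.

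The only nontrivial step is the one exploiting monotonicity: it is essential that $\lambda$ be monotone so that the image of the interval $[\min v_i,\max v_i]$ under $\lambda$ is precisely $[\min_i \lambda(v_i),\max_i \lambda(v_i)]$, allowing us to trap $\lambda(\phi_h)$ inside the same interval as its linear interpolant. Once that observation is in place, the remainder is a routine application of the standard element-wise oscillation estimate for affine functions, and the constant is uniform in $t$ and $h$ as required.
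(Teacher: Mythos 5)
Your proposal is correct and follows essentially the same argument as the paper: restrict to a single element, use monotonicity of $\lambda$ to trap $\lambda(\phi_h)$ between the nodal values $\lambda(\phi_h(x_m))$ and $\lambda(\phi_h(x_M))$ (i.e.\ the range of the affine interpolant), and then bound that oscillation by $h_K$ times the constant gradient of $I_h\lambda(\phi_h)$ on the element before summing. The only difference is cosmetic — you square and integrate elementwise where the paper works with the local $L^\infty$ bound — so the two proofs coincide in substance.
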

\begin{proof}
	This proof is largely the same as that of \cite{elliott1987error}.
	To begin, we restrict to a single element $K(t) \in \mathcal{T}_h(t)$, and note that a linear function on $K(t)$ attains its extrema at the nodes.
	Hence there are nodes $x_m(t), x_M(t) \in \partial K(t)$ such that $\phi_h(x_m(t)) \leq \phi_h(x) \leq \phi_h(x_M(t))$ for all $x \in K(t)$.
	By the monotonicity assumption on $\lambda$ we find
	$$ \lambda(\phi_h(x_m(t))) \leq \lambda(\phi_h(x)) \leq \lambda(\phi_h(x_M(t))),$$
	for all $x \in K(t)$.
	From this is it clear that
	$$ \| I_h \lambda(\phi_h) - \lambda(\phi_h) \|_{L^{\infty}(K(t))} \leq \lambda(\phi_h(x_M)) - \lambda(\phi_h(x_m)) = |(x_M - x_m) \cdot \gradgh I_h\lambda(\phi_h)|,$$
	where the equality follows since $I_h \lambda$ is linear on $K(t)$.
	The desired inequality follows from this.
\end{proof}

The use of mass lumping in surface finite elements has previously been considered in \cite{eilks2008numerical,frittelli2017lumped,frittelli2018numerical} and so we do not prove every property of mass lumped finite elements that we use.
It is a well known property of acute triangulations (see for example \cite{ciarlet1973maximum, frittelli2019preserving}) that the stiffness matrix, $A(t) = (a_{ij}(t))_{ij}$, where
$$ a_{ij}(t) = \int_{\Gamma_h(t)} \gradgh \phi_i \cdot \gradgh \phi_j,$$
is such that
$$a_{ii}(t) > 0,\qquad a_{ij}(t) \leq 0 \ (i \neq j). $$
In particular this means that we have the following result.
\begin{lemma}
	Let $\lambda \in C(\mathbb{R})$, with $\lambda(0) = 0$, and $\lambda' \in L^{\infty}(\mathbb{R})$ such that $ 0 \leq \lambda'(s) \leq M_{\lambda} < \infty$ for almost all $s \in \mathbb{R}$.
	Then for an evolving acute triangulation of $\Gamma_h(t)$ one has
	\begin{align}
		\label{acuteineq}
		\| \gradgh I_h \lambda(\phi_h(t)) \|_{L^2(\Gamma_h(t))}^2 \leq M_{\lambda} \int_{\Gamma_h(t)} \gradgh \phi_h \cdot \gradgh I_h \lambda(\phi_h(t)),
	\end{align}
	for all $\phi_h \in S_h(t)$.
\end{lemma}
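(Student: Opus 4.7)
The plan is to expand both sides in the nodal basis, convert the resulting double sums to sums over edges using the partition-of-unity identity for $S_h(t)$, and then use the sign information on off-diagonal stiffness entries from the acute assumption together with monotonicity and the Lipschitz bound on $\lambda$ to conclude termwise.

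More concretely, write $\phi_h = \sum_i u_i \phi_i$ with $u_i := \phi_h(x_i)$ and set $\lambda_i := \lambda(u_i)$, so that $I_h \lambda(\phi_h) = \sum_i \lambda_i \phi_i$. Writing $a_{ij} := a_h(\phi_i,\phi_j)$ we then have
\[
\|\gradgh I_h \lambda(\phi_h)\|_{L^2(\Gamma_h(t))}^2 = \sum_{i,j} \lambda_i \lambda_j a_{ij}, \qquad a_h(\phi_h, I_h \lambda(\phi_h)) = \sum_{i,j} u_i \lambda_j a_{ij}.
\]
Since $\sum_i \phi_i \equiv 1$ on $\Gamma_h(t)$, we have $\sum_j \gradgh \phi_j = 0$, and consequently $\sum_j a_{ij} = 0$ for every $i$, so $a_{ii} = -\sum_{j \neq i} a_{ij}$. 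Substituting this and pairing the off-diagonal indices yields the standard edge-wise representations
\[
\sum_{i,j} \lambda_i \lambda_j a_{ij} = \sum_{\{i,j\}} (-a_{ij})(\lambda_i - \lambda_j)^2, \quad \sum_{i,j} u_i \lambda_j a_{ij} = \sum_{\{i,j\}} (-a_{ij})(u_i - u_j)(\lambda_i - \lambda_j),
\]
where the sums on the right are over unordered pairs $\{i,j\}$ with $i \neq j$.

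The acute triangulation hypothesis gives $-a_{ij} \geq 0$ for every off-diagonal pair, so the estimate will follow once I prove the termwise inequality
\[
(\lambda_i - \lambda_j)^2 \leq M_\lambda \, (u_i - u_j)(\lambda_i - \lambda_j).
\]
Since $\lambda$ is absolutely continuous with $\lambda' \in L^\infty(\mathbb{R})$ and $0 \leq \lambda' \leq M_\lambda$, it is monotone non-decreasing and $M_\lambda$-Lipschitz, so $(u_i - u_j)$ and $(\lambda_i - \lambda_j)$ share the same sign and $|\lambda_i - \lambda_j| \leq M_\lambda |u_i - u_j|$; multiplying these gives the required termwise bound. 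Weighting by the nonnegative coefficients $-a_{ij}$ and summing over edges then yields the desired inequality.

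The only place where anything can go wrong is ensuring that the two sign conditions combine correctly, namely that $-a_{ij} \geq 0$ (from acuteness) multiplied by the monotonicity-based termwise inequality survives the summation without cancellation; this is immediate because every edge contribution is individually nonnegative. There is no analytic obstacle beyond collecting these elementary facts, so I do not anticipate a hard step — the proof is essentially a linear-algebraic manipulation supported by the acute-mesh sign pattern and the Lipschitz/monotonicity of $\lambda$.
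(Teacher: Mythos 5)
Your proof is correct: the paper does not write out its own argument for this lemma but simply defers to \cite{ciavaldini1975analyse,nochetto1991finite}, and the edge-wise summation-by-parts argument you give --- using the row-sum identity $\sum_j a_{ij}=0$ from the partition of unity, the off-diagonal sign pattern $a_{ij}\leq 0$ for $i\neq j$ from acuteness, and the elementary scalar inequality $(\lambda_i-\lambda_j)^2\leq M_\lambda(u_i-u_j)(\lambda_i-\lambda_j)$ coming from monotonicity together with Lipschitz continuity of $\lambda$ --- is precisely the classical argument those references employ. Nothing is missing; you have in effect supplied the proof that the paper left implicit.
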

We refer the reader to \cite{ciavaldini1975analyse,nochetto1991finite} for the proof, noting that the evolving surface changes nothing (provided that one can ensure the triangulation remains acute for $t \in [0,T]$).

We introduce the following shorthand notation
\begin{align*}
	\intm(t;\eta_h, \zeta_h) &= \int_{\Gamma_h(t)} I_h(\eta_h \zeta_h),
\end{align*}
for the lumped mass $L^2$ inner product on $\Gamma_h(t)$.
The bilinear form $\intm(\cdot,\cdot)$ corresponds to the use of numerical integration in a finite element scheme.
In particular, it defines a new inner product on $S_h(t)$, and hence we have a new norm given by
$$ \normh{t}{\eta_h}^2 := \intm (t;\eta_h, \eta_h),$$
where we will omit $t$ as before.
Differentiating this bilinear form in time yields the following.
\begin{lemma}
\label{transport5}
	Let $\eta_h, \zeta_h \in S_h(t)$ be such that $\matdev_h \eta_h, \matdev_h \zeta_h \in S_h(t)$ exist.
    Then
	\[ \frac{d}{dt} \intm(\eta_h, \zeta_h) = \intm(\matdev_h\eta_h, \zeta_h) + \intm(\eta_h, \matdev_h \zeta_h) + g_h(I_h(\eta_h\zeta_h),1).\]
\end{lemma}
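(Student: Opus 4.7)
The plan is to write $\intm(\eta_h,\zeta_h)$ as an ordinary $L^2$ inner product of the interpolant against the constant function $1$, and then reduce to the standard discrete transport theorem of Proposition \ref{transport3}. Concretely, observe that
\[ \intm(\eta_h,\zeta_h) = m_h\!\left(I_h(\eta_h\zeta_h),\, 1\right), \]
and that, since $\sum_{i=1}^{N_h} \phi_i \equiv 1$ is the partition of unity and each basis function satisfies $\matdev_h \phi_i = 0$, we have $\matdev_h 1 = 0$. Applying Proposition \ref{transport3} therefore gives
\[ \frac{d}{dt}\intm(\eta_h,\zeta_h) = m_h\!\left(\matdev_h I_h(\eta_h\zeta_h),\, 1\right) + g_h\!\left(I_h(\eta_h\zeta_h),\, 1\right), \]
so it only remains to identify $m_h(\matdev_h I_h(\eta_h\zeta_h),1)$ with $\intm(\matdev_h\eta_h,\zeta_h)+\intm(\eta_h,\matdev_h\zeta_h)$.

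The key computation will be the identity
\[ \matdev_h I_h(\eta_h\zeta_h) = I_h(\matdev_h\eta_h\cdot \zeta_h) + I_h(\eta_h\cdot \matdev_h\zeta_h), \]
which is the Leibniz rule for the nodal interpolant with respect to $\matdev_h$. To see it, expand $I_h(\eta_h\zeta_h) = \sum_k \eta_h(x_k(t),t)\zeta_h(x_k(t),t)\,\phi_k$, where $x_k(t)$ is the $k$th moving node. Since $\matdev_h\phi_k=0$, differentiating along the material trajectory gives
\[ \matdev_h I_h(\eta_h\zeta_h) = \sum_k \frac{d}{dt}\!\bigl[\eta_h(x_k(t),t)\zeta_h(x_k(t),t)\bigr]\phi_k. \]
Using the defining property $\frac{d}{dt}\eta_h(x_k(t),t)= (\matdev_h\eta_h)(x_k(t),t)$ of the discrete material derivative (and likewise for $\zeta_h$), the ordinary product rule at each node produces exactly the two nodal interpolants on the right-hand side above. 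Integrating against $1$ then gives
\[ m_h(\matdev_h I_h(\eta_h\zeta_h),1) = \intm(\matdev_h\eta_h,\zeta_h) + \intm(\eta_h,\matdev_h\zeta_h), \]
which, combined with the previous display, yields the claimed formula.

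The only point that requires a little care is the nodal Leibniz identity above, since the nodal interpolant does not commute with multiplication in general; however, at each node $x_k(t)$ the functions $\eta_h,\zeta_h$ take the scalar values $\eta_h(x_k(t),t)$ and $\zeta_h(x_k(t),t)$, and the interpolant is determined by these nodal values, so pointwise differentiation is exact. The rest of the argument is a direct application of Proposition \ref{transport3} and the partition-of-unity remark $\matdev_h 1 = 0$.
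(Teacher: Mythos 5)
Your proof is correct and follows essentially the same route as the paper: rewrite $\intm(\eta_h,\zeta_h)$ as $m_h(I_h(\eta_h\zeta_h),1)$, apply the discrete transport theorem, and then use the nodal expansion together with the transport property $\matdev_h\phi_i=0$ to obtain the Leibniz identity $\matdev_h I_h(\eta_h\zeta_h)=I_h(\matdev_h\eta_h\,\zeta_h)+I_h(\eta_h\,\matdev_h\zeta_h)$. No gaps; your extra remark justifying the nodal differentiation is a fair elaboration of the same argument.
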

\begin{proof}
	By definition we have
	\[ \intm(\eta_h, \zeta_h) = \int_{\Gamma_h(t)} I_h(\eta_h\zeta_h),\]
	and so the transport theorem yields
	\[ \frac{d}{dt} \intm(\eta_h, \zeta_h) = m_h(\matdev_h I_h(\eta_h \zeta_h),1) + g_h(I_h(\eta_h \zeta_h), 1). \]
	Now writing $I_h(\eta_h \zeta_h) = \sum_{i=1}^{N_h} \eta_h(t;x_i(t))\zeta_h(t;x_i(t)) \phi_i(t)$, and using the transport property $\matdev_h \phi_i = 0$, one finds that
	\[ \matdev_h I_h(\eta_h \zeta_h) = I_h(\matdev_h\eta_h \zeta_h) + I_h(\eta_h \matdev_h \zeta_h), \]
	from which the result follows.
\end{proof}
Next we adapt a result of Ciavaldini \cite{ciavaldini1975analyse} to the setting of evolving surface finite elements.

\begin{lemma}
	\label{intmbounds}
	Let $\eta_h, \zeta_h \in S_h(t)$. Then we have
	\begin{align}
		\|\eta_h \|_{L^2(\Gamma_h(t))} \leq & \normh{t}{\eta_h} \leq C \|\eta_h \|_{L^2(\Gamma_h(t))}, \label{intmbound0} \\
		| \intm(\eta_h, \zeta_h) - m_h(\eta_h, \zeta_h)| \leq &C h \| \eta_h \|_{L^2(\Gamma_h(t))} \| \gradgh \zeta_h \|_{L^2(\Gamma_h(t))}, \label{intmbound1}\\
		| \intm(\eta_h, \zeta_h) - m_h(\eta_h, \zeta_h)| \leq &C h^2 \| \gradgh \eta_h \|_{L^2(\Gamma_h(t))} \| \gradgh \zeta_h \|_{L^2(\Gamma_h(t))}, \label{intmbound2}
	\end{align}
	where $C$ is some constant independent of $h, t$.
\end{lemma}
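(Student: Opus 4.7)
The plan is to reduce all three estimates to element-wise ones, since the bilinear forms and norms involved all decompose as sums over $K \in \mathcal{T}_h(t)$ with no coupling between elements. The key geometric simplification is that each element is a flat triangle, so for any $\eta_h \in S_h(t)$ the tangential gradient $\gradgh \eta_h$ is a constant vector on each $K$; consequently $\|\gradgh \eta_h\|_{L^2(K)}^2 = |K|\,|\gradgh \eta_h|^2$.

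For the norm equivalence \eqref{intmbound0} I would compute the local mass and lumped mass matrices explicitly. With nodal values indexed by the vertices of $K$, one has $M_K = \tfrac{|K|}{12}(J + I)$, where $J$ is the $3 \times 3$ all-ones matrix, while $M_K^{\mathrm{lumped}} = \tfrac{|K|}{3} I$. Diagonalising $M_K$ gives eigenvalues $|K|/3$ (eigenvector $(1,1,1)^T$) and $|K|/12$ (double), while $M_K^{\mathrm{lumped}}$ has all eigenvalues equal to $|K|/3$, yielding
\[ \mathbf{v}^T M_K \mathbf{v} \ \leq\ \mathbf{v}^T M_K^{\mathrm{lumped}} \mathbf{v} \ \leq\ 4\,\mathbf{v}^T M_K \mathbf{v} \]
for any coefficient vector $\mathbf{v}$. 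Summing over $K$ gives \eqref{intmbound0}.

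For \eqref{intmbound2} I would write
\[\intm(\eta_h,\zeta_h) - m_h(\eta_h,\zeta_h) = \sum_{K \in \mathcal{T}_h(t)} \int_K \bigl(I_h(\eta_h \zeta_h) - \eta_h \zeta_h\bigr),\]
and use that on the flat element $K$ the product $\eta_h \zeta_h$ is a genuine quadratic polynomial whose tangential Hessian is the constant tensor $\gradgh \eta_h \otimes \gradgh \zeta_h + \gradgh \zeta_h \otimes \gradgh \eta_h$. The standard interpolation error estimate for a quadratic on $K$, combined with $\|\gradgh \eta_h\|_{L^2(K)} = |K|^{1/2} |\gradgh \eta_h|$, then yields
\[\|I_h(\eta_h \zeta_h) - \eta_h \zeta_h\|_{L^1(K)} \leq C h_K^2 \|\gradgh \eta_h\|_{L^2(K)} \|\gradgh \zeta_h\|_{L^2(K)}.\]
Summing over $K$ and applying the Cauchy--Schwarz inequality in $\ell^2$ yields \eqref{intmbound2}. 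Finally, \eqref{intmbound1} follows from \eqref{intmbound2} together with the element-wise inverse inequality $\|\gradgh \eta_h\|_{L^2(K)} \leq C h_K^{-1} \|\eta_h\|_{L^2(K)}$ applied to one of the two gradient factors, with uniform quasi-uniformity used to bound $h_K^{-1}$ by $Ch^{-1}$.

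I do not expect a serious obstacle: these are standard Euclidean facts, and the evolving-surface aspect is essentially vacuous because each element remains flat throughout, and the constants appearing in the local estimates depend only on dimensionless quantities (eigenvalue ratios and reference-element interpolation constants). The only point requiring a little care is that $\boldsymbol{\nu}_h$, and hence $\gradgh$, jump across element boundaries, so all computations must be performed strictly element-by-element; but no global interaction arises.
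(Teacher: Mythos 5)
Your argument is correct and follows essentially the same route as the paper: an element-by-element computation, with \eqref{intmbound0} coming from exact local quadrature (your eigenvalue comparison of the consistent and lumped local mass matrices is just a repackaging of the paper's explicit quadrature identities), \eqref{intmbound2} from the $L^1$ interpolation error of the quadratic $\eta_h\zeta_h$ whose constant tangential Hessian is expressed through $\gradgh\eta_h$ and $\gradgh\zeta_h$, and \eqref{intmbound1} from \eqref{intmbound2} plus an inverse inequality. No gaps.
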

\begin{proof}
	By the piecewise nature of the functions, it is sufficient to restrict to a single $K(t) \in \mathcal{T}_h(t)$, and we denote the nodes of $K(t)$ by $x_1, x_2, x_3$.
	To show the first result, we follow the presentation of \cite{ciavaldini1975analyse}.
	Firstly, we note that
	$$ \normh{t}{\eta_h}^2 = \int_{K(t)} \sum_{i=1}^3 \eta_h(x_i)^2 \phi_i = \frac{|K(t)|}{3} \sum_{i=1}^3 \eta_h(x_i)^2,$$
	where $\phi_i$ is the nodal basis function associated to $x_i$, and we have used a quadrature rule which is exact for a linear function.
	Similarly, by noting that $\eta_h^2$ is piecewise quadratic, we find that
	$$ \| \eta_h \|_{L^2(K(t))}^2 = \int_{K(t)} \left( \sum_{i=1}^3 \eta_h(x_i) \phi_i \right)^2 = \frac{|K(t)|}{6} \left( \sum_{i=1}^3 \eta_h(x_i)^2 + \sum_{1\leq i < j \leq 3} \eta_h(x_i) \eta_h(x_j) \right),$$
	where we have used a quadrature rule which is exact for quadratics.
	One can readily show that
	$$ \sum_{1\leq i < j \leq 3} \eta_h(x_i) \eta_h(x_j) \leq \sum_{i=1}^3 \eta_h(x_i)^2, $$
	and hence it follows that $\| \eta_h \|_{L^2(\Gamma_h(t))} \leq \normh{t}{\eta_h}$.
	To show the second part of \eqref{intmbound0}, we use a similar argument except now we use a bound of the form
	$$2  \sum_{i=1}^3 \eta_h(x_i)^2 \leq M \sum_{i=1}^3 \eta_h(x_i)^2 + M \sum_{1\leq i < j \leq 3} \eta_h(x_i) \eta_h(x_j),$$
	which one can show\footnote{For example one can use the identity $(M-2)(a^2 + b^2 + c^2) + M(ab + ac +bc) = (\frac{M}{2} - 2)(a^2 + b^2 + c^2) + \frac{M}{2}(a+b+c)^2$ for $a,b,c \in \mathbb{R}.$} holds for $M > 4$.\\
	
	To show \eqref{intmbound2}, we firstly recall from from \cite[Theorem 6.13]{elliott2021unified} that
	\[\|\chi - I_K \chi\|_{L^1(K(t))} \leq C h_K^{2} |\chi|_{H^{2,1}(K(t))},\]
	where $|\cdot|_{H^{2,1}(K(t))}$ denotes the $H^{2,1}(K(t))$ seminorm, and $I_K$ is the local Lagrange interpolation operator on $K(t)$, and $C$ is independent of $h,t$.
	This yields
	\begin{align}\label{ciavaldini1}
		\left| \int_{K(t)} \left(\eta_h \zeta_h  -  I_K(\eta_h \zeta_h) \right) \right| \leq \int_{K(t)} \left|\eta_h \zeta_h  -  I_K(\eta_h \zeta_h) \right| \leq C h_K^2 |\eta_h \zeta_h|_{H^{2,1}(K(t))}.
	\end{align}
	On $K(t)$ $\eta_h, \zeta_h$ are in fact smooth, and so the necessary derivatives exist.
	Moreover, as the functions are linear on $K(t)$ we observe that $ \underline{D}_i \underline{D}_j \eta_h = \underline{D}_i \underline{D}_j \zeta_h = 0$ for $i,j = 1,...,3$.
	Computing the $H^{2,1}(K(t))$ seminorm, and using the above observation, we find
	\begin{align*}
		| \eta_h \zeta_h |_{H^{2,1}(K(t))} = \sum_{i=1}^3\sum_{j=1}^3  \|\underline{D}_i \underline{D}_j(\eta_h \zeta_h)\|_{L^1(K(t))} &= \sum_{i=1}^3\sum_{j=1}^3 \int_{K(t)} |\underline{D}_i\eta_h| |\underline{D}_j \zeta_h|\\
		&\leq C \| \gradgh \eta_h \|_{L^2(K(t))} \| \gradgh \zeta_h \|_{L^2(K(t))}.
	\end{align*}
	Combining this with \eqref{ciavaldini1} gives a local version of \eqref{intmbound2}, which one uses to deduce the global result (since our mesh is uniformly quasi-uniform).
	\eqref{intmbound1} then follows from \eqref{intmbound2} and an inverse inequality.
\end{proof}

\begin{remark}
    By arguing along the same lines one can show that
    \begin{align}
        |g_h(\eta_h, \zeta_h) - g_h(I_h(\eta_h \zeta_h),1)| \leq Ch^2 \| \gradgh \eta_h \|_{L^2(\Gamma_h(t))} \| \gradgh \zeta_h \|_{L^2(\Gamma_h(t))}, \label{intmbound3}
    \end{align}
    and hence one can use an inverse inequality to see that
    \[ |g_h(I_h(\eta_h \zeta_h),1)| \leq C\|\eta_h\|_{L^2(\Gamma_h(t))}\|\zeta_h\|_{L^2(\Gamma_h(t))}. \]
    We shall use this bound repeatedly throughout.
\end{remark}

\section{Variational formulation and well-posedness}
\label{section: logch variational form}
The weak formulation of the Cahn-Hilliard equation of \cite{caetano2021cahn,caetano2023regularization} is to find a solution pair $(u,w) \in L^\infty_{H^1} \cap H^1_{H^{-1}} \times L^2_{H^1}$ such that
\begin{gather}
	\label{cheqn1}
	m_*\left(\matdev{u} , \phi \right) + g(u, \phi)+ a (w, \phi) = 0,\\ 
	m(w, \eta) = \varepsilon a(u,\phi) + \frac{\theta}{2 \varepsilon} m (f(u), \phi) - \frac{1}{\varepsilon} m(u,\phi), \label{cheqn2}
\end{gather}
for all $\phi \in H^1(\Gamma(t))$ and a.e. $t \in [0,T]$, where the initial condition $u(0) = u_0$ holds a.e. on $\Gamma_0$.
The well-posedness of this weak formulation has been studied in \cite{caetano2021cahn,caetano2023regularization}.
Due to the logarithmic potential the function $u$ must be such that $|u|<1$ a.e. on $\Gamma(t)$ for almost all $t \in [0,T]$.
This restricts the class of admissible initial data.
One would at least expect that we require the initial data to be such that $|u_0| \leq 1$ almost everywhere on $\Gamma(t)$ for almost all $t \in [0,T]$.
In fact we require a stronger condition --- that $u_0$ is such that
\[m_{u_0}(t) := \frac{1}{|\Gamma(t)|} \left| \int_{\Gamma_0} u_0\right| < 1,\]
for all $t \in [0,T]$.
We refer to \cite[Proposition 5.1]{caetano2021cahn} for the explanation for how this condition arises, and to \cite{caetano2023regularization} for a physical interpretation.
As such, one considers the set of admissible initial conditions to be
\[\mathcal{I}_0 := \left\{ \eta \in H^1(\Gamma_0) \mid |\eta| \leq 1\  \text{a.e. on } \Gamma_0, \ \Ech[\eta;0] < \infty , \ m_{\eta}(t) < 1 \ \forall t \in [0,T] \right\} .\]

In \cite{caetano2021cahn, caetano2023regularization} it is shown that there exists a solution pair solving \eqref{cheqn1}, \eqref{cheqn2}, with $u_0 \in \mathcal{I}_0$, by considering the regularised potential function defined for $\delta \in (0,1)$ by

\begin{equation}\label{dpot}
	F_{\log}^{\delta}(r) := 
	\begin{cases}
		(1-r) \log(\delta) + (1+r) \log(2-\delta) + \frac{(1-r)^2}{2 \delta} + \frac{(1+r)^2}{2 (2 -  \delta)} - 1, \ & r \geq 1 - \delta \\
		(1+r) \log(1+r) + (1-r)\log(1-r), \ & |r| < 1- \delta\\
		(1+r)\log(\delta) + (1-r)\log(2-\delta) + \frac{(1+r)^2}{2 \delta} + \frac{(1-r)^2}{2(2 - \delta)} - 1, \ & r \leq -1 + \delta
	\end{cases}.
\end{equation}
It can be shown that $F^{\delta}_{\log} \in C^2(\mathbb{R})$.
This approach has been used in several papers on the logarithmic potential, see for example \cite{barrett1995error,copetti1992numerical, elliott1991generalized}.
We will adopt the shorthand notation
\[F^\delta(r) = \frac{\theta}{2}F^\delta_{\text{log}}(r) + \frac{1-r^2}{2}, \quad \fd(r) := (F^{\delta}_{\log})'(r).\]
It can be shown that the function $\fd$ is such that for $r,s \in \mathbb{R}$,
\begin{gather}\
	\label{phidbound1}
	(r-s)^2 \leq (\fd(r) - \fd(s))(r-s),\\
	\label{phidbound2}|\fd(r) - \fd(s)| \leq \frac{1}{\delta} |r-s|,
\end{gather}
for sufficiently small $\delta$.
If also $|r|, |s| > 1-\delta$ then we have
\begin{align}
	\label{phidbound3}
	\frac{1}{2\delta} (r-s)^2 \leq (\fd(r) - \fd(s))(r-s).
\end{align}

We end this section by proving an error bound between the exact solution, and the solution of the regularised equation.
We also recall the definition of the inverse Laplacian, $\mathcal{G}$, from Appendix \ref{invlaps}.
This proof is adapted from \cite[Theorem 2.1]{barrett1995error}.

\begin{theorem}
	\label{u delta error theorem}
	Let $(u,w)$ denote the solution of \eqref{cheqn1}, \eqref{cheqn2}, and $(u^\delta, w^{\delta})$ denote the solution of the regularised problem for $\delta \in (0,1)$.
	Then for sufficiently small $\delta > 0$ we have that
	\begin{align}
		\varepsilon \int_0^T \| \gradg (u - u^\delta ) \|_{L^2(\Gamma(t))}^2 + \sup_{t \in [0,T]} \| u - u^{\delta} \|_{-1}^2 \leq C \delta. \label{deltaerror1}
	\end{align}
\end{theorem}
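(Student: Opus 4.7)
The plan is to adapt the Barrett--Blowey energy argument to the evolving surface setting. Set $e := u - u^\delta$. Testing (\ref{cheqn1}) and its regularised counterpart with $\phi = 1$, together with $a(w,1) = 0$, gives $\frac{d}{dt}\int_{\Gamma(t)} u = \frac{d}{dt}\int_{\Gamma(t)} u^\delta = 0$; since both agree at $t = 0$ we conclude $\int_{\Gamma(t)} e(t) = 0$ for every $t \in [0,T]$, so that $\mathcal{G}e \in H^1(\Gamma(t))$ is well-defined at each time.

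I will test the difference of the first equations with $\phi = \mathcal{G}e$ and use the defining identity $a(w - w^\delta, \mathcal{G}e) = m(w - w^\delta, e)$ to obtain
\[ m_*(\matdev e, \mathcal{G}e) + g(e, \mathcal{G}e) + m(w - w^\delta, e) = 0. \]
Substituting the difference of the second equations evaluated at $\eta = e$,
\[ m(w - w^\delta, e) = \varepsilon \| \gradg e \|_{L^2(\Gamma(t))}^2 + \frac{\theta}{2\varepsilon} m\bigl(f(u) - f^\delta(u^\delta), e\bigr) - \frac{1}{\varepsilon} \| e \|_{L^2(\Gamma(t))}^2. \]
The $m_*(\matdev e, \mathcal{G}e)$ contribution will be rewritten as $\frac{1}{2}\frac{d}{dt}\| e \|_{-1}^2$ modulo surface-evolution residuals coming from the time-dependence of $\mathcal{G}$; these residuals, together with $g(e, \mathcal{G}e)$, are controlled by $C \|V\|_{L^\infty}\bigl(\| e \|_{-1}^2 + \| e \|_{L^2}\| e \|_{-1}\bigr)$.

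The main obstacle is the nonlinear term. I will split
\[ m\bigl(f(u) - f^\delta(u^\delta), e\bigr) = m\bigl(f^\delta(u) - f^\delta(u^\delta), e\bigr) + m\bigl(f(u) - f^\delta(u), e\bigr). \]
The first summand is bounded below by $\| e \|_{L^2(\Gamma(t))}^2$ by the strong monotonicity bound (\ref{phidbound1}), which (up to a positive constant) absorbs the $-\frac{1}{\varepsilon}\| e \|_{L^2}^2$ on the right. For the second summand, $f(u) - f^\delta(u)$ vanishes on $\{|u| \leq 1 - \delta\}$; on its complement the explicit piecewise form (\ref{dpot}) together with $|u| \leq 1$ a.e.\ gives a pointwise bound, and (\ref{phidbound3}) controls the set where both $|u|, |u^\delta| > 1 - \delta$, yielding $|m(f(u) - f^\delta(u), e)| \leq C\delta + C'\| e \|_{L^2(\Gamma(t))}^2$ after Cauchy--Schwarz.

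Assembling everything produces a differential inequality of the form
\[ \frac{d}{dt}\| e \|_{-1}^2 + \varepsilon \| \gradg e \|_{L^2(\Gamma(t))}^2 \leq C\bigl(\| e \|_{-1}^2 + \| e \|_{L^2(\Gamma(t))}^2\bigr) + C\delta. \]
The residual $\| e \|_{L^2}^2$ is handled by the interpolation $\| e \|_{L^2}^2 = a(\mathcal{G}e, e) \leq \| e \|_{-1} \| \gradg e \|_{L^2}$ (valid since $e$ is zero-mean) followed by Young's inequality, which absorbs half of $\varepsilon \| \gradg e \|_{L^2}^2$ into the left-hand side. Since $e(0) = 0$, Gr\"onwall's lemma then delivers both assertions of (\ref{deltaerror1}) with right-hand side $C\delta$.
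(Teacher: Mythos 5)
Your overall skeleton matches the paper's: test the difference of the first equations with $\mathcal{G}e$ and of the second with $e$, rewrite $m_*(\matdev e,\mathcal{G}e)+g(e,\mathcal{G}e)$ via the transport theorem as $\tfrac12\tfrac{d}{dt}\|e\|_{-1}^2$ plus a $b(\mathcal{G}e,\mathcal{G}e)$-type remainder, and absorb $\tfrac1\varepsilon\|e\|_{L^2}^2$ through $\|e\|_{L^2}^2\le\|e\|_{-1}\|\gradg e\|_{L^2}$ and Young. The genuine gap is your treatment of the nonlinear remainder $m(f(u)-\fd(u),e)$. There is no pointwise bound for $f(u)-\fd(u)$ on $\{|u|>1-\delta\}$: we only know $|u|<1$ almost everywhere, $f(u)=\log\frac{1+u}{1-u}$ blows up as $u\to\pm1$, while $\fd(u)$ is only of size $|\log\delta|$ there, so the difference is controlled only through $f(u)\in L^2_{L^2}$ and carries no rate $O(\delta)$ by itself. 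Hence your claimed estimate $|m(f(u)-\fd(u),e)|\le C\delta+C'\|e\|_{L^2}^2$ with $\delta$-independent constants cannot be obtained by Cauchy--Schwarz; worse, taking the absolute value throws away the sign information the argument needs, since on $\{u\ge 1-\delta,\ u^\delta\le u\}$ (and its mirror set) the quantity $(\fd(u)-f(u))(u-u^\delta)$ is nonpositive but of uncontrollable magnitude --- it must simply be discarded by sign, not estimated.

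The mechanism that actually produces the $C\delta$ in \eqref{deltaerror1} is the following balance, which your sketch omits: after discarding the favourably-signed regions, only the sets $\Gamma^{\pm}_\delta(t)$ where \emph{both} $u$ and $u^\delta$ lie beyond $\pm(1-\delta)$ (with $1-\delta\le u\le u^\delta$, resp. $-1+\delta\ge u\ge u^\delta$) remain, and there one uses the $\delta$-weighted Young inequality $\bigl|\int_{\Gamma^\pm_\delta} f(u)e\bigr|\le\frac{1}{2\delta}\int_{\Gamma^\pm_\delta}e^2+\frac{\delta}{2}\|f(u)\|_{L^2(\Gamma(t))}^2$; the second term gives the rate $\delta$ via $f(u)\in L^2_{L^2}$, and the $\frac{1}{2\delta}\int_{\Gamma^\pm_\delta}e^2$ is cancelled exactly by the strengthened monotonicity \eqref{phidbound3}, which is applicable precisely because both arguments exceed $1-\delta$ on those sets. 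You mention \eqref{phidbound3} in passing but never set up this cancellation; without it you either lose the rate $\delta$ entirely or pick up a constant of size $1/\delta$ that ruins the Gr\"onwall step. (A lesser slip: the coercive contribution $\frac{\theta}{2\varepsilon}\|e\|_{L^2}^2$ from \eqref{phidbound1} cannot absorb $\frac{1}{\varepsilon}\|e\|_{L^2}^2$ since $\theta<1$, but your interpolation step handles that term regardless.)
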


\begin{proof}
	To begin, we define $E_u:= u - u^\delta, E_w:= w - w^\delta$, which one readily finds satisfies
	\begin{gather}
		m_*\left(\matdev{E_u} , \phi \right) + g(E_u, \phi)+ a (E_w, \phi) = 0,\label{regdif1}\\ 
		m(E_w, \phi) = \varepsilon a(E_u,\phi) + \frac{\theta}{2 \varepsilon} m (f(u)- \fd(u^\delta), \phi) - \frac{1}{\varepsilon} m(E_u,\phi),\label{regdif2}
	\end{gather}
	for all $\phi \in H^1(\Gamma(t))$ and almost all $t \in [0,T]$.
	We note $\mval{E_u}{\Gamma(t)} = 0$, and so $\mathcal{G} E_u$ is well defined.
	Hence we test \eqref{regdif1} with $\mathcal{G} E_u$ and \eqref{regdif2} with $E_u$, from which one readily finds that
	\begin{align}
		m_*\left(\matdev{E_u} , \mathcal{G} E_u \right) + g(E_u, \mathcal{G} E_u)+ \varepsilon a(E_u,E_u) + \frac{\theta}{2 \varepsilon} m (f(u)- \fd(u^\delta), E_u) - \frac{1}{\varepsilon} m(E_u,E_u) = 0. \label{regdif3}
	\end{align}
	We now want to rewrite the first two terms appropriately.
	Firstly we observe that
	\[ m_*\left(\matdev{E_u} , \mathcal{G} E_u \right) + g(E_u, \mathcal{G}E_u) = \frac{d}{dt} m(E_u, \mathcal{G} E_u) - m(E_u, \matdev\mathcal{G} E_u),  \]
	where we note that $\matdev \mathcal{G} E_u \in H^1(\Gamma(t))$ (see Appendix \ref{invlaps}).
	Then we see that $m(E_u, \matdev\mathcal{G} E_u) = a(\mathcal{G}E_u, \matdev\mathcal{G} E_u)$, and from the transport theorem
	\[ a(\mathcal{G}E_u, \matdev\mathcal{G} E_u) = \frac{1}{2} \frac{d}{dt} a(\mathcal{G} E_u, \mathcal{G} E_u) - \frac{1}{2} b(\mathcal{G} E_u,\mathcal{G} E_u). \]
	Hence we find that \eqref{regdif3} becomes
	\begin{align}
		\frac{1}{2} \frac{d}{dt}\|E_u\|_{-1}^2 + \frac{1}{2} b(\mathcal{G} E_u, \mathcal{G} E_u) + \varepsilon a(E_u,E_u) + \frac{\theta}{2 \varepsilon} m (f(u)- \fd(u^\delta), E_u) = \frac{1}{\varepsilon} \|E_u\|_{L^2(\Gamma(t))}^2. \label{regdif4}
	\end{align}
	Now from the definition of $\mathcal{G}$ and Young's inequality
	\[ \frac{1}{\varepsilon} \|E_u\|_{L^2(\Gamma(t))}^2 \leq \frac{1}{2 \varepsilon^3} \|E_u\|_{-1}^2 + \frac{\varepsilon}{2} \|\gradg E_u \|_{L^2(\Gamma(t))}^2,  \]
	and likewise using the smoothness of $V$, one finds
	\[ b(\mathcal{G} E_u, \mathcal{G} E_u) \leq C \|E_u\|_{-1}^2. \]
	Next we express $m (f(u)- \fd(u^\delta), E_u)$ as
	\[ m (f(u)- \fd(u^\delta), E_u) = m (\fd(u)- \fd(u^\delta), E_u) - m (\fd(u) - f(u), E_u). \]
	Combining these in \eqref{regdif4} yields
	\begin{align}
		\frac{d}{dt}\|E_u\|_{-1}^2 + \varepsilon a(E_u,E_u) + \frac{\theta}{\varepsilon} m (\fd(u)- \fd(u^\delta), E_u) \leq C \|E_u\|_{-1}^2 + \frac{\theta}{\varepsilon} m (\fd(u)- f(u), E_u). \label{regdif5}
	\end{align}
	Hence we aim to bound the potential terms suitably to obtain the result.
	Firstly, by defining the sets
	\begin{gather*}
		\Gamma^+_{\delta}(t) := \left\{ x \in \Gamma(t) \ | \ 1 - \delta \leq u(x,t) \leq u^\delta(x,t) \right\},\\
		\Gamma^-_{\delta}(t) := \left\{ x \in \Gamma(t) \ | \  - 1 + \delta \geq u(x,t) \geq u^\delta(x,t) \right\},
	\end{gather*}
	and using the monotonicity of $\fd$ and \eqref{phidbound3} we find that 
	\[m(\fd(u) - \fd(u^{\delta}), E_u) \geq \frac{1}{2 \delta} \int_{\Gamma_\delta^+(t) \cup \Gamma_\delta^-(t) } E_u^2\]
	Secondly, we can consider three cases:
	\begin{enumerate}
		\item For $|r| \leq 1 - \delta$ we have that $\fd(r) = f(r)$,
		\item For $ r \geq 1 - \delta$ and $s \leq r$ we have $(\fd(r) - f(r))(r-s) \leq 0$,
		\item For $ r \leq  - 1 + \delta$ and $s \geq r$ we have $(\fd(r) - f(r))(r-s) \leq 0$,
	\end{enumerate} 
	hence we find that
	$$  m(\fd(u) - f(u), E_u) \leq - \int_{\Gamma_\delta^+(t) \cup \Gamma_\delta^-(t) } f(u)E_u ,$$
	as $\fd(u) E_u \leq 0$ on $\Gamma_\delta^+(t) \cup \Gamma_\delta^-(t)$.
	and by applying Young's inequality one finds that
	$$ \left| \int_{\Gamma_\delta^+(t) \cup \Gamma_\delta^-(t) } f(u)E_u \right| \leq \frac{1}{2 \delta} \int_{\Gamma_\delta^+(t) \cup \Gamma_\delta^-(t) } E_u^2 + \frac{ \delta}{2} \int_{\Gamma(t)} f(u)^2.$$
	Combining these bounds in \eqref{regdif5} one finds
	$$ \varepsilon \| \gradg E_u \|_{L^2(\Gamma(t))}^2 + \frac{d}{dt} \|E_u\|_{-1}^2 \leq C\|E_u\|_{-1}^2 + \frac{\delta \theta}{2 \varepsilon} \| f(u) \|_{L^2(\Gamma(t))}^2.$$
	Thus integrating in time, noting that $f(u) \in L^2_{L^2}$ and applying Gr\"onwall's inequality yields the result.
\end{proof}

\section{Semi-discretisation of the problem}
\label{section: logch semi discrete}
\subsection{Finite element problem}
We now propose and analyse a spatially discrete numerical scheme for \eqref{cheqn1}, \eqref{cheqn2}.
In order to show stability of the numerical scheme we make the following assumption on the evolution of $\Gamma(t)$.
\begin{assumption}
	We assume that the velocity, $V$, of $\Gamma(t)$ is such that $\gradg \cdot V \geq 0$ for all $(x,t) \in \mathcal{S}_T$.
\end{assumption}
This condition has also been considered in recent work on the degenerate Cahn-Hilliard equation on an evolving surface \cite{elliott2024degenerate}, in which the following geometric formulation of this assumption is shown.
\begin{lemma}[{\cite[Lemma 4.1]{elliott2024degenerate}}]
	$\gradg \cdot V \geq 0$ if, and only if, for every $\mathcal{H}^2$ measurable region $\Sigma(t) \subset \Gamma(t)$,
	\[ \frac{d}{dt} |\Sigma(t)| \geq 0. \]
\end{lemma}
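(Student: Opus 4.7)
The plan is to reduce the equivalence to a single transport identity, namely that for any evolving region $\Sigma(t) = \Phi(\Sigma_0,t)$ with $\Sigma_0 \subset \Gamma_0$ measurable,
\[ \frac{d}{dt} |\Sigma(t)| = \int_{\Sigma(t)} \gradg \cdot V. \]
Once this identity is in hand, both directions of the \emph{iff} are immediate. I would establish the identity by a change-of-variables argument: writing $|\Sigma(t)| = \int_{\Sigma_0} J(\cdot,t) \, d\mathcal{H}^2$, where $J(\cdot,t)$ is the (tangential) Jacobian of $\Phi(\cdot,t)$, and using the standard formula $\partial_t J = J \cdot (\gradg \cdot V)\circ \Phi$ to differentiate under the integral and change variables back. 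Alternatively, one can invoke Proposition \ref{transport2} applied to $\eta = \zeta = 1$ on a smooth family of sets that approximates $\Sigma(t)$ (since $\chi_{\Sigma(t)} \notin H^1(\Gamma(t))$ in general, one would need a standard mollification argument here).

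For the forward direction ($\Rightarrow$), if $\gradg \cdot V \geq 0$ on $\mathcal{S}_T$ then the integrand in the identity is pointwise nonnegative for every $\Sigma(t)$, so $\frac{d}{dt}|\Sigma(t)| \geq 0$ follows at once.

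For the reverse direction ($\Leftarrow$), I would argue by contradiction. Suppose $(\gradg \cdot V)(x_0,t_0) < 0$ for some $(x_0,t_0) \in \mathcal{S}_T$. Since $V \in C^1([0,T];C^2(\mathbb{R}^3;\mathbb{R}^3))$, the map $(x,t) \mapsto (\gradg \cdot V)(x,t)$ is continuous on $\mathcal{S}_T$, so there exist $\eta > 0$ and an open neighbourhood $U \subset \Gamma(t_0)$ of $x_0$ with $\gradg \cdot V(\cdot, t_0) < -\eta$ on $U$. Setting $\Sigma_0 = \Phi(U,t_0)^{-1} \subset \Gamma_0$ and $\Sigma(t) = \Phi(\Sigma_0,t)$, the transport identity at $t = t_0$ yields
\[ \left.\frac{d}{dt}|\Sigma(t)|\right|_{t=t_0} = \int_U \gradg \cdot V(\cdot,t_0) \, d\mathcal{H}^2 \leq -\eta |U| < 0, \]
contradicting the assumed monotonicity of $|\Sigma(\cdot)|$.

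The only real subtlety, and thus the main obstacle, is the rigorous derivation of the transport identity for an arbitrary $\mathcal{H}^2$-measurable $\Sigma_0$; the transport theorem as stated applies to $H^1_{H^{-1}}$ functions, not indicator functions. This is handled either by the direct Jacobian computation (which needs only measurability of $\Sigma_0$ and smoothness of $\Phi$) or by approximating $\chi_{\Sigma_0}$ by smooth functions and passing to the limit using dominated convergence; either approach is standard and not the conceptual heart of the lemma.
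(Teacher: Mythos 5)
Your proposal is correct: the reduction to the transport identity $\frac{d}{dt}|\Sigma(t)| = \int_{\Sigma(t)} \gradg \cdot V$ via the tangential Jacobian, the immediate forward implication, and the continuity-plus-contradiction argument for the converse constitute the standard proof of this fact. Note that the paper itself does not prove the lemma but quotes it from \cite{elliott2024degenerate}, where the argument proceeds along essentially these same lines, so there is nothing to flag beyond your own (correctly identified) technical point that the transport theorem for $H^1$ functions must be replaced by the direct Jacobian computation for merely measurable $\Sigma_0$.
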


As remarked in \cite{elliott2024degenerate} if one considers a velocity which is exclusively in the normal direction then $\gradg \cdot V = HV_N$.
As such that above condition holds for a normal velocity is of the form $V_N = g(H)$, as a function of the mean curvature, where $g(\cdot)$ is sufficiently smooth, and such that $xg(x) \geq 0$.
An explicit example of such an evolution is inverse mean curvature flow, $g(H) = \frac{1}{H}$.
We refer the reader to \cite{bethuel1999geometric} for a discussion of geometric evolution equations of form $V_N = g(H)$.

It is not clear that $\gradg \cdot V \geq 0$ would imply that $\gradgh \cdot V_h \geq 0$.
However, if instead we assume a strict inequality, $\gradg \cdot V > 0$, then as $V_h = I_h V^{-\ell}$, using \eqref{lift2} and \eqref{interpolation} one finds that for sufficiently small $h$, 
\[ \gradg \cdot V > 0 \Rightarrow \gradgh \cdot V_h > 0. \]

As a slightly weaker assumption we assume that $\gradgh \cdot V_h \geq 0$, which holds for a class of expanding surfaces (from the explanation above), but this also allows the possibility that $\Gamma(t)$ is a stationary surface.

As seen in the previous section, we consider a set of admissible initial conditions, given by
$$ \mathcal{I}_{h,0} := \left\{ \eta_h \in S_h(0) \mid |\eta_h| \leq 1 \ \text{a.e. on }\Gamma_{h}(0), \ \Ech_h[\eta_h;0] < \infty, \ \left|\mval{\eta_h}{\Gamma_h(0)} \right| < 1 \right\},$$
where
\begin{align}\label{discglfunctional}
	\Ech_h[U_h;t] = \int_{\Gamma_h(t)} \frac{\varepsilon |\gradgh U_h|^2}{2} + \frac{1}{\varepsilon} I_h F(U_h)
\end{align}
is a discrete version of the Ginzburg-Landau functional.
We note that the assumption $\gradgh \cdot V_h \geq 0$ implies that $\frac{1}{|\Gamma_h(0)|}\geq\frac{1}{|\Gamma_h(t)|}$ and so the condition on the mean value is simpler than that for $\mathcal{I}_0$ defined in Section \ref{section: logch variational form}.

Discretising in space gives rise to our semi-discrete scheme, where the solution spaces in which we pose the problem are
\begin{align*}
	\tilde{S}_h^T := \left\{ z_h \in C(\mathcal{S}_{h,T}) \mid z_h(\cdot, t) \in S_h(t) \right\}, \quad S_h^T := \left\{ z_h \in \tilde{S}_h^T \mid \matdev_h{z_h} \in C(\mathcal{S}_{h,T} ) \right\}.
\end{align*}

The semi-discrete problem is as follows.
Given initial data $U_{h,0} \in \mathcal{I}_{h,0}$, approximating some $u_0 \in \mathcal{I}_0$, we want to find $(U_h, W_h) \in S_h^T \times \tilde{S}_h^T$ such that
\begin{gather}\label{fecheqn1}
	\intm\left(\matdev_h U_h , \phi_h \right) + g_h(I_h(U_h \phi_h),1) + a_{h}(W_h,\phi_h) = 0,\\ 
	\intm(W_h ,\phi_h ) = \varepsilon a_{h}(U_h,\phi_h) + \frac{\theta}{2 \varepsilon} \intm (I_h f(U_h), \phi_h)- \frac{1}{\varepsilon} \intm(U_h,\phi_h),\label{fecheqn2}
\end{gather}
for all $\phi_h(t) \in S_h(t)$ and a.e. $t \in [0,T]$, satisfying the initial condition $U_h(0) = U_{h,0}$ a.e. in $\Gamma_h(0)$.
We note that unlike the schemes of \cite{beschle2022stability,elliott2015evolving, elliott2024fully} we have used mass lumping to control the nonlinearity, as in \cite{barrett1995error, copetti1992numerical}.
This allows us to establish $\delta$-independent bounds on the nonlinearity, which otherwise proves difficult.
Lastly we observe that the $g_h$ term in \eqref{fecheqn1} is motivated by Proposition \ref{transport5}, and will allow us to retain a mass conservation property as taking $\phi_h = 1$ above yields
\[0 = \intm\left(\matdev_h U_h , 1 \right) + g_h(U_h,1) = \frac{d}{dt}\intm(U_h,1) = \frac{d}{dt}\int_{\Gamma_h(t)}U_h.\]

\subsection{Well-posedness of finite element problem}
\subsubsection{Regularisation}
To show well-posedness of this problem we consider a regularisation as in \cite{caetano2021cahn,caetano2023regularization}.
The regularised semi-discrete problem is to find a pair $(\Udh, \Wdh) \in S_h^T \times \tilde{S}_h^T$ solving
\begin{gather}\label{discregch}
	\intm\left(\matdev_h{ \Udh}, \phi_h\right) +g_h(I_h(\Udh \phi_h),1) + a_{h}(\Wdh,\phi_h) = 0,\\
	\intm(\Wdh ,\phi_h ) = \varepsilon a_{h}(\Udh,\phi_h) + \frac{\theta}{2 \varepsilon} \intm (I_h \fd(\Udh),\phi_h) - \frac{1}{\varepsilon} \intm (\Udh,\phi_h), \label{discregch2}
\end{gather}
for all $\phi_h \in S_h$ and a.e. $t \in [0,T]$, along with the initial condition $\Udh(0) = U_{h,0}$.
As before, we require that $U_{h,0} \in \mathcal{I}_{h,0}$.

As part of the existence proof we will make use of a discrete and regularised version of the Ginzburg-Landau functional \eqref{glfunctional},
$$\Echdh\left[ \Udh ;t\right] := \int_{\Gamma_{h}(t)} \varepsilon \frac{|\gradgh \Udh|^2}{2} + \frac{1}{\varepsilon } I_h F^{\delta}(\Udh). $$

The proof of well posedness is quite long, and hence separated into several parts.
First we prove global existence of a solution to the regularised problem, \eqref{discregch}. Then show uniform bounds, independent of $\delta$ so we may pass to the limit $\delta \rightarrow 0$.
We then verify that the solution is unique by showing a stability result.
As part of this well posedness, we must show that the solution is such that $|U_h| < 1$ almost everywhere on $\Gamma_h(t)$ for almost all $t \in [0,T]$, due to the poles of $f$ at $\pm 1$.

Throughout we consider initial data $U_{h,0} \in \mathcal{I}_{h,0}$ approximating some $u_0 \in \mathcal{I}_0$.
Namely, for a given $u_0$ we consider $U_{h,0}$ such that
$$ \left\| u_0 - U_{h,0}^\ell \right \|_{L^2(\Gamma_0)} + h\left\| u_0 - U_{h,0}^\ell \right \|_{H^1(\Gamma_0)} \leq Ch^2,$$
where $C$ is independent of $h$ but may depend on $u_0$.
This allows us to make bounds involving $U_{h,0}$ independent of $h$ by lifting onto $\Gamma_0$.
Suitable examples, for sufficiently smooth $u_0$, are the Lagrange interpolant and the Ritz projection.

\begin{lemma}
	\label{regfeexist}
	Given $h \in (0,h_0)$, $\delta \in (0,1)$ and $U_{h,0} \in \mathcal{I}_{h,0}$ there exists a unique solution pair $(\Udh,\Wdh) \in S_h^T \times \tilde{S}_h^T$, solving \eqref{fecheqn1}, \eqref{fecheqn2} for all $\phi_h \in S_h(t)$ and $t \in [0,T]$, along with the initial condition $\Udh(0) = U_{h,0}$.\\
	
	Moreover, this pair is such that
	\begin{equation}\label{discregbound}
		\sup_{t \in [0,T]} \Echdh[\Udh(t)] + \int_0^{T} \| \gradgh \Wdh (t) \|_{L^2(\Gamma_h(t))}^2 \, dt \leq C,
	\end{equation}
	where $C$ is a constant independent of $\delta$ and $h$.
\end{lemma}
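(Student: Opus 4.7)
My plan is to treat the regularised problem as a finite-dimensional ODE, establish local existence and uniqueness via Picard--Lindel\"of, and derive \eqref{discregbound} as an a priori bound that rules out blow-up and hence extends the solution to $[0,T]$. Writing $\Udh(t) = \sum_i U_i(t) \phi_i(t)$ and $\Wdh(t) = \sum_j W_j(t)\phi_j(t)$, and using $\matdev_h\phi_i = 0$, one has $\matdev_h\Udh = \sum_i \dot U_i\,\phi_i$, while the lumped-mass matrix is diagonal with strictly positive entries $M_j(t) = \int_{\Gamma_h(t)}\phi_j$. Equation \eqref{discregch2} then algebraically determines $(W_j)$ from $(U_i)$ in a $C^1$ fashion because $\fd \in C^1(\mathbb R)$; substituting into \eqref{discregch} produces a first-order ODE for $(U_j)$ with $C^1$ right-hand side, so Picard--Lindel\"of gives a unique local solution.

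For the energy identity, I would test \eqref{discregch} with $\phi_h = \Wdh$ and \eqref{discregch2} with $\phi_h = \matdev_h\Udh$, subtract, apply Proposition~\ref{transport3} to $\varepsilon\,a_h(\Udh,\matdev_h\Udh)$, and differentiate the nodal expansion $\int_{\Gamma_h(t)} I_h F^\delta(\Udh) = \sum_j F^\delta(U_j) M_j$ directly (using $\dot M_j = g_h(\phi_j,1)$ together with $(F^\delta)'(r) = \tfrac{\theta}{2}\fd(r) - r$) to arrive at
\[
\frac{d}{dt}\Echdh[\Udh;t] + \|\gradgh\Wdh\|^2_{L^2(\Gamma_h(t))} = \tfrac{\varepsilon}{2} b_h(\Udh,\Udh) + \tfrac{1}{\varepsilon} g_h(I_h F^\delta(\Udh),1) - g_h(I_h(\Udh\Wdh),1).
\]
The first two right-hand-side terms are controlled by $C(1+\Echdh)$ using the smoothness of $V_h$, a $\delta$-independent lower bound on $F^\delta$, and the assumption $\gradgh\cdot V_h \ge 0$.

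The main obstacle is the cross-term $g_h(I_h(\Udh\Wdh),1)$, because the energy controls $\|\gradgh\Wdh\|_{L^2}$ but not $\|\Wdh\|_{L^2}$. I split $\Wdh = (\Wdh - \bar{\Wdh}) + \bar{\Wdh}$; by \eqref{intmbound3} and a discrete Poincar\'e inequality the oscillatory part contributes at most $\tfrac{1}{2}\|\gradgh\Wdh\|^2_{L^2(\Gamma_h(t))} + C\|\Udh\|^2_{L^2(\Gamma_h(t))}$, which is absorbed into the dissipation. Testing \eqref{discregch2} with $\phi_h = 1$ identifies $|\Gamma_h(t)|\bar{\Wdh} = \tfrac{\theta}{2\varepsilon}\int_{\Gamma_h(t)} I_h\fd(\Udh) - \tfrac{1}{\varepsilon}\int_{\Gamma_h(t)}\Udh$, and testing \eqref{discregch} with $\phi_h = 1$ shows that $\int_{\Gamma_h(t)}\Udh$ is conserved, so the assumption $\gradgh\cdot V_h \ge 0$ gives $|\bar{\Udh}(t)| \le |\bar U_{h,0}| \le 1-\alpha$ for some fixed $\alpha > 0$, uniformly in $t$. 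The delicate step is then to establish
\[
\|I_h\fd(\Udh)\|_{L^1(\Gamma_h(t))} \le C\bigl(1 + \intm(I_h\fd(\Udh),\Udh - \bar{\Udh})\bigr),
\]
with $C$ depending on $\alpha$ but not on $h,\delta$; this adapts the Barrett--Blowey argument \cite{barrett1995error} using the monotonicity, odd symmetry and superlinear growth of the logarithmic part of $\fd$ together with $|\bar{\Udh}| \le 1 - \alpha$. The inner product on the right is in turn controlled by testing \eqref{discregch2} with the mean-zero function $\Udh - \bar{\Udh}$, applying Poincar\'e on $\Wdh - \bar{\Wdh}$ and using Young's inequality.

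Combining everything produces a Gr\"onwall-ready inequality
\[
\frac{d}{dt}\Echdh[\Udh;t] + \tfrac{1}{2}\|\gradgh\Wdh\|^2_{L^2(\Gamma_h(t))} \le C\bigl(1 + \Echdh[\Udh;t]\bigr),
\]
with $C$ independent of $h,\delta$. Since $U_{h,0}\in\mathcal I_{h,0}$ guarantees $\Echdh[\Udh;0] \le C$, Gr\"onwall's inequality delivers \eqref{discregbound}, precluding blow-up of the ODE and so extending the unique solution to all of $[0,T]$.
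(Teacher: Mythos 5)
Your local existence argument and your derivation of the energy identity are essentially the paper's (nodal ODE system, Picard--Lindel\"of, testing with $\Wdh$ and $\matdev_h\Udh$, transport of the lumped energy), but your treatment of the cross term $g_h(I_h(\Udh\Wdh),1)$ contains a genuine gap: the chain of estimates you describe does not produce a \emph{linear} Gr\"onwall inequality. Splitting $\Wdh$ into mean plus fluctuation, the fluctuation part is fine, but the mean part forces you through $|\mval{\Wdh}{\Gamma_h(t)}| \leq C\bigl(1+\|I_h\fd(\Udh)\|_{L^1(\Gamma_h(t))}+\|\Udh\|_{L^1(\Gamma_h(t))}\bigr)$ and then, via the Barrett--Blowey step and testing \eqref{discregch2} with $\Udh-\mval{\Udh}{\Gamma_h(t)}$, through
\[
\intm\bigl(I_h\fd(\Udh),\Udh-\mval{\Udh}{\Gamma_h(t)}\bigr)\ \leq\ C\|\gradgh\Wdh\|_{L^2(\Gamma_h(t))}\|\gradgh\Udh\|_{L^2(\Gamma_h(t))}+C\bigl(1+\Echdh[\Udh]\bigr).
\]
Multiplying this by the remaining factor $|g_h(\Udh,1)|\leq C(1+\Echdh[\Udh])^{1/2}$ yields a term of the form $C\|\gradgh\Wdh\|_{L^2(\Gamma_h(t))}(1+\Echdh[\Udh])$, and after absorbing $\|\gradgh\Wdh\|^2$ into the dissipation by Young's inequality you are left with $C(1+\Echdh[\Udh])^2$ (plus a $(1+\Echdh)^{3/2}$ term). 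This is a Riccati-type inequality: it gives a bound only on a time interval determined by the data, not the claimed $\delta$- and $h$-uniform bound \eqref{discregbound} on all of $[0,T]$. There is no obvious way to repair this within your route, because at this stage you have no $\delta$-independent pointwise-in-time control of $\|I_h\fd(\Udh)\|_{L^1}$ or $\|\Udh\|_{L^1}$ other than through the energy itself.

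The paper avoids this entirely by using the assumption $\gradgh\cdot V_h\geq 0$ \emph{structurally}, not just for the mean-value bound. Since mass lumping makes $\bar{M}$ and $\bar{G}$ diagonal, one can substitute for $\Wdh$ nodally in the cross term, $\alpha^\delta\cdot\bar{G}\beta^\delta=\alpha^\delta\cdot\bar{G}\bar{M}^{-1}\bigl[\varepsilon A\alpha^\delta+\tfrac{\theta}{2\varepsilon}\bar{M}\fd(\alpha^\delta)-\tfrac{1}{\varepsilon}\bar{M}\alpha^\delta\bigr]$, and the singular contribution becomes $\tfrac{\theta}{2\varepsilon}g_h(I_h(\Udh\fd(\Udh)),1)\geq 0$ because $r\fd(r)\geq 0$ and $\bar{G}$ has nonnegative diagonal entries; it therefore sits on the correct side of \eqref{energypf1} and can simply be dropped, so no $L^1$ bound on $\fd(\Udh)$ is needed here at all. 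The remaining pieces are then linear in the energy: $\alpha^\delta\cdot\bar{G}\alpha^\delta$ directly, and $\varepsilon\,\alpha^\delta\cdot\bar{G}\bar{M}^{-1}A\alpha^\delta$ via the auxiliary function $\widetilde{\Udh}$ and the $L^2$-projection $\Lambda_h$, where the $h^{-1}$ loss from the inverse estimate \eqref{laplacian inverse inequality} is compensated by factors of $h$ from \eqref{intmbound3}, the interpolation error $V_h=I_hV^{-\ell}$, and \eqref{l2 projection bound2}. You should also note that the $\delta$- and $h$-independence of $\Echdh[U_{h,0}]$ does not follow from membership in $\mathcal{I}_{h,0}$ alone; the paper uses the approximation property of $U_{h,0}$ and an explicit uniform-in-$\delta$ bound on $\int_{\Gamma_h(0)}I_hF^\delta(U_{h,0})$.
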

\begin{proof}
	We firstly show the short time existence of the functions $\Udh, \Wdh$.
	We enumerate the finite element basis functions as $(\phi_j(t))_{j=1}^{N_h}$ and express the functions $\Udh, \Wdh$ as
	$$ \Udh(t) = \sum_{j=1}^{N_h} \alpha_j^{\delta}(t) \phi_j(t), \quad \Wdh(t) = \sum_{j=1}^{N_h} \beta_j^{\delta}(t) \phi_j(t).$$
	Then the system of ODEs above can be written as
	\begin{gather*}
		\bar{M}(t)\frac{d}{dt} \alpha^{\delta}(t) + \bar{G}(t)\alpha^\delta(t) + A(t) \beta^{\delta}(t) = 0,\\
		\bar{M}(t) \beta^\delta(t) = \varepsilon A(t)\alpha^{\delta}(t) + \frac{\theta}{2 \varepsilon} \mathcal{F}^{\delta}(t;\alpha^{\delta}(t)) - \frac{1}{\varepsilon} \bar{M}(t)\alpha^{\delta}(t).
	\end{gather*}
	Here $\alpha^{\delta}, \beta^{\delta}$ are the vectors of coefficients, and the other terms are given by
	\begin{gather*}
		\bar{M}_{ij}(t) = \intm(\phi_i(t),\phi_j(t)), \quad \bar{G}_{ij}(t) = g_h(I_h(\phi_i(t)\phi_j(t)),1), \quad A_{ij}(t) = a_{h}(\phi_i(t),\phi_j(t)),\\
		\mathcal{F}^{\delta}(t;\alpha^{\delta}(t))_j = \intm \left(I_h\fd \left( \sum_{i=1}^{N_h} \alpha_i^{\delta}(t) \phi_i(t) \right), \phi_j(t)\right) = \intm \left( \sum_{i=1}^{N_h} \fd(\alpha_i^{\delta}(t)) \phi_i(t), \phi_j(t)\right).
	\end{gather*}
	This system is then subject to the initial condition, $ \alpha^{\delta}(0) = \alpha_0,$
	where $U_{h,0} = \sum_{j=1}^{N_h} \alpha_{0,j}\phi_j(0)$.
    We note that the use of mass-lumping here means that both the matrices $\bar{M}(t), \bar{G}(t)$ are diagonal.
	Noting that $\bar{M}(t)$ is positive definite, we can decouple the equations for a single ODE for $\alpha^{\delta}(t)$.
	As $\fd$ is $C^1$ by construction we observe that $\mathcal{F}^\delta(t;\cdot)$ is locally Lipschitz, and we can apply standard ODE theory to obtain short time existence of a unique $\alpha^{\delta}$ and $\beta^{\delta}$.
	
	Next we show that $\Udh, \Wdh$ exist on $[0,T]$ by showing the bound \eqref{discregbound}. 
	We start by differentiating the regularised energy for
	\begin{multline*}
		\frac{d}{dt} \Echdh[\Udh] = \varepsilon a_{h}\left(\matdev_h{\Udh},\Udh \right) + \frac{\varepsilon}{2} b_h(\Udh,\Udh) + \frac{\theta}{2\varepsilon} \intm \left(I_h \fd(\Udh),\matdev_h{\Udh} \right) - \frac{1}{\varepsilon} \intm \left(\Udh,\matdev_h{\Udh} \right)\\
        + \frac{1}{\varepsilon} g_h \left(I_h F^\delta(\Udh),1 \right),
	\end{multline*}
    where we have used Proposition \ref{transport3} and Lemma \ref{transport5}.
	Now testing \eqref{discregch} with $\Wdh$, and \eqref{discregch2} with $\matdev_h{\Udh}$ we see
	\begin{gather*}
		\intm \left(\matdev_h \Udh, \Wdh \right) = -g_h(I_h(\Udh\Wdh),1) - a_{h}(\Wdh,\Wdh),\\
		\intm \left(\Wdh,\matdev_h{\Udh}\right) = \varepsilon a_{h}\left(\matdev_h{\Udh}, \Udh \right) + \frac{\theta}{2\varepsilon} \intm \left(I_h \fd(\Udh), \matdev_h{\Udh} \right) - \frac{1}{\varepsilon} \intm \left(\Udh, \matdev_h{\Udh} \right),
	\end{gather*}
	from which we obtain
	\begin{align}
		\frac{d}{dt} \Echdh[\Udh] + \| \gradgh \Wdh \|_{L^2(\Gamma_h(t))}^2 + g_h(I_h(\Udh\Wdh),1) = \frac{\varepsilon}{2} b_h(\Udh,\Udh) +\frac{1}{\varepsilon}g_h \left(I_h F^\delta(\Udh),1 \right). \label{energypf1}
	\end{align}
	It is straightforward to see from the smoothness assumption on $V$ that
	\[ \frac{\varepsilon}{2} b_h(\Udh,\Udh) + \frac{1}{\varepsilon}g_h \left(I_h F^\delta(\Udh),1 \right) \leq C + C \Echdh[\Udh]. \]
	All that remains is to deal with the $g_h(I_h(\Udh\Wdh),1)$ term, and this is where we use the assumption that $\gradgh \cdot V_h \geq 0$.
	Returning to the matrix formulation, we see that $g_h(I_h(\Udh\Wdh),1) = \alpha^\delta \cdot \bar{G} \beta^\delta$, which we can write independent of $\beta^\delta$ as
	\[ \alpha^\delta \cdot \bar{G} \beta^\delta = \alpha^\delta \cdot \bar{G} \bar{M}^{-1} \left[ \varepsilon A\alpha^{\delta} + \frac{\theta}{2\varepsilon} \mathcal{F}^{\delta}(\alpha^{\delta}) - \frac{1}{\varepsilon} \bar{M}\alpha^{\delta}  \right],\]
    where the plan is to show the first and third terms are bounded above (in absolute value) and the second term is bounded below.
	Under the assumption that $\gradgh \cdot V_h  \geq 0$ one finds that $\alpha^\delta \cdot \bar{G} \bar{M}^{-1} \mathcal{F}^\delta(\alpha^\delta) \geq 0$.
	To see this is true we first observe, by definition of $\mathcal{F}^\delta$, that we may write $\mathcal{F}^\delta(\alpha^\delta) = \bar{M} f^\delta(\alpha^\delta)$, where we abuse notation and understand $f^\delta(\alpha^\delta)$ as a vector with entries $f^\delta(\alpha^\delta_i)$.
    This is justified as
    \[\mathcal{F}^\delta(t;\alpha^\delta(t))_j = \intm \left( \sum_{i=1}^{N_h} \fd(\alpha_i^{\delta}(t)) \phi_i(t), \phi_j(t)\right) = \intm \left(\fd(\alpha_j^{\delta}(t)) \phi_j(t), \phi_j(t)\right) = (\bar{M}(t) \fd(\alpha^\delta(t)))_j,\]
    since $\intm(\phi_i(t),\phi_j(t)) = 0$ for $i \neq j$.
	Hence we find that
    \[ \alpha^\delta \cdot \bar{G} \bar{M}^{-1} \mathcal{F}^\delta(\alpha^\delta) = \alpha^\delta \cdot \bar{G} \fd(\alpha^\delta) = g_h(I_h(\Udh \fd(\Udh)),1) \geq 0 \]
	where we have used the fact that $r f^\delta(r) \geq 0$ and the assumption $\gradgh \cdot V_h \geq 0$.
    It remains to show upper bounds on the other terms, namely $\varepsilon \alpha^\delta \cdot \bar{G} \bar{M}^{-1} A\alpha^{\delta}$ and $\alpha^\delta \cdot \bar{G} \alpha^\delta$.
    We firstly consider the second of these since it is more obvious.
    By definition
    \[\alpha^\delta \cdot \bar{G}{\alpha}^\delta = g_h(I_h((\Udh)^2),1),\]
    and using the smoothness of $V$ one finds 
    \[ |g_h(I_h((\Udh)^2),1)| \leq C \|\Udh\|_{h,t}^2 \leq C + C \Echdh[\Udh],\]
    where we have used the Poincar\'e inequality\footnote{As is typical for the Cahn-Hilliard equation, we also have used the mass conservation property $\int_{\Gamma_h(t)} \Udh(t) = \int_{\Gamma_h(0)} U_{h,0}$, which follows from testing \eqref{discregch} with $\phi_h = 1$.}.

    The more involved term is $\varepsilon \alpha^\delta \cdot \bar{G} \bar{M}^{-1} A\alpha^{\delta}$.
    For this we require two new objects.
    Firstly, we define $\widetilde{\Udh} \in S_h(t)$ to be the unique solution of
    \[ \intm(\widetilde{\Udh}, \phi_h) = a_h(\Udh, \phi_h), \]
    for all $\phi_h \in S_h(t)$.
    We note that such a function is bounded with
    \begin{align}
    \|\widetilde{\Udh}\|_{h,t} \leq \frac{C}{{h}}\|\gradgh\Udh\|_{L^2(\Gamma_h(t))},
        \label{laplacian inverse inequality}
    \end{align}
    where we have used an inverse inequality, and Young's inequality.
    If we denote the vector of nodal values of $\widetilde{\Udh}$ by $\widetilde{\alpha^\delta}$, one finds that $\widetilde{\alpha^\delta} = \bar{M}^{-1} {A} {\alpha^\delta}$, and hence we may write
    \[ \varepsilon \alpha^\delta \cdot \bar{G} \bar{M}^{-1} A\alpha^{\delta} = \varepsilon g_h(I_h(\widetilde{\Udh}\Udh),1). \]

    Next we introduce the $L^2$ projection onto $S_h(t)$.
    For $\psi \in L^2(\Gamma_h(t))$ we define $\Lambda_h (\psi) \in S_h(t)$ to be the unique solution of
    \[ m_h(\Lambda_h (\psi), \phi_h) = m_h(\psi, \phi_h), \]
    for all $\phi_h \in S_h(t)$.
    If one has that $\psi \in H^1(\Gamma_h(t))$ then it is known (see \cite{ern2021finite,elliott2015evolving}) that since our triangulation is uniformly quasi-uniform
    \begin{gather}
        \|\Lambda_h \psi \|_{H^1(\Gamma_h(t))} \leq C \|\psi\|_{H^1(\Gamma_h(t))}, \label{l2 projection bound1}\\
        \|\Lambda_h \psi - \psi \|_{L^2(\Gamma_h(t))} \leq C h\|\psi\|_{H^1(\Gamma_h(t))} \label{l2 projection bound2},
    \end{gather}
    for constants independent of $t$.
    To bound $g_h(I_h(\widetilde{\Udh}\Udh),1)$ we write
    \begin{multline*}
        |g_h(I_h(\widetilde{\Udh}\Udh),1)| \leq |g_h(I_h(\widetilde{\Udh}\Udh),1) - g_h(\widetilde{\Udh}, \Udh)| + |g_h(\widetilde{\Udh}, \Udh) - m_h(\widetilde{\Udh}, \Udh (\gradg \cdot V)^{-\ell})|\\
        + |m_h(\widetilde{\Udh}, \Udh (\gradg \cdot V)^{-\ell}) - m_h(\widetilde{\Udh}, \Lambda_h(\Udh (\gradg \cdot V)^{-\ell}))| + |a_h({\Udh}, \Lambda_h(\Udh (\gradg \cdot V)^{-\ell}))|,
    \end{multline*}
    where we have used the definition of $\widetilde{\Udh}$ in the last term.
    Now we use \eqref{intmbound3} and an inverse inequality to see that
    \[ |g_h(I_h(\widetilde{\Udh}\Udh),1) - g_h(\widetilde{\Udh}, \Udh)| \leq Ch \|\widetilde{\Udh}\|_{L^2(\Gamma_h(t))} \|\gradgh \Udh \|_{L^2(\Gamma_h(t))} \leq C \|\gradgh \Udh \|_{L^2(\Gamma_h(t))}^2.\]
    Next we use the fact that $V_h = I_h V$, and \eqref{interpolation2} (as well as \eqref{lift2} where needed) to see that
    \begin{align*}
    |g_h(\widetilde{\Udh}, \Udh) - m_h(\widetilde{\Udh}, \Udh (\gradg \cdot V)^{-\ell})| & \leq Ch \|\widetilde{\Udh} \|_{L^2(\Gamma_h(t))} \|\Udh\|_{L^2(\Gamma_h(t))}\\
    & \leq C \|\gradgh \Udh \|_{L^2(\Gamma_h(t))} \|\Udh\|_{L^2(\Gamma_h(t))},
    \end{align*}
    where we have again used \eqref{laplacian inverse inequality} for the final inequality.
    Finally for the last two terms we use \eqref{l2 projection bound1} and \eqref{l2 projection bound2}, as well as the smoothness of $V$, respectively to see that
    \begin{gather*}
        |m_h(\widetilde{\Udh}, \Udh (\gradg \cdot V)^{-\ell}) - m_h(\widetilde{\Udh}, \Lambda_h(\Udh (\gradg \cdot V)^{-\ell}))| \leq C \|\Udh\|_{H^1(\Gamma_h(t))}^2 \\
        |a_h({\Udh}, \Lambda_h(\Udh (\gradg \cdot V)^{-\ell})| \leq C \| \Udh\|_{H^1(\Gamma_h(t))}^2.
    \end{gather*}
    All in all, using these bounds in conjunction with the Poincar\'e inequality one obtains a bound of the form
    \[ |g_h(I_h(\widetilde{\Udh}\Udh),1)| \leq C + C \Echdh[\Udh]. \]
	Combining these facts together, and using the Poincar\'e inequality as appropriate, in \eqref{energypf1} yields
	\begin{align}
		\frac{d}{dt} \Echdh[\Udh] + \| \gradgh \Wdh \|_{L^2(\Gamma_h(t))}^2 \leq C + C \Echdh[\Udh], \label{energypf2}
	\end{align}
	where one concludes by using a Gr\"onwall inequality.
	
	Lastly note $\Echdh[U_{h,0}]$ can be bounded independent of $h$ by taking lifts onto the surface $\Gamma_0$ and using $\left\| u_0 - U_{h,0}^\ell \right \|_{H^1(\Gamma_0)} \leq Ch$, for some function $u_0 \in \mathcal{I}_0$.
	It remains to see that the bound is independent of $\delta$.
	For this we note that, by the definition of $F^\delta$
	\begin{multline*}
		\int_{\Gamma_h(0)} I_h F^\delta(U_{h,0}) = \int_{\{|U_{h,0}|< 1 - \delta\}} I_h F_{\log}(U_{h,0})
		+ \int_{\{U_{h,0} \geq 1- \delta\}} \sum_{k=0}^2 \frac{F_{\log}^{(k)}(1-\delta)}{k!}I_h\left((U_{h,0}-1+\delta)^k\right)\\
		+ \int_{\{U_{h,0} \leq -1 + \delta\}} \sum_{k=0}^2 \frac{F_{\log}^{(k)}(-1+\delta)}{k!}I_h\left((U_{h,0}+1-\delta)^k\right) + \int_{\Gamma_h(0)} \frac{1 - I_h\left((U_{h,0})^2\right)}{2}
	\end{multline*}
	and so we bound these first three terms.
	Since $U_{h,0} \in \mathcal{I}_{h,0}$, we see $|U_{h,0}| \leq 1$, and one clearly has 
	\[ \int_{\{|U_{h,0}|< 1 - \delta\}} I_hF_{\log}(U_{h,0}) \leq |\Gamma_h(0)|\sup_{r \in [-1,1]} F_{\log}(r) < \infty, \]
	and for the terms involving derivatives one finds that
	\begin{multline*}
	\int_{\{U_{h,0} \geq 1- \delta\}} \sum_{k=0}^2 \frac{F_{\log}^{(k)}(1-\delta)}{k!}I_h(U_{h,0}-1+\delta)^k\\
 \leq \frac{|\Gamma_h(0)|}{2} \left( \sup_{r \in [-1,1]} F_{\log}(r) + \delta \log\left(\frac{2 - \delta}{ \delta}\right) + \frac{ 4 \delta^2}{2\delta - \delta^2} \right),
	\end{multline*}
	which is finite uniformly bounded for $\delta \in (0,1)$.
	The bound for the integral over $\{U_{h,0} \leq -1 + \delta\}$ is similar.
\end{proof}
\begin{remark}
	The assumption $\gradgh \cdot V_h \geq 0$ was required here as adapting the argument of \cite{caetano2023regularization} would be analogous to taking the limit $h\rightarrow0$ then $\delta \rightarrow 0$.
	We expect that one can drop this assumption and expand the analysis presented here to a wider class of evolving surfaces.
\end{remark}

We use this bound to show a uniform bound for $\matdev_h{\Udh}$ in $L^2_{H^{-1}}$.

\begin{lemma}
	\label{semi discrete derivative lemma}
	For $\delta \in (0,1)$ we have that
	$$ \int_0^T \left\| \matdev_h{\Udh} \right\|_{H^{-1}(\Gamma_h(t))}^2 \leq C$$
	for a constant $C$ independent of $\delta$ and $h$.
\end{lemma}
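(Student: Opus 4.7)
The plan is to bound $\matdev_h \Udh$ by duality, testing the first discrete equation \eqref{discregch} against a discrete inverse Laplacian. Since $\matdev_h \Udh \in S_h(t)$, it suffices to work with a discrete $H^{-1}$-type norm equivalent to $\|\cdot\|_{H^{-1}(\Gamma_h(t))}$ on $S_h(t)$ uniformly in $t,h$ (the uniform quasi-uniformity assumption ensures this). I would decompose $\matdev_h \Udh = \bar c_h + \widetilde{U}_h$, where $\bar c_h := \intm(\matdev_h \Udh, 1)/\intm(1,1)$ is the lumped mean and $\widetilde{U}_h := \matdev_h \Udh - \bar c_h$ has zero lumped mean.

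For the constant piece, testing \eqref{discregch} with $\phi_h = 1$ (so that $a_h(\Wdh,1)=0$) gives $\intm(\matdev_h \Udh, 1) = -g_h(\Udh,1)$, whence $|\bar c_h| \leq C\|\Udh\|_{L^2(\Gamma_h(t))}$ by the smoothness of $V$ together with \eqref{intmbound0}. For the mean-zero part, I employ the discrete inverse Laplacian $\intinvsh$ from Appendix \ref{invlaps}, characterised by $a_h(\intinvsh \psi_h, \phi_h) = \intm(\psi_h, \phi_h)$ for every $\phi_h \in S_h(t)$ whenever $\intm(\psi_h, 1) = 0$. Testing \eqref{discregch} with $\phi_h = \intinvsh \widetilde{U}_h$, the left-hand side becomes $\intm(\widetilde{U}_h, \intinvsh \widetilde{U}_h) = \inormh{\widetilde{U}_h}^2$, while the right-hand side is controlled using the remark following Lemma \ref{intmbounds} and Cauchy--Schwarz by
\[ C\bigl(\|\Udh\|_{L^2(\Gamma_h(t))} + \|\gradgh \Wdh\|_{L^2(\Gamma_h(t))}\bigr) \|\gradgh \intinvsh \widetilde{U}_h\|_{L^2(\Gamma_h(t))}. \]
Since $\|\gradgh \intinvsh \widetilde{U}_h\|_{L^2(\Gamma_h(t))} = \inormh{\widetilde{U}_h}$, dividing yields $\inormh{\widetilde{U}_h} \leq C\bigl(\|\Udh\|_{L^2(\Gamma_h(t))} + \|\gradgh \Wdh\|_{L^2(\Gamma_h(t))}\bigr)$.

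Combining both pieces, squaring, and integrating in time gives
\[ \int_0^T \|\matdev_h \Udh\|_{H^{-1}(\Gamma_h(t))}^2 \leq C \int_0^T \bigl(\|\Udh\|_{L^2(\Gamma_h(t))}^2 + \|\gradgh \Wdh\|_{L^2(\Gamma_h(t))}^2\bigr), \]
and the right-hand side is bounded uniformly in $h,\delta$ by Lemma \ref{regfeexist}: the discrete Ginzburg--Landau estimate, together with mass conservation and Poincar\'e, controls $\Udh$ in $L^\infty_{H^1}$, while $\gradgh \Wdh$ is directly controlled in $L^2_{L^2}$. The main technical obstacle is ensuring that $\inormh{\cdot}$ together with the scalar mean $\bar c_h$ recovers the full $\|\cdot\|_{H^{-1}(\Gamma_h(t))}$ norm with constants independent of $t,h,\delta$; this rests on the norm equivalences for the discrete inverse Laplacians deferred to Appendix \ref{invlaps} and on keeping careful track of the lumped-mass inner product throughout.
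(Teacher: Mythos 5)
Your proposal is correct and takes essentially the same route as the paper's proof: decompose $\matdev_h \Udh$ into its mean value (bounded by taking $\phi_h = 1$ in \eqref{discregch}) and a mean-zero part (bounded in $\inormh{\cdot}$ by testing with $\intinvsh$ of that part), then recover the $H^{-1}(\Gamma_h(t))$ bound via the inverse-Laplacian norm comparisons of Appendix \ref{invlaps} and conclude with the uniform energy bound \eqref{discregbound}.
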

\begin{proof}
	Firstly we note that $\matdev_h{\Udh}$ does not have mean value 0, and hence we cannot immediately use an inverse Laplacian.
	We do know however that it has a bounded mean value, as taking $\phi_h = 1$ in \eqref{discregch} one finds
	\[ \int_{\Gamma_h(t)} \matdev_h \Udh = g_h(\Udh,1) \leq C \|\Udh\|_{L^1(\Gamma_h(t))}, \]
	from which one can use \eqref{discregbound} to see that
	\begin{align}
		\int_{0}^T \left(\mval{\matdev_h \Udh}{\Gamma_h(t)}\right)^2 \leq C. \label{regderivative1}
	\end{align}
	Now by the definition of $\intinvsh$ from Appendix \ref{invlaps} one tests \eqref{discregch} with $\intinvsh\left( \matdev_h{\Udh} - \mval{\matdev_h \Udh}{\Gamma_h(t)} \right)$ for
	\begin{multline*}
		\inormh{\matdev_h{\Udh} - \mval{\matdev_h \Udh}{\Gamma_h(t)}}^2 = -a_h\left(\Wdh, \intinvsh \left(\matdev_h{\Udh} - \mval{\matdev_h \Udh}{\Gamma_h(t)}\right)\right)\\
		- g_h\left(I_h\left(\Udh\intinvsh \left(\matdev_h{\Udh} - \mval{\matdev_h \Udh}{\Gamma_h(t)}\right)\right),1\right).
	\end{multline*}
	Now using Young's inequality, Poincar\'e's inequality and \eqref{discregbound} it follows that
	\begin{align}
		\int_0^T \inormh{\matdev_h{\Udh} - \mval{\matdev_h \Udh}{\Gamma_h(t)}}^2 \leq C. \label{regderivative2}
	\end{align}
	Next, by using \eqref{invlapduality}, \eqref{invlapineq2}, \eqref{invlapineq3} one finds that
	\[\int_0^T \left\| \matdev_h{\Udh} \right\|_{H^{-1}(\Gamma_h(t))}^2 \leq C \int_{0}^T \left(\mval{\matdev_h \Udh}{\Gamma_h(t)}\right)^2 + C\int_0^T \inormh{\matdev_h{\Udh} - \mval{\matdev_h \Udh}{\Gamma_h(t)}}^2, \]
	and the result follows from \eqref{regderivative1}, \eqref{regderivative2}.
\end{proof}

Next we state a result showing that one can control the measure of the set of values such that $|\Udh| > 1$.
For brevity's sake we do not prove this, as it follows from minor adaptions to \cite[Lemma 5.8]{caetano2021cahn}.

\begin{lemma}
	\label{udhcontrol}
	There exists a constant, $C$, independent of $h,\delta$ such that
	\begin{equation}\label{udhextrem}
		\int_{\Gamma_{h}(t)} [-1 - \Udh(t)]_+ + \int_{\Gamma_{h}(t)} [\Udh(t) - 1]_+ \leq C \left(\frac{1}{|\log(\delta)|} + \delta\right),
	\end{equation}
	for all $t \in [0,T]$, where $[f]_+ := \max(0,f)$.
\end{lemma}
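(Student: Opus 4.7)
The plan is to extract the bound directly from the uniform energy estimate \eqref{discregbound} of Lemma \ref{regfeexist}, which in particular yields
\[ \int_{\Gamma_h(t)} I_h F^\delta(\Udh(t)) \leq C \]
with $C$ independent of $h, \delta, t$. The key structural fact is that on the outer branches of the regularised potential in \eqref{dpot} the linear piece $(1 \mp r)\log \delta$ produces growth of order $|\log \delta|$ in $[|r|-1]_+$. Expanding the definition for $r \geq 1$, noting that the two terms $(1+r)\log(2-\delta)$ and $\frac{(1+r)^2}{2(2-\delta)}$ are nonnegative, and writing $\frac{1-r^2}{2} = -\frac{(r-1)^2}{2} - (r-1)$ so that the remaining concave contribution is absorbed by the quadratic term $\frac{\theta(r-1)^2}{4\delta}$ for $\delta < \theta/2$, I obtain
\[ F^\delta(r) \geq (r-1)\left(\frac{\theta |\log \delta|}{2} - 1\right) - \frac{\theta}{2}, \]
and hence, for $\delta$ small enough and for all $r$ (by an analogous bound on the lower branch together with the uniform boundedness of $F^\delta$ on $[-1,1]$),
\[ F^\delta(r) \geq \frac{\theta|\log \delta|}{4}\bigl([r-1]_+ + [-1-r]_+\bigr) - C_0, \]
with $C_0$ independent of $\delta$.

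Evaluating at a node $x_i$ and using that for any continuous $g$ one has the lumped mass identity $\int_{\Gamma_h(t)} I_h g = \sum_i g(x_i) \int_{\Gamma_h(t)} \phi_i$, summing this pointwise nodal bound and combining with the energy estimate gives
\[ \int_{\Gamma_h(t)} I_h[\Udh - 1]_+ + \int_{\Gamma_h(t)} I_h[-1 - \Udh]_+ \leq \frac{C}{|\log \delta|}. \]
To pass from $I_h[\Udh - 1]_+$ to $[\Udh - 1]_+$ itself I would exploit convexity of $r \mapsto [r-1]_+$: writing $\Udh(x) = \sum_i \phi_i(x) \Udh(x_i)$ on a generic element and using $\phi_i \geq 0$, $\sum_i \phi_i \equiv 1$,
\[ [\Udh(x) - 1]_+ = \left[\sum_i \phi_i(x)\bigl(\Udh(x_i) - 1\bigr)\right]_+ \leq \sum_i \phi_i(x)\,[\Udh(x_i) - 1]_+ = I_h[\Udh - 1]_+(x), \]
and symmetrically for $[-1 - \Udh]_+$. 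Integrating these pointwise inequalities and combining with the previous step then yields the $1/|\log \delta|$ contribution in \eqref{udhextrem}.

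The additional $O(\delta)$ term arises from a sharper bookkeeping of the lower bound for $F^\delta$ on the outer branches: tracking the terms $\frac{(1\pm r)^2}{2(2-\delta)}$ and the transition contributions at $r = \pm(1-\delta)$ (rather than simply discarding them into $C_0$) produces $O(\delta)$-size pieces that convert to an additive $O(\delta)$ bound on the right-hand side. The principal obstacle, relative to the continuous argument in \cite[Lemma 5.8]{caetano2021cahn}, is verifying that the use of the lumped inner product $\intm$ and the Lagrange interpolant $I_h$ does not destroy the nodal-to-pointwise monotonicity and convexity step needed to recover $[\Udh \mp 1]_\pm$ from its interpolant; the convexity inequality above is exactly what resolves this.
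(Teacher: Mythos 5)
Your argument is correct and is essentially the adaptation of the cited continuous result (\cite[Lemma 5.8]{caetano2021cahn}) that the paper invokes: you use the $\delta$-uniform bound on $\int_{\Gamma_h(t)} I_h F^\delta(\Udh)$ coming from \eqref{discregbound}, the $|\log\delta|$-growth of $F^\delta$ outside $[-1,1]$, and you correctly supply the extra discrete step (nodal evaluation of the lumped integral together with the convexity inequality $[\Udh \mp 1]_{\pm} \leq I_h[\Udh \mp 1]_{\pm}$ for piecewise linear $\Udh$). In fact, for $\delta$ sufficiently small your estimate gives the stronger bound $C/|\log\delta|$, which already implies \eqref{udhextrem}, so the closing discussion about recovering an additional $O(\delta)$ term is not needed.
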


We now use this control to obtain uniform bounds for the potential term by adapting a proof in \cite{copetti1992numerical}.
\begin{lemma}
	\label{dpotbound}
	For sufficiently small $\delta$ we have that
	\begin{gather}\label{dpotbound1}
			\int_0^T \left\| I_h \fd(\Udh) - \mval{I_h \fd(\Udh)}{\Gamma_h(t)} \right\|_{L^2(\Gamma_h(t))}^2 \leq C,\\
			\label{dpotbound2}
			\int_0^T \| I_h \fd(\Udh) \|_{L^2(\Gamma_h(t))}^2 \leq C,
		\end{gather}
	where C denotes a constant independent of $\delta$ and $h$.
\end{lemma}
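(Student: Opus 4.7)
The plan is to prove \eqref{dpotbound1} and \eqref{dpotbound2} in sequence, with the second building on the first.

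For \eqref{dpotbound1}, define the (spatially constant) scalar $\lambda(t) := \mval{I_h \fd(\Udh(t))}{\Gamma_h(t)}$ and test \eqref{discregch2} with $\phi_h := I_h \fd(\Udh) - \lambda \in S_h(t)$. Two features make this choice productive: (i) since $\lambda$ is constant, $a_h(\Udh, \phi_h) = a_h(\Udh, I_h \fd(\Udh))$, which is non-negative by \eqref{acuteineq} applied to the monotone increasing $\fd$ and so can be dropped; and (ii) because $\int_{\Gamma_h(t)}\phi_h = 0$, a direct expansion gives the identity $\intm(I_h \fd(\Udh), \phi_h) = \normh{t}{\phi_h}^2$. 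Subtracting the (irrelevant) means from the first slot of $\intm(\Wdh, \phi_h)$ and $\intm(\Udh, \phi_h)$, applying Cauchy--Schwarz, Poincar\'e, and the norm equivalence \eqref{intmbound0}, one obtains
\[ \normh{t}{\phi_h} \leq C\left( \| \gradgh \Wdh \|_{L^2(\Gamma_h(t))} + \| \Udh \|_{L^2(\Gamma_h(t))} \right). \]
Squaring, integrating, and inserting the energy bound \eqref{discregbound} (together with Poincar\'e and mass conservation for $\Udh$) yields \eqref{dpotbound1}.

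For \eqref{dpotbound2}, it suffices to bound $\int_0^T \lambda^2\,dt \leq C$, since then \eqref{intmbound0} and the Pythagorean identity $\normh{t}{I_h \fd(\Udh)}^2 = \normh{t}{\phi_h}^2 + \lambda^2 |\Gamma_h(t)|$ deliver the claim. The idea is to extract a $\delta$-uniform bound on $\lambda$ from convexity of $F_{\log}^\delta$: since $(F_{\log}^\delta)' = \fd$, $F_{\log}^\delta \geq 0$, and the uniform bound $F_{\log}^\delta(\pm 1) \leq C$ follows by direct computation from \eqref{dpot}, convexity gives the pointwise inequalities $\fd(r)(1-r) \leq C$ and $-\fd(r)(1+r) \leq C$ for all $r \in \mathbb{R}$. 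Evaluating at the nodal values $r = \Udh(x_i)$ and summing with the lumped weights yields $\intm(I_h \fd(\Udh), \pm 1 - \Udh) \leq C|\Gamma_h(t)|$. Decomposing
\[ \intm(I_h \fd(\Udh), \Udh) = m_t \lambda |\Gamma_h(t)| + \intm(I_h \fd(\Udh) - \lambda, \Udh - m_t), \]
with $m_t := \mval{\Udh}{\Gamma_h(t)}$, and bounding the second term by $C\normh{t}{\phi_h}$ using Cauchy--Schwarz and the Poincar\'e--plus--energy control of $\|\Udh - m_t\|_{L^2(\Gamma_h(t))}$, one arrives at $\lambda(1 \mp m_t) \leq C + C\normh{t}{\phi_h}$. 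Mass conservation (testing \eqref{discregch} with $\phi_h \equiv 1$) together with the assumption $\gradgh \cdot V_h \geq 0$, which forces $|\Gamma_h(t)| \geq |\Gamma_h(0)|$, gives $|m_t| \leq |\mval{U_{h,0}}{\Gamma_h(0)}| \leq 1 - \beta$ for some $\beta > 0$ depending only on $u_0$. Hence $|\lambda| \leq C + C\normh{t}{\phi_h}$; squaring, integrating, and using \eqref{dpotbound1} closes the estimate.

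The principal obstacle is the second step. The direct testing argument giving \eqref{dpotbound1} controls only the fluctuation of $I_h \fd(\Udh)$ while $\fd$ itself becomes $\delta$-singular, so a separate ingredient is needed for the mean. The convexity inequalities for $F_{\log}^\delta$ supply the right $\delta$-uniform growth bound, and crucially it is the strict separation $|m_t|<1$---genuinely dependent on the geometric assumption $\gradgh \cdot V_h \geq 0$---that converts the one-sided bounds on $\lambda(1 \mp m_t)$ into a two-sided bound on $|\lambda|$.
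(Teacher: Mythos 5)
Your proof of \eqref{dpotbound1} is essentially the paper's own argument: test \eqref{discregch2} with $I_h \fd(\Udh) - \mval{I_h \fd(\Udh)}{\Gamma_h(t)}$, drop $a_h(\Udh, I_h\fd(\Udh)) \geq 0$ via the acuteness inequality \eqref{acuteineq}, and absorb the remaining terms using \eqref{intmbound0}, Poincar\'e and \eqref{discregbound}. Your treatment of the mean value in \eqref{dpotbound2}, however, is a genuinely different and correct route. The paper controls $\mval{I_h\fd(\Udh)}{\Gamma_h(t)}$ by first invoking the measure estimate \eqref{udhextrem} of Lemma \ref{udhcontrol} so that the hypotheses of the Copetti--Elliott measure lemma (Lemma \ref{measure}) hold, then splitting the mean-value integral over the sets where $\Udh$ is close to $\pm 1$ and absorbing a factor $\frac{4-\xi^2}{4}<1$ of $\|I_h\fd(\Udh)\|_{L^2}^2$ into the left-hand side. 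You instead exploit the structure of the regularisation: since $F_{\log}^{\delta}$ is convex, nonnegative, and satisfies $F_{\log}^{\delta}(\pm 1) \leq C$ uniformly in $\delta$ (a direct computation from \eqref{dpot}), the tangent-line inequality gives $\fd(r)(\pm 1 - r)\leq C$ for all $r$; this transfers through $I_h$ because the interpolant of a function with nodal values at most $C$ is itself at most $C$, and since vertex quadrature integrates linears exactly the decomposition $\intm(I_h\fd(\Udh),\Udh) = \lambda\, m_t\, |\Gamma_h(t)| + \intm(I_h\fd(\Udh)-\lambda, \Udh - m_t)$ is legitimate. Combined with $|m_t| \leq 1-\xi$ (mass conservation plus $|\Gamma_h(t)|\geq|\Gamma_h(0)|$, exactly as in the paper) and the energy bound on $\|\Udh - m_t\|_{L^2(\Gamma_h(t))}$, your one-sided estimates $\lambda(1\mp m_t) \leq C + C\normh{t}{I_h\fd(\Udh)-\lambda}$ close the argument. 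What each approach buys: yours bypasses Lemma \ref{udhcontrol} and Lemma \ref{measure} entirely and is more self-contained, at the price of relying on the specific properties of this regularisation ($F_{\log}^{\delta}\geq 0$ with uniformly bounded values at $\pm1$), which you should state and verify explicitly in a full write-up; the paper's route is more robust to the choice of regularised potential but needs the additional measure-theoretic machinery. Both arguments hinge on the same essential ingredient, the strict separation $|m_t|<1-\xi$ coming from mass conservation and the assumption $\gradgh\cdot V_h \geq 0$.
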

\begin{proof}[\underline{Proof of Lemma \ref{dpotbound}, part 1}]
	\ \newline
	To show the first claim we test \eqref{discregch2} with $\phi_h = I_h \fd (\Udh) - \mval{I_h \fd(\Udh)}{\Gamma_h(t)}$ to obtain 
	\begin{multline*} \frac{\theta}{2 \varepsilon} \intm \left(I_h\fd(\Udh), I_h \fd (\Udh) - \mval{I_h \fd(\Udh)}{\Gamma_h(t)}\right) =  \intm\left(\Wdh ,I_h \fd (\Udh) - \mval{I_h \fd(\Udh)}{\Gamma_h(t)}\right)\\
		- \varepsilon a_{h}\left(\Udh,I_h \fd (\Udh)\right) + \frac{1}{\varepsilon} \intm\left(\Udh,I_h \fd (\Udh) - \mval{I_h \fd(\Udh)}{\Gamma_h(t)}\right).
	\end{multline*}
	It is a straightforward computation to see that
	\begin{align*}
		\intm \left(I_h\fd(\Udh), I_h \fd (\Udh) - \mval{I_h \fd(\Udh)}{\Gamma_h(t)}\right) = \normh{t}{I_h\fd(\Udh)- \mval{I_h \fd(\Udh)}{\Gamma_h(t)}}^2.
	\end{align*}
	Now recalling the assumption that we have an evolving acute triangulation of $\Gamma_h(t)$ we use \eqref{acuteineq} to see that $ a_{h}(\Udh,I_h \fd (\Udh)) \geq 0$, and hence
	\begin{multline*}
		\normh{t}{I_h\fd(\Udh)- \mval{I_h \fd(\Udh)}{\Gamma_h(t)}}^2 \leq \frac{2}{\theta} \intm\left(\Udh,I_h \fd (\Udh) - \mval{I_h \fd(\Udh)}{\Gamma_h(t)}\right)\\
		+ \frac{2\varepsilon}{\theta} \intm\left(\Wdh - \mval{\Wdh}{\Gamma_h(t)} ,I_h \fd (\Udh) - \mval{I_h \fd(\Udh)}{\Gamma_h(t)}\right).	
	\end{multline*}
	Now by using a Young's inequality, and \eqref{intmbound0} one finds that
	 \[\normh{t}{I_h\fd(\Udh)- \mval{I_h \fd(\Udh)}{\Gamma_h(t)}}^2 \leq \frac{4}{\theta^2} \| \Udh \|_{L^2(\Gamma_h(t))}^2 + \frac{4\varepsilon^2}{\theta^2} \left\| \Wdh - \mval{\Wdh}{\Gamma_h(t)} \right\|_{L^2(\Gamma_h(t))}^2.\]
	Now by using the Poincar\'e inequality, integrating over $[0,T]$ and using the bounds from \eqref{discregbound} proves the first result.
\end{proof}
To prove the second result one first shows a uniform bound on $\mval{I_h \fd(\Udh)}{\Gamma_h(t)}$, and this requires the following preliminary result.

\begin{lemma}[{\cite[Lemma 2.4]{copetti1992numerical}}]
	\label{measure}
	Let	$\eta_h \in L^1(\Gamma_h(t))$ be such that there exist $0 < \xi, \xi' < 1$ such that
	\begin{gather*}
		\left| \mval{\eta_h}{\Gamma_h(t)} \right| < 1 - \xi,\\
		\frac{1}{|\Gamma_h(t)|} \left(\int_{\Gamma_h(t)} [-1-\eta_h]_+ + \int_{\Gamma_{h}(t)} [\eta_h - 1]_+ \right) < \xi'.
	\end{gather*}
	We define the sets
	$$ \Gamma_{h,\xi}^+(t) := \left\{ x \in \Gamma_h(t) \mid \eta_h > 1 - \frac{\xi}{2} \right\}, \quad \Gamma_{h,\xi}^-(t) := \left\{ x \in \Gamma_h(t) \mid \eta_h < - 1 + \frac{\xi}{2} \right\}.$$
	Then if $\xi' < \frac{\xi^2}{4}$ we have
	\begin{align*}
		|\Gamma_{h,\xi}^+(t)| < \left( 1 - \frac{\xi}{2} \right) |\Gamma_h(t)|, \quad |\Gamma_{h,\xi}^-(t)| < \left( 1 - \frac{\xi}{2} \right) |\Gamma_h(t)|.
	\end{align*}
\end{lemma}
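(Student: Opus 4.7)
By symmetry it suffices to prove the bound on $|\Gamma_{h,\xi}^+(t)|$; the corresponding estimate on $|\Gamma_{h,\xi}^-(t)|$ follows by applying the same argument to $-\eta_h$, using the other half of the mean-value hypothesis, $\mval{\eta_h}{\Gamma_h(t)} > -(1-\xi)$, and noting that the integral hypothesis is symmetric in $[\eta_h - 1]_+$ and $[-1 - \eta_h]_+$. The approach is by contradiction: assume $|\Gamma_{h,\xi}^+(t)| \geq (1-\xi/2)|\Gamma_h(t)|$ and derive a lower bound on $\mval{\eta_h}{\Gamma_h(t)}$ incompatible with $\mval{\eta_h}{\Gamma_h(t)} < 1-\xi$.

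The main step is the decomposition
\[\int_{\Gamma_h(t)} \eta_h = \int_{\Gamma_{h,\xi}^+(t)} \eta_h + \int_{\Gamma_h(t) \setminus \Gamma_{h,\xi}^+(t)} \eta_h.\]
On $\Gamma_{h,\xi}^+(t)$ the defining inequality $\eta_h > 1-\xi/2$ immediately gives $\int_{\Gamma_{h,\xi}^+(t)} \eta_h > (1-\xi/2)|\Gamma_{h,\xi}^+(t)|$. On the complement I use the pointwise identity $\eta_h = \max(\eta_h,-1) - [-1-\eta_h]_+$, together with the $L^1$-hypothesis $\int_{\Gamma_h(t)} [-1-\eta_h]_+ \leq \xi'|\Gamma_h(t)|$, to obtain the lower bound
\[\int_{\Gamma_h(t) \setminus \Gamma_{h,\xi}^+(t)} \eta_h \geq -|\Gamma_h(t) \setminus \Gamma_{h,\xi}^+(t)| - \xi'|\Gamma_h(t)|.\]

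Substituting the contradiction hypothesis $|\Gamma_{h,\xi}^+(t)| \geq (1-\xi/2)|\Gamma_h(t)|$, and the complementary bound $|\Gamma_h(t) \setminus \Gamma_{h,\xi}^+(t)| \leq (\xi/2)|\Gamma_h(t)|$, produces an explicit lower bound on $\mval{\eta_h}{\Gamma_h(t)}$ that is quadratic in $\xi$ minus the linear term $\xi'$. Comparing this with the upper hypothesis $\mval{\eta_h}{\Gamma_h(t)} < 1-\xi$ produces a scalar inequality in which the threshold $\xi' < \xi^2/4$ is exactly what is needed to close the argument and yield the contradiction. The argument for $\Gamma_{h,\xi}^-$ is then recovered by the symmetry observation above.

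\paragraph{Main obstacle.} The decomposition above is sharp, so there is very little slack in the estimates: one must carefully track the strict inequalities throughout (in particular the strict $>$ in the definition of $\Gamma_{h,\xi}^+$ and the strict $<$ in the mean-value hypothesis) in order to recover the strict set inequality in the conclusion. The most delicate point is verifying that the quantitative threshold $\xi' < \xi^2/4$ is precisely what the resulting quadratic-in-$\xi$ comparison demands; any weaker control on $\xi'$ would fail to rule out configurations in which $\eta_h$ is concentrated near $1$ on most of $\Gamma_h(t)$ and balanced by a thin region of large negative values.
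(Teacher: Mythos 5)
You cannot be compared against an internal proof here: the paper quotes this lemma from Copetti--Elliott without proving it. Judged on its own terms, your plan (split the mean value over $\Gamma_{h,\xi}^+(t)$ and its complement, use $\eta_h>1-\xi/2$ on the former and $\eta_h\ge-1-[-1-\eta_h]_+$ on the latter, and contradict the mean-value hypothesis) is the natural and essentially only route, but the arithmetic you defer to your ``main obstacle'' paragraph does not close. Writing $a:=|\Gamma_{h,\xi}^+(t)|/|\Gamma_h(t)|$, your two bounds give
\begin{align*}
\mval{\eta_h}{\Gamma_h(t)} \;>\; \Bigl(1-\tfrac{\xi}{2}\Bigr)a-(1-a)-\xi',
\end{align*}
and under the contradiction hypothesis $a\ge 1-\xi/2$ the right-hand side is bounded below by $\bigl(1-\tfrac{\xi}{2}\bigr)^2-\tfrac{\xi}{2}-\xi'=1-\tfrac{3\xi}{2}+\tfrac{\xi^2}{4}-\xi'$. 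To contradict $\mval{\eta_h}{\Gamma_h(t)}<1-\xi$ you would need $\xi'\le\tfrac{\xi^2}{4}-\tfrac{\xi}{2}$, which is negative, so no positive $\xi'$ (in particular none with $\xi'<\xi^2/4$) yields a contradiction; the claim that $\xi'<\xi^2/4$ is ``exactly what is needed'' is where the proof breaks. What your decomposition does prove is a weaker version: assuming $a>1-\gamma$ forces $\mval{\eta_h}{\Gamma_h(t)}>1-\tfrac{\xi}{2}-\xi'-\gamma(2-\tfrac{\xi}{2})$, so one gets $|\Gamma_{h,\xi}^{\pm}(t)|\le(1-\gamma)|\Gamma_h(t)|$ for $\gamma=\frac{\xi/2-\xi'}{2-\xi/2}$; in particular, under the cleaner hypothesis $\xi'\le\xi^2/8$ this gives exactly the factor $1-\xi/4$.

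The discrepancy is not slack in your estimates: with the constants as printed the conclusion itself fails. Take $\eta_h=1-\xi/4$ on a set of measure $(1-\tfrac{\xi}{2}+s)|\Gamma_h(t)|$ and $\eta_h=-1-\xi/4$ on the remainder, with $0<s<\xi/8$ and, say, $\xi\le 8/9$. Then $\mval{\eta_h}{\Gamma_h(t)}=1-\tfrac{5\xi}{4}+2s$ lies strictly between $-(1-\xi)$ and $1-\xi$, while $[\eta_h-1]_+\equiv 0$ and $\frac{1}{|\Gamma_h(t)|}\int_{\Gamma_h(t)}[-1-\eta_h]_+=\tfrac{\xi}{4}\bigl(\tfrac{\xi}{2}-s\bigr)<\tfrac{\xi^2}{8}$, so both hypotheses hold with some $\xi'<\xi^2/4$, yet $|\Gamma_{h,\xi}^+(t)|>(1-\tfrac{\xi}{2})|\Gamma_h(t)|$. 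So the statement can only be recovered after adjusting the constants (for instance $\xi'\le\xi^2/8$ with conclusion $1-\xi/4$, as above), and these should be checked against the original reference. For the way the lemma is used in the proof of Lemma \ref{dpotbound} this is harmless: any fixed factor strictly below one suffices there, after re-choosing the Young's-inequality weight, at the cost of modified constants.
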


For $\eta_h = \Udh$, we find that the first condition holds as $U_{h,0} \in \mathcal{I}_{h,0}$, and hence by using the mass conservation property 
\[\left|\mval{\Udh(t)}{\Gamma_h(t)}\right| = \frac{|\Gamma_h(0)|}{|\Gamma_h(t)|}\left|\mval{U_{h,0}}{\Gamma_h(0)}\right| < (1 - \xi)\frac{|\Gamma_h(0)|}{|\Gamma_h(t)|} \leq 1-\xi,\]
for some $\xi > 0$.
Here we have used the fact that $|\Gamma_h(0)| \leq |\Gamma_h(t)|$ since $\gradgh \cdot V_h \geq 0$.
The second condition holds for $\delta$ sufficiently small by using \eqref{udhextrem}.

\begin{proof}[\underline{Proof of Lemma \ref{dpotbound}, part 2}]
	\ \newline
	We note that as our initial data is admissible there exists $\xi > 0$ such that
	\[\left|\mval{\Udh}{\Gamma_h(t)}\right| =  \frac{1}{|\Gamma_h(t)|}\left|\int_{\Gamma_h(0)}U_{h,0}\right| \leq \frac{1}{|\Gamma_h(0)|}\left|\int_{\Gamma_h(0)}U_{h,0}\right| < 1- \xi.\]
	Now by taking $\delta$ sufficiently small we may use Lemma \ref{measure} to find that
	\begin{align*}
		\mval{I_h \fd(\Udh)}{\Gamma_h(t)} &=  \frac{1}{|\Gamma_h(t)|} \int_{\left\{\Udh(t)\leq 1 - \frac{\xi}{2}\right\}} I_h \fd(\Udh) +  \frac{1}{|\Gamma_h(t)|} \int_{\left\{\Udh(t) > 1 - \frac{\xi}{2}\right\}} I_h \fd(\Udh)\\
		&\leq \fd \left(1 - \frac{\xi}{2}\right) + \frac{\left( 1 - \frac{\xi}{2}\right)^{\frac{1}{2}} }{|\Gamma_h(t)|^{\frac{1}{2}}} \|I_h \fd(\Udh)\|_{L^2(\Gamma_h(t))}.
	\end{align*}
	One can similarly show that
	$$ \mval{I_h \fd(\Udh)}{\Gamma_h(t)} \geq -\fd \left(1 - \frac{\xi}{2}\right) - \frac{\left( 1 - \frac{\xi}{2}\right)^{\frac{1}{2}} }{|\Gamma_h(t)|^{\frac{1}{2}}} \|I_h \fd(\Udh)\|_{L^2(\Gamma_h(t))} .$$
	Now by combining these bounds we observe
	$$ \left( \mval{I_h \fd(\Udh)}{\Gamma_h(t)} \right)^2 \leq \left( \fd \left(1 - \frac{\xi}{2}\right) + \frac{\left( 1 - \frac{\xi}{2}\right)^{\frac{1}{2}} }{|\Gamma_h|^{\frac{1}{2}}} \|I_h \fd(\Udh)\|_{L^2(\Gamma_h(t))} \right)^2 . $$
	Noting that, for $a,b \in \mathbb{R}$, $(a+b)^2 \leq a^2 \left( 1 + \frac{2}{\xi}
	\right) + b^2 \left( 1 + \frac{\xi}{2}
	\right)$ one obtains
	\begin{equation}\label{dpotbound3}
		\left( \mval{I_h \fd(\Udh)}{\Gamma_h(t)} \right)^2 \leq \left( 1 + \frac{2}{\xi} \right) \fd\left( 1 - \frac{\xi}{2} \right)^2 + \left( \frac{4 - \xi^2}{4 |\Gamma_h(t)|} \right) \|I_h \fd(\Udh)\|_{L^2(\Gamma_h(t))}^2 .
	\end{equation}
	Clearly we have that
	$$ \frac{1}{|\Gamma_h(t)|} \left\| I_h \fd(\Udh) - \mval{I_h \fd(\Udh)}{\Gamma_h(t)} \right\|_{L^2(\Gamma_h(t))}^2 = \frac{1}{|\Gamma_h(t)|} \|I_h \fd(\Udh)\|_{L^2(\Gamma_h(t))}^2 - \left(\mval{I_h\fd(\Udh)}{\Gamma_h(t)}\right)^2,$$
	and hence using \eqref{dpotbound3},
	\begin{multline}\label{dpotbound4}
		\frac{\xi^2}{ 4|\Gamma_h(t)|} \| I_h \fd(\Udh) \|_{L^2(\Gamma_h(t))}^2 \leq \frac{1}{|\Gamma_h(t)|} \left\| I_h \fd(\Udh) - \mval{I_h \fd(\Udh)}{\Gamma_h(t)}\right\|_{L^2(\Gamma_h(t))}^2 + \left( 1 + \frac{2}{\xi} \right) \fd\left(1 - \frac{\xi}{2}\right)^2 .
	\end{multline} 
	Now by construction, we can take $\delta$ to be sufficiently small so that $\fd\left( 1 - \frac{\xi}{2} \right) = f\left( 1 - \frac{\xi}{2} \right)$, which removes the dependence on $\delta$.
	The result readily follows by integrating \eqref{dpotbound4} over $[0,T]$ and using \eqref{dpotbound1} gives \eqref{dpotbound2}.
\end{proof}
It is a straightforward application of the Poincar\'e inequality to see that our established bounds imply a bound
$$ \int_0^T \| \Wdh \|_{H^1(\Gamma_h(t))}^2 \leq C,$$
where $C$ is independent of $\delta, h$.
This follows by obtaining a uniform bound on $\int_0^T\left(\mval{\Wdh}{\Gamma_h(t)} \right)^2$ by testing \eqref{discregch2} with $\phi_h = 1$, and using \eqref{discregbound}.
\subsubsection{Passage to the limit}
As a consequence of the established uniform bounds there exist functions $U_h \in L^{\infty}_{H^1},\matdev_h U_h \in L^2_{H^{-1}},W_h \in L^2_{H^1},\Phi_h \in L^2_{L^2}$, such that as (up to a subsequence of) $\delta \searrow 0$ we have
\begin{gather*}
	\Udh \overset{*}{\rightharpoonup} U_h \text{ weak-}* \text{ in } L^\infty_{H^1} \qquad \matdev_h{\Udh} \rightharpoonup \matdev_h{U_h} \text{ weakly in } L^2_{H^{-1}},\\
	\Wdh \rightharpoonup W_h \text{ weakly in } L^2_{H^1}, \qquad I_h \fd\left( \Udh \right) \rightharpoonup \Phi_h \text{ weakly in } L^2_{L^2},
\end{gather*}
with $U_h \in S_h^T$ and $W_h, \Phi_h \in \tilde{S}_h^T$.
Moreover the Aubin-Lions type result of \cite{alphonse2023function} allows one to obtain a strongly convergent subsequence $\Udh \rightarrow U_h$ in $L^2_{L^2}$.

We now show that $|U_h|<1$ a.e. on $\Gamma_h(t)$ (for almost all $t \in [0,T]$), and $\Phi_h = I_h f(U_h)$.
As a result of Lemma \ref{udhcontrol}, taking $\delta \rightarrow 0$ we obtain the following.
\begin{corollary}
	For almost all $t \in [0,T]$ we have that $|U_h(t)| \leq 1$ almost everywhere on $\Gamma_h(t)$.
\end{corollary}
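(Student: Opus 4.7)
The plan is to pass to the limit in the bound \eqref{udhextrem} of Lemma \ref{udhcontrol} using the strong $L^2_{L^2}$ convergence $U_h^\delta \to U_h$ that we already have in hand. First I would extract a further subsequence so that $U_h^\delta \to U_h$ pointwise almost everywhere on the spacetime surface $\mathcal{S}_{h,T}$. This is standard once one pulls back to the fixed reference surface $\Gamma_h(0)$ via the discrete flow $\Phi^h$: strong convergence in $L^2(0,T;L^2(\Gamma_h(0)))$ yields a pointwise a.e.\ convergent subsequence, and the Jacobian of $\Phi^h(\cdot,t)$ is bounded above and below uniformly in $t \in [0,T]$ by the smoothness of $V$.

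Next, by continuity of $r \mapsto [r-1]_+$ and $r \mapsto [-1-r]_+$, the pointwise convergence transfers to
\[ [U_h^\delta - 1]_+ \to [U_h - 1]_+, \qquad [-1 - U_h^\delta]_+ \to [-1 - U_h]_+ \]
almost everywhere on $\mathcal{S}_{h,T}$. Integrating \eqref{udhextrem} over $[0,T]$ gives
\[ \int_0^T \int_{\Gamma_h(t)} [-1 - U_h^\delta]_+ + \int_0^T \int_{\Gamma_h(t)} [U_h^\delta - 1]_+ \leq TC\left(\frac{1}{|\log \delta|} + \delta \right), \]
and the right-hand side vanishes as $\delta \searrow 0$. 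Applying Fatou's lemma (again on the reference surface, with the uniformly bounded Jacobian absorbed into the constant) yields
\[ \int_0^T \int_{\Gamma_h(t)} [-1 - U_h]_+ + \int_0^T \int_{\Gamma_h(t)} [U_h - 1]_+ = 0. \]
Since the integrands are nonnegative, $[-1 - U_h]_+ = [U_h - 1]_+ = 0$ almost everywhere on $\mathcal{S}_{h,T}$, which is exactly the claim that for almost all $t \in [0,T]$, $|U_h(t)| \leq 1$ almost everywhere on $\Gamma_h(t)$.

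There is no real obstacle here; the proof is essentially a soft passage to the limit. The only point requiring a little care is the time-dependence of the surface, which is dealt with by pulling back to $\Gamma_h(0)$ so that standard Lebesgue/Fatou convergence theorems on a fixed measure space apply. Note that we do \emph{not} need the bound \eqref{udhextrem} to be strict: the fact that the right-hand side of \eqref{udhextrem} tends to $0$ with $\delta$ is what forces $U_h$ to lie in $[-1,1]$ in the limit.
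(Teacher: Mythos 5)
Your argument is correct and is essentially the paper's own: the corollary is stated as an immediate consequence of Lemma \ref{udhcontrol} by letting $\delta \searrow 0$ along the strongly $L^2_{L^2}$-convergent subsequence, which is exactly what you do, merely filling in the standard details (pointwise a.e.\ subsequence, continuity of the positive parts, Fatou on the pulled-back fixed surface).
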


Due to the poles of $f$ at $\pm 1$, we need to strengthen this result to make sure that the set of values where $|U_h(t)| = 1$ has measure zero.
We note that since $U_h$ is piecewise linear it is sufficient to show that we have $|U_h(x_i(t),t)| < 1$ for the nodes $(x_i(t))_{i=1,...,N_h} \subset \Gamma_h(t)$ and almost all $ t \in [0,T]$.
By the strong convergence $\Udh \rightarrow U_h$ in $L^2_{L^2}$ we find that for almost all $x \in \Gamma_h(t)$ and almost all $t \in [0,T]$ that
$$\lim_{\delta \searrow 0} \Udh(x,t) = U_h(x,t).$$
We show that this in fact holds for all $x \in \Gamma_h(t)$.

\begin{lemma}
	\label{conveverywhere}
	Let $t \in [0,T]$ be such that 
	$$ \lim_{\delta \searrow 0} \Udh(x,t) = U_h(x,t) $$
	for almost all $x \in \Gamma_h(t)$.
	Then this holds for all $x \in \Gamma_h(t)$.
\end{lemma}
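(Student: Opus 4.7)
The plan is to exploit the finite-dimensionality of $S_h(t)$: both $\Udh(\cdot,t)$ and $U_h(\cdot,t)$ lie in $S_h(t)$, and so on each element $K(t) \in \mathcal{T}_h(t)$ they are affine. An affine function on a planar triangle is determined by its values at any three affinely independent points, so if the sequence converges at three such points we automatically get pointwise convergence on the whole triangle.

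Concretely, fix $t$ satisfying the hypothesis and let $E \subset \Gamma_h(t)$ be the full-measure set on which $\Udh(x,t) \to U_h(x,t)$. Fix an arbitrary $K(t) \in \mathcal{T}_h(t)$. Then $E \cap K(t)$ has full $\mathcal{H}^2$-measure in $K(t)$ and in particular has positive measure, so one can extract three affinely independent points $y_1, y_2, y_3 \in E \cap K(t)$ (otherwise every triple in $E \cap K(t)$ would be collinear, forcing $E \cap K(t)$ to lie on a line and hence to have measure zero).

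For $x \in K(t)$, let $(\lambda_1(x), \lambda_2(x), \lambda_3(x))$ denote the barycentric coordinates of $x$ with respect to $y_1,y_2,y_3$. Affineness of $\Udh(\cdot,t)|_{K(t)}$ and $U_h(\cdot,t)|_{K(t)}$ yields
\[ \Udh(x,t) = \sum_{j=1}^{3} \lambda_j(x) \Udh(y_j,t), \qquad U_h(x,t) = \sum_{j=1}^{3} \lambda_j(x) U_h(y_j,t). \]
Since $y_j \in E$, each $\Udh(y_j,t) \to U_h(y_j,t)$ as $\delta \searrow 0$, hence the same holds for the affine combination, giving $\Udh(x,t) \to U_h(x,t)$ for every $x \in K(t)$. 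As $K(t)$ was an arbitrary element of $\mathcal{T}_h(t)$, the convergence extends to all of $\Gamma_h(t)$.

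There is essentially no obstacle here; the only point requiring a sentence of justification is the existence of three affinely independent points in $E \cap K(t)$, which is immediate from the positivity of its planar measure. The rest is purely the fact that in finite dimensions almost-everywhere pointwise convergence of affine functions upgrades to everywhere convergence.
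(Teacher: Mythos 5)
Your proof is correct, but it takes a genuinely different route from the paper. The paper argues globally on $\Gamma_h(t)$: it reduces the claim to convergence at the nodes, bounds $\| \Udh - U_h \|_{L^\infty(\Gamma_h(t))} \leq C h^{-2} \| \Udh - U_h \|_{L^1(\Gamma_h(t))}$ by an inverse inequality (valid since both functions lie in $S_h(t)$ and the mesh is quasi-uniform), and then upgrades the almost-everywhere convergence to $L^1$ convergence via the dominated convergence theorem, using the $\delta$-uniform bound coming from \eqref{discregbound} to provide the dominating function. Your argument instead works element by element and uses only the affine structure: on each flat triangle $K(t)$ the full-measure convergence set contains three affinely independent points (a measure-zero line cannot carry a set of positive $\mathcal{H}^2$-measure), and writing any $x \in K(t)$ in the barycentric (affine) coordinates of those three points — which is legitimate even with negative coefficients, since affine functions respect all affine combinations — lets the convergence pass through a fixed finite linear combination. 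Your route is more elementary and self-contained: it needs neither the inverse inequality, nor mesh quasi-uniformity, nor any $\delta$-uniform bound, and it isolates the real reason the lemma is true (a.e. convergence within a fixed finite-dimensional space of piecewise affine functions upgrades to everywhere convergence). The paper's route costs those extra ingredients, all of which are already available at that point in the argument, and in exchange delivers slightly more, namely convergence in $L^\infty(\Gamma_h(t))$ for the fixed $h$, though only the pointwise statement is used subsequently.
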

\begin{proof}
	We fix $t$ such that the assumption holds.
	A straightforward application of the triangle inequality shows that pointwise convergence at the nodes of $\Gamma_h(t)$ implies pointwise convergence everywhere.
    Hence we verify that pointwise convergence holds at all of the nodes.
	For a given node $x_i \in \Gamma_h(t)$ of the triangulation $\mathcal{T}_h(t)$ one has that
    \[ \left| \Udh(x_i) - U_h(x_i) \right| \leq \left\| \Udh - U_h \right\|_{L^\infty(\Gamma_h(t))} \leq Ch^{-2}\left\| \Udh - U_h \right\|_{L^1(\Gamma_h(t))}, \]
    where we have used an inverse inequality.
    Since $\Udh$ is uniformly bounded (independent of $\delta$) due to \eqref{discregbound} one may apply the dominated convergence theorem so that pointwise convergence almost everywhere implies convergence in $L^1(\Gamma_h(t))$ from which the result follows.
\end{proof}

This is a necessary prerequisite for us to adapt an argument from \cite{caetano2021cahn}.

\begin{lemma}
	\label{uh<1}
	For almost all $t \in [0,T]$ one has that $|U_h(x,t)| < 1$ everywhere on $\Gamma_h(t)$.
\end{lemma}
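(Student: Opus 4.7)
The plan is to argue by contradiction, exploiting the fact that $I_h\fd(\Udh)$ is uniformly bounded in $L^2_{L^2}$ (Lemma \ref{dpotbound}), while formally $\fd$ blows up at $\pm 1$.

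Since $U_h(\cdot,t) \in S_h(t)$ is piecewise linear, it is enough to show that at every node $x_i(t)$ we have $|U_h(x_i(t),t)| < 1$ for almost every $t \in [0,T]$. Suppose for contradiction that there exists a node index $i$ and a measurable set $E \subset [0,T]$ of positive measure such that, say, $U_h(x_i(t),t) = 1$ on $E$ (the case $-1$ is symmetric). By Lemma \ref{conveverywhere}, for $t \in E$ we have the pointwise convergence $\Udh(x_i(t),t) \to 1$ as $\delta \searrow 0$.

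The key step is to show that $\fd(\Udh(x_i(t),t)) \to +\infty$ pointwise on $E$. This uses only the structure of $F^\delta_{\log}$ in \eqref{dpot}: $\fd$ is nondecreasing (since $F^\delta_{\log}$ is convex), coincides with $f$ on $(-1+\delta,1-\delta)$, and satisfies $\fd(1-\delta) = \log((2-\delta)/\delta) \to +\infty$. Fix $t \in E$ and $M > 0$; choose $\delta_0$ so that $f(1-2\delta_0) > M$ and $\log((2-\delta_0)/\delta_0) > M$. For $\delta < \delta_0$ small enough, $\Udh(x_i(t),t) > 1 - 2\delta_0$. If $\Udh(x_i(t),t) \leq 1-\delta$ then $\fd(\Udh(x_i(t),t)) = f(\Udh(x_i(t),t)) \geq f(1-2\delta_0) > M$; otherwise $\Udh(x_i(t),t) > 1 - \delta$ and by monotonicity $\fd(\Udh(x_i(t),t)) \geq \fd(1-\delta) > M$. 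Hence $\fd(\Udh(x_i(t),t)) \to +\infty$ on $E$.

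To derive the contradiction, extract pointwise values of $\fd(\Udh)$ from the $L^2_{L^2}$ bound. Using the lumped mass representation together with the norm equivalence \eqref{intmbound0},
\[ \normh{t}{I_h\fd(\Udh)}^2 = \sum_{j=1}^{N_h} |\fd(\Udh(x_j(t),t))|^2 \int_{\Gamma_h(t)} \phi_j \geq c\, h^2 |\fd(\Udh(x_i(t),t))|^2, \]
where $c > 0$ is independent of $\delta$ (we use uniform quasi-uniformity to bound $\int_{\Gamma_h(t)}\phi_j$ from below by $c h^2$ uniformly in $t$). Combined with Lemma \ref{dpotbound} this yields, for the fixed $h$,
\[ \int_0^T |\fd(\Udh(x_i(t),t))|^2 \, dt \leq C_{h}, \]
independently of $\delta$. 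Applying Fatou's lemma with the pointwise limit established above then gives
\[ +\infty = \int_E \liminf_{\delta \searrow 0} |\fd(\Udh(x_i(t),t))|^2 \, dt \leq \liminf_{\delta \searrow 0} \int_0^T |\fd(\Udh(x_i(t),t))|^2 \, dt \leq C_{h}, \]
a contradiction. The main obstacle is step two, the pointwise blow-up of $\fd(\Udh(x_i,t))$: one must handle both regimes $\Udh(x_i,t) \lessgtr 1-\delta$ uniformly, which is where monotonicity of $\fd$ and the explicit lower bound $\fd(1-\delta) \geq -\log\delta$ are essential.
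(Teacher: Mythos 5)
Your proof is correct and follows essentially the same route as the paper's: nodal pointwise convergence (Lemma \ref{conveverywhere}), blow-up of $\fd$ at points where $|U_h|=1$ (which you verify directly rather than citing the adaptation of \cite[Lemma 5.10]{caetano2021cahn}), the $\delta$-uniform bound \eqref{dpotbound2} localised to a node, and Fatou's lemma. The only cosmetic differences are your contradiction framing and the use of the lumped-mass lower bound $\normh{t}{I_h\fd(\Udh)}^2 \geq c h^2 |\fd(\Udh(x_i,t))|^2$ in place of the paper's inverse inequality to $L^\infty(\Gamma_h(t))$; both yield the same $h$-dependent, $\delta$-independent bound.
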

\begin{proof}
	Firstly, we note that one can argue along the same lines as in \cite[Lemma 5.10]{caetano2021cahn} to show the following.
	Up to a subsequence of $\delta \searrow 0$ we have for all $x \in \Gamma_h(t)$, and almost all $t \in [0,T]$
	\begin{align}
		\label{limphid}
		\lim_{\delta \searrow 0} I_h \fd \left(\Udh(x,t) \right) = \begin{cases}
			I_h f(U_h(x,t)), & \text{ if } |U_h(x,t)| < 1\\
			\infty, & \text{ otherwise}
		\end{cases}.
	\end{align}
	We note that for our purposes this requires the convergence on all of $\Gamma_h(t)$ as discussed in the previous lemma.
	Then we recall the bound \eqref{dpotbound2}, and use an inverse inequality to see that
	$$ \int_0^T \|I_h\fd\left( \Udh(t) \right)\|_{L^{\infty}(\Gamma_h(t))}^2 \leq Ch^{-2} \int_0^T \|I_h \fd\left( \Udh(t) \right)\|_{L^{2}(\Gamma_h(t))}^2 \leq Ch^{-2}, $$
	where the constant $C$ is independent of $\delta$ and $h$.
	Taking the $\liminf$ in $\delta$ of the above and using Fatou's lemma we find that
	\begin{align*}
		\int_0^T \liminf_{\delta \searrow 0}| \fd\left( \Udh(x_i,t) \right)|^2 &\leq \int_0^T \liminf_{\delta \searrow 0} \|I_h \fd\left( \Udh(t) \right)\|_{L^{\infty}(\Gamma_h(t))}^2\\
		&\leq \liminf_{\delta \searrow 0} \int_0^T \|I_h \fd\left( \Udh(t) \right)\|_{L^{\infty}(\Gamma_h(t))}^2 \leq Ch^{-2},
	\end{align*}
	for all the nodes $x_i \in \Gamma_h(t)$.
	Hence we have
	$$  \liminf_{\delta \searrow 0}|\fd\left( \Udh(x_i,t) \right)| < \infty,$$
	for almost all $t \in [0,T]$, and hence by \eqref{limphid} that $|U_h(x_i,t)| < 1$ for all nodes $x_i$.
	Since this holds for all of the nodes, it holds for all $x \in \Gamma_h(t)$ as $U_h$ is piecewise linear.
\end{proof}
As a consequence of this result, we also find that as $\Phi_h = I_h f(U_h)$.
Thus a solution pair $(\Udh, \Wdh)$ solving \eqref{discregch}, \eqref{discregch2} yields a solution pair $(U_h, W_h)$ solving \eqref{fecheqn1}, \eqref{fecheqn2} as $\delta \searrow 0$.
We note that lower semi-continuity of the norm implies a bound of the form
	\begin{align}
		\label{potbound}
		\int_0^T \| I_h f(U_h)\|_{L^2(\Gamma_h(t))}^2 \leq C,
	\end{align}
	for a constant $C$ independent of $h$.

Lastly state a result showing continuous dependence on the initial data, from which we obtain uniqueness.
\begin{proposition}
	\label{festability}
	Let $U^1_{h,0}, U^2_{h,0} \in \mathcal{I}_{h,0}$ be admissible initial data such that $\mval{U^1_{h,0}}{\Gamma_h(0)} = \mval{U^2_{h,0}}{\Gamma_h(0)}$.
	Denoting the corresponding solution pairs as $(U^i_h,W_h^i)$ with initial data $U_h^i(0) = U^i_{h,0}$ we have
	\[\inormh{U^1_h(t) - U^2_h(t)}^2 + \varepsilon \int_0^t \|\gradgh(U_h^1 - U_h^2)\|_{L^2(\Gamma_h(t))}^2  \leq C\inormh{ U^1_{h,0} - U^2_{h,0}}^2,\]
	where $C$ is independent of $h$ and depends exponentially on $t$.
\end{proposition}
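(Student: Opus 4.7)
The plan is to mimic the continuous stability argument of Theorem \ref{u delta error theorem} in the mass-lumped semi-discrete setting. Set $E_U := U_h^1 - U_h^2$ and $E_W := W_h^1 - W_h^2$, which satisfy the linearised pair
\begin{gather*}
\intm(\matdev_h E_U, \phi_h) + g_h(I_h(E_U \phi_h),1) + a_h(E_W, \phi_h) = 0,\\
\intm(E_W,\phi_h) = \varepsilon a_h(E_U,\phi_h) + \frac{\theta}{2\varepsilon}\intm\bigl(I_h(f(U_h^1)-f(U_h^2)),\phi_h\bigr) - \frac{1}{\varepsilon}\intm(E_U,\phi_h).
\end{gather*}
Testing the first equation with $\phi_h = 1$ and using Lemma \ref{transport5} gives $\tfrac{d}{dt}\intm(E_U,1)=0$, so the hypothesis on the initial means ensures $\int_{\Gamma_h(t)} E_U = 0$ for all $t$. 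Hence $\psi_h := \intinvsh E_U \in S_h(t)$ is well defined and $\inormh{E_U}^2 = a_h(\psi_h,\psi_h) = \intm(E_U,\psi_h)$.

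I would then test the first difference equation with $\psi_h$ and the second with $E_U$. The definition of $\intinvsh$ gives $a_h(E_W,\psi_h) = \intm(E_U,E_W)$, so combining the two identities eliminates the chemical potential and yields
\[
\intm(\matdev_h E_U,\psi_h) + g_h(I_h(E_U\psi_h),1) + \varepsilon a_h(E_U,E_U) + \frac{\theta}{2\varepsilon}\intm\bigl(I_h(f(U_h^1)-f(U_h^2)),E_U\bigr) = \frac{1}{\varepsilon}\intm(E_U,E_U).
\]
To convert the first term into a time derivative, I would apply Lemma \ref{transport5} to $\intm(E_U,\psi_h)$ and Proposition \ref{transport3} to $a_h(\psi_h,\psi_h) = \inormh{E_U}^2$; together with the defining property of $\psi_h$ (which lets me rewrite $a_h(\matdev_h\psi_h,\psi_h) = \intm(E_U,\matdev_h\psi_h)$), this gives the crucial identity
\[
\intm(\matdev_h E_U,\psi_h) = \tfrac{1}{2}\tfrac{d}{dt}\inormh{E_U}^2 + \tfrac{1}{2} b_h(\psi_h,\psi_h) - g_h(I_h(E_U\psi_h),1),
\]
so the $g_h$ contributions cancel and the transport-generated term $b_h(\psi_h,\psi_h)$ is controlled by $C\inormh{E_U}^2$ using smoothness of $V$.

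For the remaining terms, mass-lumping makes the potential contribution nonnegative: $\intm(I_h(f(U_h^1)-f(U_h^2)),E_U)$ is a positive linear combination of the nodal values $(f(U_h^1(x_i))-f(U_h^2(x_i)))(U_h^1(x_i)-U_h^2(x_i)) \geq 0$ by monotonicity of $f$, so this term can be discarded. The right-hand side is handled by the discrete interpolation estimate
\[
\|E_U\|_{L^2(\Gamma_h(t))}^2 \leq \normh{t}{E_U}^2 = a_h(\psi_h, E_U) \leq \inormh{E_U}\,\|\gradgh E_U\|_{L^2(\Gamma_h(t))},
\]
obtained from \eqref{intmbound0} and Cauchy--Schwarz, followed by Young's inequality to absorb $\tfrac{\varepsilon}{2}\|\gradgh E_U\|_{L^2}^2$ into the left. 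This produces a differential inequality of the form
\[
\tfrac{d}{dt}\inormh{E_U}^2 + \varepsilon\|\gradgh E_U\|_{L^2(\Gamma_h(t))}^2 \leq C \inormh{E_U}^2,
\]
and a Gr\"onwall argument together with integration in time gives the claimed bound. The main obstacle is the bookkeeping between $\intm$ and $m_h$ and the derivation of the transport identity for $\inormh{\cdot}^2$ in the mass-lumped setting; once that is handled, the $g_h$ term cancels cleanly and the rest is parallel to the continuous case.
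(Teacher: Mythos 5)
Your proposal is correct and follows essentially the same route as the paper, which omits the details and refers to the analogous continuous-level argument with $\mathcal{G}$ replaced by $\intinvsh$: difference equations, testing with $\intinvsh E_U$ and $E_U$, the transport identities for $\intm$ and $a_h$, monotonicity of $f$ at the nodes to discard the lumped potential term, Young's inequality to absorb $\frac{1}{\varepsilon}\normh{t}{E_U}^2$, and Gr\"onwall. Your bookkeeping of the mean-value conservation, the cancellation of the $g_h$ terms, and the bound $|b_h(\intinvsh E_U,\intinvsh E_U)|\leq C\inormh{E_U}^2$ all match the intended proof.
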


\begin{proof}    We omit this proof as it is essentially identical to \cite[Proposition 4.12]{caetano2021cahn}, where the only meaningful difference is we must use $\intinvsh$ rather than $\mathcal{G}$ as in \cite{caetano2021cahn} (see Appendix \ref{invlaps} for the definition of these operators).
\end{proof}

\subsection{Error analysis}

In this section we consider the error analysis of the pair $(U_h, W_h)$, with initial data $U_{h,0} = \Pi_h u_0$, where $u_0 \in \mathcal{I}_{0}\cap H^2(\Gamma_0)$.
We now show that $U_{h,0} \in \mathcal{I}_{h,0}$ and so the preceding theory holds true.
\begin{lemma}
	\label{initialdatalemma}
	Let $u_0 \in \mathcal{I}_0 \cap H^2(\Gamma_0)$ be such that $|u_0| < 1$.
	Then, for sufficiently small $h$, $\Pi_h u_0\in \mathcal{I}_{h,0}$.
\end{lemma}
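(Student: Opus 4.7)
The plan is to verify each of the three defining properties of $\mathcal{I}_{h,0}$ in turn: the pointwise bound $|\Pi_h u_0| \leq 1$, the finite discrete energy $\Ech_h[\Pi_h u_0;0] < \infty$, and the strict mean value bound $|\mval{\Pi_h u_0}{\Gamma_h(0)}| < 1$. Throughout, we shall use the fact that since $u_0 \in H^2(\Gamma_0) \hookrightarrow C^0(\Gamma_0)$ and $|u_0| < 1$ pointwise on the compact manifold $\Gamma_0$, there exists $\sigma > 0$ with $\|u_0\|_{L^\infty(\Gamma_0)} \leq 1 - \sigma$.

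The main work is the $L^\infty$ estimate, so I would do this first. The key trick is to compare $\Pi_h u_0$ not to $u_0$ directly (for which the available Ritz bound \eqref{ritz4} only controls $\|u_0\|_{H^2}$, not $\|u_0\|_{L^\infty}$) but to the Lagrange interpolant $I_h u_0^{-\ell} \in S_h(0)$. For linear Lagrange elements on each triangle the interpolant is a convex combination of the nodal values of $u_0^{-\ell}$, which coincide with values of $u_0$ at nodes on $\Gamma_0$; hence
\[ \|I_h u_0^{-\ell}\|_{L^\infty(\Gamma_h(0))} \leq \|u_0\|_{L^\infty(\Gamma_0)} \leq 1 - \sigma. \]
For the difference $\Pi_h u_0 - I_h u_0^{-\ell} \in S_h(0)$, the triangle inequality combined with \eqref{ritz3} and \eqref{interpolation} yields
\[ \|\Pi_h u_0 - I_h u_0^{-\ell}\|_{L^2(\Gamma_h(0))} \leq C h^2 \|u_0\|_{H^2(\Gamma_0)}, \]
and since the mesh is uniformly quasi-uniform, an inverse inequality in two dimensions upgrades this to
\[ \|\Pi_h u_0 - I_h u_0^{-\ell}\|_{L^\infty(\Gamma_h(0))} \leq C h^{-1} \|\Pi_h u_0 - I_h u_0^{-\ell}\|_{L^2(\Gamma_h(0))} \leq C h \|u_0\|_{H^2(\Gamma_0)}. \]
Adding the two bounds, $\|\Pi_h u_0\|_{L^\infty(\Gamma_h(0))} \leq 1 - \sigma + C h \|u_0\|_{H^2(\Gamma_0)} \leq 1 - \sigma/2 < 1$ for all sufficiently small $h$.

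Given this strict $L^\infty$ bound, the discrete energy is straightforward: $\|\gradgh \Pi_h u_0\|_{L^2(\Gamma_h(0))}$ is controlled by \eqref{ritz1} in terms of $\|u_0\|_{H^1(\Gamma_0)}$, while $I_h F(\Pi_h u_0)$ is bounded on $\Gamma_h(0)$ because $F$ is continuous on the compact interval $[-1+\sigma/2, 1-\sigma/2]$ and the nodal values of $\Pi_h u_0$ all lie there; so both terms of $\Ech_h[\Pi_h u_0;0]$ are finite. For the mean value, the second defining property of the Ritz projection in \eqref{ritz} gives
\[ \mval{\Pi_h u_0}{\Gamma_h(0)} = \frac{1}{|\Gamma_h(0)|} \int_{\Gamma_0} u_0 = \frac{|\Gamma_0|}{|\Gamma_h(0)|} \, m_{u_0}(0). \]
Since $u_0 \in \mathcal{I}_0$, we have $m_{u_0}(0) < 1$ strictly, and since $|\Gamma_h(0)|/|\Gamma_0| \to 1$ as $h \to 0$ (this follows from \eqref{lift1} applied to the constant function), the product stays strictly below $1$ for sufficiently small $h$, completing the verification.

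The main obstacle, as signposted above, is ensuring the $L^\infty$ bound remains below $1$ after projection, since the available Ritz $L^\infty$ estimate \eqref{ritz4} is not of the form $\|\pi_h z\|_{L^\infty} \leq \|z\|_{L^\infty} + o(1)$. Detouring through the Lagrange interpolant (which enjoys a pointwise maximum principle for linear elements) together with an inverse estimate is the cleanest way around this.
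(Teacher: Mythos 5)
Your proposal is correct and follows essentially the same route as the paper's own proof: the $L^\infty$ bound via the triangle inequality against the Lagrange interpolant combined with an inverse inequality, \eqref{ritz3} and \eqref{interpolation}; the energy bound via \eqref{ritz1} and the boundedness of $F$ near $[-1,1]$; and the mean value via the mass-preserving property in \eqref{ritz} together with $|\Gamma_h(0)|$ being close to $|\Gamma_0|$ for small $h$. The only cosmetic difference is that the paper bounds $F$ on all of $[-1,1]$ rather than on a strict subinterval, which changes nothing.
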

\begin{proof}
	By definition we have that
	\[\left|\mval{\Pi_h u_0}{\Gamma_h(0)}\right| = \frac{|\Gamma_0|}{|\Gamma_h(0)|} \left|\mval{u_0}{\Gamma_0} \right| \leq (1-\xi)\frac{|\Gamma_0|}{|\Gamma_h(0)|} < 1,\]
	for sufficiently small $h$.
	Similarly by using an inverse inequality and \eqref{ritz3}, \eqref{interpolation} one finds
	\begin{align*}
		\| \Pi_h u_0 \|_{L^{\infty}(\Gamma_h(0))} &\leq \|  I_h u_0^{-\ell} \|_{L^{\infty}(\Gamma_h(0))} + \| \Pi_h u_0 - I_h u_0^{-\ell} \|_{L^{\infty}(\Gamma_h(0))}\\
		&< 1 - \xi + Ch^{-1} \| \Pi_h u_0 - I_h u_0^{-\ell} \|_{L^{2}(\Gamma_h(0))}\\
		&< 1 - \xi + Ch\|u_0\|_{H^2(\Gamma_0)} < 1.
	\end{align*}
	With this we now find that
	\begin{align*}
		\Ech_h[\Pi_h u_0] = \int_{\Gamma_h(t)} \frac{\varepsilon|\gradgh \Pi_h u_0|^2}{2} + \frac{1}{\varepsilon} I_h F(\Pi_h u_0) \leq C + C \|u_0\|_{H^1(\Gamma(t))}^2,
	\end{align*}
	where we have used \eqref{ritz1}, and the boundedness of $F$ over $[-1,1]$.
\end{proof}

We adapt ideas from \cite{barrett1995error,barrett1996error} to show an error bound for the semi-discrete scheme \eqref{fecheqn1}, \eqref{fecheqn2}.
Here $u^\delta$ is the solution of the regularised continuous problem, as given as in the proof of \eqref{deltaerror1}.
For the error analysis, we require that $u^\delta$ is such that
\begin{align}
	\int_0^T \left[ \|u^\delta\|_{H^2(\Gamma(t))}^2 + \|w^\delta\|_{H^1(\Gamma(t))}^2 +  \|\fd(u^\delta)\|_{L^2(\Gamma(t))}^2 + \|\matdev u^\delta\|_{H^2(\Gamma(t))}^2 \right] < C, \label{errorrequirement}
\end{align}
for some constant $C$ independent of $\delta$.
All of these bounds are known to hold, except for the derivative bound, which is more problematic.
A derivative bound of this form is assumed in \cite{beschle2022stability,elliott2015evolving,elliott2024fully} for the error analysis of ESFEM schemes for the Cahn-Hilliard equation with a regular potential.

To show an error bound, we decompose the error as
$$ u^{-\ell} - U_h = [u^{-\ell} - (u^{\delta})^{-\ell}] + [(u^{\delta})^{-\ell} - \Udh] + [\Udh - U_h],$$
where the first term is bounded by using lifts and \eqref{deltaerror1}, and the third term is bounded similarly as in the following theorem.

\begin{theorem}
	Let $(\Udh,\Wdh)$ be the unique solution of \eqref{discregch}, \eqref{discregch2}, and $(U_h, W_h)$ be the unique solution of \eqref{fecheqn1}, \eqref{fecheqn2}.
	Then for sufficiently small $\delta >0$ we have that
	\begin{align}
		\varepsilon \int_0^T \| \gradgh (U_h - \Udh) \|_{L^2(\Gamma_h(t))}^2 + \sup_{t \in [0,T]} \inormh{U_h - \Udh}^2 \leq C \delta, \label{deltaerror2}
	\end{align}
	for $C$ a constant independent of $\delta, h$.
\end{theorem}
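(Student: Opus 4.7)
The strategy is to mirror the continuous-level argument of Theorem \ref{u delta error theorem} at the discrete level, with the mass-lumped inner product $\intm$ playing the role of $m$ and the discrete inverse Laplacian $\intinvsh$ (defined in Appendix \ref{invlaps}) playing the role of $\mathcal{G}$. Setting $\widehat{U}_h := U_h - \Udh$ and $\widehat{W}_h := W_h - \Wdh$, one subtracts \eqref{discregch}--\eqref{discregch2} from \eqref{fecheqn1}--\eqref{fecheqn2} to obtain the error equations. Both $U_h$ and $\Udh$ start from $U_{h,0}$ and satisfy the mass conservation property, so $\int_{\Gamma_h(t)} \widehat{U}_h = 0$ and $\intinvsh \widehat{U}_h$ is well-defined. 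I then test the error equation for $\widehat{U}_h$ with $\intinvsh \widehat{U}_h$ and the error equation for $\widehat{W}_h$ with $\widehat{U}_h$.

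The time-derivative manipulation proceeds as follows. Using the definition $a_h(\intinvsh \widehat{U}_h, \phi_h) = \intm(\widehat{U}_h, \phi_h)$ together with Proposition \ref{transport3} and Lemma \ref{transport5}, the combination $\intm(\matdev_h \widehat{U}_h, \intinvsh \widehat{U}_h) + g_h(I_h(\widehat{U}_h \intinvsh \widehat{U}_h),1)$ rewrites as $\tfrac{1}{2}\tfrac{d}{dt}\|\widehat{U}_h\|_{S_h}^2 + \tfrac{1}{2}b_h(\intinvsh \widehat{U}_h, \intinvsh \widehat{U}_h)$, where the $b_h$ term is controlled by $\|\widehat{U}_h\|_{S_h}^2$ via the smoothness of $V_h$. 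The coupling term $a_h(\widehat{W}_h, \intinvsh \widehat{U}_h)$ equals $\intm(\widehat{W}_h, \widehat{U}_h)$, which we eliminate using the tested second equation to get
\begin{align*}
\tfrac{1}{2}\tfrac{d}{dt}\|\widehat{U}_h\|_{S_h}^2 + \varepsilon \|\gradgh \widehat{U}_h\|_{L^2(\Gamma_h(t))}^2 + \tfrac{\theta}{2\varepsilon}\intm(I_h f(U_h) - I_h \fd(\Udh), \widehat{U}_h) \leq C\|\widehat{U}_h\|_{S_h}^2 + \tfrac{1}{\varepsilon}\|\widehat{U}_h\|_{h,t}^2,
\end{align*}
where the $\|\widehat{U}_h\|_{h,t}^2$ term will later be absorbed by Young's inequality into the $\varepsilon \|\gradgh \widehat{U}_h\|^2$ and $\|\widehat{U}_h\|_{S_h}^2$ contributions.

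The central step, and the main obstacle, is to handle the potential difference $\intm(I_h f(U_h) - I_h \fd(\Udh), \widehat{U}_h)$ with bounds independent of $\delta$. I split this as $\intm(I_h \fd(U_h) - I_h \fd(\Udh), \widehat{U}_h) + \intm(I_h f(U_h) - I_h \fd(U_h), \widehat{U}_h)$. Because $\intm$ evaluates at nodes, the first piece reduces nodewise to $\sum_i w_i (\fd(U_h(x_i)) - \fd(\Udh(x_i)))(U_h(x_i) - \Udh(x_i))$, which by monotonicity \eqref{phidbound1} is bounded below by $\|\widehat{U}_h\|_{h,t}^2$, and by \eqref{phidbound3} provides an additional $\tfrac{1}{2\delta}$ factor on the nodal sets where both iterates exceed $1-\delta$ in absolute value. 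For the second piece I introduce the node sets
\[\mathcal{N}_\delta^+(t) := \{i : 1-\delta \leq U_h(x_i,t) \leq \Udh(x_i,t)\}, \qquad \mathcal{N}_\delta^-(t) := \{i : -(1-\delta) \geq U_h(x_i,t) \geq \Udh(x_i,t)\},\]
and argue as in the proof of Theorem \ref{u delta error theorem}: off these sets, either $f(U_h(x_i)) = \fd(U_h(x_i))$ or the product has the favourable sign, while on them $\fd(U_h(x_i)) \widehat{U}_h(x_i) \leq 0$. A Young's inequality then yields a control in terms of $\tfrac{1}{2\delta}$ times the nodal sum of $\widehat{U}_h^2$ (absorbed by the $\tfrac{1}{2\delta}$ gain from \eqref{phidbound3}) plus $\tfrac{\delta}{2}$ times the nodal sum of $f(U_h)^2$, which is globally controlled by $\int_0^T \|I_h f(U_h)\|_{L^2(\Gamma_h(t))}^2 \leq C$ from \eqref{potbound}.

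Putting these estimates together, integrating in time and applying Gr\"onwall's inequality yields \eqref{deltaerror2}. The delicate point is ensuring that every nodewise $\tfrac{1}{2\delta}$ penalty from the non-regularised potential is consumed by the corresponding gain from the monotonicity of $\fd$, so that only a uniformly bounded $\delta \int_0^T \|I_h f(U_h)\|_{L^2}^2$ remainder survives on the right hand side.
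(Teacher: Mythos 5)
Your proposal is correct and follows exactly the route the paper intends: it transplants the proof of Theorem \ref{u delta error theorem} to the discrete level, with $\intm$ and $\intinvsh$ replacing $m$ and $\mathcal{G}$, the nodal case analysis exploiting that the lumped form is a positively weighted nodal sum, and the remainder controlled by \eqref{potbound} via Young's inequality and Gr\"onwall. The only (inconsequential) slip is notational: testing with $\intinvsh\widehat{U}_h$ produces the norm $\inormh{\cdot}$ appearing in \eqref{deltaerror2}, not $\normsh{\cdot}$ as written in your displays, though the two are equivalent by \eqref{invlapineq3}.
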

\begin{proof}
	This bound follows almost identically to \eqref{deltaerror1}, where we use \eqref{potbound} to bound the potential term.
	We omit the details.
\end{proof}
Thus the main goal of this subsection is bounding the error for the regularised terms, $(u^{\delta})^{-\ell} - \Udh$.
We consider the same idea as in \cite{elliott2015evolving}, and decompose the error as
\[ (u^{\delta})^{-\ell} - \Udh = \underbrace{(u^{\delta})^{-\ell} - \Pi_h u^{\delta}}_{=: \rho_u^{\delta}} + \underbrace{\Pi_h u^{\delta} - \Udh}_{=: \sigma_u^{\delta}}, \quad (w^{\delta})^{-\ell} - \Wdh = \underbrace{(w^{\delta})^{-\ell} - \Pi_h w^{\delta}}_{=: \rho_w^{\delta}} + \underbrace{\Pi_h w^{\delta} - \Wdh}_{=: \sigma_u^{\delta}}.\]
We observe, from our assumption \eqref{errorrequirement}, that by applying \eqref{ritz3} we find
\begin{align}
	\label{deltaerror3}
	\int_0^T \|\rho_u^\delta\|_{L^2(\Gamma_h(t))}^2 + h^2 \|\gradgh \rho_u^\delta\|_{L^2(\Gamma_h(t))}^2 \leq C h^4,
\end{align}
for a constant $C$ independent of $\delta,h$, and hence it only remains to bound $\sigma_u^\delta$.

We observe that by using both versions of transport theorem
\begin{align*}
	m\left(\matdev{u^\delta} , \phi_h^\ell \right) + g(u^\delta, \phi_h^\ell)&= \frac{d}{dt} m(u^\delta, \phi_h^\ell) - m(u^\delta, \matdev \phi_h^\ell)\\
	&=m(\matdev_\ell u^\delta, \phi_h^\ell) +g_\ell(u^\delta, \phi_h^\ell) + m(u^\delta, \matdev_\ell\phi_h^\ell - \matdev \phi_h^\ell)
\end{align*}
and hence $u^{\delta}, w^\delta$ solve
\begin{gather*}
	m\left(\matdev_\ell{u^\delta} , \phi_h^\ell \right) + g_\ell(u^\delta, \phi_h^\ell)+ a (w^\delta, \phi_h^\ell) = m(u^\delta, \matdev\phi_h^\ell - \matdev_\ell \phi_h^\ell),\\ 
	m(w^\delta, \phi_h^\ell) = \varepsilon a(u^\delta,\phi_h^\ell) + \frac{\theta}{2 \varepsilon} m (\fd(u^\delta), \phi_h^\ell) - \frac{1}{\varepsilon} m(u^\delta,\phi_h^\ell).
\end{gather*}
Now as $\Udh, \Wdh$ solve \eqref{discregch}, \eqref{discregch2} one obtains, using the definition of the Ritz projection that $\sigma_u^\delta, \sigma_w^\delta$ solve
\begin{gather}
	\intm(\matdev_h \sigma_u^\delta, \phi_h) + g_h(I_h(\sigma_u^\delta\phi_h),1) + a_h(\sigma_w^\delta, \phi_h) = \sum_{i=1}^3 E_i(\phi_h),\label{errorpf1}\\
	\intm(\sigma_w^\delta, \phi_h) = \varepsilon a_h(\sigma_u^\delta, \phi_h) + \intm(I_h\fd(\Pi_h u^\delta) - I_h\fd(\Udh),\phi_h) - \frac{1}{\varepsilon} \intm(\sigma_u^\delta, \phi_h) + \sum_{i=4}^6 E_i(\phi_h),\label{errorpf2}
\end{gather}
where $E_i(\phi_h)$ denote some consistency errors, given by
\begin{gather*}
	E_1(\phi_h) := \intm(\matdev_h \Pi_h u^\delta, \phi_h) - m(\matdev u^\delta, \phi_h^\ell),\\
	E_2(\phi_h) := g_h(I_h(\Pi_h u^\delta\phi_h),1) - g_\ell(u^\delta, \phi_h^\ell) ,\\
	E_3(\phi_h) := m(u^\delta, \matdev\phi_h^\ell - \matdev_\ell \phi_h^\ell),\\
	E_4(\phi_h) := \intm(\Pi_h w^\delta, \phi_h) - m(w^\delta, \phi_h^\ell),\\
	E_5(\phi_h) := \frac{\theta}{2 \varepsilon}\left[m(\fd(u^\delta), \phi_h^\ell) - \intm(I_h\fd(\Pi_h u^\delta), \phi_h)\right],\\
	E_6(\phi_h) := \frac{1}{\varepsilon}\left[\intm(\Pi_h u^\delta, \phi_h) - m(u^\delta, \phi_h^\ell) \right].
\end{gather*}
We will be interested in taking $\phi_h$ to be $\sigma_u^\delta$ or $\intinvsh \sigma_u^\delta$, where we note that the latter is well-defined\footnote{This follows as $\int_{\Gamma_h(t)} U_h^\delta = \int_{\Gamma_h(0)} \Pi_h u_0 = \int_{\Gamma(0)} u_0 =  \int_{\Gamma(t)} u = \int_{\Gamma_h(t)} \Pi_h u$, by the mass-conserving properties of both the Ritz projection and solutions of the Cahn-Hilliard equation.}, and bounding the corresponding $E_i(\phi_h)$.
This is the content of the following lemma.

\begin{lemma}
	\label{errorlemma}
	For $E_1,...,E_6$ defined as above we have
	\begin{gather*}
		|E_1(\intinvsh \sigma_u^\delta)| \leq Ch^2 \left( \| u^\delta\|_{H^2(\Gamma(t))} + \|\matdev u^\delta\|_{H^2(\Gamma(t))} \right)\|\intinvsh \sigma_u^\delta\|_{L^2(\Gamma_h(t))},\\
		|E_2(\intinvsh \sigma_u^\delta)| \leq Ch^2\|u^\delta\|_{H^2(\Gamma(t))}\|\intinvsh \sigma_u^\delta\|_{L^2(\Gamma(t))},\\
		|E_3(\intinvsh \sigma_u^\delta)| \leq Ch^2\|u^\delta\|_{L^2(\Gamma(t))} \inormh{\sigma_u^\delta},\\
		|E_4(\sigma_u^\delta)| \leq Ch \|w^\delta\|_{H^1(\Gamma(t))} \|\sigma_u^\delta\|_{L^2(\Gamma_h(t))},\\
		|E_5(\sigma_u^\delta)|\leq \left(\frac{Ch^2 \log\left( \frac{1}{h} \right)^{\frac{1}{2}}}{\delta} \|u^\delta\|_{H^2(\Gamma(t))} + Ch^2 \|\fd(u^\delta)\|_{L^2(\Gamma(t))}\right)\|\gradgh\sigma_u^\delta\|_{L^2(\Gamma_h(t))},\\
		|E_6(\sigma_u^\delta)| \leq Ch^2\|u^\delta\|_{H^2(\Gamma(t))} \|\gradg \sigma_u^\delta\|_{L^2(\Gamma_h(t))}.
	\end{gather*}
\end{lemma}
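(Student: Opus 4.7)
The estimates for $E_1$ through $E_6$ follow a common template: in each case I insert intermediate quantities so that the total error decomposes into (a) a mass-lumping error $\intm-m_h$, controlled by Lemma \ref{intmbounds}; (b) a geometric error $m_h$-vs-$m$, $g_h$-vs-$g_\ell$, or $g_\ell$-vs-$g$, controlled by \eqref{perturb1}--\eqref{perturb6}; and (c) a Ritz projection or lift-derivative error, controlled by \eqref{ritz2}, \eqref{ritz3}, \eqref{ritzddt}, or \eqref{derivativedifference1}. The test functions $\sigma_u^\delta$ and $\intinvsh\sigma_u^\delta$ are both mean-zero (the former because $\Pi_h$ and the finite element scheme both preserve the initial mean, the latter by the definition of $\intinvsh$), so Poincar\'e on $\Gamma_h(t)$ can be used freely whenever a gradient of the test function is needed in place of an $L^2$ norm.

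For $E_1$ the plan is to split
\[
E_1(\phi_h) = \bigl[\intm - m_h\bigr](\matdev_h\Pi_h u^\delta,\phi_h) + \bigl[m_h(\matdev_h\Pi_h u^\delta,\phi_h)-m(\matdev_\ell\pi_h u^\delta,\phi_h^\ell)\bigr] + m(\matdev_\ell\pi_h u^\delta-\matdev u^\delta,\phi_h^\ell)
\]
and estimate the pieces by \eqref{intmbound2}$+$\eqref{ritzddtnorm}, by \eqref{perturb1}$+$\eqref{ritzddtnorm}, and by combining \eqref{ritzddt} with \eqref{derivativedifference1} (after splitting $\matdev u^\delta - \matdev_\ell u^\delta$ off). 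The term $E_3$ is then an immediate consequence of \eqref{derivativedifference1} applied to $\phi_h^\ell$. For $E_2$ the same template applies with $g_h,g_\ell,g$ playing the role of $m_h,m$, using \eqref{intmbound3}, \eqref{perturb3}, \eqref{perturb5}, and \eqref{ritz2}. The bound on $E_4$ is the standard three-way split $(\intm-m_h)+(m_h-m)+(m-m)$ via \eqref{intmbound1}, \eqref{perturb1}, and \eqref{ritz2}; the first-order contribution in $h$ originates from \eqref{intmbound1} (which is why the $E_4$ estimate is only $O(h)$), and this is why I use $\sigma_u^\delta$ rather than $\intinvsh\sigma_u^\delta$ as the test function there. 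The bound on $E_6$ is identical but with the stronger \eqref{intmbound2} and with Poincar\'e to convert the resulting $\|\sigma_u^\delta\|_{L^2}$ into $\|\gradgh\sigma_u^\delta\|_{L^2}$.

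The genuine difficulty is $E_5$, because $\fd$ is only Lipschitz with constant $1/\delta$ (by \eqref{phidbound2}), so comparing $\fd(u^\delta)$ with $\fd(\pi_h u^\delta)$ naively in $L^2$-$L^2$ would cost $h^2/\delta$ against a plain $\|\sigma_u^\delta\|_{L^2}$, which is too much. My plan is to insert $\fd(\pi_h u^\delta)$, $\fd(\Pi_h u^\delta)$, and $I_h\fd(\Pi_h u^\delta)$, and to bound the resulting four pieces separately: the first by pairing in $L^1$--$L^\infty$, invoking the discrete Sobolev inequality \eqref{discrete sobolev} on $\sigma_u^\delta$ to convert the $L^\infty$ factor into $\log(1/h)^{1/2}\|\sigma_u^\delta\|_{H^1}$, and then using \eqref{ritz3} and Poincar\'e to land on $h^2\log(1/h)^{1/2}\delta^{-1}\|u^\delta\|_{H^2}\|\gradgh\sigma_u^\delta\|_{L^2}$; the second by \eqref{perturb1}, yielding $Ch^2\|\fd(u^\delta)\|_{L^2}\|\sigma_u^\delta\|_{L^2}$; the third by \eqref{enthalpybound} (monotonicity of $\fd$) combined with the acute-triangulation bound \eqref{acuteineq} so that the $1/\delta$ is absorbed into a controllable $\|\gradgh\Pi_h u^\delta\|_{L^2}$; and the fourth by \eqref{intmbound2}. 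The main obstacle is exactly this: keeping the $\delta^{-1}$ contained to a single occurrence, paid for only by a logarithmic factor rather than a power of $h^{-1}$, because the subsequent Gr\"onwall argument will need to balance this against the $O(\delta)$ control from \eqref{deltaerror1} when eventually tuning $\delta=\delta(h)$.
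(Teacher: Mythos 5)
The main gap is in your treatment of $E_5$. Inserting the \emph{un-interpolated} $\fd(\Pi_h u^\delta)$ creates the piece $m_h\bigl(\fd(\Pi_h u^\delta) - I_h\fd(\Pi_h u^\delta), \sigma_u^\delta\bigr)$, and the tools you assign to it, \eqref{enthalpybound} together with \eqref{acuteineq}, only give
\begin{equation*}
	\|\fd(\Pi_h u^\delta) - I_h\fd(\Pi_h u^\delta)\|_{L^2(\Gamma_h(t))} \leq Ch\,\|\gradgh I_h\fd(\Pi_h u^\delta)\|_{L^2(\Gamma_h(t))} \leq \frac{Ch}{\delta}\,\|u^\delta\|_{H^1(\Gamma(t))},
\end{equation*}
so this piece contributes $\frac{Ch}{\delta}\|u^\delta\|_{H^1}\|\sigma_u^\delta\|_{L^2}$ --- a full power of $h$ short of the claimed $\frac{Ch^2\log(1/h)^{1/2}}{\delta}$ term, and no re-pairing ($L^1$--$L^\infty$ with \eqref{discrete sobolev}) can recover it, since \eqref{enthalpybound} is intrinsically first order. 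A second-order interpolation estimate for $\fd(\Pi_h u^\delta)$ is not available at cost $\delta^{-1}$ either: elementwise its second derivatives are $(\fd)''(\Pi_h u^\delta)\,\gradgh\Pi_h u^\delta\otimes\gradgh\Pi_h u^\delta$ with $\|(\fd)''\|_{L^\infty}\sim\delta^{-2}$. This term is precisely what the paper's decomposition avoids: there the nonlinearity stays in interpolated form on the discrete side, passing $I_h\fd(\Pi_h u^\delta)\to I_h\fd(I_h(u^\delta)^{-\ell})\to I_h^\ell\fd(u^\delta)$ via \eqref{phidbound2}, \eqref{ritz3}, \eqref{interpolation} and \eqref{perturb1}, and the only interpolation error incurred is that of $\fd(u^\delta)$ itself, which is second order in $L^1$ because $\fd(u^\delta)\in H^{2,1}(\Gamma(t))$ with $\|\fd(u^\delta)\|_{H^{2,1}}\leq C\delta^{-1}\|u^\delta\|_{H^2}$ (the Barrett--Blowey argument); that is where the $L^1$--$L^\infty$ pairing and \eqref{discrete sobolev} enter. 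Your first piece, $m(\fd(u^\delta)-\fd(\pi_h u^\delta),(\sigma_u^\delta)^\ell)$ bounded by the Lipschitz property and \eqref{ritz3}, is correct and is a nice shortcut around the $H^{2,1}$ regularity result, but it does not remove the problematic third piece, so as written your $E_5$ estimate only reaches $O(h\delta^{-1})$, which would degrade the final rate after optimising $\delta(h)$.

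Two smaller points. For $E_2$ and $E_6$ the final Ritz-error pieces must be taken with \eqref{ritz3} (giving $Ch^2\|u^\delta\|_{H^2}$), not \eqref{ritz2} (which gives only $Ch\|u^\delta\|_{H^1}$); and for $E_2$ the chain must stop at $g_\ell$: by definition $E_2$ compares $g_h(I_h(\Pi_h u^\delta\phi_h),1)$ with $g_\ell(u^\delta,\phi_h^\ell)$, so the $O(h)$ comparison \eqref{perturb5} between $g$ and $g_\ell$ must not appear (the $\matdev$ versus $\matdev_\ell$ discrepancy is already isolated in $E_3$). With those corrections, your handling of $E_1$, $E_3$, $E_4$ and of pieces (1), (2), (4) of $E_5$ matches the paper's argument.
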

\begin{proof}
	We begin by writing
	\begin{align*}
		E_1(\intinvsh \sigma_u^\delta) &= \left[ \intm(\matdev_h \Pi_h u^\delta, \intinvsh \sigma_u^\delta) - m_h(\matdev_h \Pi_h u^\delta, \intinvsh \sigma_u^\delta) \right]\\
		&+ \left[m_h(\matdev_h \Pi_h u^\delta, \intinvsh \sigma_u^\delta) - m(\matdev_\ell \pi_h u^\delta, (\intinvsh \sigma_u^\delta)^\ell)\right]\\
		&+\left[m(\matdev_\ell (\pi_h u^\delta - u^\delta), (\intinvsh \sigma_u^\delta)^\ell)\right] +\left[m(\matdev_\ell u^\delta - \matdev u^\delta, (\intinvsh \sigma_u^\delta)^\ell)\right],
	\end{align*}
	and hence using \eqref{derivativedifference1}, \eqref{perturb1}, \eqref{ritzddtnorm}, \eqref{ritzddt}, \eqref{intmbound2}, and \eqref{lift1} where necessary, one concludes the bounds for $|E_1(\intinvsh \sigma_u^\delta)|$.\\
	
	Next we write $E_2(\intinvsh \sigma_u^\delta)$ as
	\begin{align*}
		E_2(\intinvsh \sigma_u^\delta) &= \left[ g_h(I_h(\Pi_h u^\delta\intinvsh \sigma_u^\delta),1) -  g_h(\Pi_h u^\delta, \intinvsh \sigma_u^\delta) \right]\\
		&+\left[ g_h(\Pi_h u^\delta, \intinvsh \sigma_u^\delta) - g_\ell(\pi_h u^\delta, (\intinvsh \sigma_u^\delta)^\ell) \right] + g_\ell(\pi_h u^\delta - u^\delta, (\intinvsh \sigma_u^\delta)^\ell),
	\end{align*}
	from which the stated bound follows from \eqref{perturb3}, \eqref{ritz1}, \eqref{ritz3} and (an appropriate analogue of) \eqref{intmbound2}.
	The bound for $E_3(\intinvsh \sigma_u^\delta)$ follows immediately from \eqref{derivativedifference1} and the Poincar\'e inequality.
	Likewise the bound for $E_4(\sigma_u^\delta)$ follows by similar arguments to the above, using \eqref{perturb1}, \eqref{ritz2}, \eqref{intmbound1}.
	The last of the simple bounds is for $E_6(\sigma_u^\delta)$, where we write
	\begin{multline*}
		E_6(\sigma_u^\delta) = \frac{1}{\varepsilon}\left[\intm(\Pi_h u^\delta, \sigma_u^\delta) - m_h(\Pi_h u^\delta, \sigma_u^\delta) \right] + \frac{1}{\varepsilon}\left[m_h(\Pi_h u^\delta, \sigma_u^\delta) - m(\pi_h u^\delta, (\sigma_u^\delta)^\ell)\right]\\
        +\frac{1}{\varepsilon} m(\pi_h u^\delta - u^\delta, (\sigma_u^\delta)^\ell),
	\end{multline*}
	and use \eqref{perturb1}, \eqref{ritz3}, \eqref{intmbound2}.
    We note here that since $\int_{\Gamma_h(t)} \sigma_u^\delta = 0$, we can use the Poincar\'e inequality here so that $\|\sigma_u^\delta\|_{L^2(\Gamma_h(t))} \leq C\|\gradgh \sigma_u^\delta\|_{L^2(\Gamma_h(t))} $ --- we shall do this throughout.
	
	The bound for $E_5(\sigma_u^\delta)$ naturally is the most complicated due to the singular potential.
	We begin by bounding $E_5(\sigma_u^\delta)$ by
	\begin{multline*}
		|E_5(\sigma_u^\delta)| \leq \frac{\theta}{2 \varepsilon}\left|\intm(I_h \fd(\Pi_h u^\delta), \sigma_u^\delta) - m_h(I_h\fd(\Pi_h u^\delta), \sigma_u^\delta) \right| + \frac{\theta}{2 \varepsilon} |m_h(I_h\fd(\Pi_h u^\delta) - I_h\fd((u^\delta)^{-\ell}), \sigma_u^\delta)|\\
		 +\frac{\theta}{2 \varepsilon} \left|m_h(I_h\fd((u^\delta)^{-\ell}), \sigma_u^\delta) - m(I_h^\ell\fd(u^\delta), (\sigma_u^\delta)^\ell)\right| + \frac{\theta}{2 \varepsilon}|m(I_h^\ell\fd(u^\delta) - \fd(u^\delta), (\sigma_u^\delta)^\ell)|.
	\end{multline*}
	Now by using \eqref{intmbound2} one finds
	\[\left| \intm(I_h\fd(\Pi_h u^\delta), \sigma_u^\delta) - m_h(I_h\fd(\Pi_h u^\delta), \sigma_u^\delta) \right| \leq Ch^2 \| \gradgh I_h \fd(\Pi_h u^\delta)\|_{L^2(\Gamma_h(t))} \|\gradgh\sigma_u^\delta\|_{L^2(\Gamma_h(t))},\]
	and so it is apparent we need a bound for $\| \gradgh I_h \fd(\Pi_h u^\delta)\|_{L^2(\Gamma_h(t))}$.
	To do this we use \eqref{acuteineq} to see that
	\[ \| \gradgh I_h \fd(\Pi_h u^\delta)\|_{L^2(\Gamma_h(t))}^2 \leq \frac{C}{\delta} a_h(I_h \fd(\Pi_h u^\delta), \Pi_h u^\delta), \]
	whereby Young's inequality and \eqref{ritz1} yields
	\begin{align}
		\| \gradgh I_h \fd(\Pi_h u^\delta)\|_{L^2(\Gamma_h(t))} \leq \frac{C}{\delta} \|\gradgh \Pi_h u^\delta \|_{L^2(\Gamma_h(t))} \leq \frac{C}{\delta} \|u^\delta\|_{H^1(\Gamma(t))}. \label{phid gradient bound}
	\end{align}
	
	Next we observe that
	\[m_h(I_h\fd(\Pi_h u^\delta) - I_h\fd((u^\delta)^{-\ell}), \sigma_u^\delta) = m_h(I_h\fd(\Pi_h u^\delta) - I_h\fd(I_h(u^\delta)^{-\ell}), \sigma_u^\delta)\]
	and it is straightforward to see that, by using $|(\fd)'(r)| \leq \frac{C}{\delta}$
	\begin{align*}
		\| I_h\fd(\Pi_h u^\delta) - I_h\fd(I_h(u^\delta)^{-\ell}) \|_{L^2(\Gamma(t))} &\leq C \normh{t}{I_h\fd(\Pi_h u^\delta) - I_h\fd(I_h(u^\delta)^{-\ell})}\\
		&\leq \frac{C}{\delta} \|\Pi_h u^\delta - I_h(u^\delta)^{-\ell} \|_{L^2(\Gamma(t))} \leq \frac{Ch^2}{\delta} \|u^\delta \|_{H^2(\Gamma(t))},
	\end{align*}
	where we have used \eqref{ritz3}, \eqref{interpolation} in the final inequality.
	From \eqref{lift1} and \eqref{perturb1} one finds
	\[|m_h(I_h\fd((u^\delta)^{-\ell}), \sigma_u^\delta) - m(I_h^\ell\fd(u^\delta), (\sigma_u^\delta)^\ell)| \leq Ch^2 \|I_h^\ell\fd(u^\delta)\|_{L^2(\Gamma(t))}\|\sigma_u^\delta\|_{L^2(\Gamma_h(t))},\]
	where we will bound $\|I_h^\ell\fd(u^\delta)\|_{L^2(\Gamma(t))}$ later.
	For the final term we use an argument from \cite{barrett1996error,barrett1997finite}.
    From H\"older's inequality we find
	\[|m(I_h^\ell\fd(u^\delta) - \fd(u^\delta), (\sigma_u^\delta)^\ell)|\leq C\|I_h^\ell\fd(u^\delta) - \fd(u^\delta)\|_{L^1(\Gamma(t))} \|\sigma_u^\delta\|_{L^\infty(\Gamma_h(t))}.\]
	Now by following an argument in the proof of \cite[Theorem 3.2]{barrett1996error} one can show that $\fd(u^\delta) \in H^{2,1}(\Gamma(t))$ such that
	\[\|\fd(u^\delta) \|_{H^{2,1}(\Gamma(t))} \leq \frac{C}{\delta}\|u^\delta\|_{H^{2,1}(\Gamma(t))} \leq \frac{C}{\delta}\|u^\delta\|_{H^{2}(\Gamma(t))}. \]
	Thus using \eqref{discrete sobolev} and \eqref{interpolation2} one finds that 
	\[\|I_h^\ell\fd(u^\delta) - \fd(u^\delta)\|_{L^1(\Gamma(t))} \|\sigma_u^\delta\|_{L^\infty(\Gamma_h(t))} \leq \frac{C h^2}{\delta} \log\left(\frac{1}{h}\right)^{\frac{1}{2}} \|u^\delta\|_{H^2(\Gamma(t))} \|\gradgh \sigma_u^\delta\|_{L^2(\Gamma_h(t))}.\]
	The final thing to note now is that by using \eqref{interpolation2}, and the above bound for $\|\fd(u^\delta) \|_{H^{2,1}(\Gamma(t))}$, it is clear that
	\[\|I_h^\ell\fd(u^\delta)\|_{L^2(\Gamma(t))} \leq C\|\fd(u^\delta)\|_{L^2(\Gamma(t))} + \frac{Ch}{\delta} \|u^\delta\|_{H^2(\Gamma(t))}. \]
	Piecing this together yields the bound for $E_5(\sigma_u^\delta)$.
\end{proof}

\begin{theorem}
	For $\sigma_u^{\delta}$ defined as above, and $\delta$ sufficiently small, then assuming \eqref{errorrequirement} holds we have
	\begin{align}
		\label{deltaerror4}
		\sup_{t \in [0,T]} \inormh{\sigma_u^{\delta}}^2 
 + \frac{\varepsilon}{2}\int_0^T \| \gradgh \sigma_u^\delta\|_{L^2(\Gamma_h(t))}^2 \leq C \left( h^2 + \frac{h^4 \log\left(\frac{1}{h}\right)}{\delta^2}  \right),
	\end{align}
	where $C$ depends on $\varepsilon, T, \theta, u_0$.
\end{theorem}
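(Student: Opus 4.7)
The plan is to apply a standard inverse-Laplacian testing strategy to the error system \eqref{errorpf1}, \eqref{errorpf2}, exploiting that $\int_{\Gamma_h(t)} \sigma_u^{\delta} = 0$ (both $\Pi_h u^{\delta}$ and $\Udh$ preserve the same mean), so that $\intinvsh \sigma_u^{\delta}$ is a valid test function. First I would test \eqref{errorpf1} with $\phi_h = \intinvsh \sigma_u^{\delta}$ and process the material-derivative term using Lemma \ref{transport5} together with the defining property of $\intinvsh$ (and the corresponding transport identity for $a_h(\intinvsh \cdot , \intinvsh \cdot) = \| \cdot \|_{-h}^2$), which turns $\intm(\matdev_h \sigma_u^{\delta}, \intinvsh \sigma_u^{\delta}) + g_h(I_h(\sigma_u^{\delta} \intinvsh \sigma_u^{\delta}),1)$ into $\tfrac{1}{2}\tfrac{d}{dt}\|\sigma_u^{\delta}\|_{-h}^2$ plus a controllable geometric remainder of the form $b_h(\intinvsh \sigma_u^{\delta}, \intinvsh \sigma_u^{\delta})$, which is bounded by $C\|\sigma_u^\delta\|_{-h}^2$ using the smoothness of $V_h$.

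Next, using the definition of $\intinvsh$, the term $a_h(\sigma_w^{\delta}, \intinvsh \sigma_u^{\delta})$ reduces to $\intm(\sigma_w^{\delta}, \sigma_u^{\delta})$, which I would expand by testing \eqref{errorpf2} against $\phi_h = \sigma_u^{\delta}$. The monotonicity bound \eqref{phidbound1}, applied nodewise inside the lumped mass form, gives
\[
\intm\bigl(I_h\fd(\Pi_h u^\delta) - I_h\fd(\Udh), \sigma_u^\delta\bigr) \geq 0,
\]
so this awkward nonlinear contribution can simply be discarded from the right-hand side. The linear $-\tfrac{1}{\varepsilon}\intm(\sigma_u^\delta,\sigma_u^\delta)$ term is absorbed via the inverse-Laplacian Young inequality $\|\sigma_u^\delta\|_{h,t}^2 \leq \tfrac{\varepsilon^2}{C}\|\gradgh \sigma_u^\delta\|_{L^2(\Gamma_h(t))}^2 + C\varepsilon^{-2}\|\sigma_u^\delta\|_{-h}^2$, at the cost of half the coercive $\varepsilon\, a_h(\sigma_u^\delta,\sigma_u^\delta)$. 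The result is a differential inequality of the form
\[
\frac{d}{dt}\|\sigma_u^\delta\|_{-h}^2 + \varepsilon\|\gradgh \sigma_u^\delta\|_{L^2(\Gamma_h(t))}^2 \leq C \|\sigma_u^\delta\|_{-h}^2 + R(t),
\]
where $R(t)$ collects the six consistency errors $E_i$ tested against $\intinvsh \sigma_u^\delta$ (for $i=1,2,3$) or $\sigma_u^\delta$ (for $i=4,5,6$).

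I would then invoke Lemma \ref{errorlemma} term by term, applying Young's inequality with a small parameter so that every factor of $\|\gradgh \sigma_u^\delta\|_{L^2(\Gamma_h(t))}$ gets absorbed into the $\varepsilon\|\gradgh \sigma_u^\delta\|^2$ on the left, and every factor of $\|\intinvsh \sigma_u^\delta\|_{L^2(\Gamma_h(t))}$ or $\inormh{\sigma_u^\delta}$ gets absorbed into $\|\sigma_u^\delta\|_{-h}^2$. After squaring and summing, the leftover data-dependent terms become $Ch^4$ contributions from $E_1, E_2, E_3, E_4, E_6$ together with the dominant $E_5$-contribution of size $C\bigl(h^4\log(1/h)/\delta^2\bigr)\|u^\delta\|_{H^2(\Gamma(t))}^2 + Ch^4\|\fd(u^\delta)\|_{L^2(\Gamma(t))}^2$.

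The main obstacle is keeping these error contributions uniformly integrable in $\delta$: the assumption \eqref{errorrequirement} is precisely what makes $\int_0^T\|u^\delta\|_{H^2}^2, \int_0^T\|\matdev u^\delta\|_{H^2}^2, \int_0^T\|\fd(u^\delta)\|_{L^2}^2$ and $\int_0^T\|w^\delta\|_{H^1}^2$ bounded independently of $\delta$, so that after time integration $\int_0^T R(t) \leq C(h^2 + h^4\log(1/h)/\delta^2)$ with $C$ independent of $\delta, h$. A Gr\"onwall argument then promotes the differential inequality to the stated bound on $\sup_t \|\sigma_u^\delta\|_{-h}^2 + \tfrac{\varepsilon}{2}\int_0^T\|\gradgh \sigma_u^\delta\|_{L^2(\Gamma_h(t))}^2$, with the initial error $\|\sigma_u^\delta(0)\|_{-h} = 0$ thanks to the choice $U_{h,0} = \Pi_h u_0$.
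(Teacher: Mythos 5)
Your proposal is correct and follows essentially the same route as the paper: test \eqref{errorpf1} with $\intinvsh\sigma_u^\delta$ and \eqref{errorpf2} with $\sigma_u^\delta$, convert the material-derivative and $g_h$ terms into $\tfrac12\tfrac{d}{dt}\inormh{\sigma_u^\delta}^2$ plus a $b_h$ remainder via the lumped transport identity, drop the nonlinear term by monotonicity of $\fd$, absorb the $\tfrac{1}{\varepsilon}\intm(\sigma_u^\delta,\sigma_u^\delta)$ term with the inverse-Laplacian Young inequality, invoke Lemma \ref{errorlemma}, and conclude by Gr\"onwall with $\sigma_u^\delta(0)=0$. One small bookkeeping slip: the $E_4$ term only yields $Ch\|w^\delta\|_{H^1(\Gamma(t))}\|\sigma_u^\delta\|_{L^2(\Gamma_h(t))}$ (since $w^\delta$ is merely $H^1$), so after Young it produces the $Ch^2$ contribution in \eqref{deltaerror4} rather than a $Ch^4$ one as you list, which your final integrated bound nevertheless states correctly.
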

\begin{proof}
	We test \eqref{errorpf1} with $\intinvsh \sigma_u^\delta$ and \eqref{errorpf2} with $\sigma_u^\delta$ to see that
	\begin{multline*}
		\intm(\matdev_h \sigma_u^\delta, \intinvsh \sigma_u^\delta) + g_h(I_h(\sigma_u^\delta\intinvsh \sigma_u^\delta),1) + \varepsilon a_h(\sigma_u^\delta, \sigma_u^\delta) + \intm(I_h \fd(\Pi_h u^\delta) - I_h \fd(\Udh),\sigma_u^\delta)\\
     = \sum_{i=1}^3 E_i(\intinvsh \sigma_u^\delta)- \sum_{i=4}^6 E_i(\sigma_u^\delta)+ \frac{1}{\varepsilon} \intm(\sigma_u^\delta, \sigma_u^\delta).
	\end{multline*}
	One then writes (using Proposition \ref{transport3} and Lemma \ref{transport5})
	\begin{align*}
		\intm(\matdev_h \sigma_u^\delta, \intinvsh \sigma_u^\delta) + g_h(I_h(\sigma_u^\delta \intinvsh \sigma_u^\delta),1) &= \frac{d}{dt} \intm(\sigma_u^\delta, \intinvsh \sigma_u^\delta) - \intm(\sigma_u^\delta, \matdev_h \intinvsh \sigma_u^\delta)\\
		&= \frac{d}{dt} \inormh{\sigma_u^\delta}^2 - a_h(\intinvsh\sigma_u^\delta, \matdev_h \intinvsh \sigma_u^\delta)\\
		&= \frac{1}{2}\frac{d}{dt} \inormh{\sigma_u^\delta}^2 - \frac{1}{2}b_h(\intinvsh\sigma_u^\delta, \intinvsh \sigma_u^\delta).
	\end{align*}
	Now using this, and the monotonicity of $\fd(\cdot)$, in the above we find
	\begin{multline}
		\frac{1}{2}\frac{d}{dt} \inormh{\sigma_u^\delta}^2 + \varepsilon \|\gradgh \sigma_u^\delta\|_{L^2(\Gamma_h(t))}^2\\
		\leq \frac{1}{2}b_h(\intinvsh\sigma_u^\delta, \intinvsh \sigma_u^\delta) + \frac{1}{\varepsilon}\normh{t}{\sigma_u^\delta}^2 + \sum_{i=1}^3|E_i(\intinvsh \sigma_u^\delta)| + \sum_{i=4}^6|E_i(\sigma_u^\delta)|. \label{errorpf3}
	\end{multline}
	We note that from the smoothness of $V$ and \eqref{intmbound0} one can show
	\[\frac{1}{2}b_h(\intinvsh\sigma_u^\delta, \intinvsh \sigma_u^\delta) + \frac{1}{\varepsilon}\normh{t}{\sigma_u^\delta}^2 \leq C \inormh{\sigma_u^\delta}^2 + \frac{\varepsilon}{8} \|\gradgh \sigma_u^\delta\|_{L^2(\Gamma_h(t))}^2.\]
	All that is left is to use the bounds from Lemma \ref{errorlemma} and Young/Poincar\'e inequalities where necessary to see that
	\begin{multline*}
		\frac{d}{dt} \inormh{\sigma_u^\delta}^2 + \varepsilon \|\gradgh \sigma_u^\delta\|_{L^2(\Gamma_h(t))}^2 \leq C \inormh{\sigma_u^\delta}^2 + \frac{Ch^4 \log\left(\frac{1}{h}\right)}{\delta^2} \|u^\delta\|_{H^2(\Gamma(t))}^2\\
		+Ch^2 \|w^\delta\|_{H^1(\Gamma(t))}^2  + Ch^4\left( \|u^\delta\|_{H^2(\Gamma(t))}^2 + \|\matdev u^\delta\|_{H^2(\Gamma(t))}^2+ \|\fd(u^\delta)\|_{L^2(\Gamma(t))}^2 \right),
	\end{multline*}
	whence the result follows by applying the Gr\"onwall inequality --- noting that $\sigma_u^\delta(0) = 0$, and that we have used our assumption \eqref{errorrequirement}.
\end{proof}

\begin{remark}
 Inspecting this proof one finds that this still holds under the weaker assumption that
\[ \int_{0}^T \|\matdev u^\delta\|_{H^2(\Gamma(t))}^2 \leq \frac{C}{\delta^2}, \]
which is more likely to hold, as the non-regularised problem is known to have limited regularity properties and hence a bound on $\matdev u^\delta$ is likely to degenerate as $\delta \rightarrow 0$.
\end{remark}

Now that we have bounded all of the relevant terms, we obtain an error bound for the semi-discrete scheme.
\begin{theorem}
	\label{semidiscerrortheorem}
	Let $(u,w)$ be the unique solution of \eqref{cheqn1}, \eqref{cheqn2} with initial data $u_0 \in \mathcal{I}_0 \cap H^2(\Gamma_0)$ and $\|u_0\|_{L^\infty(\Gamma_0)} < 1$.
	Let $(U_h, W_h)$ the unique solution of \eqref{fecheqn1}, \eqref{fecheqn2} with initial data $U_{h,0} = \Pi_h u_0$.
	Then for sufficiently small $h$,
	\begin{align}
		\int_0^T \varepsilon \| \gradgh (u^{-\ell} - U_h) \|_{L^2(\Gamma_h(t))}^2 \leq C h^\frac{4}{3} \log\left( \frac{1}{h} \right). \label{deltaerror5}
	\end{align}
\end{theorem}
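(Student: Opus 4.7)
My plan is to combine the triangle inequality decomposition described just before the theorem with the three error estimates already established, namely \eqref{deltaerror1}, \eqref{deltaerror2}, \eqref{deltaerror3}, and \eqref{deltaerror4}, and then choose the regularisation parameter $\delta$ optimally in terms of $h$.

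First I would split
\[ u^{-\ell} - U_h = \bigl[ u^{-\ell} - (u^{\delta})^{-\ell} \bigr] + \rho_u^{\delta} + \sigma_u^{\delta} + \bigl[ U_h^{\delta} - U_h \bigr], \]
take $\gradgh$ of both sides, and apply the triangle inequality in $L^2_{L^2}$. For the first bracket I pass from $\gradgh$ on $\Gamma_h(t)$ to $\gradg$ on $\Gamma(t)$ via the lift equivalence \eqref{lift2}, and then invoke Theorem \ref{u delta error theorem} to obtain a contribution $\lesssim \delta$. The Ritz-projection term $\rho_u^{\delta}$ is handled directly by \eqref{deltaerror3}, giving a contribution $\lesssim h^2$. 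The regularised-discrete-to-discrete term $U_h^{\delta} - U_h$ is controlled by \eqref{deltaerror2}, contributing $\lesssim \delta$. Finally the main term $\sigma_u^{\delta}$ is bounded by \eqref{deltaerror4}, contributing $\lesssim h^2 + h^4 \log(1/h)/\delta^2$.

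Putting all four bounds together gives
\[ \varepsilon \int_0^T \|\gradgh(u^{-\ell} - U_h)\|_{L^2(\Gamma_h(t))}^2 \;\le\; C\Bigl( \delta + h^2 + \frac{h^4 \log(1/h)}{\delta^2} \Bigr). \]
Balancing the first and third terms amounts to choosing $\delta$ so that $\delta \sim h^4\log(1/h)/\delta^2$, i.e.\ $\delta \sim h^{4/3}(\log(1/h))^{1/3}$. With this choice the $\delta$ and $\delta^{-2}h^4\log(1/h)$ contributions both become $\lesssim h^{4/3}(\log(1/h))^{1/3}$, which is in particular dominated by $h^{4/3}\log(1/h)$, and the $h^2$ term is already absorbed since $h^2 \le h^{4/3}$ for small $h$. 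One must also check that this $\delta$ is admissible, i.e.\ sufficiently small so that all earlier results (notably Lemma \ref{dpotbound} and the bound \eqref{deltaerror4}) apply; this is true for $h$ small enough because $\delta \to 0$ as $h \to 0$.

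The only real obstacle is verifying that every ingredient is uniformly valid at the chosen $\delta(h)$: the constants in \eqref{deltaerror1}, \eqref{deltaerror2}, \eqref{deltaerror3}, and \eqref{deltaerror4} are already known to be independent of $\delta$ and $h$, so no circularity arises. The rest is the elementary optimisation of a sum of three positive quantities in $\delta$, and the logarithmic slack in the stated bound means the optimisation need not be done sharply.
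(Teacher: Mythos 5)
Your proposal is correct and follows essentially the same route as the paper: the same four-term decomposition, the same combination of \eqref{deltaerror1}, \eqref{deltaerror2}, \eqref{deltaerror3}, \eqref{deltaerror4}, and an optimisation in $\delta$ of the resulting bound $C(\delta + h^2 + h^4\log(1/h)/\delta^2)$. The only cosmetic difference is that you balance with $\delta \sim h^{4/3}(\log(1/h))^{1/3}$ while the paper simply takes $\delta = C(p)h^{4/3}$; both choices land within the stated $Ch^{4/3}\log(1/h)$ bound.
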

\begin{proof}
	By combining \eqref{deltaerror1}, \eqref{deltaerror2}, \eqref{deltaerror3}, \eqref{deltaerror4} we find that
	\[\int_0^T \varepsilon \| \gradgh (u^{-\ell} - U_h) \|_{L^2(\Gamma_h(t))}^2 \leq C \left( \delta + h^2 + \frac{h^4 \log\left( \frac{1}{h} \right)}{\delta^2}  \right).\]
	Now we choose $\delta = C(p) h^p$ for some value $p$ to be determined and some constant $C(p)$ dependent on $p$ so that $\delta$ is sufficiently small.
	The optimal choice of $p$ is then $p = \frac{4}{3}$ and gives the result.
\end{proof}

\begin{remark}

\begin{enumerate}
    \item We do not immediately obtain an $L^\infty_{H^{-1}}$ error bound, as we have for the individual results \eqref{deltaerror1}, \eqref{deltaerror2}, \eqref{deltaerror4}.
    As this requires some notion of how one can compare the norms $\|\cdot\|_{-1}, \inormh{\cdot}$.
    \item From Proposition \ref{festability} is clear that this result holds for initial data $\tilde{U}_{h,0} \in \mathcal{I}_{h,0}$ such that
    \[\int_{\Gamma_h(0)} \tilde{U}_{h,0} = \int_{\Gamma_0} u_0, \text{ and } \inormh{\Pi_h u_0 - \tilde{U}_{h,0}} \leq C h^\frac{2}{3} \log\left( \frac{1}{h} \right)^\frac{1}{2},\]
	for some constant $C$ independent of $h$.
	A notable example of this is choosing $\tilde{U}_{h,0} = I_h u_0^{-\ell} - \tilde{c},$
	where
	$$ \tilde{c} = \frac{1}{|\Gamma_h(0)|}\left( \int_{\Gamma_h(0)} I_h u_0^{-\ell} - \int_{\Gamma_0} u_0 \right). $$
	It is straightforward to show that $\tilde{U}_{h,0} \in \mathcal{I}_{h,0}$, and that $|\tilde{c}| = \mathcal{O}(h^2)$.
	This justifies our choice of using the interpolant as initial data in our numerical experiments --- as for sufficiently small $h$ this term is negligible.
\end{enumerate}
\end{remark}

\section{Full discretisation of the problem}
\label{section: logch fully discrete}
\subsection{Time discretisation}

We consider a backward Euler time discretisation, with timestep $\tau > 0$, of the system \eqref{fecheqn1}, \eqref{fecheqn2}, introducing further numerical integration terms, to give our fully discrete scheme.

\begin{notation}
	\begin{enumerate}
		\item We use the notation $t_n := n \tau$, $\Gamma_h^n := \Gamma_h(t_n)$, $S_h^n := S_h(t_n)$.
		\item For functions $\phi_h^{n-1} \in S_h^{n-1},\phi_h^{n} \in S_h^{n}$ we define $\overline{\phi_h^{n-1}} \in S_h^{n}$ and $\underline{\phi_h^n} \in S_h^{n-1}$ to be the functions with the same nodal values, but on the succeeding/preceding surfaces respectively.
		\item We define a fully discrete material time derivative for a sequence of functions $(\phi_h^n)_n$ for $\phi_h^n \in S_h^n$ by
		\[ \matdevtau \phi_h^n := \frac{1}{\tau} \left(\phi_h^n - \overline{\phi_h^{n-1}}\right) \in S_h^n. \]
	\end{enumerate}
	
\end{notation}

For simplicity we assume that $N_T := \frac{T}{\tau} \in \mathbb{N}$ --- the analysis in this section can be modified to include non-uniform timestep sizes and timestep sizes such that $N_T := \lfloor \frac{T}{\tau}\rfloor \neq \frac{T}{\tau}$, but we shall not consider this here.
With this notation we now pose our fully discrete scheme.
We let $U_{h,0} \in \mathcal{I}_{h,0}$ be an approximation for some $u_0 \in \mathcal{I}_0$, as in the semi-discrete case.
Then for $n \geq 1$ and data $(U_h^{n-1}, W_h^{n-1})$ we want to find $(U_h^n, W_h^n) \in S_h^n \times S_h^n$ such that
\begin{gather}\label{fullydisceqn1}
	\frac{1}{\tau} \left(\intm(t_n;U^n_h, \phi_h^n) - \intm(t_{n-1};U_h^{n-1}, \phi_h^{n-1})\right)+ a_h(t_n;W_h^n , \phi_h^n) = \intm(t_{n-1};U_h^{n-1}, \underline{\matdevtau \phi_h^n}),\\
	\intm(t_n;W_h^n, \phi_h^n) = \varepsilon a_h(t_n;U^n_h , \phi_h^n) + \frac{\theta}{2\varepsilon} \intm(t_n;I_h f(U^n_h),\phi_h^n) - \frac{1}{\varepsilon}\intm(t_n;U^n_h, \phi_h^n),	\label{fullydisceqn2}
\end{gather}
for all $\phi_h^{n-1} \in S_h^{n-1},\phi_h^n \in S_h^n$, and such that $U_h^0 = U_{h,0}$.
As in \cite{elliott2024fully}, we note that in our notation we can write \eqref{fullydisceqn1} as
\[ \frac{1}{\tau} \left(\intm(t_n;U^n_h, \phi_h^n) - \intm(t_{n-1};U_{h}^{n-1}, \underline{\phi_h^{n}})\right)+ a_h(W_h^n , \phi_h^n) =0, \]
which is a more natural formulation of this equality.
This form will be used throughout, notably in proving existence by a similar argument to that in \cite{elliott2024fully}.
From here on we shall omit the time argument from the bilinear forms, as the timestep will be clear from context.

\subsection{Well-posedness}
\subsubsection{Regularisation}
As before, we show well-posedness by considering the regularised potential \eqref{dpot} and the corresponding equations.
As such we are interested in the following regularised problem for $\delta \in (0,1)$.

Given initial data $U_{h,0} \in \mathcal{I}_{h,0}$, approximating some $u_0 \in \mathcal{I}_{0}$, we want to find $(\Udhn{n}, \Wdhn{n}) \in S_h^n \times S_h^n$ such that
\begin{gather}\label{regfullydisceqn1}
	\frac{1}{\tau} \left(\intm(\Udhn{n}, \phi_h^n) - \intm(\Udhn{n-1}, \underline{\phi_h^{n}})\right)+ a_h(\Wdhn{n} , \phi_h^n) = 0,\\
	\intm(\Wdhn{n}, \phi_h^n) = \varepsilon a_h(\Udhn{n} , \phi_h^n) + \frac{\theta}{2\varepsilon} \intm(I_h\fd(\Udhn{n}),\phi_h^n) - \frac{1}{\varepsilon}\intm(\Udhn{n}, \phi_h^n),	\label{regfullydisceqn2}
\end{gather}
for all $\phi_h^n \in S_h^n$, and such that $\Udhn{0} = U_{h,0}$.

\subsubsection{Existence}
To show existence, we consider the minimisation of an appropriate functional, as in \cite{copetti1992numerical,elliott1989cahn, elliott2024fully}.
To do so we first state some results from \cite{dziuk2012fully}.

\begin{lemma}[{\cite[Lemma 3.6]{dziuk2012fully}}]
	For $\phi_h^n \in S_h^n$, $\tau$ sufficiently small and $t \in [t_{n-1}, t_n]$ there exists a constant $C$ independent of $t, \tau, h$ such that
	\begin{align}
		\| \underline{\phi_h^n}(t) \|_{L^2(\Gamma_h(t))} &\leq C \| \phi_h^n \|_{L^2(\Gamma_h^n)}, \label{timenorm1}\\
		\| \gradgh \underline{\phi_h^n}(t) \|_{L^2(\Gamma_h(t))} &\leq C \| \gradgh \phi_h^n \|_{L^2(\Gamma_h^n)}, \label{timenorm2}
	\end{align}
	where $\underline{\phi_h^n}(t)$ is the function on $\Gamma_h(t)$ with the same nodal values as $\phi_h^n$.
	In this notation $\underline{\phi_h^n} = \underline{\phi_h^n}(t_{n-1})$.
\end{lemma}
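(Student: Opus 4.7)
The plan is to argue element-by-element using the fact that the discrete flow map $\Phi^h$ provides, for each element $K(t_n) \in \mathcal{T}_h(t_n)$, a natural affine bijection $F_t : K(t_n) \to K(t)$ sending each vertex $x_i(t_n)$ to $x_i(t)$, and that the two finite-element functions $\phi_h^n$ and $\underline{\phi_h^n}(t)$ are related by $\phi_h^n = \underline{\phi_h^n}(t) \circ F_t$ on $K(t_n)$, since they share nodal values at corresponding vertices. The whole claim reduces to showing that, for $\tau$ sufficiently small, $F_t$ is close to an isometry uniformly in $h$ and $K$.

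First I would quantify the perturbation of the vertices: since $V \in C^1([0,T];C^2(\mathbb{R}^3;\mathbb{R}^3))$ and $\dot{x}_i(s) = V(x_i(s),s)$, one has $x_i(t) - x_i(t_n) = O(\tau)$ uniformly. The key refinement is that for any two adjacent vertices $x_i$, $x_j$ of a triangle, the Taylor expansion
\[ V(x_j,t) - V(x_i,t) = DV(x_i,t)(x_j - x_i) + O(|x_j - x_i|^2) \]
and Gr\"onwall's inequality give, for $\tau$ small,
\[ \bigl(x_j(t) - x_i(t)\bigr) - \bigl(x_j(t_n) - x_i(t_n)\bigr) = O(\tau\, h), \]
where the hidden constant depends only on $\|V\|_{C^1([0,T];C^2)}$. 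Consequently, each edge vector, each edge length, and (via cross products) the area $|K(t)|$ satisfy the multiplicative bounds
\[ |K(t)|/|K(t_n)| = 1 + O(\tau), \qquad l_i(t)/l_i(t_n) = 1 + O(\tau), \]
uniformly in $h$ and in the element $K$.

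For the $L^2$ bound I would use the explicit formula that for a linear function with barycentric expansion $\sum_{i} \alpha_i \lambda_i^K$ on any triangle $K$,
\[ \int_K \Bigl( \sum_i \alpha_i \lambda_i^K \Bigr)^2 = \frac{|K|}{12}\Bigl( \sum_i \alpha_i^2 + \bigl(\sum_i \alpha_i\bigr)^2 \Bigr). \]
Because the nodal values are shared between $\phi_h^n$ and $\underline{\phi_h^n}(t)$, summing over elements yields
\[ \|\underline{\phi_h^n}(t)\|_{L^2(\Gamma_h(t))}^2 = \sum_{K(t)} \frac{|K(t)|}{|K(t_n)|}\int_{K(t_n)} (\phi_h^n)^2, \]
and the uniform area bound gives the first estimate. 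For the $H^1$ bound I would use that on a triangle $K$ one has $|\gradgh \lambda_i^K| = l_i/(2|K|)$ with $\lambda_i^K$ constant in direction across the element; hence $\gradgh \lambda_i^{(t)}$ differs from $\gradgh \lambda_i^{(n)}$ by a multiplicative factor $1 + O(\tau)$ in magnitude and by a vector of size $O(\tau)$ in direction (the tangent plane of $K(t)$ tilts by $O(\tau)$). Expanding $\gradgh \underline{\phi_h^n}(t)$ and $\gradgh \phi_h^n$ in the respective nodal bases and combining with the area ratio then gives the second estimate.

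The main obstacle is the uniformity of the perturbation bounds in $h$. A naive estimate would say that nodes move by $O(\tau)$ while edges have length $O(h)$, producing a relative change of $O(\tau/h)$, which would be useless. This is avoided precisely by the $C^1$-smoothness of $V$ together with the Taylor expansion above, which promotes the edge perturbation to $O(\tau h)$ rather than $O(\tau)$. Once that observation is in place, the remaining work is bookkeeping with the element formulas and summing over $\mathcal{T}_h(t_n)$, invoking the (uniform) quasi-uniformity assumption only to control constants in a fully $h$-independent way.
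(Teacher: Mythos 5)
Your element-by-element argument is essentially sound, but be aware that the paper itself does not prove this lemma: it is quoted directly from Dziuk and Elliott (2012), and the proof there runs along a different line. In the cited source the two estimates are short consequences of the discrete transport theorem (Proposition \ref{transport3} in this paper): since $\underline{\phi_h^n}(t)$ has vanishing discrete material derivative, one has $\frac{d}{dt}\, m_h\bigl(\underline{\phi_h^n}(t),\underline{\phi_h^n}(t)\bigr) = g_h\bigl(\underline{\phi_h^n}(t),\underline{\phi_h^n}(t)\bigr)$ and $\frac{d}{dt}\, a_h\bigl(\underline{\phi_h^n}(t),\underline{\phi_h^n}(t)\bigr) = b_h\bigl(\underline{\phi_h^n}(t),\underline{\phi_h^n}(t)\bigr)$, and the uniform bounds on $\gradgh\cdot V_h$ and $\mathbf{B}_h(V_h)$ together with Gr\"onwall over an interval of length at most $\tau$ give both norm equivalences with constant $e^{C\tau}\leq C$. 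Your route instead quantifies the geometry directly: the $O(\tau h)$ edge perturbation via the Lipschitz bound on $V$ and Gr\"onwall, the area ratio $1+O(\tau)$, and the exact $P^1$ mass-matrix identity. This is more elementary and self-contained (it does not presuppose the transport machinery), at the price of more bookkeeping; the transport-theorem proof is shorter and extends verbatim to the bilinear-form perturbations \eqref{timeperturb1}--\eqref{timeperturb3}.

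One step you should tighten is the gradient estimate. Expanding both functions in their nodal bases and using that each $\gradgh\lambda_i$ changes by a relative $O(\tau)$ does not by itself yield \eqref{timenorm2}, because of cancellation: for a constant function the individual terms $\alpha_i\gradgh\lambda_i$ have size $\|\phi_h^n\|_{L^\infty}/h$ while the gradient vanishes, so the naive error is $O(\tau/h)\|\phi_h^n\|_{L^\infty}$ rather than $O(\tau)\|\gradgh\phi_h^n\|$. The clean fix is already implicit in your setup: from $\phi_h^n = \underline{\phi_h^n}(t)\circ F_t$ one has $\gradgh\underline{\phi_h^n}(t) = (DF_t)^{-T}\gradgh\phi_h^n$ (composed with $F_t^{-1}$), and your edge bound together with shape regularity shows the singular values of $DF_t$ are $1+O(\tau)$ uniformly, so a change of variables with $|\det DF_t| = |K(t)|/|K(t_n)| = 1+O(\tau)$ gives the bound directly; equivalently, expand in differences of nodal values, which are controlled by $h\,|\gradgh\phi_h^n|$ on each element. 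With that adjustment your proof is complete and correct, and it does use the uniform quasi-uniformity exactly where you say it does.
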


\begin{lemma}
	For $\zeta_h^n, \eta_h^n \in S_h^n$ and sufficiently small $\tau$ we have
	\begin{align}
		|m_h(\zeta_h^n, \eta_h^n) - m_h(\underline{\zeta_h^n}, \underline{\eta_h^n})| &\leq C \tau \| \zeta_h^n \|_{L^2(\Gamma_h^n)} \| \eta_h^n \|_{L^2(\Gamma_h^n)}, \label{timeperturb1}\\
		|a_h(\zeta_h^n, \eta_h^n) - a_h(\underline{\zeta_h^n}, \underline{\eta_h^n})| &\leq  C \tau \|\gradgh \zeta_h^n \|_{L^2(\Gamma_h^n)} \|\gradgh \eta_h^n \|_{L^2(\Gamma_h^n)}, \label{timeperturb2}\\
		|\intm(\zeta_h^n, \eta_h^n) - \intm(\underline{\zeta_h^n}, \underline{\eta_h^n})| &\leq C \tau \normh{t}{\zeta_h^n} \normh{t}{\eta_h^n}, \label{timeperturb3}
	\end{align}
	where $C$ denotes a constant independent of $\tau, h$.
\end{lemma}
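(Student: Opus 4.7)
The plan is to treat all three estimates uniformly by extending $\zeta_h^n$ and $\eta_h^n$ to time-dependent finite element functions with constant nodal coefficients on the slab $[t_{n-1}, t_n]$, and then invoking the transport theorems already established in the paper. Specifically, writing $\zeta_h^n = \sum_{i=1}^{N_h} \alpha_i \phi_i(t_n)$, I would define $\zeta_h(t) := \sum_{i=1}^{N_h} \alpha_i \phi_i(t) \in S_h(t)$ for $t \in [t_{n-1}, t_n]$, and similarly $\eta_h(t)$ from $\eta_h^n$. By construction $\zeta_h(t_n) = \zeta_h^n$ and $\zeta_h(t_{n-1}) = \underline{\zeta_h^n}$, and the transport property $\matdev_h \phi_i = 0$ of the basis functions gives $\matdev_h \zeta_h \equiv 0$ and $\matdev_h \eta_h \equiv 0$ on this interval.

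For \eqref{timeperturb1}, Proposition \ref{transport3} then simplifies to $\frac{d}{dt} m_h(\zeta_h, \eta_h) = g_h(\zeta_h, \eta_h)$, so integrating over $[t_{n-1}, t_n]$ gives
\[ m_h(\zeta_h^n, \eta_h^n) - m_h(\underline{\zeta_h^n}, \underline{\eta_h^n}) = \int_{t_{n-1}}^{t_n} g_h(\zeta_h(t), \eta_h(t)) \, dt. \]
The smoothness of $V$ (and hence of $V_h$) controls $\|\gradgh \cdot V_h\|_{L^\infty(\Gamma_h(t))}$ uniformly in $t$ and $h$, so $|g_h(\zeta_h, \eta_h)| \leq C \|\zeta_h\|_{L^2(\Gamma_h(t))} \|\eta_h\|_{L^2(\Gamma_h(t))}$. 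Applying \eqref{timenorm1} to transfer these norms back to $\Gamma_h^n$ and integrating over the interval of length $\tau$ yields \eqref{timeperturb1}. The bound \eqref{timeperturb2} follows the same template using the second identity of Proposition \ref{transport3}, which reduces here to $\frac{d}{dt} a_h(\zeta_h, \eta_h) = b_h(\zeta_h, \eta_h)$; one then uses the uniform $L^\infty$ bound on $\mathbf{B}_h(V_h)$ (again from the assumed smoothness of $V$) and \eqref{timenorm2} to conclude.

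The estimate \eqref{timeperturb3} is handled analogously, now via Lemma \ref{transport5}, which for our extension reduces to
\[ \frac{d}{dt} \intm(\zeta_h, \eta_h) = g_h(I_h(\zeta_h \eta_h), 1). \]
The bound $|g_h(I_h(\zeta_h \eta_h), 1)| \leq C \|\zeta_h\|_{L^2(\Gamma_h(t))} \|\eta_h\|_{L^2(\Gamma_h(t))}$ from the remark following Lemma \ref{intmbounds}, combined with \eqref{intmbound0} and \eqref{timenorm1}, controls the integrand by $C \normh{t_n}{\zeta_h^n} \normh{t_n}{\eta_h^n}$, and integrating over $[t_{n-1}, t_n]$ gives the result.

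I do not anticipate a substantive obstacle: the entire argument is a routine combination of the three transport theorems with uniform velocity bounds, the time-transfer estimates \eqref{timenorm1}--\eqref{timenorm2}, and the lumped-mass norm equivalence \eqref{intmbound0}. The ``sufficiently small $\tau$'' hypothesis is inherited solely from \eqref{timenorm1}--\eqref{timenorm2}.
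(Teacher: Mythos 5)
Your argument is correct and is essentially the approach the paper relies on: the paper simply cites Lemmas 3.6 and 3.7 of Dziuk--Elliott (2012), whose underlying mechanism is exactly your construction — extend the functions with frozen nodal values so that $\matdev_h$ vanishes, integrate the discrete transport identities of Proposition \ref{transport3} and Lemma \ref{transport5} over $[t_{n-1},t_n]$, and bound the resulting $g_h$, $b_h$ and lumped-$g_h$ terms via the uniform bounds on $V_h$ together with \eqref{timenorm1}, \eqref{timenorm2} and \eqref{intmbound0}. You have merely written out the details that the paper delegates to the citation, so no gap to report.
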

\begin{proof}
	\eqref{timeperturb1} is shown in \cite{dziuk2012fully} by combining Lemma 3.6 and Lemma 3.7, and the previous result.
	The proofs for \eqref{timeperturb2}, \eqref{timeperturb3} follow similarly.
\end{proof}
\begin{corollary}
	Let $\tau$ be sufficiently small, then for $\phi_h^n \in S_h^n$ we have that
	\begin{align}
		\|\phi_h^n\|_{L^2(\Gamma_h^n)} &\leq C \|\underline{\phi_h^n}\|_{L^2(\Gamma_h^{n-1})}, \label{timenorm3}\\
		\|\gradgh \phi_h^n\|_{L^2(\Gamma_h^n)} &\leq C \|\gradgh \underline{\phi_h^n}\|_{L^2(\Gamma_h^{n-1})}, \label{timenorm4},
	\end{align}
	where $C$ denotes a constant independent of $\tau, h$.
\end{corollary}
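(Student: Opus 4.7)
The plan is short: both bounds are immediate consequences of the bilinear-form perturbation estimates \eqref{timeperturb2} and \eqref{timeperturb3} already proved in the preceding lemma, together with the norm equivalence \eqref{intmbound0}.

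For \eqref{timenorm3} I would apply \eqref{timeperturb3} with $\zeta_h^n = \eta_h^n = \phi_h^n$, which yields
\[
\left| \normh{t_n}{\phi_h^n}^2 - \normh{t_{n-1}}{\underline{\phi_h^n}}^2 \right| \leq C \tau \normh{t_n}{\phi_h^n}^2.
\]
Choosing $\tau$ small enough that $C\tau \leq \tfrac{1}{2}$ allows one to absorb the right-hand side into the left and deduce $\normh{t_n}{\phi_h^n}^2 \leq 2 \normh{t_{n-1}}{\underline{\phi_h^n}}^2$. Converting the lumped-mass norms back to $L^2$ norms via \eqref{intmbound0} on both sides gives \eqref{timenorm3} with a constant independent of $\tau,h$.

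For \eqref{timenorm4} I would run an identical argument based on \eqref{timeperturb2}: setting $\zeta_h^n = \eta_h^n = \phi_h^n$ one finds
\[
\left| \| \gradgh \phi_h^n \|_{L^2(\Gamma_h^n)}^2 - \| \gradgh \underline{\phi_h^n} \|_{L^2(\Gamma_h^{n-1})}^2 \right| \leq C \tau \| \gradgh \phi_h^n \|_{L^2(\Gamma_h^n)}^2,
\]
and the same absorption step (valid for $\tau$ sufficiently small) produces \eqref{timenorm4}.

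There is no real obstacle: the preceding lemma already captures the essential geometric fact that the two surfaces $\Gamma_h^{n-1}$ and $\Gamma_h^n$ are close in the sense that integrals over them (with corresponding functions) differ by at most $O(\tau)$ relative to the $L^2$/$H^1$ norms. The present corollary is just the observation that, for $\tau$ small, this $O(\tau)$ perturbation is insufficient to prevent a reverse-direction estimate. The only thing to take care of is that the constants obtained depend only on the smoothness of $V$ (through the constants in \eqref{timeperturb2}, \eqref{timeperturb3}) and not on $h$ or $n$, which is automatic from the statement of the preceding lemma.
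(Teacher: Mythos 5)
Your proof is correct and uses essentially the same absorption argument as the paper; the only (harmless) difference is that for the $L^2$ bound the paper applies \eqref{timeperturb1} directly, writing $\|\phi_h^n\|_{L^2(\Gamma_h^n)}^2 = \|\underline{\phi_h^n}\|_{L^2(\Gamma_h^{n-1})}^2 + [\,\cdot\,]$ and absorbing the $O(\tau)$ bracket, whereas you detour through the lumped-mass estimate \eqref{timeperturb3} and the norm equivalence \eqref{intmbound0}. Your treatment of \eqref{timenorm4} via \eqref{timeperturb2} is exactly the paper's argument.
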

\begin{proof}
	For \eqref{timenorm3} this follows by writing
	$$\|\phi_h^n\|_{L^2(\Gamma_h^n)}^2 = \|\underline{\phi_h^n}\|_{L^2(\Gamma_h^{n-1})}^2 + \left[ \|\phi_h^n\|_{L^2(\Gamma_h^n)}^2 - \|\underline{\phi_h^n}\|_{L^2(\Gamma_h^{n-1})}^2 \right],$$
	and bounding the term in square brackets by using \eqref{timeperturb1} with a sufficiently small $\tau$.
	\eqref{timenorm4} follows similarly.
\end{proof}
In fact these results can be generalised, where one replaces $\underline{\phi_h^n} \in S_h^n$ with $\underline{\phi_h^n}(t) \in S_h(t)$ for $t \in [t_{n-1}, t_n]$.

Following the approach of \cite{elliott2024fully} now introduce some tools to be used in showing existence of a solution.
\begin{definition}
	For $z_h^{n-1} \in S_h^{n-1}$ we define $z_{h,+}^{n-1} \in S_h^n$ to be the unique solution of
	\begin{align}
		\intm(t_n;z_{h,+}^{n-1}, \phi_h^n) = \intm(t_{n-1};z_{h}^{n-1}, \underline{\phi}_h^n), \label{timeproj}
	\end{align}
	for all $\phi_h^n \in S_h^n$.
\end{definition}

This is clearly well defined by the Lax-Milgram theorem.
This time projection has the following properties.
\begin{lemma}
	For $z_h^{n-1} \in S_h^{n-1}$ and $z_{h,+}^{n-1}$ as defined above we have:
	\begin{gather}
			\|z_{h,+}^{n-1}\|_{h,t_n} \leq C \|z_h^{n-1}\|_{L^2(\Gamma_h^{n-1})}, \label{timeproj1}\\
			\|\overline{z_h^{n-1}} -  z_{h,+}^{n-1}\|_{h,t_n} \leq C{\tau} \|z_h^{n-1}\|_{L^2(\Gamma_h^{n-1})}, \label{timeproj2}
		\end{gather}

	where $C$ denotes a constant independent of $\tau,h$.
\end{lemma}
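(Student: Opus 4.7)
The plan is to test the defining relation \eqref{timeproj} with a well-chosen $\phi_h^n$ and then invoke the norm-equivalence \eqref{intmbound0} together with \eqref{timenorm1} (and its corollary \eqref{timenorm3}) to trade norms between $\Gamma_h^{n-1}$ and $\Gamma_h^n$.

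For \eqref{timeproj1} I take $\phi_h^n = z_{h,+}^{n-1}$ in \eqref{timeproj}, which by the definition of $\normh{t}{\cdot}$ gives
\[\normh{t_n}{z_{h,+}^{n-1}}^2 = \intm(t_{n-1};z_h^{n-1}, \underline{z_{h,+}^{n-1}}) \leq \normh{t_{n-1}}{z_h^{n-1}}\,\normh{t_{n-1}}{\underline{z_{h,+}^{n-1}}}.\]
Then \eqref{intmbound0} on the left and \eqref{timenorm1} applied to the function with nodal values of $z_{h,+}^{n-1}$ yield $\normh{t_{n-1}}{\underline{z_{h,+}^{n-1}}} \leq C\normh{t_n}{z_{h,+}^{n-1}}$, and dividing through gives the claim.

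For \eqref{timeproj2} set $\psi_h^n := \overline{z_h^{n-1}} - z_{h,+}^{n-1} \in S_h^n$ and note that by the nodal-value convention $\underline{\overline{z_h^{n-1}}} = z_h^{n-1}$, so $\underline{\psi_h^n} = z_h^{n-1} - \underline{z_{h,+}^{n-1}}$. Using the definition of the squared norm and then \eqref{timeproj} on the $z_{h,+}^{n-1}$ contribution,
\[\normh{t_n}{\psi_h^n}^2 = \intm(t_n;\overline{z_h^{n-1}}, \psi_h^n) - \intm(t_{n-1};z_h^{n-1}, \underline{\psi_h^n}).\]
The right-hand side is exactly the kind of time-slab perturbation controlled by \eqref{timeperturb3} (applied with $\zeta_h^n = \overline{z_h^{n-1}}$, whose pullback to $\Gamma_h^{n-1}$ is precisely $z_h^{n-1}$), so
\[\normh{t_n}{\psi_h^n}^2 \leq C\tau\,\normh{t_n}{\overline{z_h^{n-1}}}\,\normh{t_n}{\psi_h^n}.\]
Cancelling one factor of $\normh{t_n}{\psi_h^n}$ and bounding $\normh{t_n}{\overline{z_h^{n-1}}}$ via \eqref{intmbound0} and \eqref{timenorm3} (with $\phi_h^n = \overline{z_h^{n-1}}$) by $C\|z_h^{n-1}\|_{L^2(\Gamma_h^{n-1})}$ finishes the proof.

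The only subtlety is identifying the right auxiliary function on which to apply \eqref{timeperturb3}; once the observation $\underline{\overline{z_h^{n-1}}} = z_h^{n-1}$ is in place, everything else reduces to mechanical use of the inequalities already established earlier in this section. There is no real obstacle: both bounds follow by a one-line testing argument modulo the bookkeeping of swapping between the $S_h^{n-1}$ and $S_h^n$ formulations.
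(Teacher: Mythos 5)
Your proof is correct and follows essentially the same route as the paper's: test \eqref{timeproj} with $z_{h,+}^{n-1}$ and use \eqref{intmbound0} with \eqref{timenorm1} for the first bound, then test with $\overline{z_h^{n-1}}-z_{h,+}^{n-1}$ and apply the time-perturbation estimate for the lumped form for the second. The only cosmetic difference is that you invoke \eqref{timeperturb3} directly where the paper cites \eqref{timeperturb1} (equivalent here via \eqref{intmbound0}), so there is nothing further to add.
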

\begin{proof}
	To show \eqref{timeproj1} we test \eqref{timeproj} with $z_{h,+}^{n-1}$ and use \eqref{intmbound0}, \eqref{timenorm1} to see that
	$$ \|z_{h,+}^{n-1}\|_{L^2(\Gamma_h^n)}^2 \leq C \|\underline{z_{h,+}^{n-1}}\|_{L^2(\Gamma_h^{n-1})} \|z_h^{n-1}\|_{L^2(\Gamma_h^{n-1})} \leq C\|z_{h,+}^{n-1}\|_{L^2(\Gamma_h^n)} \|z_h^{n-1}\|_{L^2(\Gamma_h^{n-1})},$$
	from which \eqref{timeproj1} holds.
	To show \eqref{timeproj2} we note that
	$$ \intm(\overline{z_h^{n-1}} -  z_{h,+}^{n-1}, \phi_h^n) = \intm(\overline{z_h^{n-1}},\phi_h^n) - \intm(z_h^{n-1}, \underline{\phi_h^n}),$$ and hence taking $\phi_h^n = \overline{z_h^{n-1}} -  z_{h,+}^{n-1}$ and using \eqref{timeperturb1} one obtains the result.
\end{proof}
Using this time projection we notice that one may write \eqref{regfullydisceqn1} as
\begin{align}
    \frac{1}{\tau} \intm(\Udhn{n} - U_{h,+}^{n-1,\delta}, \phi_h^n)+ a_h(\Wdhn{n} , \phi_h^n) = 0. \label{regfullydisceqn1 alternate}
\end{align}

Lastly, we require an appropriate notion of a discrete inverse Laplacian, $\intinvsh$, as defined in Appendix \ref{invlaps}.
With these considerations we can decouple the equations \eqref{regfullydisceqn1}, \eqref{regfullydisceqn2}.
To do so we notice that 
\[ \frac{1}{\tau} a_h(\intinvsh(\Udhn{n} - U_{h,+}^{n-1,\delta}), \phi_h^n)+ a_h(\Wdhn{n} , \phi_h^n) = 0,  \]
and hence one finds
\begin{align}
	\label{fullydiscdecoupled}
	\Wdhn{n} =\lambda_h^{n,\delta} - \frac{1}{\tau}\intinvsh(\Udhn{n} - U_{h,+}^{n-1,\delta}) ,
\end{align}
where \eqref{regfullydisceqn2} implies
\[\lambda_h^{n,\delta} = \mval{\left( \frac{\theta}{2 \varepsilon} I_h\fd(\Udhn{n}) - \frac{1}{\varepsilon} \Udhn{n} \right)}{\Gamma_h^n}.\]
Hence the system \eqref{regfullydisceqn1}, \eqref{regfullydisceqn2} can be written as a single equation,
\begin{multline}
	\label{fullydiscdecoupledeqn}
	\varepsilon a_h \left( \Udhn{n}, \phi_h^n \right) + \frac{\theta}{2 \varepsilon}\intm\left(\fd\left(\Udhn{n}\right), \phi_h^n - \mval{\phi_h^n}{\Gamma_h^n} \right) - \frac{1}{\varepsilon}\intm\left(\Udhn{n}, \phi_h^n - \mval{\phi_h^n}{\Gamma_h^n}\right)\\
	+\frac{1}{\tau}\intm \left( \intinvsh(\Udhn{n} - U^{n-1,\delta}_{h,+}), \phi_h^n\right) = 0.
\end{multline}
This observation motivates one to define a functional, $J_h^{n,\delta} : D^n \rightarrow \mathbb{R}$ given by
\[J_h^{n,\delta} (z_h^n) := \frac{1}{\varepsilon} \intm \left( F^{\delta} (z_h^n), 1 \right) + \frac{\varepsilon}{2} \|\gradgh z_h^n \|_{L^2(\Gamma_h^n)}^2 + \frac{1}{2 \tau} \|z_h^n - U_{h,+}^{n-1,\delta}\|_{-h}^2 ,\]
where
\[D^n := \left\{ z_h^n \in S_h^n \mid \intm(z_h^n,1) = \intm(\Udhn{n-1},1) \right\}.\]
$D^n$ is clearly an affine subspace of $S_h^n$, and thus finite dimensional.

\begin{lemma}
	Let $U_{h,0} \in \mathcal{I}_{h,0}$.
	Then there exists a solution pair $(\Udhn{n},\Wdhn{n})$ solving \eqref{regfullydisceqn1}, \eqref{regfullydisceqn2}.
\end{lemma}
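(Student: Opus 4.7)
The plan is to obtain $(\Udhn{n}, \Wdhn{n})$ by minimising the functional $J_h^{n,\delta}$ over the affine subspace $D^n$, following the approach of \cite{copetti1992numerical, elliott1989cahn, elliott2024fully}. The key observation is that \eqref{fullydiscdecoupledeqn} is precisely the Euler--Lagrange equation for $J_h^{n,\delta}$ with respect to variations tangent to $D^n$ (i.e. $\phi_h^n \in S_h^n$ with $\intm(\phi_h^n, 1) = 0$), on noting that $(F^\delta)'(r) = \frac{\theta}{2} f^\delta(r) - r$ and differentiating $\frac{1}{2\tau}\|\cdot\|_{-h}^2$ using the symmetry of $\intinvsh$. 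Once a minimiser $\Udhn{n}$ is produced, the companion $\Wdhn{n}$ is read off from \eqref{fullydiscdecoupled} with $\lambda_h^{n,\delta}$ fixed by the stated mean-value formula.

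For the existence of a minimiser I would argue by the direct method. Continuity of $J_h^{n,\delta}$ on the finite-dimensional space $D^n$ is immediate from $F^\delta \in C^2(\mathbb{R})$. For coercivity, the key point is that $F^\delta_{\log}$ grows quadratically outside $[-1+\delta, 1-\delta]$ with coefficient proportional to $1/\delta$; together with \eqref{phidbound1} this gives a bound of the form $F^\delta(r) \geq c r^2 - c'$ for sufficiently small $\delta$ (the growth of $F^\delta_{\log}$ dominating the concave contribution $(1-r^2)/2$). Combined with the Dirichlet term $\frac{\varepsilon}{2}\|\gradgh z_h^n\|_{L^2(\Gamma_h^n)}^2$, the non-negativity of $\frac{1}{2\tau}\|\cdot\|_{-h}^2$, and Poincar\'e's inequality (applicable because the mean $\mval{z_h^n}{\Gamma_h^n}$ is fixed on $D^n$), this yields coercivity in $H^1(\Gamma_h^n)$. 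Since $D^n$ is a closed, finite-dimensional affine subspace of $S_h^n$, $J_h^{n,\delta}$ attains its infimum on $D^n$.

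With the minimiser $\Udhn{n}$ in hand, the first-variation computation gives \eqref{fullydiscdecoupledeqn} on zero-mean test functions (the mean-value subtraction in the stated form of \eqref{fullydiscdecoupledeqn} makes the identity trivially valid on constants, since both $\gradgh 1 = 0$ and $\intinvsh(\Udhn{n} - U_{h,+}^{n-1,\delta})$ has zero mean, the latter because $\intm(\Udhn{n},1) = \intm(\Udhn{n-1},1) = \intm(U_{h,+}^{n-1,\delta},1)$ by \eqref{timeproj} and $\Udhn{n} \in D^n$). Defining $\Wdhn{n}$ via \eqref{fullydiscdecoupled}, one recovers \eqref{regfullydisceqn2} from the Euler--Lagrange equation and the choice of $\lambda_h^{n,\delta}$, while \eqref{regfullydisceqn1 alternate} (and hence \eqref{regfullydisceqn1}) follows from the defining property $a_h(\intinvsh\psi, \phi_h^n) = \intm(\psi, \phi_h^n)$ applied to the zero-mean $\psi = \Udhn{n} - U_{h,+}^{n-1,\delta}$, together with $a_h(\lambda_h^{n,\delta}, \phi_h^n) = 0$. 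The main obstacle is the coercivity step: because $F^\delta$ is genuinely non-convex when $\theta < 2$, one cannot rely on convexity and must instead lean on the quadratic growth of $F^\delta_{\log}$, accepting constants that depend on $\delta, h, \tau$. This is acceptable here as we are proving existence at a single timestep for the regularised problem; uniformity in $\delta$ (needed for the passage $\delta \searrow 0$) is established separately in the subsequent lemmas.
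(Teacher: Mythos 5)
Your proposal is correct and follows essentially the same route as the paper, which also obtains $\Udhn{n}$ by minimising $J_h^{n,\delta}$ over the finite-dimensional affine set $D^n$ (via the direct method), identifies \eqref{fullydiscdecoupledeqn} as the Euler--Lagrange equation with $\lambda_h^{n,\delta}$ as the Lagrange multiplier for the mass constraint, and recovers $\Wdhn{n}$ from \eqref{fullydiscdecoupled}. Your quadratic-growth coercivity argument is slightly more than needed (boundedness below of $F^\delta$, the fixed mean on $D^n$ and the Dirichlet term already give the $H^1$ bound), but this is not a defect.
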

\begin{proof}
    With the observation that $\Udhn{n}$ solves \eqref{fullydiscdecoupledeqn}, one may argue by the same logic as in \cite[Lemma 3.5]{elliott2024fully} --- as such we omit further details.
    Moreover, by arguing as in \cite[Lemma 3.1]{elliott2024fully} one finds that this solution is unique under the assumption that $\varepsilon < 4 \varepsilon^3$.
    
\end{proof}

Now as in Section \ref{section: logch semi discrete} we need to establish bounds independent of $\delta$ to consider the limit $\delta \rightarrow 0$.
As before, this will make use of our assumption that $\gradgh \cdot V_h \geq 0$.

\begin{lemma}
	Let $\tau > 0$ be sufficiently small and such that $\tau \leq \frac{\varepsilon^3}{2}$.
	Then the unique solution $(\Udhn{n}, \Wdhn{n})$ of \eqref{regfullydisceqn1}, \eqref{regfullydisceqn2} satisfies for $ 1 \leq N \leq \lfloor \frac{T}{\tau} \rfloor$,
	\begin{align}
		\Echdh[\Udhn{N}] + \frac{\varepsilon\tau^2}{4} \sum_{n=1}^N \|\gradgh \matdevtau \Udhn{n}\|_{L^2(\Gamma_h^n)}^2 + \tau \sum_{n=1}^N \|\gradgh \Wdhn{n}\|_{L^2(\Gamma_h^n)}^2 \leq C \exp\left(Ct_N\right),\label{fdiscregstability}
	\end{align}
	where $C$ denotes a constant independent of $\delta,\tau, h$.
\end{lemma}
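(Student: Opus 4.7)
The plan is to derive a discrete energy identity analogous to the one used in the proof of Lemma \ref{regfeexist}, and then close via a discrete Gronwall inequality. First, I would test \eqref{regfullydisceqn2} with $\phi_h^n = \matdevtau \Udhn{n}$ and test the alternate form \eqref{regfullydisceqn1 alternate} of \eqref{regfullydisceqn1} with $\phi_h^n = \Wdhn{n}$; combining the two identities yields a per-step inequality for $\Echdh[\Udhn{n};t_n]$ in terms of its value at the previous step and the dissipation produced by $\matdevtau\Udhn{n}$ and $\Wdhn{n}$.

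Three ingredients power the per-step calculation. Expanding $\Udhn{n} = \overline{\Udhn{n-1}} + \tau \matdevtau\Udhn{n}$ yields the algebraic identity
\[
\tau\, a_h(\Udhn{n}, \matdevtau \Udhn{n}) = \tfrac12\bigl(\|\gradgh \Udhn{n}\|_{L^2(\Gamma_h^n)}^2 - \|\gradgh \overline{\Udhn{n-1}}\|_{L^2(\Gamma_h^n)}^2\bigr) + \tfrac{\tau^2}{2}\|\gradgh \matdevtau \Udhn{n}\|_{L^2(\Gamma_h^n)}^2,
\]
which accounts for the gradient part. Convexity of $F^\delta_{\log}$ applied at the nodes of $\Gamma_h^n$ gives
\[
\int_{\Gamma_h^n} I_h\bigl[F^\delta_{\log}(\Udhn{n}) - F^\delta_{\log}(\overline{\Udhn{n-1}})\bigr] \leq \tau\, \intm\bigl(I_h \fd(\Udhn{n}), \matdevtau \Udhn{n}\bigr),
\]
while the exact nodal identity $\tfrac{s^2-r^2}{2} = -r(r-s) + \tfrac{(r-s)^2}{2}$ handles the concave part $(1-r^2)/2$ of $F^\delta$, producing a surplus term $\tfrac{\tau^2}{2\varepsilon}\normh{t_n}{\matdevtau \Udhn{n}}^2$. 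Using \eqref{regfullydisceqn2} to eliminate the potential contributions, one arrives at
\[
\Echdh[\Udhn{n};t_n] + \tfrac{\varepsilon\tau^2}{2}\|\gradgh \matdevtau \Udhn{n}\|_{L^2(\Gamma_h^n)}^2 \leq \Echdh[\overline{\Udhn{n-1}};t_n] + \tau\, \intm(\Wdhn{n}, \matdevtau \Udhn{n}) + \tfrac{\tau^2}{2\varepsilon}\normh{t_n}{\matdevtau \Udhn{n}}^2,
\]
where $\Echdh[\overline{\Udhn{n-1}};t_n]$ denotes the discrete energy evaluated on $\Gamma_h^n$ at the transported nodal values. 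The cross term $\tau\, \intm(\Wdhn{n}, \matdevtau \Udhn{n})$ is then bounded by $-\tau \|\gradgh \Wdhn{n}\|_{L^2(\Gamma_h^n)}^2$ plus a surface-motion error controlled via \eqref{timeproj2}, obtained by testing \eqref{regfullydisceqn1 alternate} with $\Wdhn{n}$. The discrepancy $\Echdh[\overline{\Udhn{n-1}};t_n] - \Echdh[\Udhn{n-1};t_{n-1}]$ is handled by integrating the discrete transport theorem for basis-transported interpolants over $[t_{n-1}, t_n]$, giving a contribution of order $C\tau(1 + \Echdh[\Udhn{n-1};t_{n-1}])$ after noting that $F^\delta$ is uniformly bounded below.

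The main obstacle is absorbing the surplus $\tfrac{\tau^2}{2\varepsilon}\normh{t_n}{\matdevtau \Udhn{n}}^2$ into the dissipation $\tfrac{\varepsilon\tau^2}{2}\|\gradgh \matdevtau \Udhn{n}\|_{L^2(\Gamma_h^n)}^2$. Decomposing $\matdevtau \Udhn{n}$ into its spatial mean and mean-zero part, the latter is controlled by a Poincar\'e constant times $\|\gradgh \matdevtau \Udhn{n}\|_{L^2(\Gamma_h^n)}$, and the mean is $O(\|\Udhn{n-1}\|_{L^1(\Gamma_h^{n-1})})$ by mass conservation and the discrete transport theorem; the mean-zero contribution is absorbed into the dissipation (with an $\varepsilon$-dependent factor bundled into $C$), leaving $\tfrac{\varepsilon\tau^2}{4}\|\gradgh \matdevtau \Udhn{n}\|_{L^2(\Gamma_h^n)}^2$ on the left as claimed, while the mean contribution produces an $O(\tau^2(1+\Echdh[\Udhn{n-1}]))$ forcing term absorbed into the Gronwall iteration. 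A secondary obstacle is that the surface-motion error on the cross term introduces $\normh{t_n}{\Wdhn{n}}$; its mean-zero part is absorbed into $\tfrac{\tau}{2}\|\gradgh \Wdhn{n}\|^2$ by Poincar\'e and Young, while the mean requires a $\delta$-independent bound on $|\mval{I_h \fd(\Udhn{n})}{\Gamma_h^n}|$ from a fully-discrete adaptation of Lemma \ref{measure}, whose hypothesis $|\mval{\Udhn{n}}{\Gamma_h^n}| < 1 - \xi$ is secured by mass conservation together with $\gradgh \cdot V_h \geq 0$, since then $|\mval{\Udhn{n}}{\Gamma_h^n}| \leq \bigl(|\Gamma_h^0|/|\Gamma_h^n|\bigr)|\mval{U_{h,0}}{\Gamma_h^0}| < 1 - \xi$ uniformly in $n$. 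The hypothesis $\tau \leq \varepsilon^3/2$ is invoked to ensure uniqueness of the fully-discrete step so that the argument applies to \emph{the} unique solution. Summing from $n=1$ to $N$ and applying the discrete Gronwall inequality, with $\Echdh[U_{h,0};0]$ bounded uniformly as in the proof of Lemma \ref{regfeexist}, then yields the claimed estimate.
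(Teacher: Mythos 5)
Your overall structure \textemdash{} test the two equations with $\matdevtau \Udhn{n}$ and $\Wdhn{n}$, use a discrete gradient identity, convexity of $F^\delta_{\log}$, the algebraic identity for the concave part, and a transport estimate for the energy on the shifted surface \textemdash{} is indeed the skeleton of the paper's proof. But two of your steps contain genuine gaps.

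\textbf{Absorption of the quadratic surplus.} The surplus term $\tfrac{\tau^2}{2\varepsilon}\normh{t_n}{\matdevtau\Udhn{n}}^2$ cannot be absorbed into $\tfrac{\varepsilon\tau^2}{2}\|\gradgh\matdevtau\Udhn{n}\|_{L^2(\Gamma_h^n)}^2$ via Poincar\'e alone. To absorb the mean-zero contribution you would need $\tfrac{C_P^2}{2\varepsilon}\leq \tfrac{\varepsilon}{4}$ (so as to keep $\tfrac{\varepsilon\tau^2}{4}$ on the left), i.e.\ $\varepsilon \geq \sqrt 2\, C_P$, which fails for the small values of $\varepsilon$ the scheme is designed for. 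One also cannot move the surplus to the right and Gronwall it away, since $\tau^2\normh{t_n}{\matdevtau\Udhn{n}}^2 = \normh{t_n}{\Udhn{n}-\overline{\Udhn{n-1}}}^2$ is not summably small. The paper handles this (its term $I_5$) by testing equation \eqref{regfullydisceqn1} once more with $\tfrac{\tau}{2\varepsilon}\matdevtau\Udhn{n}$, expressing $\normh{t_n}{\matdevtau\Udhn{n}}^2$ through $a_h(\Wdhn{n},\matdevtau\Udhn{n})$ plus a small time-perturbation, and then applying Young's inequality; the resulting factor $\tfrac{\tau^2}{2\varepsilon^2}$ is absorbed into the gradient dissipation precisely because $\tau\leq\tfrac{\varepsilon^3}{2}$. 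So the hypothesis $\tau\leq\tfrac{\varepsilon^3}{2}$ is used for the estimate itself, not only \textemdash{} as you assert \textemdash{} for uniqueness.

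\textbf{The cross-term surface-motion error and the mean of $W$.} Writing $\tau\intm(\Wdhn{n},\matdevtau\Udhn{n}) = -\tau\|\gradgh\Wdhn{n}\|^2 + \intm(\Wdhn{n}, U_{h,+}^{n-1,\delta}-\overline{\Udhn{n-1}})$ and estimating the second piece via \eqref{timeproj2} leaves you with $C\tau\|\Udhn{n-1}\|_{L^2}\normh{t_n}{\Wdhn{n}}$, where only the \emph{mean-zero} part of $\Wdhn{n}$ can be absorbed by Poincar\'e into the gradient dissipation. The mean $\mval{\Wdhn{n}}{\Gamma_h^n}$ involves $\mval{I_h\fd(\Udhn{n})}{\Gamma_h^n}$, and your plan to bound this via Lemma \ref{measure} is circular: that lemma needs not only $|\mval{\Udhn{n}}{\Gamma_h^n}|<1-\xi$ (which you do secure) but also a smallness bound on the measure of $\{|\Udhn{n}|>1\}$, which comes from \eqref{fdiscextrem}, and the passage from Lemma \ref{measure} to a numerical bound on $\mval{I_h\fd(\Udhn{n})}{\Gamma_h^n}$ also uses the $L^2$ bound \eqref{fdiscdpot2}. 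Both of \eqref{fdiscextrem} and \eqref{fdiscdpot2} are proved \emph{after}, and in terms of, the energy bound you are trying to establish. The paper avoids any bound on the size of $\fd$ altogether: it works with the diagonal, positive semidefinite mass-increment matrix $\bar{G}^n = (\bar M^n - \bar M^{n-1})/\tau$ and substitutes $W^{n,\delta}$ from the second equation at the matrix level; the potential contribution is then split so that $\fd(U^{n,\delta})\cdot\bar G^n U^{n,\delta}\geq 0$ (using $r\fd(r)\geq 0$ and $\bar G^n$ PSD) and the remainder feeds into the $F^\delta_{\log}$ convexity identity on the previous mass matrix $\bar M^{n-1}=\bar M^n-\tau\bar G^n$. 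The $\varepsilon a_h$ part of the substitution (their $I_6$) then requires a separate argument with a discrete Laplacian-type function $\widetilde{\Udhn{n}}$ and the $L^2$ projection, mirroring the semi-discrete Lemma \ref{regfeexist}, which your proposal does not address either.
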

\begin{proof}
	We begin by writing \eqref{regfullydisceqn1}, \eqref{regfullydisceqn2} in matrix form as
	\begin{gather}
		\bar{M}^n \frac{(U^{n,\delta} - U^{n-1,\delta})}{\tau} + \frac{(\bar{M}^n - \bar{M}^{n-1})}{\tau} U^{n-1,\delta} + A^n W^{n,\delta} = 0,\label{matrixeqn1}\\
		\bar{M}^n W^{n,\delta} = \varepsilon A^n U^{n,\delta} + \frac{\theta}{2\varepsilon}\bar{M}^n \fd(U^{n,\delta}) - \frac{1}{\varepsilon}\bar{M}^n U^{n,\delta}, \label{matrixeqn2}
	\end{gather}
	where $U^{n,\delta}, W^{n,\delta}$ denote the vector of nodal values of $\Udhn{n}, \Wdhn{n}$ respectively, and
	\begin{align*}
		(\bar{M}^n)_{ij} &= \intm(t_n; \phi_i^n, \phi_j^n),\\
		(A^n)_{ij} &= a_h(t_n; \phi_i^n, \phi_j^n),
	\end{align*}
	for $\phi_i^n$ the `$i$'th nodal basis function in $S_h^n$.
	Here we understand $\fd(U^{n,\delta})$ as the vector with entries $\fd(U^{n,\delta}_i)$, as we justified in Section \ref{section: logch semi discrete}.
	We dot \eqref{matrixeqn1} with $W^{n,\delta}$ and \eqref{matrixeqn2} with $\frac{1}{\tau}(U^{n,\delta} - U^{n-1,\delta})$.
	This yields
	\begin{align}
		\begin{gathered}
		\frac{\varepsilon}{\tau}U^{n,\delta} \cdot A^n (U^{n,\delta} - U^{n-1,\delta}) + \frac{\theta}{2\tau \varepsilon} (U^{n,\delta} - U^{n-1,\delta}) \cdot \bar{M}^n \fd(U^{n,\delta}) + W^{n,\delta} \cdot A^n W^{n,\delta}\\
		+  W^{n,\delta} \cdot \frac{(\bar{M}^n - \bar{M}^{n-1})}{\tau} U^{n-1,\delta} = \frac{1}{\tau\varepsilon} U^{n,\delta} \cdot \bar{M}^n(U^{n,\delta}-U^{n-1,\delta}).
		\end{gathered}\label{regfullydiscenergy1}
	\end{align}
	It is then straightforward to verify that
	\begin{align}
		\begin{aligned}
			\frac{\varepsilon}{\tau}U^{n,\delta} \cdot A^n (U^{n,\delta} - U^{n-1,\delta}) &= \frac{\varepsilon}{2\tau} \left[ U^{n,\delta} \cdot A^n U^{n,\delta} - U^{n-1,\delta} \cdot A^{n-1} U^{n-1,\delta} \right]\\
			&+ \frac{\varepsilon}{2\tau} \left(U^{n,\delta} - U^{n-1,\delta}\right) \cdot A^n \left(U^{n,\delta} - U^{n-1,\delta}\right)\\
			&+ \frac{\varepsilon}{2\tau} \left[ U^{n-1,\delta} \cdot A^{n-1} U^{n-1,\delta} - U^{n-1,\delta} \cdot A^n U^{n-1,\delta}  \right], 
		\end{aligned}\label{regfullydiscenergy2}
	\end{align}
	which we can further write as
	\begin{align*}
		\begin{aligned}
			\frac{\varepsilon}{\tau}U^{n,\delta} \cdot A^n (U^{n,\delta} - U^{n-1,\delta}) &= \frac{\varepsilon}{2\tau} \left[ \| \gradgh \Udhn{n} \|_{L^2(\Gamma_h^n)}^2 - \| \gradgh \Udhn{n-1} \|_{L^2(\Gamma_h^{n-1})}^2 \right]\\
			&+ \frac{\varepsilon\tau}{2} \| \gradgh \matdevtau \Udhn{n} \|_{L^2(\Gamma_h^n)}^2\\
			&+ \frac{\varepsilon}{2\tau} \left[ a_h(\Udhn{n-1},\Udhn{n-1}) - a_h(\overline{\Udhn{n-1}},\overline{\Udhn{n-1}})  \right].
		\end{aligned}
	\end{align*}
	This can be understood as a fully discrete form of the transport theorem, see \cite[Lemma 3.5]{dziuk2012fully}.\\
	
	Next we turn to the term $W^{n,\delta} \cdot \frac{(\bar{M}^n - \bar{M}^{n-1})}{\tau} U^{n-1,\delta}$, which we deal with in an analogous way to Lemma \ref{regfeexist}.
	Firstly we recall that, due to mass-lumping, $\bar{M}^n, \bar{M}^{n-1}$ are diagonal matrices, and hence if we define\footnote{Note that $\bar{G}^n \neq \bar{G}(t_n)$ for $\bar{G}(t)$ as defined in Section \ref{section: logch semi discrete}, but $\bar{G}^n$ is a finite difference approximation of $\bar{G}(t_n)$.}
	\[\bar{G}^n := \frac{(\bar{M}^n - \bar{M}^{n-1})}{\tau},\]
	then $\bar{G}^n$ is a diagonal matrix, and we show that since $\gradgh \cdot V_h \geq 0$ it is also positive semi-definite.
	This follows since
	\begin{align*}
		\bar{G}^n_{ii} = \frac{(\bar{M}_{ii}^n - \bar{M}_{ii}^{n-1})}{\tau} = \frac{1}{\tau} \left( \int_{\Gamma_h^n} \phi_i^n - \int_{\Gamma_h^{n-1}} \phi_i^{n-1} \right) = \frac{1}{\tau}  \int_{t_{n-1}}^{t_n}\frac{d}{dt} m_h(\phi_i(t), 1) =\underbrace{\frac{1}{\tau}  \int_{t_{n-1}}^{t_n} g_h(\phi_i(t), 1)}_{\geq 0},
	\end{align*}
	since $\phi_i(t) \geq 0$ and we have assumed $\gradgh \cdot V_h \geq 0$.
	Now as $\bar{M}^n$ is invertible one finds that
	\begin{align}
		W^{n,\delta} \cdot \bar{G}^n U^{n-1,\delta} = \left( \varepsilon(\bar{M}^n)^{-1}A^n U^{n,\delta} + \frac{\theta}{2\varepsilon}f(U^{n,\delta})- \frac{1}{\varepsilon} U^{n,\delta} \right)\cdot \bar{G}^n U^{n-1,\delta}. \label{regfullydiscenergy W term}
	\end{align}
	
	Since $\bar{G}^n$ is symmetric this first term may be rewritten as
	\[ (\bar{M}^n)^{-1}A^n U^{n,\delta} \cdot \bar{G}^n U^{n-1,\delta} = U^{n-1,\delta}\cdot \bar{G}^n (\bar{M}^n)^{-1}A^n U^{n,\delta}, \]
	and we shall defer treatment of this term until the end of the proof.
	For now we shall only deal with the the potential term, for which we observe that the left-hand side of \eqref{regfullydiscenergy1} contains
	\begin{multline*}
		\frac{\theta}{2 \tau\varepsilon} \fd(U^{n,\delta})\cdot \bar{M}^n (U^{n,\delta} - U^{n-1,\delta})+ \frac{\theta}{2 \varepsilon} \fd(U^{n,\delta}) \cdot \bar{G}^n U^{n-1,\delta}\\
		 = \frac{\theta}{2 \varepsilon}\fd(U^{n,\delta}) \cdot \bar{G}^n U^{n,\delta} + \frac{\theta}{2 \tau\varepsilon}  \fd(U^{n,\delta})\cdot \bar{M}^n (U^{n,\delta} - U^{n-1,\delta}) - \frac{\theta}{2\varepsilon} \fd(U^{n,\delta}) \cdot \bar{G}^n (U^{n,\delta} - U^{n-1,\delta}).
	\end{multline*}
	For this first term we notice that as $\bar{G}^n$ is diagonal and positive semi-definite, and $r\fd(r) \geq 0$ one has
	\[\fd(U^{n,\delta}) \cdot \bar{G}^n U^{n,\delta} = \sum_{i=1}^{N_h} \fd(U_i^{n,\delta})U_i^{n,\delta} \bar{G}^n_{ii} \geq 0\]
	Next use the definition of $\bar{G}^n$ to see that
	\begin{multline*}
		\frac{\theta}{2 \tau\varepsilon}  \fd(U^{n,\delta})\cdot \bar{M}^n (U^{n,\delta} - U^{n-1,\delta}) - \frac{\theta}{2 \varepsilon} \fd(U^{n,\delta}) \cdot \bar{G}^n (U^{n,\delta} - U^{n-1,\delta})\\
		= \frac{\theta}{2 \tau\varepsilon} \fd(U^{n,\delta}) \cdot \bar{M}^{n-1} (U^{n,\delta} - U^{n-1,\delta}).
	\end{multline*}
	In terms of our bilinear forms this is
	\[ \frac{\theta}{2 \tau\varepsilon} \fd(U^{n,\delta}) \cdot \bar{M}^{n-1} (U^{n} - U^{n-1}) = \frac{\theta}{2 \tau\varepsilon} \intm(\underline{I_h\fd(\Udhn{n})}, \underline{\Udhn{n}} - \Udhn{n-1}), \]
	and now using the convexity of $F_{\log}^{\delta}(\cdot)$ one finds that
	\[\frac{\theta}{2\tau\varepsilon} \intm(\underline{I_h\fd(\Udhn{n})}, \underline{\Udhn{n}} - \Udhn{n-1}) \geq \frac{\theta}{2\tau\varepsilon} \intm(\underline{I_hF_{\log}^{\delta}(\Udhn{n})} - I_hF_{\log}^{\delta}(\Udhn{n-1}),1). \]
	This is essentially the correct term for the \emph{convex part} of the potential, but it remains to add the \emph{quadratic part}, i.e. terms to do with the $\frac{1-r^2}{2}$ term in $F^\delta(r)$.
	To retrieve the quadratic part of the potential we observe that the right-hand side of \eqref{regfullydiscenergy1} can be expressed as
	\begin{multline*}
		\frac{1}{\tau\varepsilon}  U^{n,\delta} \cdot \bar{M}^n(U^{n,\delta}-U^{n-1,\delta}) =  \frac{1}{2 \tau \varepsilon} \left( U^{n,\delta} \cdot \bar{M}^n U^{n,\delta} - U^{n-1,\delta} \cdot \bar{M}^{n-1} U^{n-1,\delta} \right)\\
		+ \frac{1}{2 \tau \varepsilon}(U^{n,\delta} - U^{n-1,\delta}) \cdot \bar{M}^n (U^{n,\delta} - U^{n-1,\delta}) + \frac{1}{2 \tau \varepsilon} \left[U^{n-1,\delta} \cdot \bar{M}^{n-1} U^{n-1,\delta} - U^{n-1,\delta} \cdot \bar{M}^{n} U^{n-1,\delta} \right],
	\end{multline*}
	where this first term is essentially the correct term for the quadratic part of the potential. 
	From this we have that
	\begin{align}
		\begin{aligned}
		\frac{1}{\tau\varepsilon}  U^{n,\delta} \cdot \bar{M}^n(U^{n,\delta}-U^{n-1,\delta}) &= -\frac{1}{\tau\varepsilon}\left[\intm\left(\frac{1-(\Udhn{n})^2}{2},1\right) - \intm\left(\frac{1-(\Udhn{n-1})^2}{2},1\right)\right]\\
  &+  \frac{1}{2 \tau \varepsilon}\left[ \intm(t_n;1,1) - \intm(t_{n-1};1,1) \right]\\
  & + \frac{\tau}{2\varepsilon} \|\matdevtau \Udhn{n} \|_{h,t_n}^2\\
		&+ \frac{1}{2 \tau \varepsilon} \left[ \intm(\Udhn{n-1},\Udhn{n-1}) - \intm(\overline{\Udhn{n-1}}, \overline{\Udhn{n-1}}) \right],
		\end{aligned}\label{regfullydiscenergy4}
	\end{align}
    where we have introduced extra terms to match the form of the quadratic part of the potential.
	It is straightforward to see that
	\[ \frac{1}{2 \tau \varepsilon}\left[ \intm(t_n;1,1) - \intm(t_{n-1};1,1) \right] = \frac{(|\Gamma_h^n| - |\Gamma_h^{n-1}|)}{2\tau \varepsilon} .\]
	
	Combining \eqref{regfullydiscenergy2}, \eqref{regfullydiscenergy4} in \eqref{regfullydiscenergy1} one finds
	\begin{multline}
		\frac{\varepsilon}{2 \tau}\left( \|\gradgh \Udhn{n}\|_{L^2(\Gamma_h^n)}^2 - \|\gradgh \Udhn{n-1}\|_{L^2(\Gamma_h^{n-1})}^2  \right) + \frac{1}{\tau \varepsilon}\left(\int_{\Gamma_h^n} I_h F^{\delta}(\Udhn{n}) - \int_{\Gamma_h^{n-1}} I_h F^{\delta}(\Udhn{n-1}) \right)\\
		+\frac{\varepsilon \tau}{2}\|\gradgh \matdevtau \Udhn{n}\|_{L^2(\Gamma_h^n)}^2+ \|\gradgh \Wdhn{n}\|_{L^2(\Gamma_h^n)}^2 \leq C + \sum_{k=1}^6 I_k,
		\label{regfullydiscenergy5}
	\end{multline}
	where 
	\begin{gather*}
		I_1 := \frac{\varepsilon}{2\tau} \left[ a_h(\overline{\Udhn{n-1}},\overline{\Udhn{n-1}}) - a_h(\Udhn{n-1},\Udhn{n-1})\right],\\
		I_2 := \frac{\theta}{2 \tau \varepsilon}\left[\intm({I_hF_{\log}^{\delta}(\Udhn{n})},1) -  \intm(\underline{I_hF_{\log}^{\delta}(\Udhn{n})},1) \right],\\
		I_3 := \frac{1}{\varepsilon} U^{n,\delta} \cdot \bar{G}^n U^{n-1,\delta} = \frac{1}{\tau \varepsilon}\left[ \intm(\Udhn{n}, \overline{\Udhn{n-1}}) - \intm(\underline{\Udhn{n}}, \Udhn{n-1}) \right],\\
		I_4 := \frac{1}{2 \tau \varepsilon} \left[\intm(\overline{\Udhn{n-1}}, \overline{\Udhn{n-1}}) - \intm(\Udhn{n-1},\Udhn{n-1}) \right],\\
		I_5 := \frac{\tau}{2\varepsilon} \|\matdevtau \Udhn{n} \|_{h,t_n}^2,\\
		I_6 := -\varepsilon U^{n-1,\delta}\cdot \bar{G}^n (\bar{M}^n)^{-1}A^n U^{n,\delta},
	\end{gather*}
	and $C$ is a constant, independent of $\delta, h, \tau$, such that 
	\[\left|\frac{|\Gamma_h^n| - |\Gamma_h^{n-1}|}{2\tau \varepsilon}\right| \leq C,\]
	which one can obtain from the smoothness of $V$.
	It is important to note that now the potential term in \eqref{regfullydiscenergy5} is in terms of the full potential, $F^\delta$, not just the convex part, $F^\delta_{\log}$.
	
	We now bound each of these terms.
	Firstly, we find that 
	\begin{align}
		|I_1| \leq C \varepsilon \|\gradgh \Udhn{n-1}\|_{L^2(\Gamma_h^{n-1})}^2, \label{regfullydiscenergy6}
	\end{align}
	from \eqref{timeperturb2}.
	Similarly we observe that one obtains
	\begin{gather}
		|I_2| \leq \frac{C}{\varepsilon} \intm(I_h F_{\log}^{\delta}(\Udhn{n}),1),\label{regfullydiscenergy7}\\
		|I_3| \leq \frac{C}{\varepsilon}\|\Udhn{n}\|_{L^2(\Gamma_h^n)}\|\Udhn{n-1}\|_{L^2(\Gamma_h^{n-1})},\label{regfullydiscenergy8}\\
		|I_4| \leq \frac{C}{\varepsilon} \|\Udhn{n-1}\|_{L^2(\Gamma_h^{n-1})}^2,\label{regfullydiscenergy9}
	\end{gather}
	from using \eqref{timeperturb3} (and \eqref{intmbound0} where necessary).
	
	It remains to bound $I_5$ and $I_6$, which are the two most problematic terms.
	Firstly, to bound $I_5$ we notice that we may write \eqref{regfullydisceqn1} as
	\[ \intm(\matdevtau \Udhn{n}, \phi_h^n) + \frac{1}{\tau} \left( \intm(\overline{\Udhn{n-1}}, \phi_h^n) - \intm({\Udhn{n-1}}, \underline{\phi_h^n})\right) + a_h(\Wdhn{n}, \phi_h^n) = 0.\]
	We test this with $\phi_h^n = \frac{\tau}{2 \varepsilon} \matdevtau \Udhn{n}$ to see that
	\[I_5 = -\frac{1}{2\varepsilon}\left( \intm(\overline{\Udhn{n-1}}, \matdevtau \Udhn{n}) - \intm({\Udhn{n-1}}, \underline{\matdevtau \Udhn{n}})\right) - \frac{\tau}{2\varepsilon}a_h(\Wdhn{n}, \matdevtau \Udhn{n}).\]
    Recalling that $I_5 = \frac{\tau}{2 \varepsilon} \|\matdevtau \Udhn{n}\|_{h,t_n}^2$, we may use \eqref{timeperturb3} and Young's inequality to find that
	\begin{align}
		\begin{aligned}
		|I_5| &\leq C\tau \|\Udhn{n-1}\|_{h,t_{n-1}}^2 + \frac{1}{2} \|\gradgh \Wdhn{n}\|_{L^2(\Gamma_h^n)}^2 + \frac{\tau^2}{2 \varepsilon^2} \|\gradgh \matdevtau \Udhn{n}\|_{L^2(\Gamma_h^n)}^2\\
		&\leq C\tau \|\Udhn{n-1}\|_{h,t_{n-1}}^2 + \frac{1}{2} \|\gradgh \Wdhn{n}\|_{L^2(\Gamma_h^n)}^2 + \frac{\varepsilon \tau}{4} \|\gradgh \matdevtau \Udhn{n}\|_{L^2(\Gamma_h^n)}^2,
		\end{aligned}
		\label{regfullydiscenergy10}
	\end{align}
	where we have used the assumption $\tau \leq \frac{\varepsilon^3}{2}$.
	We now turn to $I_6$, which is dealt with by similar means to the proof of Lemma \ref{regfeexist}.
	We define the function $\widetilde{\Udhn{n}} \in S_h^n$ to be the unique solution of
	\[ \intm(\widetilde{\Udhn{n}}, \phi_h^n) = a_h({\Udhn{n}}, \phi_h^n), \]
	for all $\phi_h^n \in S_h^n$.
	As in \eqref{laplacian inverse inequality} one finds that
	\[ \|\widetilde{\Udhn{n}} \|_{h, t_{n}} \leq \frac{C}{{h}} \|\gradgh \Udhn{n}\|_{L^2(\Gamma_h^n)},\]
	for a constant independent of $\tau, h$.
	Denoting the vector of nodal values of $\widetilde{\Udhn{n}}$ as $\widetilde{U^{n,\delta}}$ then one finds that $\widetilde{U^{n,\delta}} = (\bar{M}^n)^{-1} A^n {U^{n,\delta}}$.
	Hence we find that
	\[ |I_6| = \varepsilon|U^{n-1,\delta} \cdot \bar{G}^n \widetilde{U^{n,\delta}}| = \frac{\varepsilon}{\tau}|\intm(\overline{\Udhn{n-1}}, \widetilde{\Udhn{n}}) - \intm({\Udhn{n-1}}, \underline{\widetilde{\Udhn{n}}})|.\]
	The idea is now to write
	\begin{align*}\intm(\overline{\Udhn{n-1}}, \widetilde{\Udhn{n}}) - \intm({\Udhn{n-1}}, \underline{\widetilde{\Udhn{n}}}) &= \int_{t_{n-1}}^{t_n}\frac{d}{dt}\intm(\overline{\Udhn{n-1}}(s), \underline{\widetilde{\Udhn{n}}}(s)) \, ds\\
	&= \int_{t_{n-1}}^{t_n}g_h(I_h(\overline{\Udhn{n-1}}(s) \underline{\widetilde{\Udhn{n}}}(s)),1) \, ds.
	\end{align*}
	Now one proceeds almost identically to Lemma \ref{regfeexist} (as well as using \eqref{timenorm3}) to find that
	\[ |g_h(I_h(\overline{\Udhn{n-1}}(s) \underline{\widetilde{\Udhn{n}}}(s)),1)| \leq C + C \Echdh[\Udhn{n-1}] + C \Echdh[\Udhn{n}]. \]
	In the interest of brevity we do not expound upon these details.
	The end result of this calculation is that
	\begin{align}
		|I_6| \leq C + C \Echdh[\Udhn{n-1}] + C \Echdh[\Udhn{n}]. \label{regfullydiscenergy11}.
	\end{align}
	
	Combining \eqref{regfullydiscenergy6}--\eqref{regfullydiscenergy11} in \eqref{regfullydiscenergy5}, and using Young/Poincar\'e inequalities accordingly, one readily obtains
	\begin{multline*}
		\Echdh[\Udhn{N}] - \Echdh[U_{h,0}] +  \frac{\varepsilon\tau^2}{4} \sum_{n=1}^N \|\gradgh \matdevtau \Udhn{n}\|_{L^2(\Gamma_h^n)}^2 + \tau \sum_{n=1}^N \| \gradgh \Wdhn{N}\|_{L^2(\Gamma_h^n)}^2\\
		\leq C + C \tau\sum_{n=0}^{N-1} \Echdh[\Udhn{n}] + C\tau \Echdh[\Udhn{N}].
	\end{multline*}
	The result follows from a discrete Gr\"onwall inequality, provided $\tau$ is sufficiently small.
	This can then be bounded independent of $\delta, h$ by similar arguments to Lemma $\ref{regfeexist}$.
\end{proof}

We also obtain slightly stronger bounds on the derivative by considering weaker norms.
This is the content of the following lemma, which will be invaluable for our later error analysis.
\begin{lemma}
	Under the assumptions in the previous lemma, and assuming that a Courant-Friedrichs-Lewy (CFL) condition, $\tau \leq Ch^2$, holds then one has that for $1 \leq N \leq N_T$,
	\begin{gather}
		\tau^{\frac{3}{2}} \sum_{n=1}^N \|\matdevtau \Udhn{n}\|_{L^2(\Gamma_h^n)}^2 \leq C, \label{fdisc derivative bound1}\\
		\tau \sum_{n=1}^N \inormh{\matdevtau \Udhn{n} - \mval{\matdevtau \Udhn{n}}{\Gamma_h^n} }^2 \leq C, \label{fdisc derivative bound2}
	\end{gather}
	for a constant $C$ independent of $\delta, \tau, h$.
\end{lemma}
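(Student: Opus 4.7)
The strategy is to decompose $\matdevtau \Udhn{n} = \xi_h^n + c_n$, where $c_n := \mval{\matdevtau \Udhn{n}}{\Gamma_h^n}$ is a scalar and $\xi_h^n$ has zero mean value (so that $\intinvsh \xi_h^n$ is well defined). The constant part $c_n$ will be controlled through mass conservation, while $\xi_h^n$ is treated separately by testing the scheme with carefully chosen inputs. Interestingly, the CFL assumption $\tau \leq C h^2$ does not appear to be needed for either of the two bounds in isolation, and is presumably used in the error analysis that follows.

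For the mean, set $\phi_h^n = 1$ in \eqref{regfullydisceqn1} to obtain the mass-conservation identity $\intm(\Udhn{n},1) = \intm(\Udhn{n-1},1)$. From this and \eqref{timeperturb3} it follows that
\[\intm(\matdevtau \Udhn{n}, 1) = \tau^{-1}\bigl(\intm(\Udhn{n-1},1) - \intm(\overline{\Udhn{n-1}},1)\bigr)\]
is bounded by $C\|\Udhn{n-1}\|_{h,t_{n-1}}$, which itself is controlled by \eqref{fdiscregstability}. Hence $|c_n| \leq C$ uniformly, and the contribution $\tau^{3/2}\sum c_n^2 |\Gamma_h^n| \leq C\tau^{1/2}$ is harmless in both claims.

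For \eqref{fdisc derivative bound2}, rewrite \eqref{regfullydisceqn1} as
\[\tau \intm(\matdevtau \Udhn{n}, \phi_h^n) = \bigl[\intm(\Udhn{n-1}, \underline{\phi_h^n}) - \intm(\overline{\Udhn{n-1}}, \phi_h^n)\bigr] - \tau a_h(\Wdhn{n}, \phi_h^n),\]
and test with $\phi_h^n = \intinvsh \xi_h^n \in S_h^n$. Since $\intinvsh \xi_h^n$ has zero mean, $c_n$ drops out of the left-hand side, leaving $\tau \inormh{\xi_h^n}^2$. The bracket on the right is bounded by $C\tau \|\Udhn{n-1}\|_{L^2}\inormh{\xi_h^n}$ via \eqref{timeperturb3}, \eqref{intmbound0}, and the Poincaré inequality applied to the mean-zero function $\intinvsh \xi_h^n$; the last term is bounded by $\tau\|\gradgh \Wdhn{n}\|_{L^2}\inormh{\xi_h^n}$ since $\|\gradgh \intinvsh \xi_h^n\|_{L^2}=\inormh{\xi_h^n}$. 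Dividing out one factor of $\inormh{\xi_h^n}$, squaring, and summing with weight $\tau$ then combining with \eqref{fdiscregstability} yields \eqref{fdisc derivative bound2}.

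For \eqref{fdisc derivative bound1}, test the scheme instead with $\phi_h^n = \matdevtau \Udhn{n}$; after \eqref{timeperturb3}, \eqref{intmbound0}, and Young's inequality this gives the pointwise bound
\[\|\matdevtau \Udhn{n}\|_{L^2(\Gamma_h^n)}^2 \leq C\|\Udhn{n-1}\|_{L^2(\Gamma_h^{n-1})}^2 + C\|\gradgh \Wdhn{n}\|_{L^2(\Gamma_h^n)}\|\gradgh \matdevtau \Udhn{n}\|_{L^2(\Gamma_h^n)}.\]
Summing with weight $\tau$ and applying discrete Cauchy–Schwarz, the cross term becomes $(\tau\sum\|\gradgh \Wdhn{n}\|^2)^{1/2}(\tau\sum\|\gradgh \matdevtau \Udhn{n}\|^2)^{1/2}$. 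The first factor is $\leq C$ by \eqref{fdiscregstability}; the second is only $\leq C\tau^{-1/2}$, because \eqref{fdiscregstability} provides merely the $\tau^2$-weighted estimate $\tau^2\sum\|\gradgh \matdevtau \Udhn{n}\|^2 \leq C$ (equivalently $\tau\sum\|\gradgh \matdevtau \Udhn{n}\|^2 \leq C\tau^{-1}$). Hence $\tau\sum \|\matdevtau \Udhn{n}\|_{L^2}^2 \leq C\tau^{-1/2}$, and multiplying by $\tau^{1/2}$ yields \eqref{fdisc derivative bound1}. The main obstacle is exactly this loss of $\tau^{-1/2}$ in the cross-term estimate: it is precisely what forces the $\tau^{3/2}$ weight in the statement and cannot be improved without a stronger energy identity for the material derivative.
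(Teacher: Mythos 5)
Your proof is correct, and for \eqref{fdisc derivative bound2} it is essentially the paper's own argument (rewrite the update, test with the mass-lumped inverse Laplacian applied to the mean-subtracted quantity, then use \eqref{timeperturb3} together with \eqref{fdiscregstability}).

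For \eqref{fdisc derivative bound1} you take a genuinely different route. You test the scheme directly with $\phi_h^n = \matdevtau\Udhn{n}$, obtain the pointwise estimate with the cross term $\|\gradgh\Wdhn{n}\|\,\|\gradgh\matdevtau\Udhn{n}\|$, and handle the summed cross term by discrete Cauchy--Schwarz against the two ($\tau$-weighted and $\tau^2$-weighted) gradient bounds in \eqref{fdiscregstability}. The paper instead splits $\Udhn{n}-\overline{\Udhn{n-1}}$ through the time projection $U_{h,+}^{n-1,\delta}$, estimates $\normh{t_n}{\Udhn{n}-U_{h,+}^{n-1,\delta}}^2 \le \inormh{\Udhn{n}-U_{h,+}^{n-1,\delta}}\,\|\gradgh(\Udhn{n}-U_{h,+}^{n-1,\delta})\|$, bounds the first factor through $\inormh{\Udhn{n}-U_{h,+}^{n-1,\delta}}\le\tau\|\gradgh\Wdhn{n}\|$, and then needs an inverse inequality on the $\|\gradgh(\overline{\Udhn{n-1}}-U_{h,+}^{n-1,\delta})\|$ piece, which is exactly where it invokes the CFL condition $\tau\le Ch^2$. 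So your aside that the CFL assumption ``does not appear to be needed'' is slightly off as a description of the paper (the paper does use CFL in its own proof of this lemma), but it is correct as a statement about the result: your alternative argument establishes \eqref{fdisc derivative bound1} without CFL. Small stylistic point: the scalar decomposition $\matdevtau\Udhn{n}=\xi_h^n+c_n$ is only actually needed for \eqref{fdisc derivative bound2}; your later argument for \eqref{fdisc derivative bound1} controls the full $L^2$ norm directly and does not rely on separating the mean. Your final remark correctly identifies the $\tau^{-1/2}$ loss from the cross term as the reason the statement carries a $\tau^{3/2}$ rather than a $\tau^2$ weight.
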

\begin{proof}
	
	We first prove \eqref{fdisc derivative bound1}, noting from \eqref{intmbound0} that
	\begin{align*}
		\tau^\frac{3}{2} \|\matdevtau \Udhn{n}\|_{L^2(\Gamma_h^n)}^2 &\leq \frac{C}{\sqrt{\tau}} \normh{t_n}{\Udhn{n} - U_{h,+}^{n-1,\delta}}^2+ \frac{C}{\sqrt{\tau}} \normh{t_n}{U_{h,+}^{n-1,\delta} - \overline{\Udhn{n-1}}}^2
	\end{align*}
	From \eqref{timeproj2} one immediately finds
	\[\frac{C}{\sqrt{\tau}} \normh{t_n}{U_{h,+}^{n-1,\delta} - \overline{\Udhn{n-1}}}^2 \leq C\tau^\frac{3}{2} \|\Udhn{n-1}\|_{L^2(\Gamma_h^{n-1})}^2,\]
    and hence we sum over $N = 1,...,N$ and use the mass-conservation property of $\Udhn{n}$ along with the Poincar\'e inequality for
    \[ \frac{C}{\sqrt{\tau}} \sum_{n=1}^N  \normh{t_n}{U_{h,+}^{n-1,\delta} - \overline{\Udhn{n-1}}}^2 \leq C\tau^\frac{3}{2} \sum_{n=1}^N\left( 1 + \|\gradgh \Udhn{n-1}\|_{L^2(\Gamma_h^{n-1})}^2 \right) \leq C. \]
    Here the final inequality follows by using \eqref{fdiscregstability}, the fact that $N \tau \leq T$ and the upper bound on $\tau$.
	
	The other term will be bounded by an interpolation argument.
	Firstly we notice that
	\begin{align*}
		\normh{t_n}{\Udhn{n} - U_{h,+}^{n-1,\delta}}^2 &= a_h(\intinvsh(\Udhn{n} - U_{h,+}^{n-1,\delta}), \Udhn{n} - U_{h,+}^{n-1,\delta})\\
		&\leq \inormh{\Udhn{n} - U_{h,+}^{n-1,\delta}} \|\gradgh(\Udhn{n} - U_{h,+}^{n-1,\delta})\|_{L^2(\Gamma_h^n)}.
	\end{align*}
	
	We now establish a bound on $\inormh{\left(\Udhn{n} - U_{h,+}^{n-1,\delta}\right)}$.
	To do this we test \eqref{regfullydisceqn1 alternate} with $\intinvsh\left( \Udhn{n} - U_{h,+}^{n-1,\delta} \right)$ to obtain
	\begin{align*}
		\inormh{\Udhn{n} - U_{h,+}^{n-1,\delta}}^2 = -\tau a_h\left( \Wdhn{n}, \intinvsh\left(\Udhn{n} - U_{h,+}^{n-1,\delta}\right)\right),
	\end{align*}
	from which Young's inequality yields
	\[ \inormh{\Udhn{n} - U_{h,+}^{n-1,\delta}} \leq \tau \| \gradgh \Wdhn{n}\|_{L^2(\Gamma_h^n)}, \]
	and by using \eqref{fdiscregstability} it follows that
	\[  \frac{1}{\tau}\sum_{n=1}^N\inormh{\Udhn{n} - U_{h,+}^{n-1,\delta}}^2 \leq \tau \sum_{n=1}^N \| \gradgh \Wdhn{n}\|_{L^2(\Gamma_h^n)}^2 \leq C. \]
	By using H\"older's inequality we find that
	\[\frac{1}{\sqrt{\tau}}\sum_{n=1}^N\normh{t_n}{\Udhn{n} - U_{h,+}^{n-1,\delta}}^2 \leq \left( \frac{1}{\tau}\sum_{n=1}^N\inormh{\Udhn{n} - U_{h,+}^{n-1,\delta}}^2\right)^{\frac{1}{2}} \left(\sum_{n=1}^N\|\gradgh(\Udhn{n} - U_{h,+}^{n-1,\delta})\|_{L^2(\Gamma_h^n)}^2\right)^{\frac{1}{2}},\]
	where we claim that the rightmost term is bounded independent of $\delta, \tau, h$.
	To see this is true we write
	\begin{align*}
		\sum_{n=1}^N\|\gradgh(\Udhn{n} - U_{h,+}^{n-1,\delta})\|_{L^2(\Gamma_h^n)}^2 &\leq 2\tau^2 \sum_{n=1}^N\|\gradgh\matdevtau\Udhn{n}\|_{L^2(\Gamma_h^n)}^2 + 2\sum_{n=1}^N\|\gradgh(\overline{\Udhn{n-1}} - U_{h,+}^{n-1,\delta})\|_{L^2(\Gamma_h^n)}^2\\
		& \leq C + \frac{C \tau^2}{h^2} \sum_{n=1}^N \|\Udhn{n-1}\|_{L^2(\Gamma_h^{n-1})}^2 \leq C,
	\end{align*}
	where we have used \eqref{timeproj2}, \eqref{fdiscregstability}, an inverse inequality, our CFL condition $\tau \leq Ch^2$, the bound $N\tau \leq T$, as well as using the Poincar\'e inequality in the usual way.
	Combining all of this together appropriately yields \eqref{fdisc derivative bound1}.
	
	To show \eqref{fdisc derivative bound2} we observe that \eqref{regfullydisceqn1} can be written as
	\[ \intm(\matdevtau \Udhn{n}, \phi_h^n) + \frac{1}{\tau} \left( \intm(\overline{\Udhn{n-1}}, \phi_h^n ) - \intm(\Udhn{n-1}, \underline{\phi_h^n})\right) + a_h(\Wdhn{n}, \phi_h^n) = 0. \]
	We test the above with $\intinvsh(\matdevtau \Udhn{n} - \mval{\matdevtau \Udhn{n}}{\Gamma_h^n})$ to see that
	\begin{multline*}
		\tau \inormh{\matdevtau \Udhn{n} - \mval{\matdevtau \Udhn{n}}{\Gamma_h^n}}^2 = -\tau a_h\left(\Wdhn{n},\intinvsh\left(\matdevtau \Udhn{n} - \mval{\matdevtau \Udhn{n}}{\Gamma_h^n}\right)\right)\\
		+\left(\intm\left(\Udhn{n-1}, \underline{\intinvsh\left(\matdevtau \Udhn{n} - \mval{\matdevtau \Udhn{n}}{\Gamma_h^n}\right)}\right)- \intm\left(\overline{\Udhn{n-1}}, \intinvsh\left(\matdevtau \Udhn{n} - \mval{\matdevtau \Udhn{n}}{\Gamma_h^n}\right)\right)\right).
	\end{multline*}
	\eqref{fdisc derivative bound2} now follows by using Young/Poincar\'e inequalities, \eqref{timeperturb3}, and \eqref{fdiscregstability}, where one bounds the $\|\Udhn{n-1}\|_{L^2(\Gamma_h^{n-1})}$ term as we have above.
\end{proof}

Next we state an analogue of Lemma \ref{udhcontrol}, which allows us to control the measure of the sets $ \{ |\Udhn{n}| > 1 \} \subset \Gamma_h^n$.
This is identical to the proof of Lemma \ref{udhcontrol}, which considers a fixed time $t \in [0,T]$.

\begin{lemma}
	There exist constants $C_1, C_2 > 0$, independent of $\delta \in (0,1)$, such that for $n \geq 1 $
	\begin{equation}\label{fdiscextrem}
		\int_{\Gamma_{h}^n} [-1 - \Udhn{n}]_+ + \int_{\Gamma_{h}^n} [\Udhn{n} - 1]_+ \leq C\left(\frac{1}{|\log(\delta)|} + \delta \right).
	\end{equation}
\end{lemma}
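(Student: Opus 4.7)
The plan is to adapt the argument of Lemma \ref{udhcontrol} in the straightforward way: the proof in the semi-discrete case depends only on (i) a uniform-in-$\delta$ lower bound on the regularised potential $F^\delta$ outside $[-1,1]$ that blows up logarithmically in $\delta$, and (ii) a uniform-in-$\delta,h$ upper bound on the potential energy. Both ingredients are available in the fully discrete setting: the uniform energy estimate \eqref{fdiscregstability} gives
\[\int_{\Gamma_h^n} I_h F^\delta(\Udhn{n}) \leq \varepsilon\, \Echdh[\Udhn{n}] \leq C,\]
with $C$ independent of $\delta, h, \tau$ and $n \leq N_T$, which is the fully discrete substitute for the energy bound used in Lemma \ref{udhcontrol}.

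Next I would establish the pointwise lower bound: from the definition \eqref{dpot}, for $r \geq 1$ the term $(1-r)\log\delta = (r-1)|\log\delta|$ dominates, while the quadratic contribution $\frac{\theta(r-1)^2}{4\delta}$ controls the $-\tfrac{r^2-1}{2}$ coming from $\frac{1-r^2}{2}$ once $\delta$ is small enough. This yields constants $c_1>0$, $c_2 \geq 0$ independent of small $\delta$ such that
\[F^\delta(r) \geq c_1 |\log\delta|\,(r-1)_+ - c_2 \qquad (r \geq 1),\]
and symmetrically $F^\delta(r) \geq c_1 |\log\delta|\,(-1-r)_+ - c_2$ for $r \leq -1$.

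To pass from nodal information to the integral $\int_{\Gamma_h^n} [\Udhn{n} - 1]_+$, I would use the convexity of $r \mapsto (r-1)_+$: since $\Udhn{n}|_K = \sum_i \Udhn{n}(x_i)\phi_i$ with $\sum_i \phi_i \equiv 1$ on each $K \in \mathcal{T}_h^n$,
\[[\Udhn{n}(x) - 1]_+ \leq \sum_i [\Udhn{n}(x_i)-1]_+ \,\phi_i(x),\]
so that $\int_{\Gamma_h^n} [\Udhn{n}-1]_+ \leq \sum_i [\Udhn{n}(x_i)-1]_+ \int_{\Gamma_h^n}\phi_i$. Applying the nodal lower bound above to each such node gives
\[\int_{\Gamma_h^n} [\Udhn{n}-1]_+ \leq \frac{1}{c_1|\log\delta|}\left(\int_{\Gamma_h^n} I_h F^\delta(\Udhn{n}) + c_2 |\Gamma_h^n|\right) \leq \frac{C}{|\log\delta|}.\]
The term $\int_{\Gamma_h^n}[-1-\Udhn{n}]_+$ is treated identically. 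The additive $\delta$ in the bound can be absorbed either into the constant or recovered by an (inessential) refinement of the lower bound on $F^\delta$ in the transition layer $|r\mp 1| < \delta$; it plays no role in applications of the lemma since both contributions vanish as $\delta \to 0$.

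The only subtlety is verifying that the lower bound on $F^\delta$ holds with constants genuinely independent of $\delta$ (the quadratic part of the regularisation must swallow the concave $(1-r^2)/2$ term for all $r \geq 1$ once $\delta$ is small enough); this is a direct computation. All other steps are identical to Lemma \ref{udhcontrol}, with the time $t$ replaced by the timestep label $n$ and the interpolation step \eqref{intmbound0} used to go between $\normh{t_n}{\cdot}$ and $L^2$ norms where needed.
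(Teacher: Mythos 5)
Your proposal is correct and follows essentially the route the paper intends: the paper omits the proof, deferring to the semi-discrete Lemma \ref{udhcontrol} and ultimately to \cite[Lemma 5.8]{caetano2021cahn}, which is the same argument of combining the uniform energy bound (here \eqref{fdiscregstability}) with the logarithmic growth of $F^\delta$ beyond $\pm 1$. You additionally make explicit the finite-element detail the paper leaves implicit --- passing between $\int_{\Gamma_h^n}[\Udhn{n}-1]_+$ and the lumped quantity $\int_{\Gamma_h^n} I_h[\Udhn{n}-1]_+$ via convexity of $(\cdot-1)_+$ and nonnegativity of the nodal basis --- which is exactly what is needed since the discrete energy only controls $\int_{\Gamma_h^n} I_h F^\delta(\Udhn{n})$, and your bound $C/|\log\delta|$ even slightly sharpens the stated right-hand side.
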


As in the semi-discrete case this is used in bounding the potential term, as in the following lemma which is a discrete time analogue of Lemma \ref{dpotbound}.
\begin{lemma}
	For $N \geq 1$, and sufficiently small $\delta$ we have the following bounds,
	\begin{gather}
		\label{fdiscdpot1}
		\tau \sum_{n = 1}^N \left\| I_h\fd(\Udhn{n}) - \mval{I_h\fd(\Udhn{n})}{\Gamma_h^n} \right\|_{L^2(\Gamma_h^n)}^2 \leq C,\\
		\label{fdiscdpot2}
		\tau \sum_{n = 1}^N \left\| I_h\fd(\Udhn{n}) \right\|_{L^2(\Gamma_h^n)}^2 \leq C,
	\end{gather}
	where $C$ denotes a constant independent of $\delta, \tau$.
\end{lemma}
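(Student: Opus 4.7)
The plan is to mimic the two-step structure of Lemma \ref{dpotbound}, replacing the continuous-time energy estimate \eqref{discregbound} with its fully discrete counterpart \eqref{fdiscregstability} and using the fully discrete control \eqref{fdiscextrem} in place of \eqref{udhextrem}.

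For \eqref{fdiscdpot1}, I would test \eqref{regfullydisceqn2} with the mean-zero function
\[\phi_h^n := I_h\fd(\Udhn{n}) - \mval{I_h\fd(\Udhn{n})}{\Gamma_h^n}.\]
A direct computation, as in the semi-discrete case, shows that
\[\intm\bigl(I_h\fd(\Udhn{n}), \phi_h^n\bigr) = \normh{t_n}{\phi_h^n}^2,\]
while the acute-mesh inequality \eqref{acuteineq} applied to $\lambda = \fd$ gives $a_h(\Udhn{n}, I_h\fd(\Udhn{n})) \geq 0$. Discarding this non-negative term and using that $\phi_h^n$ has zero mean (so $\intm(W_h^{n,\delta},\phi_h^n) = \intm(W_h^{n,\delta} - \mval{W_h^{n,\delta}}{\Gamma_h^n},\phi_h^n)$), an application of Young's inequality together with \eqref{intmbound0} gives
\[\normh{t_n}{\phi_h^n}^2 \leq \frac{C}{\theta^2}\|\Udhn{n}\|_{L^2(\Gamma_h^n)}^2 + \frac{C\varepsilon^2}{\theta^2} \Bigl\|\Wdhn{n} - \mval{\Wdhn{n}}{\Gamma_h^n}\Bigr\|_{L^2(\Gamma_h^n)}^2.\]
Multiplying by $\tau$ and summing over $n=1,\dots,N$, the Poincar\'e inequality and \eqref{fdiscregstability} (together with the mass-conservation identity obtained by testing \eqref{regfullydisceqn1} with $\phi_h^n = 1$ to bound $\|\Udhn{n}\|_{L^2(\Gamma_h^n)}$) yield \eqref{fdiscdpot1}.

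For \eqref{fdiscdpot2}, the essential step is a uniform bound on $\mval{I_h\fd(\Udhn{n})}{\Gamma_h^n}$ via Lemma \ref{measure}. Taking $\phi_h^n = 1$ in \eqref{regfullydisceqn1} shows $\int_{\Gamma_h^n}\Udhn{n} = \int_{\Gamma_h^0}U_{h,0}$ for all $n$, and since $\gradgh \cdot V_h \geq 0$ implies $|\Gamma_h^n| \geq |\Gamma_h^0|$, the first hypothesis of Lemma \ref{measure} holds with the same $\xi$ as for $U_{h,0}$; the second hypothesis follows from \eqref{fdiscextrem} for $\delta$ sufficiently small. Splitting the integral of $I_h\fd(\Udhn{n})$ over $\Gamma_{h,\xi}^\pm(t_n)$ and its complement and arguing exactly as in the second half of the proof of Lemma \ref{dpotbound} produces the pointwise-in-$n$ estimate
\[\frac{\xi^2}{4|\Gamma_h^n|}\|I_h\fd(\Udhn{n})\|_{L^2(\Gamma_h^n)}^2 \leq \frac{1}{|\Gamma_h^n|}\Bigl\|I_h\fd(\Udhn{n}) - \mval{I_h\fd(\Udhn{n})}{\Gamma_h^n}\Bigr\|_{L^2(\Gamma_h^n)}^2 + \Bigl(1 + \tfrac{2}{\xi}\Bigr)\fd\!\bigl(1 - \tfrac{\xi}{2}\bigr)^2.\]
For $\delta$ small enough that $1-\delta > 1-\xi/2$, the potential evaluation on the right reduces to $f(1-\xi/2)$, which is $\delta$-independent. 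Multiplying by $\tau$, summing in $n$, and invoking \eqref{fdiscdpot1} together with the uniform bound on $|\Gamma_h^n|$ (from the smoothness of $V$) yields \eqref{fdiscdpot2}.

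The main obstacle, as in the semi-discrete argument, is verifying that $|\mval{\Udhn{n}}{\Gamma_h^n}|$ stays uniformly bounded away from $1$ for every $n$, which crucially relies on the assumption $\gradgh \cdot V_h \geq 0$; without it, the surface area could shrink and push the admissible mean value outside $(-1+\xi, 1-\xi)$, invalidating Lemma \ref{measure}. Apart from this geometric input, the argument is a routine transcription of Lemma \ref{dpotbound} to the time-stepping setting.
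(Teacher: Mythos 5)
Your proposal is correct and follows essentially the same route as the paper, which omits the proof precisely because it is a direct transcription of Lemma \ref{dpotbound} to the time-stepping setting: test \eqref{regfullydisceqn2} with the mean-zero interpolated potential, use the acuteness inequality \eqref{acuteineq}, Young's and Poincar\'e's inequalities together with the fully discrete energy bound \eqref{fdiscregstability} for the first estimate, and then Lemma \ref{measure} combined with mass conservation, the area monotonicity from $\gradgh \cdot V_h \geq 0$, and \eqref{fdiscextrem} for the mean-value bound yielding the second. Your identification of the mean-value control as the point where the geometric assumption enters matches the paper's reasoning exactly.
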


We omit the proof of this result as it is essentially identical to Lemma \ref{dpotbound}.
As in the semi-discrete case, this allows one to establish $\delta$-independent bounds on $\tau \sum_{n=1}^{N_T}\|\Wdhn{n}\|_{H^1(\Gamma_h^n)}^2$ by using the Poincar\'e inequality and \eqref{fdiscregstability} (and obtaining an appropriate bound on $\mval{\Wdhn{n}}{\Gamma_h^n}$ analogously to the argument in Section \ref{section: logch semi discrete}).

\subsubsection{Passage to the limit}

We now use the established uniform bounds to pass to the limit as $\delta \searrow 0$.
By the compact embedding $H^1(\Gamma_h^n) \overset{c}{\hookrightarrow} L^2(\Gamma_h^n)$ we can find a subsequence of $\delta \searrow 0$ such that $\Udhn{n} \rightarrow U_h^n$ and $\Wdhn{n} \rightarrow W_h^n$ strongly in $L^2(\Gamma_h^n)$, where $U_h^n, W_h^n$ are the appropriate limits in $S_h^n$.
Thus we obtain pointwise convergence for almost all $x \in \Gamma_h^n$, and by arguing as in Lemma \ref{conveverywhere} this is in fact convergence for all $x \in \Gamma_h^n$.

Similar to the semi-discrete solution, we have that $|U_h^n| \leq 1$ almost everywhere on $\Gamma_h^n$ for $n \geq 1$.
We must again strengthen this to be a strict inequality, for which one can repeat the arguments of Lemma \ref{uh<1}.
Using this, the continuity of $f$ away from $\pm 1$, and the convergence $\Udhn{n}(x_i) \rightarrow U_h^n(x_i)$ we find
$$ \fd(\Udhn{n}(x_i))  = f(\Udhn{n}(x_i))\rightarrow f(U_h^n(x_i)),$$
for each of the nodes $x_i \in \Gamma_h^n$.
Note that this implies pointwise convergence everywhere on $\Gamma_h^n$ hence we also obtain strong convergence $I_h\fd(\Udhn{n}) \rightarrow I_h f(U_h^n)$ in $L^p(\Gamma_h^n)$ for $p \in [1,\infty]$ as the maximum of these functions occurs at the nodes of $\Gamma_h(t)$.
Hence by passing to the limit as $\delta \rightarrow 0$ in \eqref{regfullydisceqn1}, \eqref{regfullydisceqn2} we obtain solutions of \eqref{fullydisceqn1}, \eqref{fullydisceqn2}.
We end this subsection by stating a discrete time analogue of Proposition \ref{festability}.
\begin{proposition}
	\label{fullydiscstability}
	Let $U_{h,0}^1, U_{h,0}^2 \in \mathcal{I}_{h,0}$ be initial data with $\mval{U_{h,0}^1}{\Gamma_{h}(0)}=\mval{U_{h,0}^2}{\Gamma_{h}(0)}$.
	We denote the corresponding solution pair of \eqref{fullydisceqn1}, \eqref{fullydisceqn2} by $(U_h^{n,i}, W_h^{n,i})$, for $i=1,2$.
	Then if $\tau < \varepsilon^3$, we have for all $N \geq 1$,
	\[ \inormh{U_h^{N,1} - U_h^{N,2}}^2 + \frac{\varepsilon^4 \tau}{\varepsilon^3 - \tau} \sum_{n=1}^N \|\gradgh (U_h^{n,1} - U_h^{n,2})\|_{L^2(\Gamma_h^n)}^2 \leq e^{Ct_N}\left(\frac{ \varepsilon^3}{\varepsilon^3 - \tau}\right) \inormh{U^1_{h,0} - U^2_{h,0}}^2. \]
\end{proposition}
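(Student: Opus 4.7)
The plan is to adapt the stability argument of Proposition \ref{festability} (which is, in turn, the argument of \cite[Proposition 4.12]{caetano2021cahn}) to the fully discrete time-stepping scheme, using the discrete inverse Laplacian $\intinvsh$ in place of $\mathcal{G}$. Set $\widehat{U}_h^n := U_h^{n,1} - U_h^{n,2}$ and $\widehat{W}_h^n := W_h^{n,1} - W_h^{n,2}$. Testing \eqref{fullydisceqn1} with $\phi_h^n = 1$ gives $\int_{\Gamma_h^n} U_h^{n,i} = \int_{\Gamma_h(0)} U_{h,0}^i$, so the assumption on the initial means implies $\int_{\Gamma_h^n} \widehat{U}_h^n = 0$ for all $n$, which ensures $\intinvsh \widehat{U}_h^n$ is well-defined. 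Using the alternative form \eqref{regfullydisceqn1 alternate}, the difference pair $(\widehat{U}_h^n, \widehat{W}_h^n)$ satisfies the linear system
\begin{gather*}
\tfrac{1}{\tau}\intm\bigl(\widehat{U}_h^n - \widehat{U}_{h,+}^{n-1}, \phi_h^n\bigr) + a_h(\widehat{W}_h^n, \phi_h^n) = 0,\\
\intm(\widehat{W}_h^n, \phi_h^n) = \varepsilon a_h(\widehat{U}_h^n, \phi_h^n) + \tfrac{\theta}{2\varepsilon}\intm\bigl(I_h f(U_h^{n,1}) - I_h f(U_h^{n,2}), \phi_h^n\bigr) - \tfrac{1}{\varepsilon}\intm(\widehat{U}_h^n, \phi_h^n).
\end{gather*}

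The first step is to test the first equation with $\phi_h^n = \intinvsh \widehat{U}_h^n$ and the second with $\phi_h^n = \widehat{U}_h^n$, then subtract. Using the defining property $\intm(\eta, \intinvsh \zeta) = a_h(\intinvsh \eta, \intinvsh \zeta)$ (for $\eta$ with zero mean) together with the polarisation identity $2 a_h(a - b, a) = \|a\|_{a_h}^2 + \|a - b\|_{a_h}^2 - \|b\|_{a_h}^2$ converts the first term into a discrete $H^{-1}$ time derivative. The time-projection estimate \eqref{timeproj2} and the perturbation bound \eqref{timeperturb3} then replace $\inormh{\widehat{U}_{h,+}^{n-1}}$ (computed on $\Gamma_h^n$) by $\inormh{\widehat{U}_h^{n-1}}$ (on $\Gamma_h^{n-1}$) up to an error of order $\tau$. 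The expansion $a_h(\widehat{W}_h^n, \intinvsh \widehat{U}_h^n) = \intm(\widehat{W}_h^n, \widehat{U}_h^n)$ followed by substitution of the second equation lets us drop the logarithmic contribution: on the mass-lumped inner product the potential term equals $\tfrac{\theta}{2\varepsilon}\sum_i \omega_i^n \bigl(f(U_h^{n,1}(x_i^n)) - f(U_h^{n,2}(x_i^n))\bigr)\bigl(U_h^{n,1}(x_i^n) - U_h^{n,2}(x_i^n)\bigr) \geq 0$ by the positivity of the lumping weights $\omega_i^n$ and the monotonicity of $f$.

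After this reduction one arrives at an inequality of the form
\begin{equation*}
\tfrac{1}{2\tau}\bigl(\inormh{\widehat{U}_h^n}^2 - \inormh{\widehat{U}_h^{n-1}}^2\bigr) + \varepsilon \|\gradgh \widehat{U}_h^n\|_{L^2(\Gamma_h^n)}^2 \leq \tfrac{1}{\varepsilon}\intm(\widehat{U}_h^n, \widehat{U}_h^n) + C \inormh{\widehat{U}_h^{n-1}}^2.
\end{equation*}
The main obstacle is the $\tfrac{1}{\varepsilon}\intm(\widehat{U}_h^n, \widehat{U}_h^n)$ term: writing $\intm(\widehat{U}_h^n, \widehat{U}_h^n) = a_h(\intinvsh \widehat{U}_h^n, \widehat{U}_h^n) \leq \inormh{\widehat{U}_h^n}\|\gradgh \widehat{U}_h^n\|_{L^2(\Gamma_h^n)}$, one must apply Young's inequality with a weight carefully tuned so that the gradient piece can be absorbed into $\varepsilon \|\gradgh \widehat{U}_h^n\|_{L^2(\Gamma_h^n)}^2$ while leaving the precise coefficient $\tfrac{\varepsilon^4 \tau}{\varepsilon^3 - \tau}$ on the left-hand side. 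The natural choice of Young parameter is $\alpha = \varepsilon^2(\varepsilon^3-\tau)/\tau$, which demands $\tau < \varepsilon^3$ so that the absorbed coefficient remains strictly positive, and produces the prefactor $\tfrac{\varepsilon^3}{\varepsilon^3 - \tau}$ in front of $\inormh{\widehat{U}_h^n}^2$ on the time-derivative side.

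Finally, multiplying through by $2\tau$, summing from $n = 1$ to $N$, and applying a discrete Gronwall inequality (noting $t_N = N\tau$ and that the initial datum appears with the prefactor $\tfrac{\varepsilon^3}{\varepsilon^3-\tau}$ acquired above) yields the stated estimate. The error terms from the surface evolution, bounded uniformly in $h$ via \eqref{timeperturb2}, \eqref{timeperturb3}, and \eqref{timeproj2}, contribute only to the exponential factor $e^{C t_N}$.
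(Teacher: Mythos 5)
Your overall plan is the right one and tracks the paper's intended argument (which the paper cites to \cite[Proposition~3.1]{elliott2024fully}): form the difference pair, test with $\intinvsh\widehat{U}_h^n$ and $\widehat{U}_h^n$, drop the logarithmic term by monotonicity of $f$ through the mass-lumped product, tune Young's inequality so that the gradient piece absorbs and the explicit $\varepsilon^3/(\varepsilon^3-\tau)$ prefactor emerges, and finish with discrete Gr\"onwall.

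There is, however, a genuine gap at the time-projection step. To run the telescoping argument you need
\[
\inormh{\widehat{U}_{h,+}^{n-1}}^2 - \inormh{\widehat{U}_{h}^{n-1}}^2 \leq C\tau\, \inormh{\widehat{U}_{h}^{n-1}}^2,
\]
i.e.\ a $O(\tau)$ perturbation \emph{measured in the discrete $H^{-1}$ norm}. The ingredients you cite, \eqref{timeproj2} and \eqref{timeperturb3}, are $L^2$-level bounds: \eqref{timeproj2} controls $\normh{t_n}{\overline{z_h^{n-1}}-z_{h,+}^{n-1}}$ by $\tau\|z_h^{n-1}\|_{L^2}$, and \eqref{timeperturb3} estimates $\intm$-differences by $\tau\normh{t}{\cdot}\normh{t}{\cdot}$. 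Chasing either through $\intinvsh$ only yields a bound of the form $C\tau\|\widehat{U}_h^{n-1}\|_{L^2(\Gamma_h^{n-1})}\inormh{\widehat{U}_h^{n-1}}$, which is one Laplacian too strong on the right: it cannot be absorbed into $\inormh{\widehat{U}_h^{n-1}}^2$, and trying instead to absorb the $L^2$ factor into $\varepsilon\tau\|\gradgh\widehat{U}_h^{n-1}\|_{L^2}^2$ produces extra error terms that destroy the exact coefficient $\varepsilon^3/(\varepsilon^3-\tau)$ you are claiming.

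The missing ingredient, used in the cited reference and in the authors' own (suppressed) computation, is a second time projection $(\cdot)_{++}$, defined via the bilinear form $a_h$ (with the matching integral constraint) rather than via $\intm$. One shows that this $a_h$-projection intertwines with the discrete inverse Laplacian in the sense $\intinvsh z_{h,+}^{n-1} = (\intinvsh z_h^{n-1})_{++}$, and establishes the $H^1$-level estimates
\[
\|\gradgh z_{h,++}^{n-1}\|_{L^2(\Gamma_h^n)} \le C\,\|\gradgh z_h^{n-1}\|_{L^2(\Gamma_h^{n-1})},
\qquad
\|\gradgh(z_{h,++}^{n-1} - \overline{z_h^{n-1}})\|_{L^2(\Gamma_h^n)} \le C\tau\,\|\gradgh z_h^{n-1}\|_{L^2(\Gamma_h^{n-1})}.
\]
Applied to $z_h^{n-1} = \intinvsh\widehat{U}_h^{n-1}$, the second estimate gives exactly the $O(\tau)$ negative-norm perturbation needed, because $\|\gradgh\intinvsh\widehat{U}_h^{n-1}\|_{L^2(\Gamma_h^{n-1})} = \inormh{\widehat{U}_h^{n-1}}$. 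Without this lemma your argument as stated does not close at the precision claimed in the proposition.
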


\begin{proof}
    This proof is essentially identical to \cite[Proposition 3.1]{elliott2024fully} and is hence omitted.
    The only meaningful difference is that due to the use of mass-lumping we use $\intinvsh$ rather than $\invsh$ (again see Appendix \ref{invlaps}).
\end{proof}

\begin{remark}
    The smallness condition on $\tau$ is quite severe since in practise $\varepsilon$ is very small (to approximate a sharp interface).
    One could mitigate these restrictions by using an implicit-explicit scheme as in \cite[Section 5]{elliott2024fully}, but we leave this as a topic for future work.
\end{remark}

\subsection{Error analysis}
In this subsection we provide a similar analysis to that of the semi-discrete scheme, and prove error bounds for the fully discrete scheme.
As in the semi-discrete case, we assume $u_0$ is such that Lemma \ref{initialdatalemma} holds, and consider $U_{h,0} = \Pi_h u_0$.

As in \cite{barrett1995error, copetti1992numerical} we construct piecewise functions in time from the values $U_h^n , W_h^n$, and compare these to the semi-discrete solutions, $U_h, W_h$.
The reason for this is twofold.
Firstly, we can use the error bounds we established for the semi-discrete scheme.
Secondly, if we were to show a time discrete analogue of \eqref{deltaerror1} (on the true surface $\Gamma(t)$) we would require stronger regularity on the true solution, $u$.
This is not ideal as the singular potential limits the known regularity of $u$.
For contrast, we refer the reader to the analysis of a fully discrete ESFEM scheme for the Cahn-Hilliard equation with a regular potential \cite{elliott2024fully} where this approach is not required.

Given our fully discrete solutions $U_h^n, W_h^n$, we define piecewise linear functions
\begin{gather*}
	U_h^\tau(t) := \left(\frac{t-t_{n-1}}{\tau}\right) \Phi_t^h \Phi_{-t_{n}}^h U_h^n + \left(\frac{t_n - t}{\tau}\right) \Phi_t^h \Phi_{-t_{n-1}}^h U_h^{n-1},\\
	W_h^\tau(t) := \left(\frac{t-t_{n-1}}{\tau}\right) \Phi_t^h \Phi_{-t_{n}}^h W_h^n + \left(\frac{t_n - t}{\tau}\right) \Phi_t^h \Phi_{-t_{n-1}}^h W_h^{n-1},
\end{gather*}
for $t \in [t_{n-1},t_{n}]$.
It is straightforward to see from the transport property that
\[ \matdev_h U_h^\tau = \Phi_t^h \Phi_{-t_n}^h \matdevtau U_h^n = \underline{\matdevtau U_h^n}(t). \]
Similarly we define piecewise constant functions
\begin{gather*}
	\underline{U_h}(t) := \Phi_t^h \Phi_{-t_n}^h U_h^n = \underline{U_h^n}(t),\\
	\underline{W_h}(t) := \Phi_t^h \Phi_{-t_n}^h W_h^n = \underline{W_h^n}(t),
\end{gather*}
for $t \in (t_{n-1}, t_{n})$.
One can define $U_h^{\tau,\delta}, W_h^{\tau,\delta}, \underline{\Udh}, \underline{\Wdh}$ similarly.
The fully discrete error bound will be established by considering the decomposition
\[u^{-\ell} - \underline{U_h} = \underbrace{[u^{-\ell} - (u^{\delta})^{-\ell}]}_{\mathcal{O}(\delta)} + \underbrace{[(u^{\delta})^{-\ell} - U_h^\delta]}_{\mathcal{O}\left(\frac{h^4 \log\left(\frac{1}{h}\right)}{\delta^2}\right)} + \underbrace{[\Udh - \underline{\Udh}]}_{\mathcal{O}(?)} - \underbrace{[\underline{\Udh}-\underline{U_h}]}_{\mathcal{O}(?)},\]
where we have bounded all but the last two terms.

We now observe that these piecewise functions solve a perturbed version of the semi-discrete system \eqref{fecheqn1}, \eqref{fecheqn2}.
Specifically we have that, since $U_h^n, W_h^n$ solve \eqref{fullydisceqn1}, \eqref{fullydisceqn2} for all $\phi_h^n \in S_h^n$, and almost all $t \in [0,T]$, one has
\begin{align}
	\label{defect equation}
	\begin{gathered}
	\intm(\matdev_h U_h^\tau, \phi_h) + g_h(I_h(\underline{U_h}\phi_h),1) + a_h(\underline{W_h}, \phi_h) = \sum_{i=1}^3 D_i,\\
	\intm(\underline{W_h}, \phi_h(t)) = \varepsilon a_h(\underline{U_h}, \phi_h(t)) + \frac{\theta}{2 \varepsilon} \intm(I_hf(\underline{U_h}), \phi_h(t)) - \frac{1}{\varepsilon}\intm(\underline{U_h}, \phi_h(t)) + \sum_{i=4}^7 D_i,
	\end{gathered}
\end{align}
for some defects $D_i = D_i(t; \phi_h)$.
For $t \in (t_{n-1},t_n]$, these defects are given by
\begin{gather*}
	D_1(t; \phi_h) = \intm(\matdev_h U_h^\tau, \phi_h(t)) - \intm(\matdevtau U_h^n, \phi_h^n),\\
	D_2(t; \phi_h) = g_h(I_h(\underline{U_h}\phi_h(t)),1) - \frac{1}{\tau} \left( \intm(\overline{U_h^{n-1}},\phi_h^n)- \intm(U_h^{n-1},\underline{\phi_h^n}) \right),\\
	D_3(t; \phi_h)= a_h(\underline{W_h}, \phi_h(t)) - a_h(W_h^n, \phi_h^n),\\
	D_4(t; \phi_h)=\intm(\underline{W_h}, \phi_h(t)) - \intm(W_h^n, \phi_h^n),\\
	D_5(t; \phi_h)=\varepsilon\left[ a_h(U_h^n, \phi_h^n)- a_h(\underline{U_h}, \phi_h^n(t))\right],\\
	D_6(t; \phi_h)=\frac{\theta}{2 \varepsilon} \left[ \intm(I_h f(U_h^n), \phi_h^n) - \intm(I_hf(\underline{U_h}), \phi_h(t))\right],\\
	D_7(t; \phi_h)=\frac{1}{\varepsilon}\left[\intm(U_h^n, \phi_h^n)- \intm(\underline{U_h}, \phi_h(t))  \right].
\end{gather*}
Here $\phi_h(t) \in S_h(t)$, and $\phi_h^n \in S_h^n$ is chosen as $\phi_h^n = \Phi_{t_n}^h \Phi_{-t}^h \phi_h(t)$.
We bound these defects in the following lemma.

\begin{lemma}
	\label{defect bounds}
	$D_1,...,D_7$ as defined above are bounded as
	\begin{gather*}
		|D_1(t;\phi_h)| \leq C \tau \|\matdevtau U_h^n\|_{L^2(\Gamma_h^n)} \|\phi_h(t)\|_{L^2(\Gamma_h(t))},\\
		|D_2(t;\phi_h)| \leq C \tau \left( \|U_h^n \|_{L^2(\Gamma_h^n)} + \|\matdevtau U_h^n \|_{L^2(\Gamma_h^n)} \right)\|\phi_h(t)\|_{L^2(\Gamma_h(t))},\\
		|D_3(t;\phi_h)| \leq C \tau \|\gradgh W_h^n\|_{L^2(\Gamma_h^n)}\|\gradgh\phi_h(t)\|_{L^2(\Gamma_h(t))},\\
		|D_4(t;\phi_h)| \leq C \tau \| W_h^n\|_{L^2(\Gamma_h^n)}\|\phi_h(t)\|_{L^2(\Gamma_h(t))},\\
		|D_5(t;\phi_h)| \leq C \tau \|\gradgh U_h^n\|_{L^2(\Gamma_h^n)}\|\gradgh\phi_h(t)\|_{L^2(\Gamma_h(t))},\\
		|D_6(t;\phi_h)| \leq C \tau \| I_h f(U_h^n)\|_{L^2(\Gamma_h^n)}\|\phi_h(t)\|_{L^2(\Gamma_h(t))},\\
		|D_7(t;\phi_h)| \leq C \tau \| U_h^n\|_{L^2(\Gamma_h^n)}\|\phi_h(t)\|_{L^2(\Gamma_h(t))}
	\end{gather*}
	where $C$ denotes a constant independent of $t, h, \tau$.
\end{lemma}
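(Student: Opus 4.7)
The unifying idea for all seven defects is that each $D_i$ measures the mismatch between a bilinear form evaluated at time $t\in(t_{n-1},t_n]$ on $\Gamma_h(t)$ and the same form at the endpoint $t_n$ (or, for $D_2$, averaged over $[t_{n-1},t_n]$), with the functions appearing in both having nodal values that are constant along material trajectories. Each gap is therefore expressible as an integral of a bilinear form through Proposition \ref{transport3} or Lemma \ref{transport5}, and since $|t-t_n|\leq\tau$, the smoothness of $V_h$ together with the norm equivalences \eqref{timenorm1}--\eqref{timenorm4} and perturbation estimates \eqref{timeperturb1}--\eqref{timeperturb3} produces an explicit factor of $\tau$. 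Throughout I would write $\chi(s):=\Phi_s^h\Phi_{-t_n}^h\phi_h^n\in S_h(s)$, so that $\matdev_h\chi=0$, $\chi(t_n)=\phi_h^n$, and $\chi(t)=\phi_h(t)$.

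For $D_3$ (and, mutatis mutandis, $D_1,D_4,D_5,D_6,D_7$) I would observe that both $\underline{W_h^n}(\cdot)$ and $\chi(\cdot)$ have vanishing discrete material derivative, so Proposition \ref{transport3} gives
\[D_3 = a_h(t;\underline{W_h^n}(t),\chi(t)) - a_h(t_n;W_h^n,\chi(t_n)) = -\int_t^{t_n} b_h(s;\underline{W_h^n}(s),\chi(s))\,ds,\]
and the bound follows from boundedness of $\mathbf{B}_h(V_h)$ and \eqref{timenorm2}. The arguments for $D_1,D_4,D_5,D_7$ are identical, invoking Lemma \ref{transport5} in place of Proposition \ref{transport3} as appropriate, with the first slot occupied by $\matdevtau U_h^n$, $W_h^n$, $U_h^n$, $U_h^n$ respectively. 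For $D_6$ the key point is that $I_h f(\underline{U_h^n}(s))$ has nodal values $f(U_h^n(x_i(t_n)))$ which are independent of $s$, so its discrete material derivative vanishes and Lemma \ref{transport5} applies as before, producing $\|I_h f(U_h^n)\|_{L^2(\Gamma_h^n)}$ on the right.

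The main obstacle is $D_2$, which must reconcile the continuous $g_h$-contribution with its finite-difference surrogate. Setting $\psi(s):=\Phi_s^h\Phi_{-t_{n-1}}^h U_h^{n-1}$ (so $\matdev_h\psi=0$, $\psi(t_{n-1})=U_h^{n-1}$, $\psi(t_n)=\overline{U_h^{n-1}}$), I would first apply Lemma \ref{transport5} to rewrite the finite-difference term as
\[\frac{1}{\tau}\Bigl(\intm(\overline{U_h^{n-1}},\phi_h^n)-\intm(U_h^{n-1},\underline{\phi_h^n})\Bigr) = \frac{1}{\tau}\int_{t_{n-1}}^{t_n} g_h\bigl(s;I_h(\psi(s)\chi(s)),1\bigr)\,ds,\]
so that $D_2$ becomes the average over $[t_{n-1},t_n]$ of the difference $g_h(t;I_h(\underline{U_h^n}(t)\chi(t)),1) - g_h(s;I_h(\psi(s)\chi(s)),1)$. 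I would split this integrand by adding and subtracting $g_h(s;I_h(\underline{U_h^n}(s)\chi(s)),1)$: the first piece is controlled by the $s$-smoothness of the evolving surface bilinear form on functions with fixed nodal values, giving a factor of $|t-s|\leq\tau$ times $\|U_h^n\|_{L^2(\Gamma_h^n)}\|\phi_h(t)\|_{L^2(\Gamma_h(t))}$; the second piece features $\underline{U_h^n}(s)-\psi(s)$, whose nodal values are exactly $U_h^n-\overline{U_h^{n-1}}=\tau\matdevtau U_h^n$, so by \eqref{timenorm1} its $L^2(\Gamma_h(s))$-norm is bounded by $C\tau\|\matdevtau U_h^n\|_{L^2(\Gamma_h^n)}$. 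Combining and dividing by $\tau$ produces the stated bound, with both $\|U_h^n\|_{L^2(\Gamma_h^n)}$ and $\|\matdevtau U_h^n\|_{L^2(\Gamma_h^n)}$ appearing on the right.
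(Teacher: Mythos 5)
Your proposal is correct and follows essentially the same route as the paper: the easy defects $D_1, D_3, \ldots, D_7$ are handled by the transport theorems (which is precisely what underlies the paper's cited perturbation estimates \eqref{timeperturb1}--\eqref{timeperturb3}), and your treatment of $D_2$ — rewriting the finite-difference surrogate as an average of $g_h$ via Lemma \ref{transport5}, then inserting $g_h(s;I_h(\underline{U_h^n}(s)\chi(s)),1)$ to split into a ``time-regularity'' piece and a piece with nodal values $\tau\matdevtau U_h^n$ — is exactly the paper's decomposition. The only cosmetic difference is that you derive the simple bounds directly from $\matdev_h = 0$ plus boundedness of $\mathbf{B}_h(V_h)$, whereas the paper invokes the already-packaged perturbation estimates; the substance is identical.
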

\begin{proof}
	Most of these bounds are readily shown from \eqref{timeperturb1}, \eqref{timeperturb3} and \eqref{intmbound0} where necessary, and we note that for $\tau$ sufficiently small one has
	\[ \|\phi_h^n\|_{L^2(\Gamma_h^n)} \leq C\|\phi_h(t)\|_{L^2(\Gamma_h(t))}, \qquad \|\gradgh\phi_h^n\|_{L^2(\Gamma_h^n)} \leq C\|\gradgh\phi_h(t)\|_{L^2(\Gamma_h(t))} \]
	by similar arguments to \eqref{timenorm3}, \eqref{timenorm4}.
	
	The only bound which isn't immediate is that of $D_2$.
	To bound $D_2$ we firstly notice that
	\begin{align*}
		\frac{1}{\tau} \left( \intm(\overline{U_h^{n-1}},\phi_h^n)- \intm(U_h^{n-1},\underline{\phi_h^n}) \right) &= \frac{1}{\tau} \int_{t_{n-1}}^{t_n} \frac{d}{dt} \intm(\overline{U_h^{n-1}}(s),\underline{\phi_h^n}(s)) \, ds\\
		&= \frac{1}{\tau} \int_{t_{n-1}}^{t_n} g_h(I_h(\overline{U_h^{n-1}}(s)\underline{\phi_h^n}(s)),1) \, ds
	\end{align*}
	from which we see
	\[ D_2(t; \phi_h) = \frac{1}{\tau} \int_{t_{n-1}}^{t_n} g_h(I_h(\underline{U_h}\underline{\phi_h^n}(t)),1) - g_h(I_h(\overline{U_h^{n-1}}(s)\underline{\phi_h^n}(s)),1) \, ds. \]
	We then write this as
	\begin{multline*}
		 D_2(t; \phi_h) = \frac{1}{\tau} \int_{t_{n-1}}^{t_n} g_h(I_h(\underline{U_h}(t)\underline{\phi_h^n}(t)),1) - g_h(I_h(\underline{U_h}(s)\underline{\phi_h^n}(s)),1) \, ds\\
		 +  \frac{1}{\tau} \int_{t_{n-1}}^{t_n} g_h(I_h(\underline{U_h}(s)\underline{\phi_h^n}(s))- I_h(\overline{U_h^{n-1}}(s)\underline{\phi_h^n}(s)),1) \, ds,
	\end{multline*}
	which one readily sees is
	\begin{align}
		D_2(t; \phi_h) = \frac{1}{\tau} \int_{t_{n-1}}^{t_n} \int_{s}^{t} \frac{d}{dr} g_h(I_h(\underline{U_h}(r)\underline{\phi_h^n}(r)),1) \, dr  \, ds +  \int_{t_{n-1}}^{t_n} g_h(I_h(\underline{\matdevtau U_h^n}(s)\underline{\phi_h^n}(s)), 1) \, ds . \label{defectpf}
	\end{align}
	Noting that $\matdev_h I_h(\underline{U_h}\underline{\phi_h^n}) = 0$ one can readily show that
	\[ \frac{d}{dr} g_h(I_h(\underline{U_h}(r)\underline{\phi_h^n}(r)),1) = \int_{\Gamma_h(r)} I_h(\underline{U_h}(r)\underline{\phi_h^n}(r)) \left( \matdev_h (\gradgh \cdot V_h) +  (\gradgh \cdot V_h)^2 \right). \]
	We do not show this but note that it follows from similar logic to \cite[Lemma 5.6]{dziuk2013finite}.
	Then as $V_h = I_h V^{-\ell}$, our assumed smoothness on $V, \matdev V$ lets us conclude that
	\[ \sup_{t \in [0,T]} \sup_{\Gamma_h(t)} \left| \matdev_h (\gradgh \cdot V_h) +  (\gradgh \cdot V_h)^2 \right| \leq C, \]
	for some constant $C$ independent of $h$.
	Thus using this uniform bound in \eqref{defectpf}, and using \eqref{timenorm1} where appropriate, one readily concludes the claimed bound for $|D_2(t;\phi_h)|$.
\end{proof}

One can show that $U_h^{\tau,\delta}, \underline{\Udh}, \underline{\Wdh}$ solve a similar system with defects $D_i^\delta$, $i=1,...7$.
Moreover, these defects obey almost identical bounds to those in the preceding lemma.

Next we show that $U_h^\tau$ and $\underline{U_h}$ remain close, in appropriate norms, as is the content of the following lemma.
\begin{lemma}
\label{defectderivative lemma}
	\begin{gather}
		\label{defectderivative1}
		\int_{0}^T \|(U_h^\tau - \underline{U_h})\|_{L^2(\Gamma_h(t))}^2 + \sqrt{\tau} \|\gradgh(U_h^\tau - \underline{U_h})\|_{L^2(\Gamma_h(t))}^2 \leq C \tau^\frac{3}{2},\\
		\int_0^T \inormh{(U_h^\tau - \underline{U_h}) - \mval{(U_h^\tau - \underline{U_h})}{\Gamma_h(t)}}^2 \leq C \tau^2, \label{defectderivative2}
	\end{gather}
	for some constant $C$ independent of $\tau, h$.
\end{lemma}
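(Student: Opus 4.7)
The starting observation is that on each subinterval $(t_{n-1}, t_n]$, the definitions of $U_h^\tau$ and $\underline{U_h}$ combine with the transport property of the nodal basis to give the pointwise identity
\[ U_h^\tau(t) - \underline{U_h}(t) = -(t_n - t)\, \underline{\matdevtau U_h^n}(t), \]
where $\underline{\matdevtau U_h^n}(t) \in S_h(t)$ denotes the function on $\Gamma_h(t)$ sharing nodal values with $\matdevtau U_h^n \in S_h^n$. Thus both estimates reduce to space-time bounds on the transported discrete derivative, weighted by $(t_n-t)^2$, and the time integral $\int_{t_{n-1}}^{t_n}(t_n - t)^2\, dt = \tau^3/3$ supplies the extra powers of $\tau$.

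For \eqref{defectderivative1}, I would apply \eqref{timenorm1} and \eqref{timenorm2} pointwise in $t$ to transfer the $L^2$ and gradient norms back to $\Gamma_h^n$, sum over $n$, and use the $\delta \to 0$ limits of \eqref{fdisc derivative bound1} and \eqref{fdiscregstability} (both of which survive the limit by weak lower semi-continuity, since these are uniform-in-$\delta$ bounds). Concretely, $\int_0^T \|U_h^\tau - \underline{U_h}\|_{L^2(\Gamma_h(t))}^2 \leq C\tau^3 \sum_n \|\matdevtau U_h^n\|_{L^2(\Gamma_h^n)}^2 \leq C\tau^3 \cdot \tau^{-3/2} = C\tau^{3/2}$, and the weighted gradient term is bounded by $C\sqrt{\tau} \cdot \tau^3 \sum_n \|\gradgh\matdevtau U_h^n\|_{L^2(\Gamma_h^n)}^2 \leq C\sqrt{\tau}\cdot\tau^3 \cdot \tau^{-2} = C\tau^{3/2}$.

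For \eqref{defectderivative2}, the same identity gives
\[ \inormh{(U_h^\tau - \underline{U_h}) - \mval{(U_h^\tau - \underline{U_h})}{\Gamma_h(t)}}^2 = (t_n - t)^2 \inormh{\underline{\matdevtau U_h^n}(t) - \mval{\underline{\matdevtau U_h^n}(t)}{\Gamma_h(t)}}^2, \]
and the key step — which I expect to be the main obstacle — is a time-perturbation estimate of the form
\[ \inormh{\underline{\eta^n}(t) - \mval{\underline{\eta^n}(t)}{\Gamma_h(t)}}_{\Gamma_h(t)} \leq C\, \inormh{\eta^n - \mval{\eta^n}{\Gamma_h^n}}_{\Gamma_h^n}, \qquad \eta^n \in S_h^n, \]
uniformly for $t \in (t_{n-1},t_n]$ and for $\tau$ sufficiently small. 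I would prove this by duality: for a zero-mean test function $\phi \in S_h(t)$ with $\|\gradgh\phi\|_{L^2(\Gamma_h(t))} \leq 1$, pair $\phi$ with its nodal-value image $\phi^n \in S_h^n$ and write $\intm^{(t)}(\underline{\eta^n}(t),\phi)$ as $\intm^{(t_n)}(\eta^n,\phi^n)$ plus a time-perturbation error controlled by \eqref{timeperturb3}; convert $\intm^{(t_n)}(\eta^n,\phi^n)$ into $a_h^{(t_n)}(\intinvsh\eta^n, \phi^n)$ via the defining property of $\intinvsh$ on $\Gamma_h^n$; and bound this via Cauchy--Schwarz after transferring $\|\gradgh\phi^n\|_{L^2(\Gamma_h^n)}$ back to $\Gamma_h(t)$ through \eqref{timeperturb2} and the Poincar\'e inequality (the small non-zero-mean part of $\phi^n$, of size $O(\tau)\|\phi\|$, is absorbed in the same way). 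Granted this comparison, \eqref{defectderivative2} follows by applying the $\delta \to 0$ limit of \eqref{fdisc derivative bound2} and computing $\tau^3 \cdot \tau^{-1} = \tau^2$. All remaining inputs (smoothness of $V$, quasi-uniformity, the properties of $\intinvsh$ in Appendix \ref{invlaps}) are already at hand.
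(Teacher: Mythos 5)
Your proposal is correct in substance, and for \eqref{defectderivative1} it is essentially the paper's argument: the paper bounds $U_h^\tau-\underline{U_h}$ on each subinterval by $\|\Phi_t^h\Phi_{-t_n}^hU_h^n-\Phi_t^h\Phi_{-t_{n-1}}^hU_h^{n-1}\|_{L^2(\Gamma_h(t))}$, transfers to $\Gamma_h^n$ via \eqref{timeperturb1}/\eqref{timenorm2}, and invokes the $\delta\to0$ analogues of \eqref{fdisc derivative bound1} and \eqref{fdiscregstability}, exactly as you do (your exact identity $U_h^\tau-\underline{U_h}=-(t_n-t)\underline{\matdevtau U_h^n}(t)$ is a slightly cleaner way of organising the same computation). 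For \eqref{defectderivative2} you take a genuinely different route: the paper splits $\intinvsh\Phi_t^h\Phi_{-t_n}^h$ into $\Phi_t^h\Phi_{-t_n}^h\intinvsh$ plus a commutator and invokes the explicit commutator estimate of Lemma \ref{invlap commutator} (whose bound is $C\tau\|z_h^n\|_{L^2(\Gamma_h^n)}$), whereas you compare the mesh-dependent norms $\inormh{\cdot}$ at nearby times directly by duality, using \eqref{timeperturb2}, \eqref{timeperturb3} and the definition of $\intinvsh$; this bypasses the appendix lemma and is arguably more transparent. The one point you should tighten is the clean perturbation estimate you state: since the error in \eqref{timeperturb3} is measured in $L^2$ norms (just like the paper's commutator bound), your duality argument delivers
\[
\inormh{\underline{\eta^n}(t)-\mval{\underline{\eta^n}(t)}{\Gamma_h(t)}}\leq C\,\inormh{\eta^n-\mval{\eta^n}{\Gamma_h^n}}+C\tau\|\eta^n\|_{L^2(\Gamma_h^n)},
\]
after first subtracting the constant $\mval{\eta^n}{\Gamma_h^n}$ (which drops out of both sides), not the version without the second term. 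This is harmless: carried through with $\eta^n=\matdevtau U_h^n$ the extra term contributes $C\tau^5\sum_n\|\matdevtau U_h^n\|_{L^2(\Gamma_h^n)}^2\leq C\tau^{7/2}$ by \eqref{fdisc derivative bound1} (the paper's proof carries exactly the same extra term), or alternatively it can be absorbed into the first term using $\|\eta^n\|_{L^2(\Gamma_h^n)}\leq Ch^{-1}\inormh{\eta^n-\mval{\eta^n}{\Gamma_h^n}}$ from \eqref{invlapineq3} together with the CFL condition $\tau\leq Ch^2$ that is in force here. With that adjustment your plan goes through and yields the stated $C\tau^2$.
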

\begin{proof}
	We start by showing the $L^2$ bound of \eqref{defectderivative1}.
	By definition one has
	\begin{align*}
		\int_{0}^T \|U_h^\tau - \underline{U_h}\|_{L^2(\Gamma_h(t))}^2 &= \sum_{n=1}^{N_T} \int_{t_{n-1}}^{t_n} \left\| \left(\frac{t-t_{n-1}}{\tau} - 1\right) \Phi_t^h \Phi_{-t_{n}}^h U_h^n + \left(\frac{t_n - t}{\tau}\right) \Phi_t^h \Phi_{-t_{n-1}}^h U_h^{n-1} \right\|_{L^2(\Gamma_h(t))}^2\\
		& \leq \sum_{n=1}^{N_T} \int_{t_{n-1}}^{t_n} \left\| \Phi_t^h \Phi_{-t_{n}}^h U_h^n - \Phi_t^h \Phi_{-t_{n-1}}^h U_h^{n-1} \right\|_{L^2(\Gamma_h(t))}^2,
	\end{align*}
	and so using \eqref{timeperturb1} and (a $\delta \rightarrow 0$ analogue of) \eqref{fdisc derivative bound1} one obtains
	\begin{align*}
		\int_{0}^T\|U_h^\tau - \underline{U_h}\|_{L^2(\Gamma_h(t))}^2 \leq C \tau \sum_{n=1}^{N_T} \| U_h^n - \overline{U_h^{n-1}}\|_{L^2(\Gamma_h^n)}^2 = C \tau^3 \sum_{n=1}^{N_T} \| \matdevtau U_h^n \|_{L^2(\Gamma_h^n)}^2 \leq C \tau^\frac{3}{2}.
	\end{align*}
	The $H^1$ bound is similar, but uses the bound on $\gradgh \matdevtau U_h^n$ (which one obtains in the $\delta \rightarrow 0$ limit of \eqref{fdiscregstability}) instead.
	
	The bound for \eqref{defectderivative2} is more subtle.
    To begin we compute, by a similar calculation to the above, that
    \begin{align*}
        \int_0^T \inormh{(U_h^\tau - \underline{U_h}) - \mval{(U_h^\tau - \underline{U_h}) }{\Gamma_h(t)}}^2
        \leq \tau^2 \sum_{n=1}^{N_T} \int_{t_{n-1}}^{t_n} \left\| \gradgh \intinvsh \left(\Phi_t^h \Phi_{-t_n}^h\matdevtau U_h^n - \mval{\Phi_t^h \Phi_{-t_n}^h\matdevtau U_h^n}{\Gamma_h(t)} \right) \right\|_{L^2(\Gamma_h(t)))}^2.
    \end{align*}
    The issue arises as we cannot immediately apply \eqref{timenorm2}, analogously to the previous result, as $\intinvsh$ does not commute with pullbacks/pushforwards.
	To mitigate this, the idea is to write 
	\begin{align*}
		\intinvsh \Phi_t^h \Phi_{-t_n}^h = \Phi_t^h \Phi_{-t_n}^h \intinvsh + \left(\intinvsh \Phi_t^h \Phi_{-t_n}^h - \Phi_t^h \Phi_{-t_n}^h \intinvsh \right),
	\end{align*}
    where we have omitted the mean value terms for simplicity.
	This first operator can now be dealt with by using \eqref{timenorm2}, and (a $\delta \rightarrow 0$ analogue of) \eqref{fdisc derivative bound2}.
    For the bracketed term we obtain a similar bound by applying Lemma \ref{invlap commutator} with $z_h^n = \matdevtau U_h^n$.
	We omit the calculations as they are tedious, and provide nothing that we have not remarked already --- but a patient reader can indeed verify that
	\begin{align*}
		\int_0^T \inormh{(U_h^\tau - \underline{U_h}) - \mval{(U_h^\tau - \underline{U_h})}{\Gamma_h(t)}}^2 \leq C \tau^3 \sum_{n=1}^{N_T} \inormh{\matdevtau U_h^n - \mval{\matdevtau U_h^n}{\Gamma_h^n}}^2 + C \tau^5 \sum_{n=1}^{N_T} \left\|\matdevtau U_h^n\right\|_{L^2(\Gamma_h^n)}^2.
	\end{align*}
	\eqref{defectderivative2} now follows from ($\delta \rightarrow 0$ versions of) \eqref{fdisc derivative bound1} and \eqref{fdisc derivative bound2}.
\end{proof}

\begin{remark}
	It is a straightforward modification of this proof to also verify that
	\begin{align}
		\int_{0}^T \inormh{\matdev_h U_h^\tau - \mval{\matdev_h U_h^\tau }{\Gamma_h(t)}}^2 \leq C, \label{pw linear derivative bound}
	\end{align}
	for a constant, $C$, independent of $h,\tau$.
    We do not elaborate on further details on this proof.
    Note also that by definition of $U_h^\tau$ we can also write $\matdev_h U_h^\tau = \underline{\matdevtau U_h^n}(t)$, and we shall use these interchangeably. 
\end{remark}

We also have the following bounds on the mean values,
\begin{gather}
	\left|\int_{\Gamma_h(t)} U_h^\tau - \int_{\Gamma_h(0)} U_{h,0}\right| \leq C \tau, \qquad \left|\int_{\Gamma_h(t)} \underline{U_h} - \int_{\Gamma_h(0)} U_{h,0}\right| \leq C \tau.
	\label{defectbound2}
\end{gather}
To see this we first observe that for $t \in (t_{n-1},t_n]$ we may use \eqref{timeperturb1} to see
\[\left|\int_{\Gamma_h(t)} \underline{U_h} - \int_{\Gamma_h(0)} U_{h,0}\right| = \left|\int_{\Gamma_h(t)} \underline{U_h}(t) -\int_{\Gamma_h^n} U_h^n\right| \leq C \tau \|U_h^n\|_{L^2(\Gamma_h^n)} \leq C\tau. \]
The bound for $\left|\int_{\Gamma_h(t)} U_h^\tau - \int_{\Gamma_h(0)} U_{h,0}\right|$ can be shown similarly by writing
\[ \left|\int_{\Gamma_h(t)} U_h^\tau - \int_{\Gamma_h(0)} U_{h,0}\right|\leq \left(\frac{t-t_{n-1}}{\tau}\right)\left|\int_{\Gamma_h(t)} \underline{U_h^n}(t) - \int_{\Gamma_h(0)} U_{h,0}\right| + \left(\frac{t_n - t}{\tau}\right)\left|\int_{\Gamma_h(t)} \overline{U_h^{n-1}}(t) - \int_{\Gamma_h(0)} U_{h,0}\right| .  \]
With these bounds at hand we can now show a fully discrete analogue of \eqref{deltaerror2}.
\begin{lemma}
	The piecewise functions defined at the beginning of this section are such that, for sufficiently small $h, \tau, \delta$, with $\tau \leq \frac{\varepsilon^3}{2}$ we have
	\begin{align}
		\varepsilon \int_{0}^T \| \gradgh (\underline{U_h} - \underline{\Udh}) \|_{L^2(\Gamma_h(t))}^2 \leq C\delta, \label{deltaerror6}
	\end{align}
	for some constant $C$ independent of $h, \tau, \delta$.
\end{lemma}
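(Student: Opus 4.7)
The plan is to adapt the proof of the semi-discrete bound \eqref{deltaerror2} to the piecewise temporal functions, using the defect equations \eqref{defect equation} (and their analogue for the $\delta$-regularised system) together with the closeness estimates of Lemma \ref{defectderivative lemma}. Set
\[
E^\tau := U_h^\tau - U_h^{\tau,\delta}, \qquad E := \underline{U_h} - \underline{\Udh}, \qquad E_W := \underline{W_h} - \underline{\Wdh},
\]
and write $\bar E^\tau := E^\tau - \mval{E^\tau}{\Gamma_h(t)}$. Subtracting \eqref{defect equation} from its $\delta$ analogue yields
\begin{align*}
\intm(\matdev_h E^\tau,\phi_h) + g_h(I_h(E\phi_h),1) + a_h(E_W,\phi_h) &= \textstyle\sum_{i=1}^3 (D_i-D_i^\delta)(\phi_h),\\
\intm(E_W,\phi_h) &= \varepsilon a_h(E,\phi_h) + \tfrac{\theta}{2\varepsilon}\intm\!\left(I_hf(\underline{U_h})-I_h\fd(\underline{\Udh}),\phi_h\right)\\
&\quad - \tfrac{1}{\varepsilon}\intm(E,\phi_h) + \textstyle\sum_{i=4}^7 (D_i-D_i^\delta)(\phi_h),
\end{align*}
where we use that $|\underline{U_h}|<1$ a.e. By \eqref{defectbound2} both $E^\tau$ and $E$ have mean value of order $\tau$ (since $U_h$ and $\Udh$ share initial data $U_{h,0}$), so $\intinvsh \bar E^\tau$ is well defined.

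Test the first equation with $\intinvsh \bar E^\tau$ and the second with $E$. The time-derivative contribution is processed using Lemma \ref{transport5} and Proposition \ref{transport3}, together with the identity $a_h(\intinvsh \bar E^\tau,\phi_h)=\intm(\bar E^\tau,\phi_h)$, to obtain
\[
\tfrac12\tfrac{d}{dt}\inormh{\bar E^\tau}^2 \;+\; \text{terms controlled by $\inormh{\bar E^\tau}^2$, $\|\gradgh E\|_{L^2}^2$, and mean-value $O(\tau)$ corrections}.
\]
The crucial bridge is
\[
a_h(E_W, \intinvsh \bar E^\tau) = \intm(\bar E^\tau, E_W) = \intm(E,E_W) + \intm(\bar E^\tau - E,E_W),
\]
where the first term is eliminated using the second error equation tested with $E$ (producing $\varepsilon\|\gradgh E\|_{L^2}^2$ plus a monotone potential term), and the second term is bounded via the $L^2_{L^2}$ closeness of $E^\tau$ and $E$ from Lemma \ref{defectderivative lemma} combined with the uniform bound on $\tau\sum_n\|W_h^n\|_{H^1(\Gamma_h^n)}^2$.

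For the nonlinear term, use the splitting
\[
I_hf(\underline{U_h}) - I_h\fd(\underline{\Udh}) = \bigl[I_h\fd(\underline{U_h}) - I_h\fd(\underline{\Udh})\bigr] - \bigl[I_h\fd(\underline{U_h}) - I_hf(\underline{U_h})\bigr],
\]
exactly as in the proof of \eqref{deltaerror1}: the first bracket is monotone via \eqref{phidbound1}, \eqref{phidbound3}, and the second bracket contributes an $O(\delta\|f(\underline{U_h})\|_{L^2}^2)$ term, absorbed using \eqref{potbound}. The defects $D_i - D_i^\delta$ are bounded by Lemma \ref{defect bounds} (applied to both systems) together with the a priori bounds \eqref{fdiscregstability}--\eqref{fdisc derivative bound2} and \eqref{fdiscdpot2}; after Cauchy-Schwarz/Young they contribute terms of the form $C\tau\|\phi_h\|_{L^2(\Gamma_h(t))}^2$ which, when $\phi_h \in \{\intinvsh\bar E^\tau, E\}$, integrate to $O(\tau^\alpha)$ for some $\alpha>0$. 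Combining everything yields
\[
\tfrac{d}{dt}\inormh{\bar E^\tau}^2 + \varepsilon\|\gradgh E\|_{L^2(\Gamma_h(t))}^2 \leq C\inormh{\bar E^\tau}^2 + C\delta + (\text{$\tau$-small terms}),
\]
and a discrete Gr\"onwall argument combined with $\bar E^\tau(0)=0$ gives \eqref{deltaerror6} once $\tau$ is sufficiently small relative to $\delta$.

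The main obstacle is the mismatch between the piecewise linear $U_h^\tau$ (on which the material derivative acts) and the piecewise constant $\underline{U_h}$ (on which the spatial bilinear forms and the potential act). This mismatch appears twice: (i) when swapping $\intm(\bar E^\tau,E_W)$ for $\intm(E,E_W)$ in order to invoke the second error equation, which costs $\|E^\tau - E\|_{L^2}\|E_W\|_{L^2}$ and is handled by Lemma \ref{defectderivative lemma}, and (ii) in the $g_h$ term of the first equation, which features $E$ rather than $E^\tau$ and must be rewritten using the same closeness estimate. Once this bookkeeping is done, the argument proceeds in complete parallel with the semi-discrete case.
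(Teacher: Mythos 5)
Your route differs from the paper's and, as set up, it does not prove the lemma as stated. The paper works entirely at the time-discrete level: since $(U_h^n,W_h^n)$ and $(\Udhn{n},\Wdhn{n})$ satisfy \emph{exactly the same} fully discrete scheme (the only difference being $f$ versus $\fd$), the differences $\widehat{\Udhn{n}}=U_h^n-\Udhn{n}$, $\widehat{\Wdhn{n}}=W_h^n-\Wdhn{n}$ satisfy a defect-free error system; testing with $\intinvsh\widehat{\Udhn{n}}$ and $\widehat{\Udhn{n}}$, using the time projection and the discrete transport identity as in the stability proof, the monotonicity of $\fd$, and the splitting of $f-\fd$ exactly as in Theorem \ref{u delta error theorem}, a discrete Gr\"onwall argument gives $\varepsilon\tau\sum_n\|\gradgh\widehat{\Udhn{n}}\|_{L^2(\Gamma_h^n)}^2\le C\delta$, and \eqref{timenorm2} transfers this to the piecewise-constant interpolants. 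The only $\delta$-dependent input is the potential, so the bound is a clean $C\delta$ with no time-discretisation error entering at all.

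By contrast, you difference the two defect equations \eqref{defect equation}. The defects $D_i$ and $D_i^\delta$ do not cancel, so you must estimate each via Lemma \ref{defect bounds}, and the interpolation mismatches (replacing $\intm(\bar{E}^\tau,E_W)$ by $\intm(E,E_W)$, and $E$ by $E^\tau$ in the $g_h$ term) cost further terms controlled only through Lemma \ref{defectderivative lemma}. All of these contributions are $O(\tau^\alpha)$ for some $\alpha>0$, not $O(\delta)$; indeed your own conclusion contains ``$\tau$-small terms'' and you then require $\tau$ to be sufficiently small relative to $\delta$. The lemma, however, asserts the bound $C\delta$ with $C$ independent of $h,\tau,\delta$ and with no coupling between $\tau$ and $\delta$ beyond $\tau\le\varepsilon^3/2$, so what you would obtain is the strictly weaker estimate $C(\delta+\tau^\alpha)$, valid only under an additional hypothesis of the form $\tau^\alpha\lesssim\delta$. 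The missing observation is that the regularised and unregularised fully discrete solutions can be compared scheme against scheme, so that no consistency defects arise; the interpolants and Lemma \ref{defect bounds} are needed for \eqref{deltaerror7}, not for this lemma. A minor further point: the non-monotone part of the potential should be absorbed using the $\delta\to 0$ analogue of \eqref{fdiscdpot2} (a bound on $\tau\sum_n\|I_hf(U_h^n)\|_{L^2(\Gamma_h^n)}^2$), not the semi-discrete bound \eqref{potbound}.
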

\begin{proof}
	We begin by finding some estimates at the time discrete level.
	Firstly we define $\widehat{\Udhn{n}} = U_h^n - \Udhn{n}, \widehat{\Wdhn{n}} = W_h^n - \Wdhn{n}$ and we have from \eqref{fullydisceqn1}, \eqref{fullydisceqn2} and \eqref{regfullydisceqn1}, \eqref{regfullydisceqn2} that
	\begin{gather}
		\frac{1}{\tau} \left(\intm(\widehat{\Udhn{n}},\phi_h^n) - \intm(\widehat{\Udhn{n-1}},\underline{\phi_h^n})\right) + a_h(\widehat{\Wdhn{n}},\phi_h^n) = 0,\label{defectpf1}\\
		\intm(\widehat{\Wdhn{n}}, \phi_h^n) = \varepsilon a_h(\widehat{\Udhn{n}}, \phi_h^n) + \frac{\theta}{2 \varepsilon} \intm(I_h f(U_h^n) - I_h \fd(\Udhn{n}), \phi_h^n) - \frac{1}{\varepsilon} \intm(\widehat{\Udhn{n}}, \phi_h^n),\label{defectpf2}
	\end{gather}
	for all $\phi_h^n \in S_h^n$.
	We test \eqref{defectpf1} with $\intinvsh \widehat{\Udhn{n}}$ and \eqref{defectpf2} with $\widehat{\Udhn{n}}$ to obtain
    \begin{multline*}
        \frac{1}{\tau} \left(\intm(\widehat{\Udhn{n}},\intinvsh\widehat{\Udhn{n}}) - \intm(\widehat{\Udhn{n-1}},\underline{\intinvsh\widehat{\Udhn{n}}})\right) + \varepsilon a_h(\widehat{\Udhn{n}}, \widehat{\Udhn{n}}) + \frac{\theta}{2 \varepsilon} \intm(I_h f(U_h^n) - I_h \fd(\Udhn{n}), \widehat{\Udhn{n}})\\
        = \frac{1}{\varepsilon} \intm(\widehat{\Udhn{n}}, \widehat{\Udhn{n}}).
    \end{multline*}
    One can then verify (see also the proof of \cite[Proposition 3.1]{elliott2024fully}) that
    \begin{align*}
        \frac{1}{\tau} \left(\intm(\widehat{\Udhn{n}},\intinvsh\widehat{\Udhn{n}}) - \intm(\widehat{\Udhn{n-1}},\underline{\intinvsh\widehat{\Udhn{n}}})\right) &= \frac{1}{\tau}a_h(\intinvsh\widehat{\Udhn{n}} - \intinvsh\widehat{U_{h,+}^{n-1,\delta}},\intinvsh \widehat{\Udhn{n}} )\\
        &= \frac{1}{2 \tau} \left( \inormh{\widehat{\Udhn{n}}}^2 - \inormh{\widehat{U_{h,+}^{n-1,\delta}}}^2 + \inormh{\widehat{\Udhn{n}} - \widehat{U_{h,+}^{n-1,\delta}}}^2  \right).
    \end{align*}
    
    Using this, we find that
	\begin{align}
    \begin{aligned}
		&\inormh{\widehat{\Udhn{n}}}^2 - \inormh{\widehat{\Udhn{n-1}}}^2 + \inormh{\widehat{\Udhn{n}} - \widehat{U_{h,+}^{n-1,\delta}}}^2 + 2\varepsilon \tau \|\gradgh \widehat{\Udhn{n}}\|_{L^2(\Gamma_h^n)}^2\\
		&+ \frac{\tau \theta}{ \varepsilon} \intm(I_h \fd(U_h^n) - I_h \fd(\Udhn{n}), \widehat{\Udhn{n}}) = \left[ \inormh{U_{h,+}^{n-1,\delta}}^2 - \inormh{\widehat{\Udhn{n-1}}}^2 \right] + \frac{2\tau}{\varepsilon} \|\widehat{\Udhn{n}}\|_{h,t_n}^2\\
        &+ \frac{\tau \theta}{ \varepsilon} \intm(I_h \fd(U_h^n) - I_h f(U_h^n), \widehat{\Udhn{n}}).
        \end{aligned}\label{defectpf3}
	\end{align}
    By mirroring the proof of \cite[Proposition 3.1]{elliott2024fully} (modulo the use of mass-lumping) one can show that
    \[ \left|\inormh{U_{h,+}^{n-1,\delta}}^2 - \inormh{\widehat{\Udhn{n-1}}}^2\right| \leq C \tau \inormh{\widehat{\Udhn{n-1}}}^2,\]
    for a constant, $C$, independent of $h, \tau, \delta$.
    Likewise we observe that
    \[\frac{2\tau}{\varepsilon} \|\widehat{\Udhn{n}}\|_{h,t_n}^2 = \frac{2\tau}{\varepsilon}a_h(\widehat{\Udhn{n}}, \intinvsh \widehat{\Udhn{n}}) \leq \varepsilon \tau \|\gradgh \widehat{\Udhn{n}}\|_{L^2(\Gamma_h(t))}^2 + \frac{\tau}{\varepsilon^3} \inormh{\widehat{\Udhn{n}}}^2.\]
	Now one argues as in Theorem \ref{u delta error theorem}, where we omit the details, to find that \eqref{defectpf3} yields
	\begin{multline}
		\inormh{\widehat{\Udhn{n}}}^2 - \inormh{\widehat{\Udhn{n-1}}}^2 + \inormh{\widehat{\Udhn{n}} - \widehat{U_{h,+}^{n-1,\delta}}}^2 + 2 \varepsilon \tau \|\gradgh \widehat{\Udhn{n}}\|_{L^2(\Gamma_h^n)}^2 \leq \frac{\tau}{\varepsilon^3} \inormh{\widehat{\Udhn{n}}}^2
		\\+ C \tau
		\inormh{\widehat{\Udhn{n-1}}}^2 + C \tau \delta\|I_h f(U_h^n)\|_{L^2(\Gamma_h^n)}^2 . \label{defectpf4}
	\end{multline}
	Hence by summing over $n$, noting that $\frac{\tau}{\varepsilon^3} \leq \frac{1}{2}$ and using (a $\delta \rightarrow 0$ analogue of) \eqref{fdiscdpot2}, we may use a discrete Gr\"onwall inequality (noting that $\widehat{\Udhn{0}} = 0$) to find
	\[ \varepsilon \tau \sum_{n=1}^{N_T} \| \gradgh \widehat{\Udhn{n}} \|_{L^2(\Gamma_h^n)}^2 \leq C \delta, \]
	and we use this to obtain the desired bound.
	One finds from this and \eqref{timenorm2}
	\begin{align*}
		\varepsilon \int_{0}^T \| \gradgh (\underline{U_h} - \underline{\Udh}) \|_{L^2(\Gamma_h(t))}^2 &= \varepsilon \sum_{n=1}^{N_T} \int_{t_{n-1}}^{t_n} \| \gradgh \Phi_t^h \Phi^h_{-t_n}\widehat{\Udhn{n}} \|_{L^2(\Gamma_h(t))}^2 \\
		&\leq C \varepsilon \tau \sum_{n=1}^{N_T} \| \gradgh \widehat{\Udhn{n}} \|_{L^2(\Gamma_h^n)}^2 \leq C \delta.
	\end{align*}
\end{proof}

There is now one more error bound to show, from which we infer our final error bound.
This is the content of the following lemma.
\begin{lemma}
	Let $\Udh$ be the solution from \eqref{discregch}, \eqref{discregch2}, and $\underline{\Udh}, U_h^{\tau,\delta}$ the piecewise functions defined above.
	Then for sufficiently small $ h, \tau, \delta$,
	\begin{align}
		\varepsilon \int_{0}^T \| \gradgh (\Udh - \underline{\Udh}) \|_{L^2(\Gamma_h(t))}^2 + \sup_{t \in [0,T]} \inormh{\Udh - U_h^{\tau, \delta}}^2 \leq C \left( \tau +  \frac{\tau^2}{\delta^2} \right) ,\label{deltaerror7}
	\end{align}
	for some constant $C$ independent of $h, \tau, \delta$.
\end{lemma}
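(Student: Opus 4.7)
The plan follows the template of the semi-discrete bound \eqref{deltaerror4}, now viewing the piecewise-in-time interpolants $U_h^{\tau,\delta}, \underline{\Udh}, \underline{\Wdh}$ as perturbations of the semi-discrete regularised solution $(\Udh, \Wdh)$. Set $\sigma := \Udh - U_h^{\tau,\delta}$, $\underline{\sigma} := \Udh - \underline{\Udh}$ and $\underline{\sigma_w} := \Wdh - \underline{\Wdh}$. Mirroring \eqref{defect equation} there exist defects $D_1^{\delta}, \dots, D_7^{\delta}$ satisfying the analogues of the bounds in Lemma \ref{defect bounds}, and subtracting from \eqref{discregch}, \eqref{discregch2} produces
\begin{gather*}
\intm(\matdev_h \sigma, \phi_h) + g_h(I_h(\underline{\sigma}\phi_h),1) + a_h(\underline{\sigma_w}, \phi_h) = -\sum_{i=1}^3 D_i^{\delta}(\phi_h),\\
\intm(\underline{\sigma_w}, \phi_h) = \varepsilon a_h(\underline{\sigma}, \phi_h) + \tfrac{\theta}{2\varepsilon}\intm(I_h \fd(\Udh) - I_h \fd(\underline{\Udh}), \phi_h) - \tfrac{1}{\varepsilon}\intm(\underline{\sigma}, \phi_h) - \sum_{i=4}^7 D_i^{\delta}(\phi_h).
\end{gather*}
The crucial asymmetry is that the time derivative is of $\sigma$ while the spatial terms involve $\underline{\sigma}$; the gap $\sigma - \underline{\sigma} = \underline{\Udh} - U_h^{\tau,\delta}$ is controlled by Lemma \ref{defectderivative lemma}, and by \eqref{defectbound2} both $\mval{\sigma}{\Gamma_h(t)}$ and $\mval{\underline{\sigma}}{\Gamma_h(t)}$ are of size $O(\tau)$.

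I would then test the first difference equation with $\phi_h = \intinvsh(\sigma - \mval{\sigma}{\Gamma_h(t)})$ (well-defined since the argument is mean-zero) and test the second with $\phi_h = \underline{\sigma}$. The former, together with Proposition \ref{transport3} and Lemma \ref{transport5}, converts the time term into $\tfrac{1}{2}\tfrac{d}{dt}\inormh{\sigma - \mval{\sigma}{\Gamma_h(t)}}^2$ modulo a benign $b_h$-correction, exactly as in the proof of \eqref{deltaerror4}. The cross term becomes $a_h(\underline{\sigma_w}, \intinvsh(\sigma - \mval{\sigma}{\Gamma_h(t)})) = \intm(\underline{\sigma_w}, \sigma - \mval{\sigma}{\Gamma_h(t)})$, which I split as $\intm(\underline{\sigma_w}, \underline{\sigma}) + \intm(\underline{\sigma_w}, (\underline{\Udh} - U_h^{\tau,\delta}) - \mval{\sigma}{\Gamma_h(t)})$. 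The first piece is substituted by the second difference equation, contributing the coercive $\varepsilon\|\gradgh \underline{\sigma}\|_{L^2(\Gamma_h(t))}^2$ together with the monotone term $\tfrac{\theta}{2\varepsilon}\intm(I_h\fd(\Udh) - I_h\fd(\underline{\Udh}), \underline{\sigma}) \geq 0$, which is discarded. The second piece is handled by again eliminating $\underline{\sigma_w}$ via the second equation and bounding each arising term against the small correction $\underline{\Udh} - U_h^{\tau,\delta}$.

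The hard part is the nonlinear potential correction $\intm(I_h\fd(\Udh) - I_h\fd(\underline{\Udh}), \underline{\Udh} - U_h^{\tau,\delta} - \mval{\sigma}{\Gamma_h(t)})$, which carries the $\tau^2/\delta^2$ contribution. Using the Lipschitz estimate $|\fd(r) - \fd(s)| \leq \tfrac{C}{\delta}|r-s|$ from \eqref{phidbound2} together with Young's inequality bounds this by $\eta\|\underline{\sigma}\|_{L^2(\Gamma_h(t))}^2 + \tfrac{C}{\eta\delta^2}(\|\underline{\Udh} - U_h^{\tau,\delta}\|_{L^2(\Gamma_h(t))}^2 + \tau^2)$, whose time integral contributes $O(\tau^2/\delta^2)$ after appealing to Lemma \ref{defectderivative lemma} (and \eqref{defectbound2} for the mean-value piece). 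The remaining defects, together with the standard $b_h$ and $\tfrac{1}{\varepsilon}\intm(\underline{\sigma}, \cdot)$ contributions, are absorbed by Young's inequality and the uniform bounds \eqref{fdiscregstability}, \eqref{fdiscdpot2}; each yields an $O(\tau)$ contribution. A Gronwall inequality applied to the resulting differential inequality, noting $\sigma(0) = 0$, then closes the argument, and $\sup_{t} \inormh{\sigma}^2 \leq \sup_{t}\inormh{\sigma - \mval{\sigma}{\Gamma_h(t)}}^2 + C\tau^2$ recovers the bound as stated.
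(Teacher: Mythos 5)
Your overall architecture --- forming the difference equations with defects $D_i^\delta$, testing with a mass-lumped inverse Laplacian of a mean-zero error, substituting the chemical-potential equation, discarding the monotone term, and closing with Gr\"onwall --- is the same as the paper's. The genuine gap is in where you route the mismatch $\underline{\Udh}-U_h^{\tau,\delta}$. Because you test the first equation with $\intinvsh\bigl(\sigma-\mval{\sigma}{\Gamma_h(t)}\bigr)$, the mismatch lands in the cross term $\intm(\underline{\sigma_w},\cdot)$, and you then eliminate $\underline{\sigma_w}$ there through the second equation; this forces the singular potential to be paired with $\underline{\Udh}-U_h^{\tau,\delta}$. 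Your claimed $O(\tau^2/\delta^2)$ bound for $\intm\bigl(I_h\fd(\Udh)-I_h\fd(\underline{\Udh}),\,\underline{\Udh}-U_h^{\tau,\delta}-\mval{\sigma}{\Gamma_h(t)}\bigr)$ does not follow from Lemma \ref{defectderivative lemma}: that lemma only gives $\int_0^T\|U_h^{\tau,\delta}-\underline{\Udh}\|_{L^2(\Gamma_h(t))}^2\leq C\tau^{3/2}$ in $L^2$, while the $C\tau^2$ rate in \eqref{defectderivative2} holds only in the $\inormh{\cdot}$ norm, which cannot be combined with the $L^2$ Lipschitz estimate \eqref{phidbound2} as you use it (pairing instead through $\intinvsh$ costs $\|\gradgh I_h\fd(\cdot)\|_{L^2}\sim\delta^{-1}$ and yields only $O(\tau/\delta)$). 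So the route as written proves at best $C\bigl(\tau+\tau^{3/2}/\delta^2\bigr)$, which is strictly weaker than \eqref{deltaerror7}; with $\delta\sim h^{4/3}$ and $\tau\leq Ch^2$ this is $O(h^{1/3})$ and would destroy the rate in Theorem \ref{fullydiscerrortheorem}.

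The paper avoids this by testing the first equation with $\intinvsh\widetilde{E_{u,h}^{\delta}}$, built from $\Udh-\underline{\Udh}$, so that the second equation is only ever tested with $\widetilde{E_{u,h}^{\delta}}$: the potential then meets either $E_{u,h}^{\delta}$ (monotone, discarded) or its mean value, which is $O(\tau)$ and produces exactly the $\tau^2/\delta^2$ term via \eqref{defectpf9}; the mismatch $\underline{\Udh}-U_h^{\tau,\delta}$ appears only in the time-derivative and transport terms ($J_1$, $J_2$), where it is measured in $\inormh{\cdot}$ (so the full $\tau^2$ rate of \eqref{defectderivative2} is available) against quantities bounded in $H^{-1}$-type norms. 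If you want to keep your choice of test function, the repair is not to eliminate $\underline{\sigma_w}$ in the mismatch term but to bound it directly: with $\psi$ the mean-zero part of $\underline{\Udh}-U_h^{\tau,\delta}$ one has $a_h\bigl(\underline{\sigma_w},\intinvsh\psi\bigr)\leq\|\gradgh\underline{\sigma_w}\|_{L^2(\Gamma_h(t))}\inormh{\psi}$, which after Cauchy--Schwarz in time, \eqref{defectderivative2}, and the uniform bounds on $\int_0^T\|\gradgh\Wdh\|_{L^2(\Gamma_h(t))}^2$ and $\tau\sum_n\|\gradgh\Wdhn{n}\|_{L^2(\Gamma_h^n)}^2$ contributes $O(\tau)$; the potential then only meets $\underline{\sigma}$ and its $O(\tau)$ mean value, and the argument closes as in the paper.
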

\begin{proof}
As usual we define some shorthand notation to be used throughout,
\begin{gather*}
	E_{u,h}^{\tau,\delta} := \Udh - U_h^{\tau,\delta}, \qquad E_{u,h}^\delta := \Udh - \underline{\Udh}, \qquad {E_{w,h}^\delta} := \Wdh - \underline{\Wdh},
\end{gather*}
from which it is clear that $E_{u,h}^{\tau,\delta} - E_{u,h}^\delta = \underline{\Udh} - U_h^{\tau,\delta}$.
Unlike the error analysis on a stationary domain (see for instance \cite{barrett1995error}) these functions \emph{do not} have vanishing mean value.
As such we also define the functions
\begin{gather*}
    \widetilde{E_{u,h}^{\tau,\delta}} := E_{u,h}^{\tau,\delta} - \mval{E_{u,h}^{\tau,\delta}}{\Gamma_h(t)}, \qquad \widetilde{E_{u,h}^\delta} := E_{u,h}^\delta - \mval{E_{u,h}^\delta}{\Gamma_h(t)},
\end{gather*}
to which we may apply the inverse Laplacians from Appendix \ref{invlaps}.
Then subtracting the regularised version of \eqref{defect equation} from \eqref{discregch}, \eqref{discregch2} we obtain
\begin{multline}
	\label{defect equation 1}
	\intm(\matdev_h E_{u,h}^{\tau, \delta}, \phi_h) + g_h(I_h(E_{u,h}^{\tau, \delta}\phi_h),1) + a_h(E_{w,h}^{\delta}, \phi_h) =
	g_h\left(I_h\left(\underline{\Udh}\phi_h\right),1\right) - g_h(I_h(U_h^{\tau,\delta} \phi_h),1)\\
	- \sum_{k=1}^3 D_k^\delta(\phi_h),
\end{multline}
and
\begin{multline}
	\label{defect equation 2}
	\intm(E_{w,h}^{\delta}, \phi_h) = \varepsilon a_h(E_{u,h}^{\delta}, \phi_h) + \frac{\theta}{2 \varepsilon} \intm \left(I_h \fd(\Udh) - I_h\fd(\underline{\Udh}), \phi_h \right) - \frac{1}{\varepsilon}\intm(E_{u,h}^\delta, \phi_h)\\
	- \sum_{k=4}^7 D_k^\delta(\phi_h).
\end{multline}
We then test \eqref{defect equation 1} with $\phi_h = \intinvsh \widetilde{E_{u,h}^{\delta}}$, so that
\begin{align}
	\intm\left(\matdev_h E_{u,h}^{\tau, \delta}, \intinvsh \widetilde{E_{u,h}^{\tau, \delta}}\right) + g_h\left(I_h\left(E_{u,h}^{\tau, \delta} \intinvsh \widetilde{E_{u,h}^{\tau, \delta}}\right),1\right) + a_h\left(E_{w,h}^{\delta}, \intinvsh \widetilde{E_{u,h}^{\delta}}\right) = \sum_{k=1}^4 J_k, \label{defectpf6}
\end{align}
where
\begin{gather*}
	J_1 := \intm\left(\matdev_h E_{u,h}^{\tau,\delta},\intinvsh\left( \widetilde{E_{u,h}^{\tau, \delta}} - \widetilde{E_{u,h}^{\delta}}\right) \right)\\
	J_2 := g_h \left( I_h \left(E_{u,h}^{\tau,\delta}\intinvsh\left(\widetilde{E_{u,h}^{\tau, \delta}} - \widetilde{E_{u,h}^{\delta}}\right)  \right),1 \right),\\
	J_3 := g_h\left(I_h\left(\underline{\Udh}\intinvsh \widetilde{E_{u,h}^{\delta}}\right),1\right) - g_h\left(I_h(U_h^{\tau,\delta} \intinvsh \widetilde{E_{u,h}^{\delta}}),1\right),\\
	J_4 := -\sum_{i=1}^3 D_i^\delta \left(\intinvsh \widetilde{E_{u,h}^{\delta}}\right).
\end{gather*}
Then, as usual, we observe from \eqref{defect equation 2} and the definition of $\intinvsh$ that
\begin{multline*}
	a_h\left(E_{w,h}^{\delta}, \intinvsh \widetilde{E_{u,h}^{\delta}}\right) = \intm\left(E_{w,h}^{\delta}, \widetilde{E_{u,h}^{\delta}}\right) = \varepsilon a_h(E_{u,h}^{\delta}, E_{u,h}^{\delta})+ \frac{\theta}{2 \varepsilon} \intm \left(I_h \fd(\Udh)- I_h\fd(\underline{\Udh}),  \widetilde{E_{u,h}^{\delta}} \right)\\
    - \frac{1}{\varepsilon}\intm\left(E_{u,h}^\delta,  \widetilde{E_{u,h}^{\delta}}\right) - \sum_{i=4}^7 D_i^\delta\left( \widetilde{E_{u,h}^{\delta}}\right).
\end{multline*} 
Hence using this in \eqref{defectpf6} one has
\begin{multline}
	\intm\left(\matdev_h E_{u,h}^{\tau, \delta}, \intinvsh \widetilde{E_{u,h}^{\tau, \delta}}\right) + g_h\left(I_h(E_{u,h}^{\tau, \delta}\intinvsh \widetilde{E_{u,h}^{\tau, \delta}}),1\right) + \varepsilon \| \gradgh E_{u,h}^{\delta} \|_{L^2(\Gamma_h(t))}^2\\
	+ \frac{\theta}{2 \varepsilon} \intm \left(I_h \fd(\Udh) - I_h\fd(\underline{\Udh}), \widetilde{E_{u,h}^{\delta}} \right) = \frac{1}{\varepsilon}\intm\left(E_{u,h}^\delta, \widetilde{E_{u,h}^{\delta}}\right)+ \sum_{k=1}^5 J_k, \label{defectpf7}
\end{multline}
where
\begin{gather*}
	J_5 := \sum_{i=4}^7 D_i^\delta\left(\widetilde{E_{u,h}^{\delta}}\right).
\end{gather*}

To begin we firstly write
\begin{multline*}
	\intm\left(\matdev_h E_{u,h}^{\tau, \delta}, \intinvsh\widetilde{E_{u,h}^{\tau, \delta}}\right) + g_h\left(I_h(E_{u,h}^{\tau, \delta}\intinvsh \widetilde{E_{u,h}^{\tau, \delta}}),1 \right) = 	\intm\left(\matdev_h \widetilde{E_{u,h}^{\tau, \delta}}, \intinvsh\widetilde{E_{u,h}^{\tau, \delta}}\right) + g_h\left(I_h(\widetilde{E_{u,h}^{\tau, \delta}}\intinvsh \widetilde{E_{u,h}^{\tau, \delta}}),1 \right)\\
	+ \intm\left(\matdev_h \mval{E_{u,h}^{\tau, \delta}}{\Gamma_h(t)}, \intinvsh\widetilde{E_{u,h}^{\tau, \delta}}\right) + g_h\left(\mval{E_{u,h}^{\tau, \delta}}{\Gamma_h(t)}, \intinvsh \widetilde{E_{u,h}^{\tau, \delta}} \right),
\end{multline*}
where we note that $I_h (\intinvsh \widetilde{E_{u,h}^{\tau, \delta}}) = \intinvsh \widetilde{E_{u,h}^{\tau, \delta}}$.
Now from Proposition \ref{transport3}, Lemma \ref{transport5}, and the definition of $\intinvsh$, it is a straightforward calculation to see that
\begin{multline*}
	\intm\left(\matdev_h E_{u,h}^{\tau, \delta}, \intinvsh\widetilde{E_{u,h}^{\tau, \delta}}\right) + g_h\left(I_h(E_{u,h}^{\tau, \delta}\intinvsh \widetilde{E_{u,h}^{\tau, \delta}}),1 \right) = \frac{1}{2} \frac{d}{dt} \inormh{ \widetilde{E_{u,h}^{\tau, \delta}}}^2 - \frac{1}{2} b_h\left( \intinvsh \widetilde{E_{u,h}^{\tau, \delta}}, \intinvsh \widetilde{E_{u,h}^{\tau, \delta}} \right)\\
	+ \intm\left(\matdev_h \mval{E_{u,h}^{\tau, \delta}}{\Gamma_h(t)}, \intinvsh\widetilde{E_{u,h}^{\tau, \delta}}\right) + g_h\left(\mval{E_{u,h}^{\tau, \delta}}{\Gamma_h(t)}, \intinvsh \widetilde{E_{u,h}^{\tau, \delta}} \right)
\end{multline*}
We use this, and the monotonicity of $\fd$, in \eqref{defectpf7} to finally see that
\begin{multline}
		\frac{1}{2} \frac{d}{dt} \inormh{ \widetilde{E_{u,h}^{\tau, \delta}}}^2 + \varepsilon \| \gradgh E_{u,h}^{\delta} \|_{L^2(\Gamma_h(t))}^2 \leq \frac{\theta}{2 \varepsilon} \intm \left(I_h \fd(\Udh) - I_h\fd(\underline{\Udh}), \mval{E_{u,h}^{\delta}}{\Gamma_h(t)} \right)\\
		+ \frac{1}{\varepsilon}\intm\left(E_{u,h}^\delta, \widetilde{E_{u,h}^{\delta}}\right) + \frac{1}{2} b_h\left( \intinvsh \widetilde{E_{u,h}^{\tau, \delta}} , \intinvsh \widetilde{E_{u,h}^{\tau, \delta}} \right)\\
		+ \left| \intm\left(\matdev_h \mval{E_{u,h}^{\tau, \delta}}{\Gamma_h(t)}, \intinvsh\widetilde{E_{u,h}^{\tau, \delta}}\right) \right| + \left| g_h\left(\mval{E_{u,h}^{\tau, \delta}}{\Gamma_h(t)}, \intinvsh \widetilde{E_{u,h}^{\tau, \delta}} \right) \right| + \sum_{i=1}^5 |J_i|,
	\label{defectpf8}
\end{multline}
and all that remains is to bound these terms accordingly.

We begin with the useful observation that $\left| \mval{E_{u,h}^\delta}{\Gamma_h(t)}\right| \leq C \tau$.
To see this is true we can calculate
\[ \left|\mval{E_{u,h}^\delta}{\Gamma_h(t)}\right| = \frac{1}{|\Gamma_h(t)|} \left|\int_{\Gamma_h(t)} \Udh(t) - \int_{\Gamma_h(t)} \underline{\Udh}(t) \right| = \frac{1}{|\Gamma_h(t)|} \left|\int_{\Gamma_h(0)} U_{h,0} - \int_{\Gamma_h(t)} \underline{\Udh}(t) \right| \leq C \tau, \]
where we have used (a $\delta \neq 0$ version of) \eqref{defectbound2}.
A similar calculation yields $\left| \mval{E_{u,h}^{\tau,\delta}}{\Gamma_h(t)}\right| \leq C \tau$.
Hence using this observation, \eqref{phidbound2}, and the Poincar\'e inequality we find
\begin{align}
	\begin{aligned}
\left| \frac{\theta}{2 \varepsilon} \intm \left(I_h \fd(\Udh) - I_h\fd(\underline{\Udh}), \mval{E_{u,h}^{\delta}}{\Gamma_h(t)} \right) \right| &\leq \frac{C \tau}{\delta} \| E_{u,h}^\delta\|_{L^2(\Gamma_h(t))}\\
&\leq \frac{C \tau}{\delta} \| \gradgh E_{u,h}^\delta\|_{L^2(\Gamma_h(t))} + \frac{C \tau}{\delta} \left| \mval{E_{u,h}^\delta}{\Gamma_h(t)} \right|\\
& \leq \frac{\varepsilon}{4} \| \gradgh E_{u,h}^\delta\|_{L^2(\Gamma_h(t))}^2 + \frac{C \tau^2}{\delta^2},
\end{aligned}
\label{defectpf9}
\end{align}
where we have used Young's inequality for the final inequality.
Similarly one finds that
\begin{align}
	\left| g_h\left(\mval{E_{u,h}^{\tau, \delta}}{\Gamma_h(t)}, \intinvsh \widetilde{E_{u,h}^{\tau, \delta}} \right) \right| \leq C \tau \inormh{\widetilde{E_{u,h}^{\tau, \delta}}}, \label{defectpf10}
\end{align}
where we have also used \eqref{intmbound0} and the Poincar\'e inequality.
Since $\matdev_h \mval{E_{u,h}^{\tau, \delta}}{\Gamma_h(t)}$ depends only on $t$ we find
\[\intm\left(\matdev_h \mval{E_{u,h}^{\tau, \delta}}{\Gamma_h(t)}, \intinvsh \widetilde{E_{u,h}^{\tau, \delta}}\right) = 0.\]
Next we use the definition of $\intinvsh$ and \eqref{invlapineq3} to find
\begin{align}
	\begin{aligned}
	\frac{1}{\varepsilon}\intm\left(E_{u,h}^\delta, \widetilde{E_{u,h}^{\delta}}\right) &=\frac{1}{\varepsilon}a_h\left(E_{u,h}^\delta, \intinvsh \widetilde{E_{u,h}^{\delta}}\right)\\
	&\leq \frac{\varepsilon}{4} \|\gradgh E_{u,h}^\delta\|_{L^2(\Gamma_h(t))}^2 + C \inormh{\widetilde{E_{u,h}^{\delta}}}^2,
	\end{aligned}
	\label{defectpf11}
\end{align}
and we observe that
\[ \inormh{\widetilde{E_{u,h}^{\delta}} } \leq \inormh{\widetilde{E_{u,h}^{\tau,\delta}}} + \inormh{\underline{\Udh} - U_h^{\tau,\delta} - \mval{\left( \underline{\Udh} - U_h^{\tau,\delta} \right)}{\Gamma_h(t)}}, \]
where this rightmost term will be bounded by using (a $\delta \neq 0$ analogue of) \eqref{defectderivative2}.
From our smoothness assumptions on $V$ we find that, as usual,
\begin{align}
	\frac{1}{2} b_h\left( \intinvsh \widetilde{E_{u,h}^{\tau, \delta}}, \intinvsh \widetilde{E_{u,h}^{\tau, \delta}} \right) \leq C \inormh{\widetilde{E_{u,h}^{\tau, \delta}}}^2.\label{defectpf12}
\end{align}
All that remains is to bound the $J_i$ terms.
To bound $J_1$ we first split $\matdev_h E_{u,h}^{\tau,\delta} = \matdev_h\Udh - \matdev_h U_{h}^{\tau,\delta}$ and observe that
\begin{align*}
	J_1 &= \intm\left(\matdev_h \Udh,\intinvsh (\widetilde{E_{u,h}^{\tau, \delta}} - \widetilde{E_{u,h}^{\delta}}) \right) + \intm\left(\underline{\matdevtau \Udhn{n}}(t),\intinvsh (\widetilde{E_{u,h}^{\tau, \delta}} - \widetilde{E_{u,h}^{\delta}}) \right)\\
	&= \intm\left(\matdev_h \Udh,\intinvsh (\widetilde{E_{u,h}^{\tau, \delta}} - \widetilde{E_{u,h}^{\delta}}) \right) + \intm\left(\underline{\matdevtau \Udhn{n}}(t) - \mval{\underline{\matdevtau \Udhn{n}}(t)}{\Gamma_h(t)},\intinvsh (\widetilde{E_{u,h}^{\tau, \delta}} - \widetilde{E_{u,h}^{\delta}}) \right),
\end{align*}
where we have used the fact that $\matdev_h U_h^{\tau, \delta} = \underline{\matdevtau \Udhn{n} }(t)$.
Now we use the definitions of $\|\cdot\|_{H^{-1}(\Gamma_h(t))}$ and $\intinvsh$ to see
\begin{align*}
	|J_1| \leq C \left( \|\matdev_h\Udh\|_{H^{-1}(\Gamma_h(t))} + \inormh{\underline{\matdevtau \Udhn{n}}(t) - \mval{\underline{\matdevtau \Udhn{n}}(t)}{\Gamma_h(t)}} \right) \inormh{\underline{\Udh} - U_h^{\tau,\delta} - \mval{\left( \underline{\Udh} - U_h^{\tau,\delta} \right)}{\Gamma_h(t)}}
\end{align*}
where we now use H\"older's inequality, Lemma \ref{semi discrete derivative lemma}, and ($\delta \neq 0$ analogues of) \eqref{defectderivative2}, \eqref{pw linear derivative bound} to see that
\begin{align}
	\int_0^T |J_1| \leq C\tau \left( \int_0^T \|\matdev_h\Udh\|_{H^{-1}(\Gamma_h(t))}^2 + \int_0^T \inormh{\underline{\matdevtau \Udhn{n}}(t) - \mval{\underline{\matdevtau \Udhn{n}}(t)}{\Gamma_h(t)}}^2 \right)^{\frac{1}{2}} \leq C \tau. \label{defectpf13}
\end{align}
To bound $J_2$ we firstly write
\begin{align*}
	J_2 = g_h \left( I_h \left(\widetilde{E_{u,h}^{\tau,\delta}}\intinvsh\left(\widetilde{E_{u,h}^{\tau, \delta}} - \widetilde{E_{u,h}^{\delta}}\right) \right),1 \right) + g_h \left(  \mval{E_{u,h}^{\tau,\delta}}{\Gamma_h(t)},\intinvsh\left(\widetilde{E_{u,h}^{\tau, \delta}} - \widetilde{E_{u,h}^{\delta}}\right) \right).
\end{align*}
Now using the smoothness of $V$, the bound on $\mval{E_{u,h}^{\tau,\delta}}{\Gamma_h(t)}$, and Poincar\'e's inequality we have
\begin{align*}
	|J_2| \leq C\|\widetilde{E_{u,h}^{\tau,\delta}}\|_{L^2(\Gamma_h(t))} \inormh{\widetilde{E_{u,h}^{\tau, \delta}} - \widetilde{E_{u,h}^{\delta}}} +C\tau \inormh{\widetilde{E_{u,h}^{\tau, \delta}} - \widetilde{E_{u,h}^{\delta}}}
\end{align*}
Hence we now use \eqref{invlapineq3}, Young's inequality, \eqref{defectderivative2}, and our CFL condition to see that
\begin{align}
	\begin{aligned}
	\int_0^T |J_2| &\leq C \tau^2 +  \frac{C}{h}\int_0^T\inormh{\widetilde{E_{u,h}^{\tau,\delta}}} \inormh{\widetilde{E_{u,h}^{\tau, \delta}} - \widetilde{E_{u,h}^{\delta}}}\\
	&\leq C\int_0^T\inormh{\widetilde{E_{u,h}^{\tau,\delta}}}^2 + C \tau^2 + \underbrace{\frac{C\tau^2}{h^2}}_{\leq C\tau}.
	\end{aligned}
	\label{defectpf14}
\end{align}

To bound $|J_3|$ we find that using Poincar\'e's inequality
\begin{align*}
	|J_3|\leq C \|\underline{U_h^\delta} - U^{\tau,\delta}_h\|_{L^2(\Gamma_h(t))} \inormh{\widetilde{E_{u,h}^{\delta}}}
\end{align*}
from which we find that
\begin{align*}
	|J_3|\leq C \|\underline{U_h^\delta} - U^{\tau,\delta}_h\|_{L^2(\Gamma_h(t))} \inormh{\widetilde{E_{u,h}^{\tau,\delta}}} + C \|\underline{U_h^\delta} - U^{\tau,\delta}_h\|_{L^2(\Gamma_h(t))} \inormh{\widetilde{E_{u,h}^{\tau, \delta}}-\widetilde{E_{u,h}^{\delta}}}
\end{align*}
and hence we use \eqref{defectderivative1}, \eqref{defectderivative2}, and Young's inequality to see that
\begin{align}
	\int_0^T |J_3| \leq C \tau^{\frac{3}{2}} + C\int_0^T \inormh{\widetilde{E_{u,h}^{\tau,\delta}}}^2. \label{defectpf15}
\end{align}
From Lemma \ref{defect bounds} one immediately has that (after using the Poincar\'e inequality where appropriate)
\begin{align*}
	|J_4| &\leq C\tau \left( \|\Udhn{n} \|_{L^2(\Gamma_h^n)} + \|\matdevtau \Udhn{n} \|_{L^2(\Gamma_h^n)} + \|\gradgh \Wdhn{n} \|_{L^2(\Gamma_h^n)}\right)\inormh{\widetilde{E_{u,h}^\delta}}\\
	&\leq C\tau \left( \|\Udhn{n} \|_{L^2(\Gamma_h^n)} + \|\matdevtau \Udhn{n} \|_{L^2(\Gamma_h^n)} + \|\gradgh \Wdhn{n} \|_{L^2(\Gamma_h^n)}\right)\inormh{\widetilde{E_{u,h}^{\tau,\delta}}}\\
	&+ C\tau \left( \|\Udhn{n} \|_{L^2(\Gamma_h^n)} + \|\matdevtau \Udhn{n} \|_{L^2(\Gamma_h^n)} + \|\gradgh \Wdhn{n} \|_{L^2(\Gamma_h^n)}\right)\inormh{\widetilde{E_{u,h}^{\tau,\delta}} - \widetilde{E_{u,h}^\delta}},
\end{align*}
and similarly that
\begin{align*}
	|J_5| \leq C \tau \left( \|\Udhn{n} \|_{H^1(\Gamma_h^n)} + \|I_h \fd(\Udhn{n})\|_{L^2(\Gamma_h^n)} + \| \Wdhn{n} \|_{L^2(\Gamma_h^n)}\right) \|\gradgh E_{u,h}^\delta \|_{L^2(\Gamma_h(t))}.
\end{align*}
Hence we integrate over $[0,T]$ and use \eqref{fdiscregstability}, \eqref{fdisc derivative bound1}, \eqref{fdiscdpot2}, \eqref{defectderivative2}, and Young's inequality to see that
\begin{gather}
	\int_0^T |J_4| \leq C \tau^\frac{3}{2} + C \int_0^T\inormh{\widetilde{E_{u,h}^{\tau, \delta}}}^2,\label{defectpf16}\\
	\int_0^T |J_5| \leq C\tau^2 + \frac{\varepsilon}{4} \int_0^T \|\gradgh E^\delta_{u,h}\|_{L^2(\Gamma_h(t))}^2. \label{defectpf17}
\end{gather}

One then concludes by integrating in time, using \eqref{defectpf9}--\eqref{defectpf17} in \eqref{defectpf8} and  Gr\"onwall's inequality.
We omit further details.
\end{proof}

We are now in a position to prove our final error bound.

\begin{theorem}
	\label{fullydiscerrortheorem}
	Let $(u,w)$ be the unique solution of \eqref{cheqn1},\eqref{cheqn2} and $\underline{U_h}$ be defined as above.
	Then for $h, \tau$ sufficiently small we have
	\begin{align}
		\label{fullydiscerror}
		\varepsilon \int_0^T \| \gradgh (u^{-\ell} - \underline{U_h}) \|_{L^2(\Gamma_h(t))}^2 \leq C\left(\tau + h^{\frac{4}{3}} \log\left(\frac{1}{h}\right) + \frac{\tau^2}{h^\frac{8}{3}}\right),
	\end{align}
	for $C$ a constant independent of $h, \tau.$
\end{theorem}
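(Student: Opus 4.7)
The plan is to mirror the argument used for the semi-discrete error bound in Theorem \ref{semidiscerrortheorem}, but now with the additional defect contributions introduced by the backward Euler discretisation. The first step is to decompose the error via the triangle inequality using the splitting suggested just after the defect equations, namely
\[ u^{-\ell} - \underline{U_h} = \left[u^{-\ell} - (u^\delta)^{-\ell}\right] + \left[(u^\delta)^{-\ell} - U_h^\delta\right] + \left[U_h^\delta - \underline{U_h^\delta}\right] - \left[\underline{U_h^\delta} - \underline{U_h}\right], \]
and then to square and integrate to obtain a sum of four $L^2_{H^1}$ contributions.

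The second step is to recycle the already-established bounds. For the first bracket I would use the lifted form of \eqref{deltaerror1}, which gives $\mathcal{O}(\delta)$. For the second bracket I would split further as $(u^\delta)^{-\ell} - U_h^\delta = \rho_u^\delta + \sigma_u^\delta$ as in the semi-discrete error analysis, and apply \eqref{deltaerror3} and \eqref{deltaerror4} to obtain $\mathcal{O}(h^2 + h^4 \log(1/h)/\delta^2)$. The third bracket is controlled by the gradient part of \eqref{deltaerror7} and gives $\mathcal{O}(\tau + \tau^2/\delta^2)$. Finally, the fourth bracket is controlled by \eqref{deltaerror6} giving $\mathcal{O}(\delta)$. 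Summing these yields
\[ \varepsilon \int_0^T \|\gradgh(u^{-\ell} - \underline{U_h})\|_{L^2(\Gamma_h(t))}^2 \leq C\left( \delta + h^2 + \frac{h^4 \log(1/h)}{\delta^2} + \tau + \frac{\tau^2}{\delta^2} \right). \]

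The last step is the optimisation in $\delta$. Since $h^2$ is dominated by the other $h$-dependent terms, the relevant balance is between $\delta$ and $h^4 \log(1/h)/\delta^2$, which is precisely the choice made in Theorem \ref{semidiscerrortheorem}: taking $\delta = C(p) h^{4/3}$ gives $\delta + h^4 \log(1/h)/\delta^2 = \mathcal{O}(h^{4/3} \log(1/h))$, while the remaining term becomes $\tau^2/\delta^2 = \tau^2/h^{8/3}$. This yields exactly the stated bound $\tau + h^{4/3}\log(1/h) + \tau^2/h^{8/3}$.

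There is no real obstacle here beyond ensuring that all prerequisite hypotheses (sufficient smallness of $h, \tau, \delta$, the CFL condition $\tau \leq Ch^2$ used in \eqref{deltaerror7}, and the smallness condition $\tau \leq \varepsilon^3/2$) are compatible with the optimal choice $\delta = Ch^{4/3}$; in particular one must verify that for small enough $h$ one has $\delta$ small enough so that Lemma \ref{dpotbound}, Lemma \ref{measure} and the monotone-$f^\delta$ identifications used to derive \eqref{deltaerror7} and \eqref{deltaerror6} all apply. The CFL condition is not an additional restriction since the $\tau^2/h^{8/3}$ term only usefully contributes to convergence when $\tau = o(h^{4/3})$, which is stronger than $\tau \leq Ch^2$ in any regime of interest. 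The essentially routine character of this combination reflects the fact that all the technical work has been carried out in the preceding lemmas.
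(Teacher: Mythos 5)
Your proposal is correct and follows essentially the same route as the paper: the identical four-term splitting, the same reuse of \eqref{deltaerror1}, \eqref{deltaerror3}, \eqref{deltaerror4}, \eqref{deltaerror6}, \eqref{deltaerror7}, and the same optimisation $\delta = C(p)h^{4/3}$ yielding the stated bound. The additional remarks on compatibility of the smallness and CFL conditions match the paper's brief note and require no changes.
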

\begin{proof}
	This follows by using the splitting
	\[u^{-\ell} - \underline{U_h} = [u^{-\ell} - (u^{\delta})^{-\ell}] + [(u^{\delta})^{-\ell} - U_h^\delta]+ [\Udh - \underline{\Udh}]- [\underline{\Udh}-\underline{U_h}],\]
	and the error bounds \eqref{deltaerror1}, \eqref{deltaerror3}, \eqref{deltaerror4}, \eqref{deltaerror6}, \eqref{deltaerror7} where we have chosen $\delta = C(p)h^p$, for a small constant $C(p)$, and for maximal order we take $p = \frac{4}{3}$.
	Note that taking $\tau \leq Ch^2$, in accordance with our CFL condition, we obtain the same order error bound as in in the semi-discrete case.
\end{proof}
\begin{remark}
	\begin{enumerate}
		\item As discussed in the semi-discrete error, this will hold for any $\tilde{U}_{h,0} \in \mathcal{I}_{h,0}$ such that
		\[\int_{\Gamma_h(0)} \tilde{U}_{h,0} = \int_{\Gamma_0} u_0, \quad \mathrm{ and } \quad \|\Pi_h u_0 - \tilde{U}_{h,0}\|_{L^2(\Gamma_h(0))} \leq C h^\frac{2}{3}\log\left(\frac{1}{h}\right)^\frac{1}{2},\]
		for some constant $C$ independent of $h$.
		Moreover, from \eqref{defectderivative1} we also find that
		\[\varepsilon \int_0^T \| \gradgh (u^{-\ell} - U_h^\tau) \|_{L^2(\Gamma_h(t))}^2 \leq C\left(\tau + h^{\frac{4}{3}} \log\left(\frac{1}{h}\right)+ \frac{\tau^2}{h^\frac{8}{3}}\right).\]
		\item It may be possible to adapt arguments from \cite{barrett1999improved} to improve this bound, but we do not consider this here.
	\end{enumerate}	
\end{remark}

\section{Numerical experiments}
\label{section: logch numerics}
In this section we present some numerical results for an implementation of the fully discrete scheme \eqref{fullydisceqn1}, \eqref{fullydisceqn2}.
We may express the fully discrete scheme in a block matrix form as
\[\begin{pmatrix}
	\bar{M}^n & \tau A^n\\
	-\varepsilon A^n + \frac{1}{\varepsilon}\bar{M}^n  & \bar{M}^n
\end{pmatrix}
\begin{pmatrix}
	\alpha^n\\
	\beta^n
\end{pmatrix}
- \frac{\theta}{2 \varepsilon} \begin{pmatrix}
	0\\
	\bar{M}^n f(\alpha^n)
\end{pmatrix} =
\begin{pmatrix}
	\bar{M}^{n-1} \alpha^{n-1}\\
	0
\end{pmatrix},
\]
as we have throughout the paper.
Here $\alpha^n, \beta^n$ are such that
\[U_h^n = \sum_{j=1}^{N_h} \alpha_j^n \phi_j^n, \quad W_h^n = \sum_{j=1}^{N_h} \beta_j^n \phi_j^n.\]

One cannot solve this scheme immediately by use of standard Newton methods, as one has the constraint that $\alpha^n \in (-1,1)^{N_h}$.
One could remedy this by considering a Newton scheme with variable stepsizes, see for instance \cite{deuflhard2005newton}, or in our case by using a variant of the nonsmooth Newton solver in \cite{graser2014numerical}.
The corresponding linear system to solve is solved by an exact solver based on LU decomposition with pivoting.
We omit further details on the implementation.

\subsection{Dynamics on an expanding torus}
Here we consider the torus given by the level set equation
\begin{align*}
	\left( \sqrt{x^2 + y^2} - 0.75 - t \right)^2 + z^2 - 0.25^2 = 0,
\end{align*}
which one can verify has $\gradg \cdot V > 0$.
We choose the initial data to be $u_0(x,y,z) = 0.9 x\cos\left(\frac{\pi y}{2}\right)$, which we observe has vanishing mean value.
Moreover we take $\varepsilon = 0.1$, $\theta = 0.4$ and $T=0.6$.
The mesh here consists of 6016 elements, and $\tau = 5\cdot 10^{-5}$.

\begin{figure}[H]
	\centering
	\includegraphics[width=\textwidth]{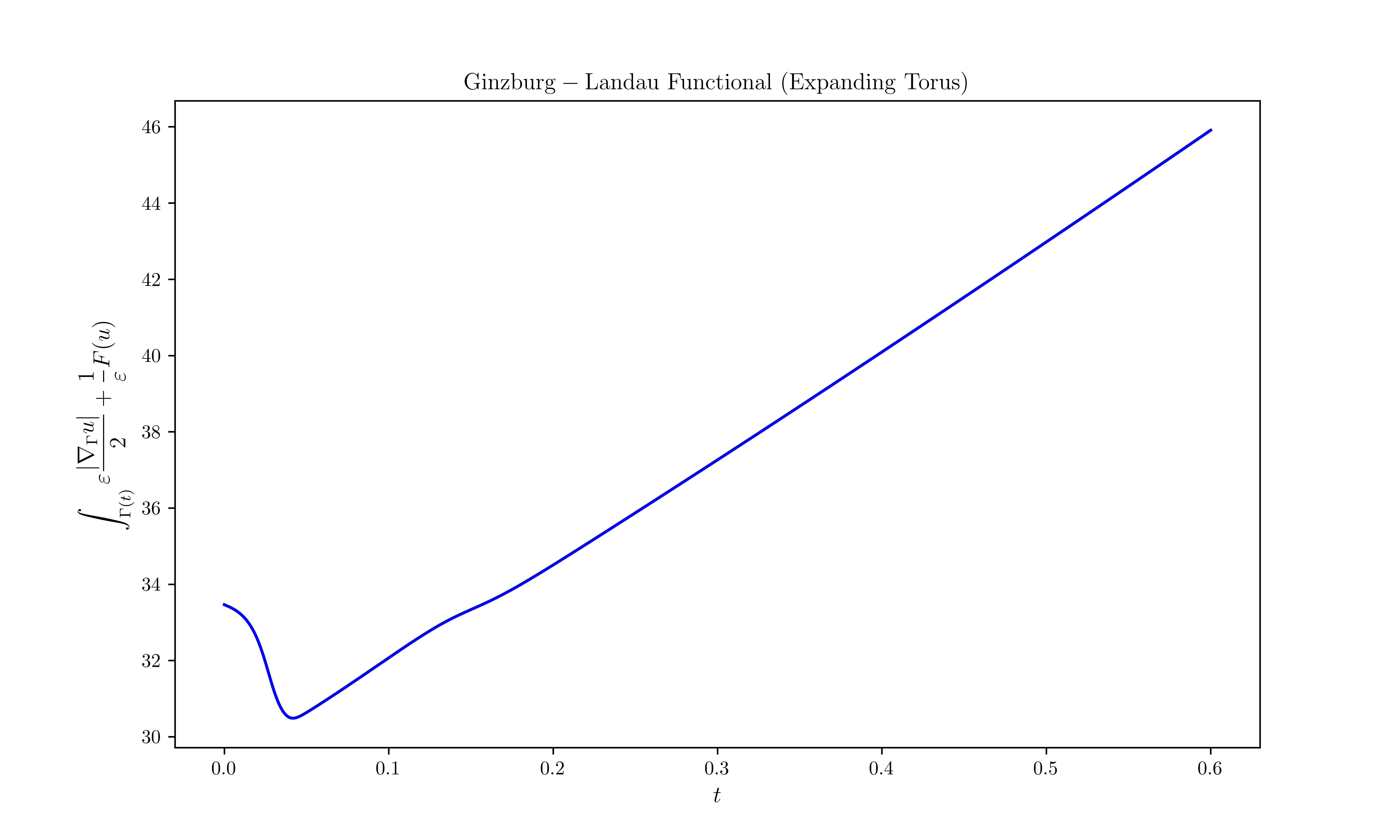}
	\caption{Plot of the Ginzburg-Landau functional on an expanding torus over $[0,0.6]$.}
	\label{fig:ExpTorusGL}
\end{figure}
We note that the Ginzburg-Landau functional as plotted in Figure \ref{fig:ExpTorusGL} is non-monotonic.
Heuristically can explain this as a competition of two contrary behaviours.
Firstly for small $\varepsilon$ the Ginzburg-Landau functional \eqref{glfunctional} is approximately the length of some curve $\gamma(t) \subset \Gamma(t)$ which evolves via some evolving surface analogue of the Mullins-Sekerka flow obtained in the sharp interface limit $\varepsilon \rightarrow 0$ --- we refer the reader to a similar discussion in \cite[Section 6.3]{elliott2024fully}.
This should, and for a stationary domain does, shrink the curve till it converges to some curve of a fixed length.
On the other hand, the surface $\Gamma(t)$ is expanding, and so sufficiently fast expansion of $\Gamma(t)$ would lead to the length of $\gamma(t)$ increasing.
Thus the two regimes seen in Figure \ref{fig:ExpTorusGL} are justified by these two behaviours respectively.

\begin{figure}[H]
	\begin{subfigure}{.45\linewidth}
		\includegraphics[width=1.2\textwidth]{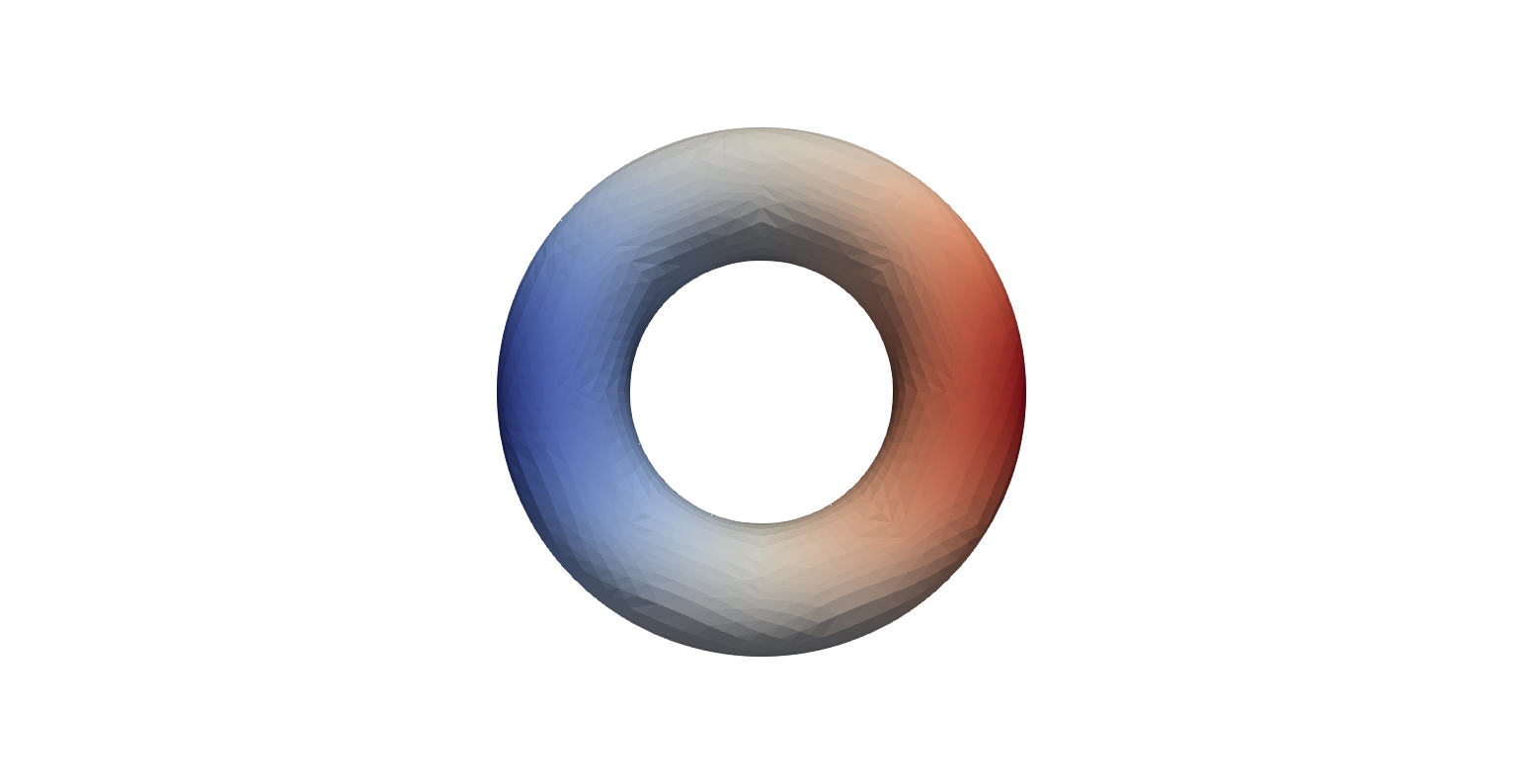}
		\caption{$t=0.$}
	\end{subfigure}%
	\begin{subfigure}{.45\linewidth}
		\includegraphics[width=1.2\textwidth]{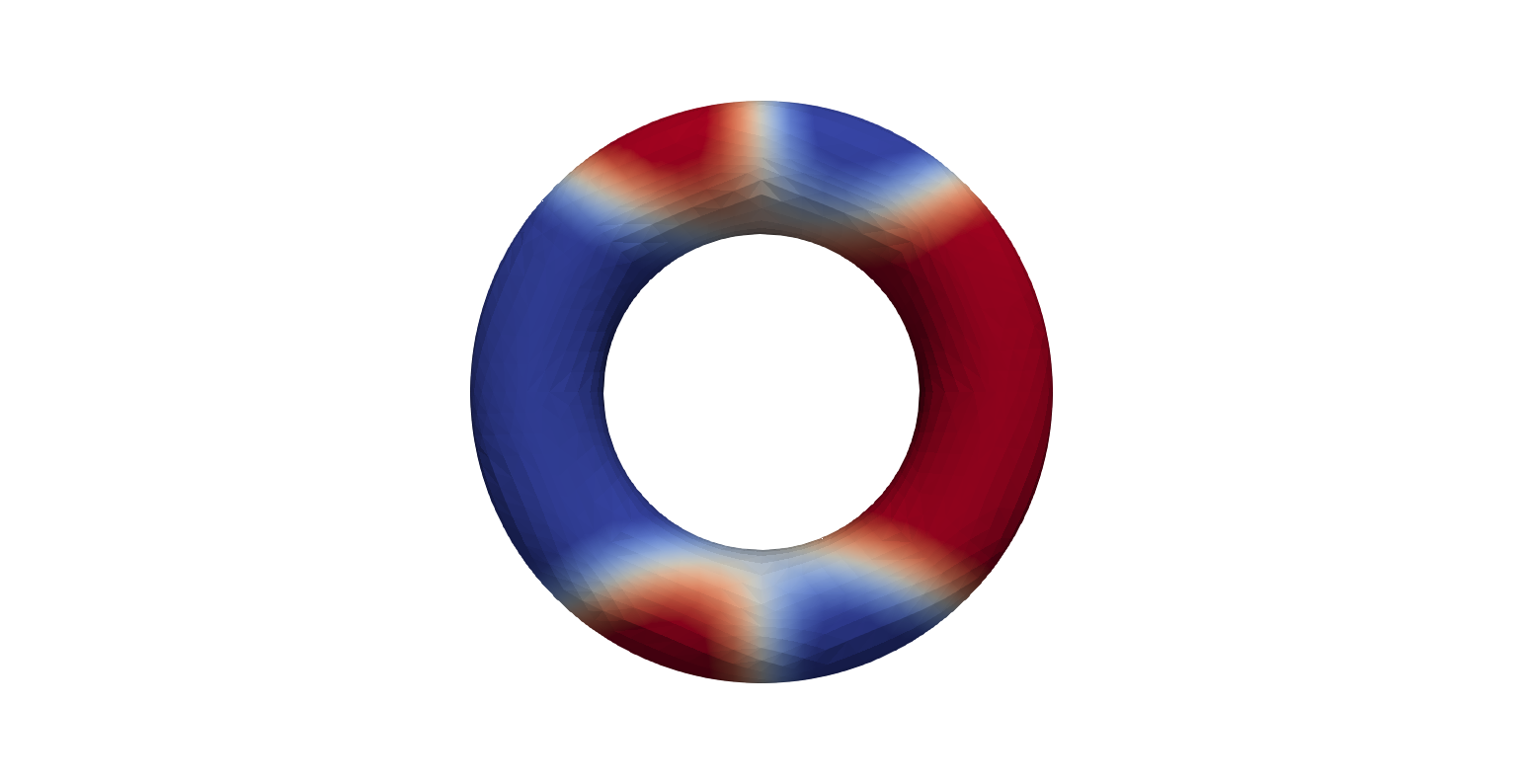}
		\caption{$t=0.1.$}
	\end{subfigure}%
	\newline
	\begin{subfigure}{.45\linewidth}
		\includegraphics[width=1.2\textwidth]{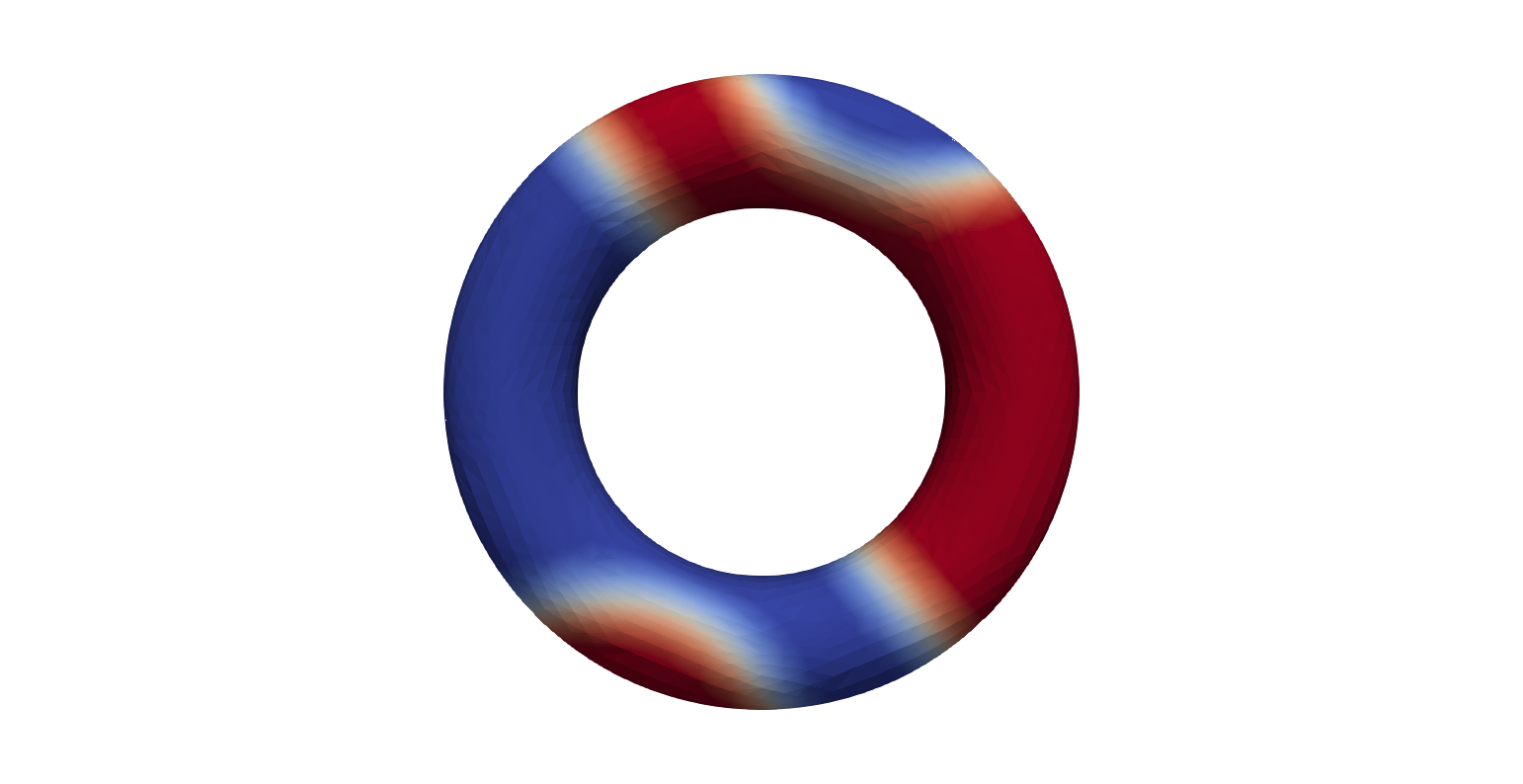}
		\caption{$t=0.2.$}
	\end{subfigure}%
	\begin{subfigure}{.45\linewidth}
		\includegraphics[width=1.2\textwidth]{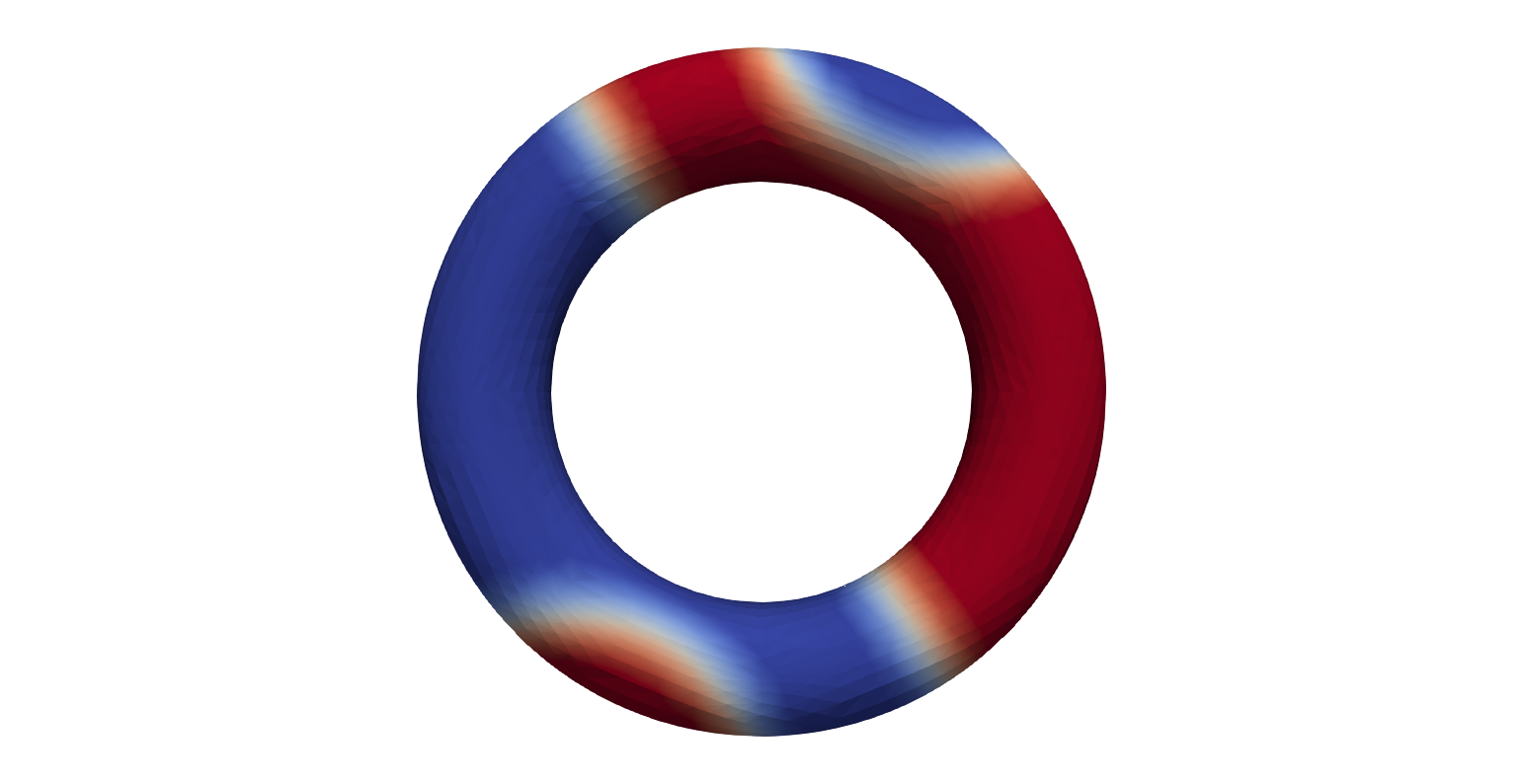}
		\caption{$t=0.3.$}
	\end{subfigure}%
	\caption{Evolution of $u$ on an expanding torus.
		Regions of blue correspond to a negative quantity, and red a positive quantity.}
	\label{fig:ExpTorusEvolution}
\end{figure}

\subsection{Dynamics on a shrinking torus}
Here we consider a shrinking torus, given by the level set equation
\[\left( \sqrt{x^2 + y^2} - 0.75 \right)^2 + z^2 - (0.25-0.25t)^2 = 0,\]
to investigate the necessity of our assumption that $\gradg \cdot V \geq 0$.
It is a straightforward calculation to verify that this level set is such that $\gradg \cdot V < 0$.
We choose the same initial data and values for $\varepsilon, \theta, \tau$ as in the previous experiment.
The theory from \cite{caetano2021cahn,caetano2023regularization} shows that a solution exists for $t \in [0,1)$, and this experiment investigates whether any numerical issues arise when the true solution is defined.
We find that there seems to be no numerical issues on this small time interval, which indicates some hope for extending our numerical analysis to evolving surfaces without the condition that $\gradg \cdot V > 0$. 
We observe in Figure \ref{fig:ShrTorusGL} that the Ginzburg-Landau functional is monotonically decreasing here, and this follows the same heuristic argument as we saw for the expanding case.

\begin{figure}[ht]
	\centering
	\includegraphics[width=\textwidth]{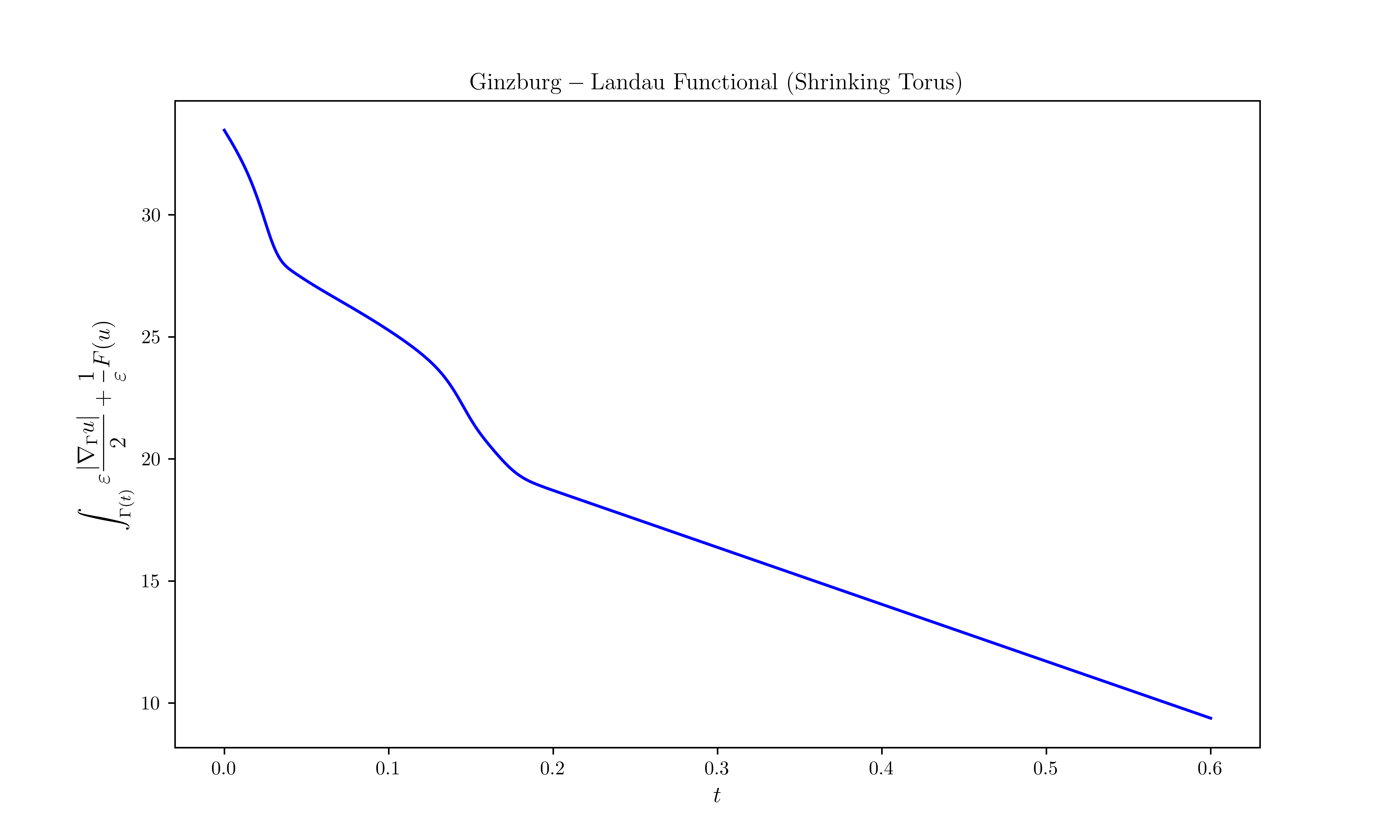}
	\caption{Plot of the Ginzburg-Landau functional on a shrinking torus over $[0,0.6]$.}
	\label{fig:ShrTorusGL}
\end{figure}

We also include some examples of the evolution of $u$ on this shrinking domain, Figure \ref{fig:ShrTorusEvolution}, which shows how the behaviour of the solution is vastly different to an expanding domain --- see Figure \ref{fig:ExpTorusEvolution}.

\begin{figure}[H]
	\begin{subfigure}{.45\linewidth}
		\includegraphics[width=1.2\textwidth]{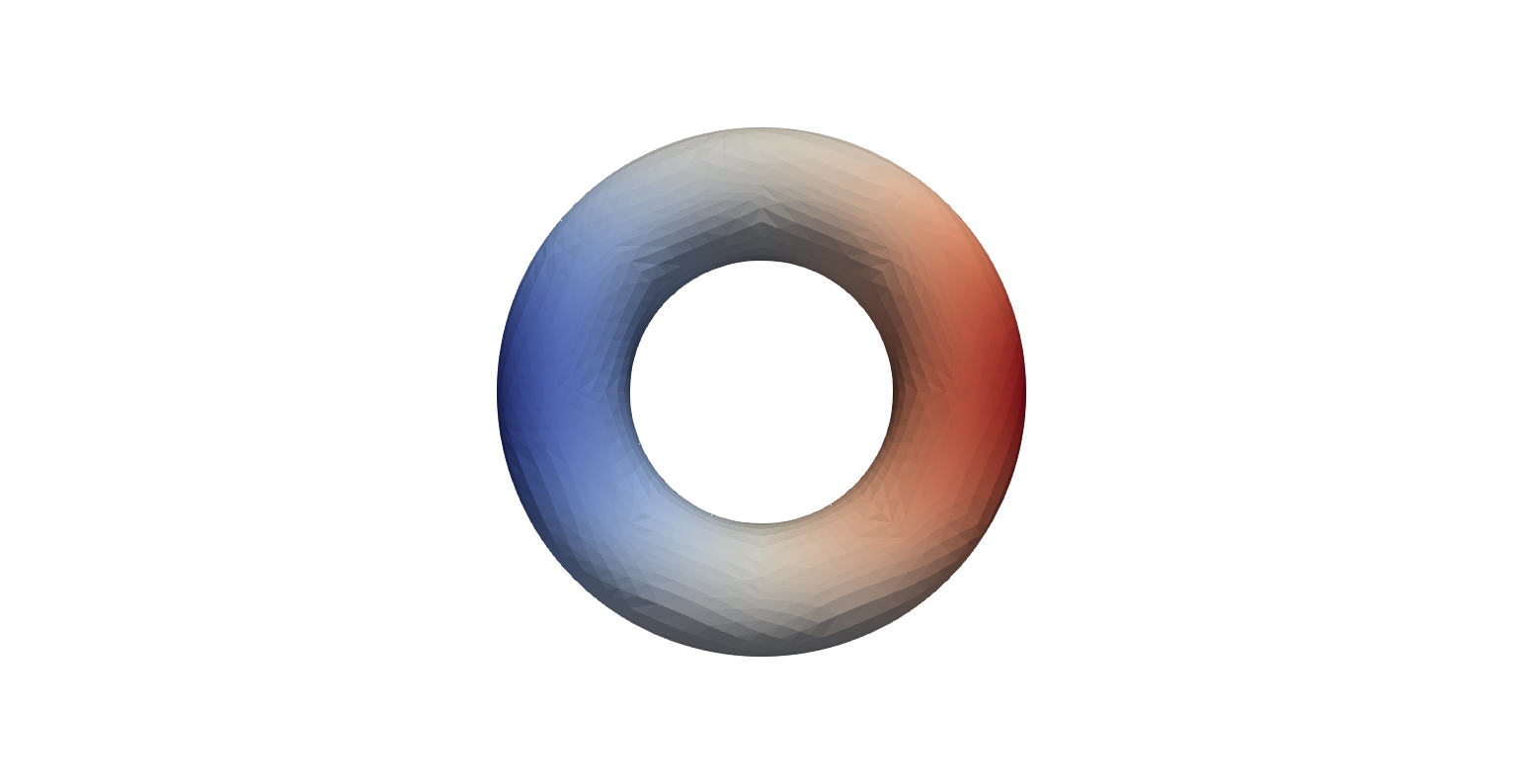}
		\caption{$t=0.$}
	\end{subfigure}%
	\begin{subfigure}{.45\linewidth}
		\includegraphics[width=1.2\textwidth]{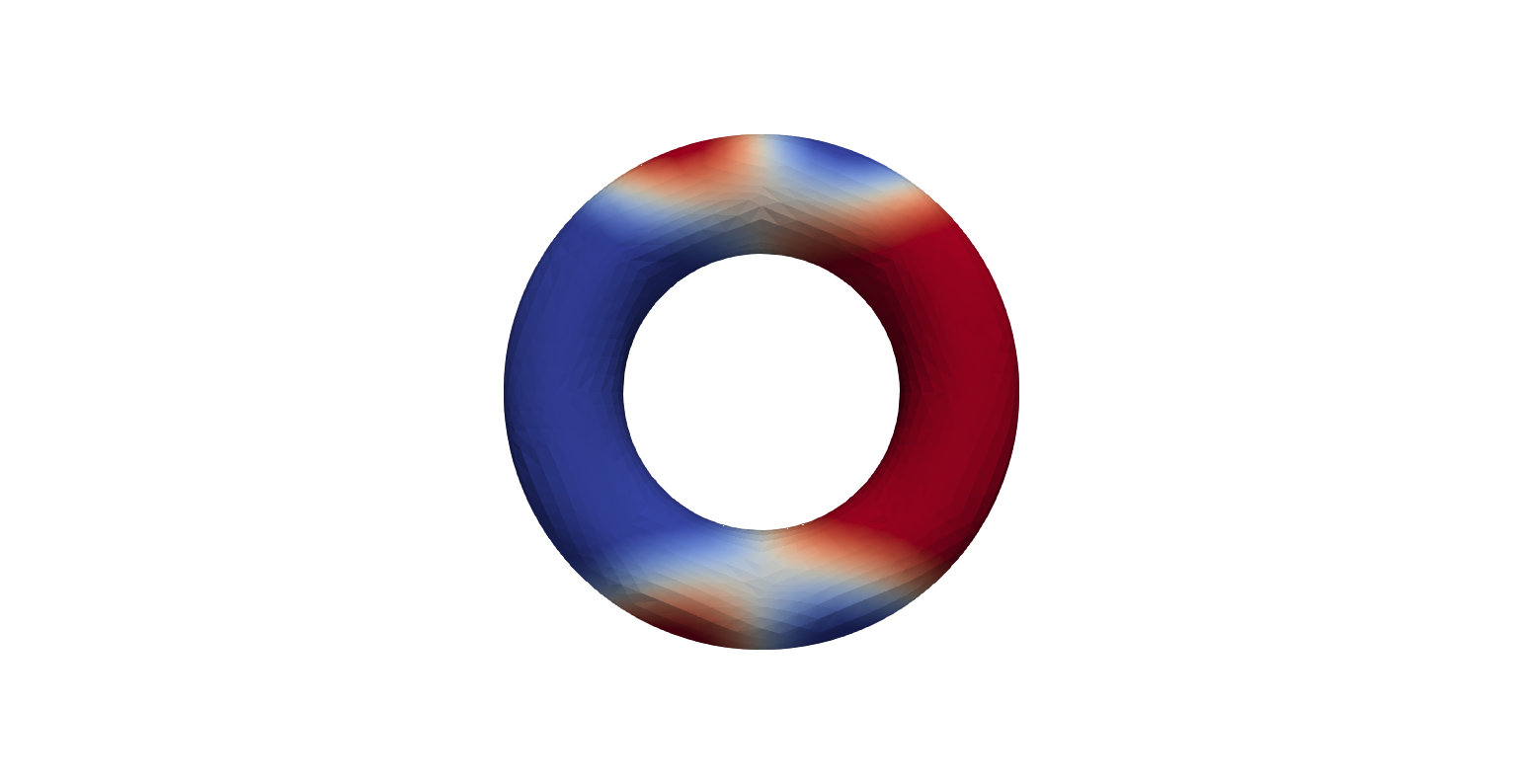}
		\caption{$t=0.1.$}
	\end{subfigure}%
	\newline
	\begin{subfigure}{.45\linewidth}
		\includegraphics[width=1.2\textwidth]{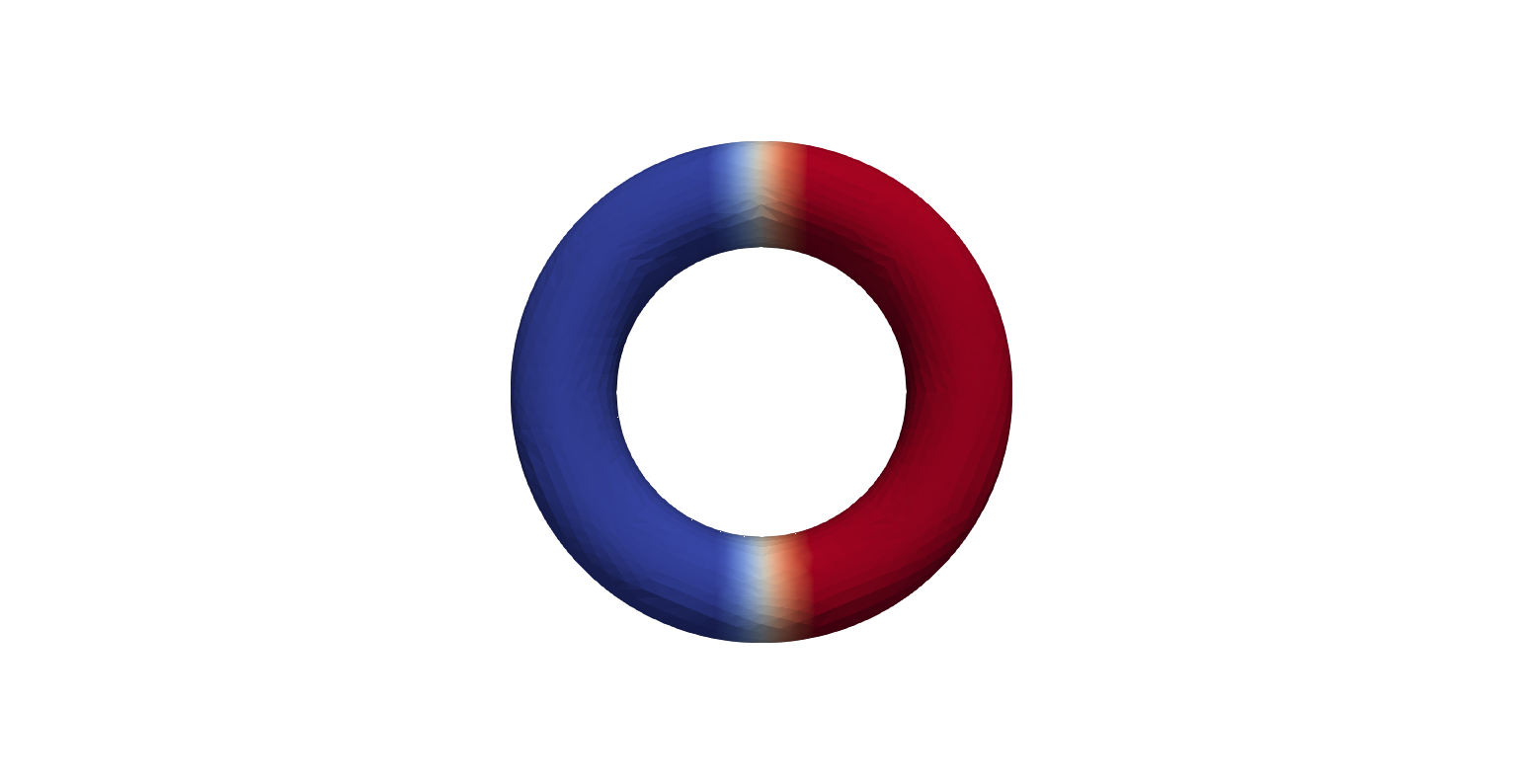}
		\caption{$t=0.2.$}
	\end{subfigure}%
	\begin{subfigure}{.45\linewidth}
		\includegraphics[width=1.2\textwidth]{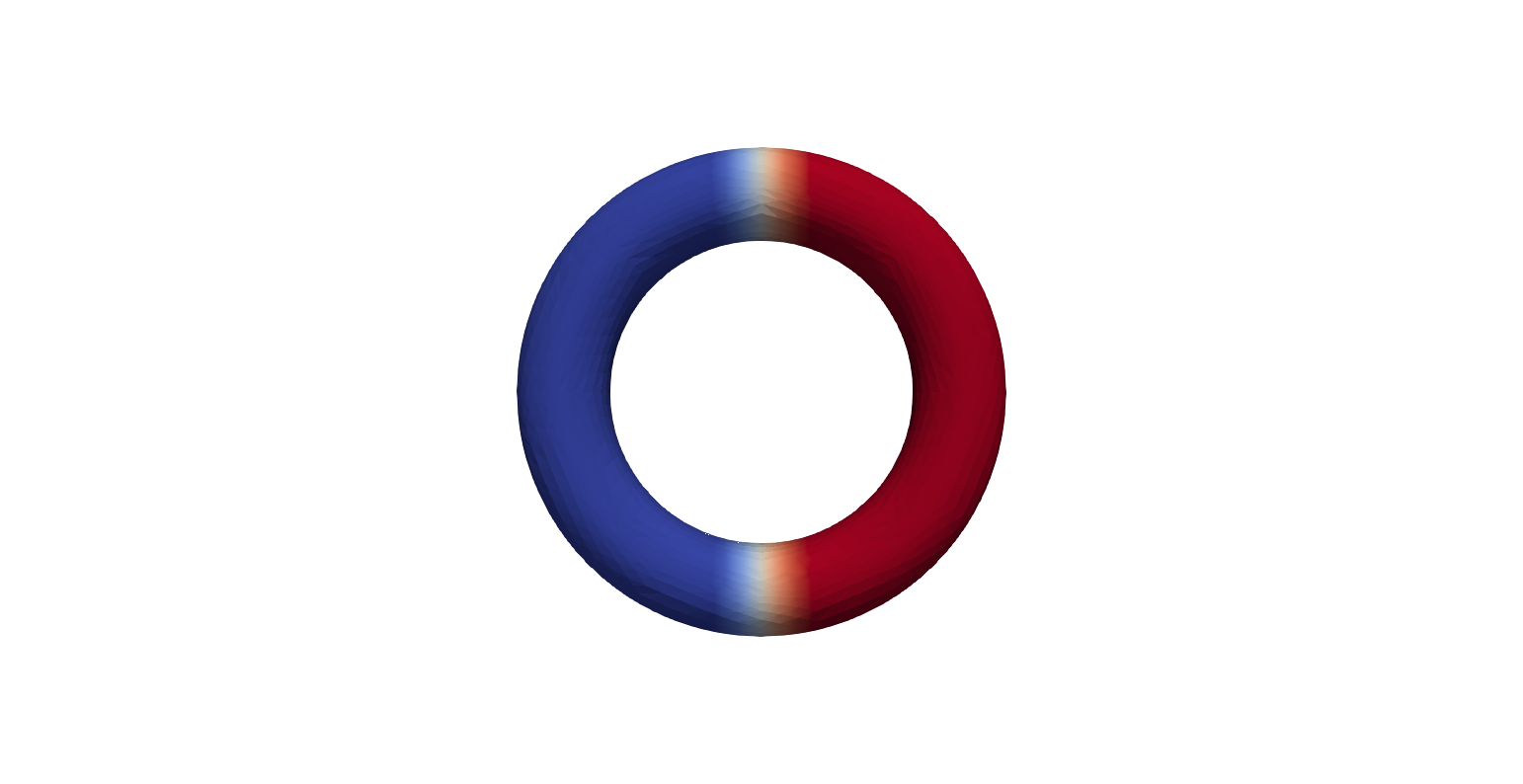}
		\caption{$t=0.3.$}
	\end{subfigure}%
	\caption{Evolution of $u$ on a shrinking torus.
		Regions of blue correspond to a negative quantity, and red a positive quantity.}
	\label{fig:ShrTorusEvolution}
\end{figure}

\subsection{Experimental order of convergence on an expanding sphere}
Here we compute an experimental order of convergence (EOC) for the $H^1$ error of the scheme \eqref{fullydisceqn1}, \eqref{fullydisceqn2} on an expanding sphere\footnote{This example is the unit sphere evolving by inverse mean curvature flow ($V_N = \frac{1}{H}$) which fits our assumptions on the expansion of the surface as discussed previously.}, given by the level set equation
\[ x^2 + y^2 + z^2 - e^t = 0. \]
The EOC is computed by solving the equation on a coarse mesh, and prolonging this coarse solution onto a finer mesh.
This process is analogous to lifting.
We obtain an error by approximating the true solution with a fine solution ($\tau = 10^{-5}, h \approx 4.023559 \cdot 10^{-2}$), as the exact solution is not known.
We choose the timestep sizes to be $\mathcal{O}(h^2)$, in accordance with our CFL condition, and this also avoids a bottleneck in the error.
We observe that the EOC is larger than predicted by Theorem \ref{fullydiscerrortheorem}.
This is not surprising as our proof relies on using the regularised potential, which introduces a bottleneck that practical schemes will not be limited by.

Here the parameters $\varepsilon, \theta, T$ are chosen as $\varepsilon = 0.1, \theta = 0.5, T = 0.1$, and the initial data is given by $u_0(x,y,z) = 0.5 x$.

\begin{table}[h]
	\centering
	\begin{tabular}{ |c|c|c| } 
		\hline
		$h$ & $\|\gradg(u - (U_h^{N_T})^\ell)\|_{L^2(\Gamma(T))}$ & EOC\\
		\hline
		$6.437694 \cdot 10^{-1}$ & $4.052072$ & - \\ 
		\hline
		$3.218847\cdot 10^{-1}$ & $2.016871$ & $1.006541$ \\ 
		\hline
		$1.609424\cdot 10^{-1}$ & $9.449989 \cdot 10^{-1}$ & $1.0937343$ \\ 
		\hline
		$8.047118 \cdot 10^{-2}$ & $3.987348 \cdot 10^{-1}$ & $1.244883$ \\ 
		\hline
	\end{tabular}
	\caption{Table of EOC for the expanding sphere.\label{table:SphereTable}}
\end{table}

We note that in this numerical experiment the mesh started acute (for all levels of refinement) but immediately lost this property.
Despite this the solution remained numerically stable and appears to be have an optimal order of convergence in the $H^1$ seminorm.

\subsection*{\textbf{Acknowledgements}}
The authors would like to thank the anonymous referees for their helpful comments, and spotting some mistakes in the original draft of this paper.
Thomas Sales is supported by the Warwick Mathematics Institute Centre for Doctoral Training, and gratefully acknowledges funding from the University of Warwick and the UK Engineering and Physical Sciences Research Council (Grant number:EP/TS1794X/1).
For the purpose of open access, the author has applied a Creative Commons Attribution (CC BY) licence to any Author Accepted Manuscript version arising from this submission.

\appendix
\section{Inverse Laplacians}
\label{invlaps}

In this appendix we discuss several notions of inverse Laplacians and related inequalities.

\begin{definition}
	Let $z \in H^{-1}(\Gamma(t))$ with $m_*(z,1)= 0$.
	We define the inverse Laplacian of $z$, $\mathcal{G}z$, to be the unique solution of
	\[a(\mathcal{G}z, \eta) = m_*(z, \eta), \qquad \int_{\Gamma(t)} \mathcal{G}z = 0,\]
	for all $\eta \in H^1(\Gamma(t))$.
\end{definition}
We also recall the following result from \cite{caetano2021cahn,elliott2015evolving}.
\begin{lemma}
	If $z \in L^2_{H^1}\cap H^1_{H^{-1}}$ and $m_*(z,1)= 0$, then $\mathcal{G}z$ is well defined and $\mathcal{G} z \in H^{1}_{H^1}$.
\end{lemma}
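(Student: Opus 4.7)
The plan is to split the argument into two parts: first showing that $\mathcal{G}z(t)$ is well-defined and belongs to $L^2_{H^1}$, and then handling the material derivative regularity.

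First I would establish pointwise (in $t$) existence by Lax-Milgram. For a.e.\ $t \in [0,T]$, the bilinear form $a(t;\cdot,\cdot)$ is coercive and continuous on the closed subspace $V(t) := \{\eta \in H^1(\Gamma(t)) : \int_{\Gamma(t)} \eta = 0\}$, with coercivity constant independent of $t$ by the compatibility of the Sobolev spaces and the resulting uniform Poincar\'e inequality. The functional $\eta \mapsto m_*(z(t),\eta)$ is well-defined on $V(t)$ since $m_*(z(t),1)=0$. Lax-Milgram gives a unique $\mathcal{G}z(t) \in V(t)$ with
\[\|\mathcal{G}z(t)\|_{H^1(\Gamma(t))} \leq C \|z(t)\|_{H^{-1}(\Gamma(t))},\]
where $C$ is independent of $t$. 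Squaring and integrating over $[0,T]$ immediately yields $\mathcal{G}z \in L^2_{H^1}$.

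For the material derivative, my plan is to pull back to the fixed reference surface $\Gamma_0$ via $\Phi_{-t}$, reducing the problem to an elliptic equation on a fixed domain with time-dependent coefficients. Set $G(t) := \Phi_{-t}\mathcal{G}z(t)$ and $\zeta(t) := \Phi_{-t} z(t) \in H^{-1}(\Gamma_0)$ (using compatibility). The defining equation becomes
\[\tilde{a}(t; G(t), \tilde\eta) = \tilde{m}_*(t;\zeta(t),\tilde\eta) \qquad \forall \tilde\eta \in H^1(\Gamma_0),\]
where the bilinear forms $\tilde{a}(t;\cdot,\cdot)$ and $\tilde{m}_*(t;\cdot,\cdot)$ have Jacobian coefficients that are $C^1$ in $t$ by the $C^1_tC^2_x$ regularity of the flow map $\Phi$. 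Since $\partial^\bullet z \in L^2_{H^{-1}}$ translates to $\partial_t \zeta \in L^2(0,T;H^{-1}(\Gamma_0))$, I would differentiate this equation in $t$ formally, obtaining
\[\tilde{a}(t; \partial_t G, \tilde\eta) = \langle \partial_t \zeta, \tilde\eta\rangle + \tilde{R}(t;G,\zeta;\tilde\eta),\]
where $\tilde R$ collects lower order terms involving $G$, $\zeta$, and the time-derivatives of the coefficients of the bilinear forms.

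To make this rigorous I would work with difference quotients $\tfrac{1}{h}(G(t+h) - G(t))$, derive the corresponding elliptic identity they satisfy, apply Lax-Milgram together with the uniform bounds already established to obtain $\partial_t G \in L^2(0,T;H^1(\Gamma_0))$, and then push forward to conclude $\partial^\bullet \mathcal{G}z \in L^2_{H^1}$. The main obstacle will be justifying the interchange of the difference quotient with the duality pairing $\tilde m_*$ on the right-hand side, since $\zeta$ is only $H^{-1}$-valued; this requires choosing test functions $\tilde\eta$ independent of time and carefully matching the norm estimates so that passing $h \to 0$ in the weak formulation yields the desired identity, after which the energy estimate for $\partial_t G$ follows by taking $\tilde\eta = \partial_t G - \text{mean}$ and invoking Poincar\'e.
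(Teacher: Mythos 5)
Your proposal is sound, but note that the paper itself does not prove this lemma: it is quoted from \cite{caetano2021cahn,elliott2015evolving}, where the regularity of $\mathcal{G}z$ is obtained by essentially the same underlying idea you propose, namely differentiating the defining elliptic identity in time with a coercivity constant uniform in $t$. Your first step (Lax--Milgram on the mean-zero subspace of $H^1(\Gamma(t))$, uniform Poincar\'e constant via compatibility, hence $\|\mathcal{G}z(t)\|_{H^1(\Gamma(t))} \leq C\|z(t)\|_{H^{-1}(\Gamma(t))}$ and $\mathcal{G}z \in L^2_{H^1}$) is exactly right. For the second step, two points deserve explicit care, both of which you only implicitly acknowledge. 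First, the identification ``$\partial^\bullet z \in L^2_{H^{-1}}$ translates to $\partial_t \zeta \in L^2(0,T;H^{-1}(\Gamma_0))$'' is not a pure transcription: the pullback of the $H^{-1}$ pairing carries a Jacobian weight, so $\partial_t\zeta$ differs from the pullback of $\partial^\bullet z$ by a term of the form $z\,\gradg\cdot V$ (this is the $g(\cdot,\cdot)$ term in the paper's definition of the weak material derivative); this extra term is harmless here precisely because $z \in L^2_{H^1}\subset L^2_{L^2}$, but it must be included in your remainder $\tilde R$. Second, the difference quotient $\frac{1}{h}(G(t+h)-G(t))$ is not mean-free for the weighted constraint $\int_{\Gamma_0} G(t)J_t = 0$; differentiating (or difference-quotienting) the constraint shows its weighted mean is bounded by $C\|G(t+h)\|_{L^2(\Gamma_0)}$, after which your plan of testing with the mean-corrected quotient and invoking Poincar\'e and the $L^2$-in-time difference-quotient characterisation of $\partial_t\zeta$ closes the estimate. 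With these two points made explicit, the argument yields $\partial_t G \in L^2(0,T;H^1(\Gamma_0))$ and hence $\mathcal{G}z \in H^1_{H^1}$, so there is no genuine gap; the approach you describe on the fixed reference surface is a legitimate alternative presentation of the cited proofs, which work directly on $\Gamma(t)$ via the transport theorem.
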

We also require three different notions of a discrete inverse Laplacian on $\Gamma_h(t)$.
\begin{definition}
	Let $z_h \in L^2(\Gamma_h(t))$ such that $\int_{\Gamma_h(t)} z_h = 0$.
	\begin{enumerate}
		\item We define the discrete inverse Laplacian, $\invh z_h \in H^1(\Gamma_h(t))$, to be the unique solution of
		$$ a_h(\invh z_h, \eta_h) = m_h(z_h, \eta_h), \qquad \int_{\Gamma_h(t)} \invh z_h = 0,$$
		for all $\eta_h \in H^1(\Gamma_h(t))$.
		\item Similarly for $z_h \in S_h(t)$, we define the inverse Laplacian on shape functions to be the unique solution, $\invsh z_h \in S_h(t)$, of
		$$ a_h(\invsh z_h, \phi_h) = m_h(z_h, \phi_h), \qquad \int_{\Gamma_h} \invsh z_h = 0,$$
		for all $\phi_h \in S_h(t)$.
		\item Lastly for $z_h \in S_h(t)$, we define a mass-lumped inverse Laplacian as the unique solution, $\intinvsh z_h \in S_h(t)$, of
		\begin{equation*}
			\label{intinvlap}
			a_h \left(\intinvsh z_h, \phi_h \right) = \intm(z_h, \phi_h), \qquad \int_{\Gamma_h(t)}\intinvsh z_h, = 0,
		\end{equation*}
		for all $\phi_h \in S_h(t)$, provided that $z_h \in C^0(\Gamma_h(t))$.	\end{enumerate}
\end{definition}
Each of these operators gives rise to an corresponding norm, given by
\begin{gather*}
	\| z \|_{-1}^2 := a(\mathcal{G}z,\mathcal{G}z), \qquad \|z_h\|_{-1,h}^2 := a_h(\invh z_h, \invh z_h),\\
	\normsh{z_h}^2 := a_h(\invsh z_h, \invsh z_h), \qquad \inormh{z_h}^2 := a_h \left( \intinvsh z_h, \intinvsh z_h \right).
\end{gather*}

It is clear that these are well defined norms due to the Poincar\'e inequality and the condition on the mean value.
We will use the inverse Laplacian to establish bounds in $H^{-1}(\Gamma_h(t))$, since
\begin{equation}\label{invlapduality}
	\| z_h \|_{H^{-1}(\Gamma_h(t))}  = \sup_{\eta_h \in H^1(\Gamma_h(t)) \backslash \{0\} } \frac{|m_h(z_h, \eta_h)|}{\|\eta_h\|_{H^1{(\Gamma_h(t))}}} \leq \|z_h\|_{-1,h},
\end{equation}
for $z_h \in L^2(\Gamma_h(t))$ with $\int_{\Gamma_h(t)} z_h = 0$.

We relate the second and third inverse Laplacians through the inequality,
\begin{align}
	\label{invlapineq1}
	\left\|(\invh - \invsh)z_h \right\|_{L^2(\Gamma_h(t))} \leq Ch^2 \|z_h\|_{L^2(\Gamma_h(t))}.
\end{align}
This can be seen as an error bound for linear finite elements solving Laplace's equation posed on $\Gamma_h(t)$ (see \cite{dziuk2013finite}).
Furthermore we have the following inequalities.
\begin{lemma}
	Let $z_h \in S_h(t)$ with $\int_{\Gamma_h(t)} z_h = 0$, then we have,
	\begin{align}
		\label{invlapineq2}
		C_1 h^2 \| \gradgh z_h \|_{L^2(\Gamma_h(t))} \leq C_2 h \| z_h \|_{L^2(\Gamma_h(t))} \leq \normsh{z_h} \leq \| z_h \|_{-1,h} \leq C_3 \normsh{z_h}.\\
		C_4 h^2 \| \gradgh z_h \|_{L^2(\Gamma_h(t))} \leq C_5 h \normh{t}{z_h} \leq C_6 \inormh{z_h} \leq C_7 \normsh{z_h} \leq C_8 \inormh{z_h}. \label{invlapineq3}
	\end{align}
\end{lemma}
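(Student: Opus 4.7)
The plan is to prove each chain from left to right, establishing the various equivalences between norms using the defining properties of the inverse Laplacians together with inverse inequalities. I will first dispose of the first chain \eqref{invlapineq2} and then reuse its conclusions when handling the mass-lumped chain \eqref{invlapineq3}.

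For \eqref{invlapineq2}, the leftmost inequality $C_1 h^2 \|\gradgh z_h\|_{L^2} \leq C_2 h\|z_h\|_{L^2}$ is just the standard inverse inequality on the quasi-uniform triangulation. For $C_2 h\|z_h\|_{L^2} \leq \normsh{z_h}$, I would take $\phi_h = z_h \in S_h(t)$ in the defining relation of $\invsh$ to get $\|z_h\|_{L^2}^2 = m_h(z_h,z_h) = a_h(\invsh z_h, z_h)$; Cauchy-Schwarz and then the inverse inequality on $\|\gradgh z_h\|_{L^2}$ yield $\|z_h\|_{L^2}^2 \leq Ch^{-1}\normsh{z_h}\|z_h\|_{L^2}$. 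For $\normsh{z_h} \leq \|z_h\|_{-1,h}$, I observe that $\normsh{z_h}^2 = m_h(z_h, \invsh z_h)$ and, since $\invsh z_h \in S_h \subset H^1$, the defining relation of $\invh$ gives this equal to $a_h(\invh z_h, \invsh z_h)$, so Cauchy-Schwarz finishes. For the remaining bound $\|z_h\|_{-1,h} \leq C_3 \normsh{z_h}$, I write $\|z_h\|_{-1,h}^2 = m_h(z_h, \invh z_h) = m_h(z_h, \invsh z_h) + m_h(z_h, \invh z_h - \invsh z_h)$; the first term equals $\normsh{z_h}^2$, and the second is bounded, via Cauchy-Schwarz and the geometric perturbation \eqref{invlapineq1}, by $Ch^2\|z_h\|_{L^2}^2$, which is in turn controlled by $C\normsh{z_h}^2$ using the second inequality already established.

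For \eqref{invlapineq3} I will imitate the same strategy, but now using the mass-lumped bilinear form $\intm$ and leveraging \eqref{intmbound0} and \eqref{intmbound2}. The first two inequalities follow exactly as in chain \eqref{invlapineq2}: the leftmost uses the inverse inequality together with $\|z_h\|_{L^2} \leq \normh{t}{z_h}$ from \eqref{intmbound0}, and for $C_5 h\normh{t}{z_h} \leq C_6 \inormh{z_h}$ I take $\phi_h = z_h$ in the defining relation of $\intinvsh$ to obtain $\normh{t}{z_h}^2 = a_h(\intinvsh z_h, z_h)$, followed by Cauchy-Schwarz and the inverse inequality $\|\gradgh z_h\|_{L^2} \leq Ch^{-1}\|z_h\|_{L^2} \leq Ch^{-1}\normh{t}{z_h}$.

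The main obstacle, and the step requiring the most care, is the equivalence $\inormh{z_h} \sim \normsh{z_h}$. For $\inormh{z_h} \leq C_7 \normsh{z_h}$, I would write $\inormh{z_h}^2 = \intm(z_h, \intinvsh z_h) = m_h(z_h, \intinvsh z_h) + [\intm - m_h](z_h, \intinvsh z_h)$; the first piece equals $a_h(\invsh z_h, \intinvsh z_h) \leq \normsh{z_h}\inormh{z_h}$ by Cauchy-Schwarz, and the remainder, via \eqref{intmbound2}, is controlled by $Ch^2\|\gradgh z_h\|_{L^2}\inormh{z_h}$. The quantity $h^2 \|\gradgh z_h\|_{L^2}$ is then absorbed by using the already-proved chain \eqref{invlapineq2}, namely $h^2\|\gradgh z_h\|_{L^2} \leq Ch\|z_h\|_{L^2} \leq C\normsh{z_h}$. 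The reverse inequality $\normsh{z_h} \leq C_8\inormh{z_h}$ is shown by the mirror argument: $\normsh{z_h}^2 = m_h(z_h, \invsh z_h) = \intm(z_h, \invsh z_h) + [m_h - \intm](z_h, \invsh z_h)$, where the first term is $a_h(\intinvsh z_h, \invsh z_h) \leq \inormh{z_h}\normsh{z_h}$ and the remainder is bounded by $Ch^2\|\gradgh z_h\|_{L^2}\normsh{z_h}$ via \eqref{intmbound2}; now $h^2\|\gradgh z_h\|_{L^2}$ is absorbed using the already-established $h^2\|\gradgh z_h\|_{L^2} \leq Ch\normh{t}{z_h} \leq C\inormh{z_h}$. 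The delicate bookkeeping here is that the absorption step always relies on a \emph{previously} proved inequality, so the chain of implications must be ordered carefully: chain \eqref{invlapineq2} must be complete before the equivalence of $\inormh{\cdot}$ and $\normsh{\cdot}$ is addressed.
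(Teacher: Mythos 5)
Your proof is correct and follows essentially the same route as the paper: the defining relations of the inverse Laplacians, Cauchy--Schwarz/absorption, the inverse inequality, \eqref{invlapineq1} for the last bound in \eqref{invlapineq2}, and \eqref{intmbound0}, \eqref{intmbound2} for \eqref{invlapineq3}, with the same careful ordering so each absorption uses a previously established inequality. The only cosmetic difference is in the equivalence $\inormh{z_h}\sim\normsh{z_h}$, where you split the bilinear forms directly while the paper's argument passes through a bound on $\|\gradgh(\invsh-\intinvsh)z_h\|$ and a triangle inequality; both rest on \eqref{intmbound2} and are interchangeable.
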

\begin{proof}
	The first inequality in the chain follows by Poincar\'e's inequality and an inverse inequality (see \cite{brenner2008mathematical}).
	The second inequality comes from
	\begin{multline*}
	    h^2 \|z_h\|_{L^2(\Gamma_h(t))}^2 = h^2 a_h\left( \invsh z_h, z_h \right) \leq \frac{1}{\lambda} \normsh{z_h}^2 + \frac{\lambda h^4}{4}\| \gradgh z_h \|_{L^2(\Gamma_h(t))}^2\\
     \leq \frac{1}{\lambda}\normsh{z_h}^2 + \frac{C \lambda h^2}{4}\| z_h \|_{L^2(\Gamma_h(t))}^2,
	\end{multline*}
	where we have used Young's inequality and the first inequality.
	The inequality follows by taking $\lambda$ sufficiently small so that $\frac{C \lambda}{4} < 1$.
	The third inequality comes from
	$$ a_h \left( \invsh z_h, \invsh z_h \right) = m_h\left( z_h, \invsh z_h \right) = a_h \left( \invh z_h, \invsh z_h \right).$$
	The final inequality then follows from
	$$ \|z_h\|_{-1,h}^2 - \normsh{z_h}^2 = m_h\left( z_h,  (\invh - \invsh) z_h \right) \leq Ch^2 \| z_h \|_{L^2(\Gamma_h(t))}^2 \leq C \normsh{z_h}^2, $$
	where we have used \eqref{invlapineq1} and the second inequality.
	The proof of \eqref{invlapineq3} is more or less identical, and hence omitted.
\end{proof}

We note that \eqref{invlapineq1}, \eqref{invlapineq2}, \eqref{invlapineq3} show that for $z_h \in S_h(t)$ with $\int_{\Gamma_h(t)} z_h = 0$ we may control $\|z_h \|_{H^{-1}(\Gamma_h(t))}$ by $\normsh{z_h}$ or $\inormh{z_h}$.

Lastly we note that each of these operators changes in time as the geometry of $\Gamma(t)$ varies.
As such, we will need the following commutator bound which compares two notions of an inverse Laplacian on $\Gamma_h(t)$.
This result will be required in the proof of Lemma \ref{defectderivative lemma}.
\begin{lemma}
	\label{invlap commutator}
	Let $z_h^n \in S_h^n$ and $t \in [t_{n-1}, t_n]$.
	Then for
        \begin{gather*}
            z_{h,0}^n := z_h^n - \mval{z_h^n}{\Gamma_h^n} \in S_h^n,\\
            z_{h,0}^t := \Phi_t^h \Phi_{-t_{n}}^h z_h^{n}  - \mval{\Phi_t^h \Phi_{-t_{n}}^h z_h^{n}}{\Gamma_h(t)} \in S_h(t),
        \end{gather*}
        one has that
	\begin{align*}
		\left\| \gradgh \left( \intinvsh z_{h,0}^t - \Phi_t^h \Phi_{-t_{n-1}}^h \intinvsh z_{h,0}^{n} \right) \right\|_{L^2(\Gamma_h(t))}\leq C \tau \|z_h^{n}\|_{L^2(\Gamma_h^{n})}, 
	\end{align*}
	for a constant $C$ independent of $\tau,h$.
\end{lemma}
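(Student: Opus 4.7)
\textbf{Proof proposal for Lemma \ref{invlap commutator}.} The natural strategy is a standard energy argument: introduce the error $e := \intinvsh z_{h,0}^t - \Phi_t^h \Phi_{-t_n}^h \intinvsh z_{h,0}^n \in S_h(t)$ (assuming the statement should read $\Phi_{-t_n}^h$, since $\intinvsh z_{h,0}^n \in S_h^n$), and try to bound $a_h(t;e,e)$ by exploiting the defining relations of $\intinvsh$ at times $t$ and $t_n$. Writing $W := \intinvsh z_{h,0}^n \in S_h^n$ and $\tilde{W} := \Phi_t^h \Phi_{-t_n}^h W \in S_h(t)$, the idea is that pulling the first bracket back to $\Gamma_h^n$ via the (material-derivative-free) pushforward should allow me to invoke the defining equation $a_h(t_n; W, \Phi) = \intm(t_n; z_{h,0}^n, \Phi)$, after which the residual is a genuine time perturbation.

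In detail, I would write
\[
a_h(t;e,e) = \intm(t; z_{h,0}^t, e) - a_h(t; \tilde{W}, e),
\]
set $\tilde e := \Phi_{t_n}^h \Phi_{-t}^h e \in S_h^n$, and then estimate the two terms separately. For the bilinear form $a_h$, Proposition \ref{transport3} combined with the transport property $\matdev_h(\Phi_s^h \Phi_{-t_n}^h W) = \matdev_h(\Phi_s^h \Phi_{-t}^h e) = 0$ yields
\[
a_h(t_n; W, \tilde e) - a_h(t; \tilde W, e) = \int_t^{t_n} b_h(s;\, \Phi_s^h \Phi_{-t_n}^h W,\, \Phi_s^h \Phi_{-t}^h e)\, ds,
\]
which by the smoothness of $V$ is bounded by $C\tau \|\gradgh W\|_{L^2(\Gamma_h^n)} \|\gradgh e\|_{L^2(\Gamma_h(t))}$ (using also the bounds \eqref{timenorm2}). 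An analogous application of Lemma \ref{transport5} gives a perturbation bound for $\intm$. The only subtlety here is that $\Phi_{t_n}^h \Phi_{-t}^h z_{h,0}^t$ does not coincide with $z_{h,0}^n$: they differ by the constant $c_\tau := \mval{z_h^n}{\Gamma_h^n} - \mval{\Phi_t^h\Phi_{-t_n}^h z_h^n}{\Gamma_h(t)}$, and another use of the transport theorem gives $|c_\tau| \leq C\tau\|z_h^n\|_{L^2(\Gamma_h^n)}$, so the constant contributes an extra $O(\tau \|z_h^n\|_{L^2}\|e\|_{L^2})$ term which is harmless.

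After these cancellations the defining equations $a_h(t_n; W, \tilde e) = \intm(t_n; z_{h,0}^n, \tilde e)$ cancel the leading pieces and leave
\[
\|\gradgh e\|_{L^2(\Gamma_h(t))}^2 \leq C\tau \|z_h^n\|_{L^2(\Gamma_h^n)} \bigl( \|\gradgh e\|_{L^2(\Gamma_h(t))} + \|e\|_{L^2(\Gamma_h(t))} \bigr),
\]
where I also use $\|\gradgh W\|_{L^2(\Gamma_h^n)} \leq C\|z_h^n\|_{L^2(\Gamma_h^n)}$, which follows from testing the defining relation of $\intinvsh z_{h,0}^n$ with $W$ itself, together with Poincar\'e's inequality and \eqref{intmbound0}. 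Since $e$ does not have vanishing mean, I would control $\mval{e}{\Gamma_h(t)}$ via the transport theorem: $\mval{\tilde W}{\Gamma_h(t)} = \mval{W}{\Gamma_h^n} + \int_{t_n}^t g_h(\Phi_s^h\Phi_{-t_n}^h W,1)\,ds$, giving $|\mval{e}{\Gamma_h(t)}| \leq C\tau\|W\|_{L^2} \leq C\tau \|z_h^n\|_{L^2(\Gamma_h^n)}$. Applying Poincar\'e to $e - \mval{e}{\Gamma_h(t)}$ then yields $\|e\|_{L^2} \leq C\|\gradgh e\|_{L^2} + C\tau \|z_h^n\|_{L^2}$, and Young's inequality absorbs the gradient term to conclude $\|\gradgh e\|_{L^2(\Gamma_h(t))} \leq C\tau\|z_h^n\|_{L^2(\Gamma_h^n)}$.

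\textbf{Expected main obstacle.} The technical nuisance is twofold: the perturbation estimates \eqref{timeperturb2}--\eqref{timeperturb3} in the paper are phrased between $\Gamma_h^n$ and $\Gamma_h^{n-1}$ rather than between $\Gamma_h^n$ and an intermediate $\Gamma_h(t)$, so one must re-derive them via the transport theorem (as sketched above); and one must carefully track the fact that $z_{h,0}^t$ and the pushforward of $z_{h,0}^n$ differ by a time-dependent constant, since $\intinvsh$ requires a zero-mean argument and the means do not match exactly on the two surfaces. Both issues are readily resolved with the available tools, but they are precisely where routine calculations could easily go astray.
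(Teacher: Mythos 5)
Your proposal is correct and follows essentially the same route as the paper's proof: an energy argument on $e$, cancellation via the defining relations of $\intinvsh$ at $t$ and $t_n$, $O(\tau)$ time-perturbation bounds (which the paper obtains from the extensions of \eqref{timeperturb2}, \eqref{timeperturb3} to intermediate times, whereas you re-derive them from the transport theorem — the same mechanism), and Poincar\'e plus Young to handle the $O(\tau)$ mean-value mismatches and absorb the gradient term. You also correctly spotted that $\Phi_{-t_{n-1}}^h$ in the statement should be $\Phi_{-t_n}^h$, as in the paper's proof.
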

\begin{proof}
	We begin by noting that
	\begin{align*}
		\left\| \gradgh \left( \intinvsh z_{h,0}^t - \Phi_t^h \Phi_{-t_{n}}^h \intinvsh z_{h,0}^{n} \right) \right\|_{L^2(\Gamma_h(t))}^2&= \intm \left( z_{h,0}^t, \intinvsh z_{h,0}^t - \Phi_t^h \Phi_{-t_{n}}^h \intinvsh z_{h,0}^{n} \right)\\
		&-\intm\left( z_{h,0}^{n}, \overline{\intinvsh z_{h,0}^t - \Phi_t^h \Phi_{-t_{n}}^h \intinvsh z_{h,0}^{n}}(t_{n}) \right)\\
    &+ a_h\left( \intinvsh z_{h,0}^{n},  \overline{\intinvsh z_{h,0}^t - \Phi_t^h \Phi_{-t_{n}}^h \intinvsh z_{h,0}^{n}}(t_{n}) \right)\\
		&- a_h \left( \Phi_t^h \Phi_{t_{n}}^h \intinvsh z_{h,0}^{n}, \intinvsh z_{h,0}^t - \Phi_t^h \Phi_{-t_{n}}^h \intinvsh z_{h,0}^{n}  \right),
	\end{align*}
    where we have used the definition of $\intinvsh$ and introduced extra terms
    \[ \intm\left( z_{h,0}^{n}, \overline{\intinvsh z_{h,0}^t - \Phi_t^h \Phi_{-t_{n}}^h \intinvsh z_{h,0}^{n}}(t_{n}) \right)\\
    = a_h\left( \intinvsh z_{h,0}^{n},  \overline{\intinvsh z_{h,0}^t - \Phi_t^h \Phi_{-t_{n}}^h \intinvsh z_{h,0}^{n}}(t_{n}) \right). \]
	Next it is a straightforward analogue of \eqref{timeperturb1} to see that, since $\Phi_t^h \Phi_{-t_{n}}^h z_{h}^{n} = \underline{z_h^n}(t)$,
    \begin{align*}
    \left| \mval{z_h^n}{\Gamma_h^n} - \mval{\Phi_t^h \Phi_{-t_{n}}^h z_{h}^{n}}{\Gamma_h(t)} \right| &\leq \left| \frac{1}{|\Gamma_h^n|} - \frac{1}{|\Gamma_h(t)|} \right| \left| \int_{\Gamma_h^n} z_h^n \right| + \frac{1}{|\Gamma_h(t)|}\left| \int_{\Gamma_h^n} z_h^n - \int_{\Gamma_h(t)}\Phi_t^h \Phi_{-t_{n}}^h z_{h}^{n} \right|\\
    &\leq C \tau \|z_{h}^{n}\|_{L^2(\Gamma_h^{n})},
    \end{align*}
    where we have used that
    \[ \left| \frac{1}{|\Gamma_h^n|} - \frac{1}{|\Gamma_h(t)|} \right| \leq \left| |\Gamma_h^n| - |\Gamma_h(t)| \right| \leq C \tau, \]
    for the second inequality.
    Hence one can observe that from \eqref{timeperturb3}
	\begin{multline*}
		 \left|\intm \left( z_{h,0}^t, \intinvsh z_{h,0}^t - \Phi_t^h \Phi_{-t_{n}}^h \intinvsh z_{h,0}^{n} \right)-\intm\left( z_{h,0}^{n}, \overline{\intinvsh z_{h,0}^t - \Phi_t^h \Phi_{-t_{n}}^h \intinvsh z_{h,0}^{n}}(t_{n}) \right)\right|\\
		  \leq C \tau \|z_h^{n}\|_{L^2(\Gamma_h^{n})} \left\| \intinvsh z_{h,0}^t - \Phi_t^h \Phi_{-t_{n}}^h \intinvsh z_{h,0}^{n} \right\|_{L^2(\Gamma_h(t))}.
	\end{multline*}
	By a similar argument, using \eqref{timeperturb2}, one also obtains
	\begin{multline*}
		 \left|a_h\left( \intinvsh z_{h,0}^{n},  \overline{\intinvsh z_{h,0}^t - \Phi_t^h \Phi_{-t_{n}}^h \intinvsh z_{h,0}^{n}}(t_{n}) \right)- a_h \left( \Phi_t^h \Phi_{-t_{n}}^h \intinvsh z_{h,0}^{n}, \intinvsh z_{h,0}^t - \Phi_t^h \Phi_{-t_{n}}^h \intinvsh z_{h,0}^{n}  \right)\right|\\
		\leq C \tau \inormh{z_{h,0}^{n}} \left\| \gradgh \left( \intinvsh z_{h,0}^t - \Phi_t^h \Phi_{-t_{n}}^h \intinvsh z_{h,0}^{n} \right) \right\|_{L^2(\Gamma_h(t))}.
	\end{multline*}
	Lastly we note that $\inormh{z_{h,0}^{n}} \leq C \|{z_{h,0}^{n}}\|_{L^2(\Gamma_h^n)}  \leq C \|{z_{h}^{n}}\|_{L^2(\Gamma_h^n)}$, and by using Poincar\'e's inequality one finds
    \begin{align*}
        \left\| \intinvsh z_{h,0}^t - \Phi_t^h \Phi_{-t_{n}}^h \intinvsh z_{h,0}^{n} \right\|_{L^2(\Gamma_h(t))} &\leq C\left\| \gradgh \left( \intinvsh z_{h,0}^t - \Phi_t^h \Phi_{-t_{n}}^h \intinvsh z_{h,0}^{n} \right) \right\|_{L^2(\Gamma_h(t))}\\
        &+ C \tau\|z_{h,0}^{n}\|_{L^2(\Gamma_h^n)}.
    \end{align*}
    Here the rightmost term comes from the fact that
    \[ \left| \mval{\intinvsh z_{h,0}^t - \Phi_t^h \Phi_{-t_{n}}^h \intinvsh z_{h,0}^{n}}{\Gamma_h(t)}\right| = \left|\mval{\Phi_t^h \Phi_{-t_{n}}^h \intinvsh z_{h,0}^{n}}{\Gamma_h(t)}\right| \leq C \tau \inormh{z_{h,0}^n} \leq C \tau \| z_{h,0}^n \|_{L^2(\Gamma_h(t))}. \]
    This first equality follows from the fact that $\int_{\Gamma_h(t)}\intinvsh z_{h,0}^t = 0$, and the subsequent inequalities follow from \eqref{timeperturb1} and the Poincar\'e inequality along with the fact that $\int_{\Gamma_h^n}{\intinvsh z_{h,0}^n} = 0$.
    The result now follows.
\end{proof}

\bibliographystyle{acm}
\bibliography{bibliography.bib}

\end{document}